\documentclass[12pt,oneside,4apaper]{article}
\raggedbottom
\sloppy


\usepackage{setspace}
\usepackage[margin=1in]{geometry}
\usepackage[colorlinks=true,citecolor=blue!50!black,linkcolor=red!50!black]{hyperref}
\usepackage[mathscr]{euscript}
\usepackage[utf8]{inputenc}

\usepackage{amsmath}%
\usepackage{amsfonts}%
\usepackage{amsthm}
\usepackage{graphicx}
\usepackage{units}
\usepackage{verbatim}
\usepackage{enumerate}
\usepackage{mathtools}
\usepackage{needspace}

\usepackage{tikz}
\usetikzlibrary{matrix,arrows,arrows.meta,calc,intersections}
\tikzset{>={Stealth[width=1.5mm,length=1.5mm]}}

\usepackage{parskip}

\begingroup
    \makeatletter
    \@for\theoremstyle:=definition,remark,plain\do{%
        \expandafter\g@addto@macro\csname th@\theoremstyle\endcsname{%
            \addtolength\thm@preskip\parskip
            }%
        }
\endgroup


\newtheorem{theorem}{Theorem}[section]
\newtheorem{lemma}[theorem]{Lemma}
\newtheorem{corollary}[theorem]{Corollary}

\theoremstyle{remark}

\newtheorem{remark}[theorem]{Remark}

\newtheorem{observation}[theorem]{Observation}
\newtheorem{justification}[theorem]{Justification}
\newtheorem{warning}[theorem]{Warning}
\newtheorem{question}[theorem]{Question}

\newtheorem{claim}{Claim}[subsubsection]

\newtheorem*{remark*}{Remark}
\newtheorem*{motivation*}{Motivation}
\newtheorem*{observation*}{Observation}
\newtheorem*{justification*}{Justification}
\newtheorem*{warning*}{Warning}

\newenvironment{claimproof}[1][Proof]{\begin{proof}[#1]}{\end{proof}}

\usepackage{xspace}
\newcommand{\newcmd}[3]{\newcommand{#1}[#2]{#3\xspace}}

\newcommand{\mc}{\mathcal}

\newcommand{\mb}{\mathbb}
\newcmd{\df}{1}{\textbf{\textit{\color{cyan!10!black} #1}}}
\newcommand{\dfeq}{=}

\newcommand{\eps}{\varepsilon}
\renewcommand{\phi}{\varphi}

\newcommand{\compl}[1]{{{#1}^{\text{c}}}}

\newcommand{\vv}{{\vvv}}
\DeclareMathOperator{\vvv}{v}

\DeclareMathOperator{\sign}{sign}


\DeclareMathOperator{\wei}{wt}

\DeclareMathOperator{\real}{PsV}
\DeclareMathOperator{\realo}{PsG}
\DeclareMathOperator{\dist}{dist}
\DeclareMathOperator{\disth}{dist_h}

\DeclareMathOperator{\pg}{PsG}
\DeclareMathOperator{\pog}{Ps\widetilde G}
\DeclareMathOperator{\pe}{PsE}
\DeclareMathOperator{\g}{G}
\DeclareMathOperator{\og}{\widetilde G}
\DeclareMathOperator{\e}{E}
\let\oe\relax
\DeclareMathOperator{\oe}{\widetilde E}
\DeclareMathOperator{\poe}{Ps\widetilde{E}}

\DeclareMathOperator{\orth}{O}
\DeclareMathOperator{\sorth}{SO}
\DeclareMathOperator{\stief}{V}
\DeclareMathOperator{\pstief}{PsV}
\DeclareMathOperator{\sphere}{S}

\DeclareMathOperator{\cell}{cell}
\DeclareMathOperator{\Cell}{Cell}
\DeclareMathOperator{\conf}{conf}
\DeclareMathOperator{\upto}{upto}

\newcommand{\Rpol}[1]{\mb{R}\mathrlap{{}^{#1}}{}_\text{pol}}
\DeclareMathOperator{\pol}{pol}
\DeclareMathOperator{\aim}{aim}

\newcommand{\Ball}{\text{\rm Ball}}

\DeclareMathOperator{\proj}{proj}

\DeclareMathOperator{\interp}{interp}
\DeclareMathOperator{\coord}{coord}
\DeclareMathOperator{\ho}{ho}

\DeclareMathOperator{\id}{id}

\DeclareMathOperator{\cov}{cov}
\DeclareMathOperator{\omg}{om_G}
\DeclareMathOperator{\ompsg}{om_{PsG}}
\DeclareMathOperator{\ot}{ot}
\DeclareMathOperator{\mcp}{MacP}

\DeclareMathOperator{\curves}{PsL}

\newcommand{\projplane}{\mb{RP}^2}

\begin{document}

\begin{center}
{\Large \noindent
Grassmannians and Pseudosphere Arrangements}

\bigskip

{\large \noindent
Michael~Gene~Dobbins
}


\begin{minipage}{0.85\textwidth}
\raggedright \footnotesize \singlespacing
\noindent
Department of Mathematical Sciences, Binghamton University (SUNY), Binghamton, \newline New York, USA. 
\texttt{mdobbins@binghamton.edu} 
\end{minipage}

\end{center}

\bigskip

\begin{abstract}
We extend vector configurations to more general objects that have nicer combinatorial and topological properties, called weighted pseudosphere arrangements.
These are defined as a weighted variant of arrangements of pseudospheres, as in the Topological Representation Theorem for oriented matroids.
We show that in rank 3, the real Stiefel manifold, Grassmannian, and oriented Grassmannian are homotopy equivalent to the analogously defined spaces of weighted pseudosphere arrangements. 
As a consequence, this gives a new classifying space for rank 3 vector bundles and for rank 3 oriented vector bundles where the difficulties of real algebraic geometry that arrise in the Grassmannian can be avoided.
In particular, we show for all rank 3 oriented matroids, that the subspace of weighted pseudosphere arrangements realizing that oriented matroid is contractible. 
This is a sharp contrast with vector configurations where the space of realizations can have the homotopy type of any primary real semialgebraic set.
\end{abstract}

Math Subject Classification (2010) \ \ 52C30 \ \ 52C40 \ \ 14M15 \ \ 57R22

\section{Introduction}

If we record all the possible ways a given vector configuration or affine point set can be partitioned by a hyperplane, the resulting combinatorial representation will be an oriented matroid \cite{bjorner1999oriented}.
From this data, we can determine such information as, what points appear on the boundary of the convex hull of a point set, the faces of the resulting polytope, the solutions to a linear programming optimization problem, whether polytopes intersect, and the visibility between points around or on the boundary of a polytope.
Such representations can be used for proving theorems and developing algorithms for finite point sets in Euclidean space \cite{oswin2013,cardinal2018subquadratic,DobHolHub2014Erdos,habert2012,knuth1992,novick1,novick2,shnurnikov2016tk}.

Oriented matroids are more general objects, however, since not all oriented matroids arise from a vector configuration.  Indeed, it is $\exists \mb{R}$-complete, which is at least NP-hard, to determine whether a given oriented matroid can be realized by a vector configuration.  
In contrast, the Topological Representation Theorem says that any oriented matroid can be realized by a pseudosphere arrangement \cite{folkman1978oriented}.

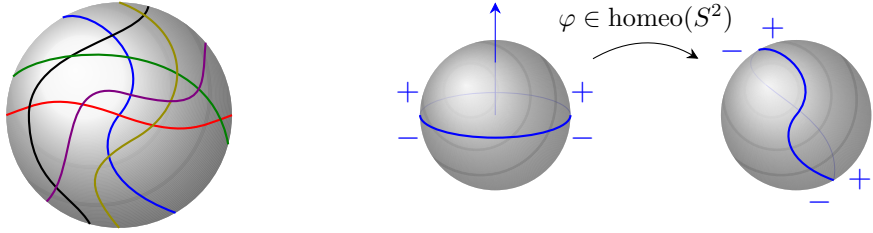
\begin{figure}
\begin{center}

\begin{tikzpicture}

\begin{scope}[rotate = 0]
\draw[blue!30]
(1,0) arc (0:180:1 and .3)
;\end{scope}

\draw[blue!50]
(0,0) -- (90:.7)
;

\shade[ball color=black!30, white, opacity=.3]
circle (1)
;

\draw[blue,->]
(90:.7) -- (90:1.5)
;

\begin{scope}[rotate = 0]
\draw[thick,blue]
(1,0) arc (0:-180:1 and .3)
;\end{scope}

\draw[blue]
(15:1.2) node {$+$} (-15:-1.2) node {$+$}
(-15:1.2) node {$-$} (15:-1.2) node {$-$}
;

\path (30:1.5) coordinate (a);


\begin{scope}[shift={(4cm,0cm)}]

\draw[blue!30]
(-60:1) .. controls +(80:1) and +(220:.7) .. (120:1)
;

\shade[ball color=black!30, white, opacity=.3]
circle (1)
;

\draw[thick,blue]
(-60:1) .. controls +(160:.7) and +(50:-.3) .. 
(0,0) .. controls +(50:.6) and +(20:.3) .. (120:1)
;

\draw[blue]
(-45:1.2) node {$+$} (105:1.2) node {$+$}
(-75:1.2) node {$-$} (-225:1.2) node {$-$}
;


\path (-30:-1.5) coordinate (b);

\end{scope}

\path[->]
(a) edge[bend left] node[auto] {\footnotesize $\varphi \in \text{homeo}(S^2)$} (b)
;

\begin{scope}[shift={(-5cm,0cm)},scale=1.5]

\shade[ball color=black!5, white, opacity=.5]
circle (1)
;

\draw[thick,blue]
(-60:1) .. controls +(160:.7) and +(50:-.3) .. 
(0,0) .. controls +(50:.6) and +(20:.3) .. (120:1)
;

\draw[thick,red]
(0:1) .. controls +(200:.3) and +(-20:.6) .. 
(0,0) .. controls +(-20:-.6) and +(20:.3) .. (0:-1)
;

\draw[thick]
(75:1) .. controls +(260:.3) and +(90:.6) .. 
(0:-.8) .. controls +(90:-.6) and +(120:.3) .. (75:-1)
;

\draw[thick,green!50!black]
(-20:-1) .. controls +(45:.4) and +(100:1) .. (-15:1)
;

\draw[thick,olive]
(270:1) .. controls +(130:.5) and +(210:.5) .. 
(-55:.25) .. controls +(210:-.6) and +(-20:.7) .. (90:1)
;

\draw[thick,violet]
(40:1) .. controls +(85:-.4) and +(-15:.8) .. 
(80:.2) .. controls +(-15:-.6) and +(45:.4) .. (50:-1)
;

\end{scope}

\end{tikzpicture}
\end{center}

\caption{Left: a pseudosphere arrangement.  Right: a pseudosphere.}
\label{figure-pseudosphere}

\end{figure}

A pseudosphere arrangement is a topological analog to data representing the direction of each vector in a vector configuration; see Firgure \ref{figure-pseudosphere}.
More precisely, given a vector configuration in $\mb{R}^k$, we can define an oriented hyperplane arrangement by associating each vector to its orthogonal complement, and then define an oriented sphere arrangement by intersecting this hyperplane arrangement with the unit sphere.  From this oriented sphere arrangement, we know the direction of each vector.  Also, each cell of the sphere arrangement corresponds to a way of partitioning the given vector configuration by a hyperplane, and so the sphere arrangement defines an oriented matroid.  
A pseudosphere arrangement is a collection of oriented topological embeddings of $(k-2)$-spheres in the $(k-1)$-sphere satisfying certain conditions, and the cells of a pseudosphere arrangement define an oriented matroid; see Sections \ref{section-polar} and \ref{section-oriented-matroid}.
In this paper we deal with weighted pseudosphere arrangements, where each pseudosphere gets a weight analogous to the norm of a vector.

The hardness of deciding whether a given oriented matroid can be realized by a vector configuration is a consequence of Mnëv's Universality Theorem, which says that for any primary semialgebraic set $X$, there is a rank 3 oriented matroid $\mc{M}$, such that the quotient by isometry of the subspace of vector configurations realizing $\mc{M}$ has the homotopy type of $X$  \cite{mnev1988universality}.  
In other words, if you were hoping for the set of vector configurations that can be partitioned by a hyperplane in a certain fixed set of ways to define a `nice' topological space, then you may be disappointed to know that such a space can be as horrible as any space that you can define algebraically. 
The Topological Representation Theorem may be understood as saying that the space of realizations by pseudosphere arrangements is always at least non-empty.
Here we take this farther by showing that in contrast to Mnëv's theorem, the space of realizations by pseudosphere arrangements is always contractible up to isometry in rank 3; see Theorem~\ref{theorem-pseudoreal-contract}\footnote{This is actually proved for weighted pseudosphere arrangements, but here the weights play no role.}.

We also consider the space of \emph{all} $n$-element rank $k$ weighted pseudosphere arrangements, which we call the pseudolinear Stiefel manifold $\pstief_{k,n}$.
One way to define the real Stiefel manifold $\stief_{k,n}$ is as the space of all $n \times k$ matrices with orthonormal columns, or equivalently up to diffeomorphism, as the space of all spanning configurations of $n$ vectors in $\mathbb{R}^k$ considered up to symmetric positive definite linear transformation. 
In rank 3, we show that each pseudolinear Stiefel manifold is homotopy equivalent to the corresponding real Stiefel manifold, $\pstief_{3,n} \simeq \stief_{3,n}$.
Moreover, there is a natural embedding of $\stief_{3,n}$ in $\pstief_{3,n}$, and we provide a strong deformation retraction from $\pstief_{3,n}$ to $\stief_{3,n}$ that is equivariant with respect to the orthogonal group $\orth_3$; see Theorem \ref{theorem-grassman-pseudograssman}.
This holds even in the case where $n = \infty$.
We also consider the quotient of the pseudolinear Stiefel manifold by the orthogonal and special orthogonal groups, which we call the pseudolinear Grassmannians and oriented pseudolinear Grassmannians.  We show that these are homotopy equivalent to the corresponding real Grassmannians and oriented Grassmannians. 
That is, we the have the homotopy equivalences, $\g_{k,n} = \stief_{k,n} / \orth_k \simeq \pg_{k,n} = \pstief_{k,n} / \orth_k$ and $\og_{k,n} = \stief_{k,n} / \sorth_k \simeq \pog_{k,n} = \pstief_{k,n} / \sorth_k$.
This means that weighted pseudosphere arrangements effectively have the same global topology as vector configurations in rank 3. 


These homotopy equivalences motivate the association of weights to pseudosphere arrangements. 
As a vector configuration moves along on a path though a Stiefel manifold, some of the vectors may pass through the origin.  For a weighted pseudosphere arrangement, this would correspond to a pseudosphere vanishing from the arrangement and then reappearing somewhere else.  Since this must happen continuously, we include weights that go to zero as a pseudosphere vanishes. 

One source of interest in Grassmannians is as classifying spaces for vector bundles.  
Recall that a real rank $k$ vector bundle is a space that locally has the structure of a product of $\mb{R}^k$ with a space $B$, called the base space; for a precise definition see \cite[page 24]{husemoller1994fibre}. 
The infinite Grassmannian $\g_{k,\infty}$ is a classifying space for rank $k$ vector bundles in the following sense. 
Every rank $k$ vector bundle with base space $B$ can be defined up to isomorphism by a map from $B$ to $\g_{k,\infty}$, and two maps define isomorphic vector bundles if and only if the maps are homotopy equivalent, provided that $B$ is paracompact.
We show that the infinite pseudolinear Grassmannian $\pg_{k,\infty}$ is a classifying space for rank 3 vector bundles, the infinite oriented pseudolinear Grassmannian $\pog_{k,\infty}$ is a classifying space for rank 3 oriented vector bundles, and the pseudolinear Stiefel manifold is universal for $\orth_3$ fiber bundles; see Corollary~\ref{corollary-classifying}.


Oriented Matroids have a natural partial order, and the poset of all rank $k$ oriented matroids on $n$ elements is the MacPhersonian, $\mcp_{k,n}$.
This work is part of a project by the author to resolve the MacPhersonian conjecture in rank 3, which says that the Grassmannian is homotopy equivalent to the polyhedral chain complex of $\mcp_{k,n}$.  %
One potential consequence of this conjecture is that we would have a representation of the homotopy type of the Grassmannian as a simplicial complex defined by purely combinatorial conditions.  
Another important consequence of the MacPhersonian conjecture is that we could represent a vector bundle over a simplicial complex as a poset map to the MacPhersonian in a way that gives a bijection between isomorphism classes of vector bundles and matroid bundles \cite{anderson2002mod,anderson1999matroid}.
This conjecture was motivated by the use of oriented matroids to compute Pontrjagin classes of triangulated manifolds \cite{gelfand1992combinatorial}.  An erroneous proof of the conjecture has appeared and was subsequently retracted \cite{biss2003homotopy,biss2009erratum}. 
The present paper does not deal with the MacPhersonian, but may be regarded intuitively as evidence for this conjecture.


In the face of Mnëv's Universality Theorem, the MacPhersonian conjecture may seem overly optimistic for two reasons. 
First, we have a natural map $\omg$ from the Grassmannian $\g_{3,n}$ to the MacPhersonian $\mcp_{3,n}$ sending each vector configuration to its associated oriented matroid, 
but this map is not surjective.  Second, the preimage $\omg^{-1}(\mc{M})$ of an oriented matroid $\mc{M}$ can have the homotopy type of any primary semialgebraic set, even in rank 3.
That is, the Grassmannian may be decomposed into realizations of oriented matroids, but the resulting components have highly complicated topology, as do their intersections.  
The conjecture would suggest that we can simply ignore the topology of the components and their intersections, and this will have no bearing on the topology of the Grassmannian as a whole.
The MacPhersonian conjecture may already seem intuitively more reasonable in light of Theorem \ref{theorem-pseudoreal-contract} and Theorem \ref{theorem-grassman-pseudograssman}, which factor $\omg$ into maps $\iota$ and $\ompsg$ through the pseudolinear Grassmannian $\pg_{3,n}$ such that the following diagram commutes,
\[
\begin{tikzpicture}
\matrix (m) [matrix of math nodes, row sep=4em,
column sep=5em, text height=1.5ex, text depth=0.25ex]
{ 
& \pg_{3,n} &  \\
\g_{3,n} & & \mcp_{3,n} \\
};
\path[->]
(m-1-2) edge node[auto] {$\ompsg$} (m-2-3)
(m-2-1) edge node[auto] {$\iota$} (m-1-2)
(m-2-1) edge node[auto] {$\omg$} (m-2-3)
;
\end{tikzpicture} 
\]
the map $\iota$ is a homotopy equivalence, the map $\ompsg$ is surjective, and for every rank 3 oriented matroid $\mc{M} \in \mcp_{3,n}$, the preimage $\ompsg^{-1}(\mc{M}) \subset \pg_{3,n}$ is contractible.

Other obstacles may raise doubt on generalizing these results to rank higher than 3.  For example, the proofs of the main results here use the fact that the quotient by isometries of the space of self homeomorphisms of the 2-sphere is contractible \cite{kneser1926}, which may fail in higher dimensions \cite[Section 10.12]{kirby1977foundational}.
Also, extension spaces of higher rank oriented matroids can be disconnected, even in the realizable case \cite{liu2017counterexample}. 
The fact that universality holds for realizations of oriented matroids even when restricted to rank 3, however, shows how crucial the rank 3 case is.

Several ideas used in this paper were developed based 
on discussions with Laura Anderson and 
on the author's prior work with Andreas Holmsen and Alfredo Hubard on generalizing Mnëv's Universality Theorem to arrangements of convex sets in the plane \cite{DobHolHub2015realization}, which in turn stemmed from work generalizing the Erdős-Szekeres thoerem from point sets to arrangements of convex sets \cite{DobHolHub2014Erdos,DobHolHub2016regular}.

\section{Main definitions}

\subsection{Basic notation}

Here we briefly give some basic notation which is mostly standard, but may not be universally consistent in the literature. 
We use round brackets 
$(x_1,\dots,x_n) \in X^n$ to denote a sequence and curly brackets $\{x_1,\dots,x_n\} \subset X$ to denote a set.
To append an entry $y$ to sequence $I = (x_1,\dots,x_n)$, we write $I \cdot y = (x_1,\dots,x_{n},y)$. 
We use the notation $[n]_{\mb{N}} \dfeq \{1,\dots,n\}$ and $[a,b]_{\mb{R}} \dfeq \{x \in \mb{R} : a \leq x \leq b\}$,
with round brackets for strict inequalities. 
The unit sphere in $\mb{R}^k$ is denoted by $\sphere^{k-1} \dfeq \left\{ x\in \mb{R}^k : \|x\|^2 = 1 \right\}$, and the closed unit ball by $\Ball^k$. 
We call the intersection of a pointed convex cone in $\mb{R}^k$ with the sphere $\sphere^{k-1}$, a convex subset of the sphere, or we say a spherically convex set for emphasis.
For a convex subset $C$ of the sphere or the projective plane and points $x_i \in C$, we use $[x_1,\dots,x_n]_C$ to denote the polygonal path in $C$ with vertices $x_1,\dots,x_n$ in that order and geodesic edges. 

We denote the set of self homeomorphisms between topological spaces by 
\[\hom(X,Y) = \{ \phi: X \to Y :\ \phi \text{ is a homeomorphism }  \},\] 
and $\hom(X) \dfeq \hom(X,X)$. 
Let $\hom^+(\sphere^d)$ be 
the component of $\hom(\sphere^d)$ that contains the identity map,
i.e.\ positively oriented reparameterizations of the sphere.
$\orth_k$ denotes the orthogonal group of rank $k$, which we may regard as a subset of $\hom(\sphere^{k-1})$ or as a set of ($k\times k$)-matrices when convenient, and $\sorth_k$ denotes the special orthogonal group. 


We use $x \mapsto \varphi$ for the function defined by substituting a value for $x$ into a formula $\varphi$.
We may implicitly curry functions, and we separate arguments by `;' to emphasis when this is done.  That is, given a function $f : X \times Y \to Z$, we denote by $f(c) : Y \to Z$ the function $y \mapsto f(c;y)$.
We denote partial functions by $f:X \not\to Y$.

\subsection{Vector configurations and Grassmannians}\label{subsec-vectconfig}

Let $X$ be a $k$ dimensional real vector space with an inner product $\langle,\rangle$, an orthonormal basis $e_1,\dots,e_k$, and unit sphere $\sphere(X) = \{u \in X: \|u\|=1\}$.  For the time being, we could regard $X$ to be $\mb{R}^k$, but we will mostly work in the vector space $X = \Rpol{k}$ defined later.

A vector configuration $(a_1,\dots,a_n)$ in $X$ is a \df{Parseval frame} when any of the following equivalent conditions are satisfied: 

\vbox{
\begin{itemize}
\item
For all $u \in \sphere(X)$, $\langle a_1,u \rangle^2 + \dots +\langle a_n,u \rangle^2 = 1$. 
\item
For all $x \in X$, $\langle a_1,x \rangle^2 + \dots +\langle a_n,x \rangle^2 = \|x\|^2$.
\item
The linear map $x \mapsto (\langle a_1,x\rangle,\dots,\langle a_n,x\rangle)$ from $X$ to $\mb{R}^n$ is an isometry.
\item
The $(n\times k)$-matrix $A$ with entries $A_{i,j} = \langle a_i, e_j\rangle$ has orthonormal columns. 
\end{itemize}
}

\begin{observation}
Each class of vector configurations spanning $X$ related by symmetric positive definite transformations contains a unique Parseval frame. 
\end{observation}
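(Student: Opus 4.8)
The plan is to prove existence and uniqueness separately, in both cases working through the fourth characterization of a Parseval frame: a configuration $(a_1,\dots,a_n)$ spanning $X$ is a Parseval frame exactly when the associated matrix $A$ (rows $a_i$) has orthonormal columns, i.e.\ $A^{\mathsf T}A = I_k$. Two spanning configurations $(a_1,\dots,a_n)$ and $(b_1,\dots,b_n)$ lie in the same class under symmetric positive definite transformations when there is a symmetric positive definite linear map $S \colon X \to X$ with $b_i = S a_i$ for all $i$; in matrix terms, $B = A S^{\mathsf T} = A S$ (using $S = S^{\mathsf T}$). So given a spanning configuration with matrix $A$, I must show there is a unique positive definite $S$ making $AS$ have orthonormal columns, that is, $(AS)^{\mathsf T}(AS) = S A^{\mathsf T} A S = I_k$.

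First I would establish existence. Since the $a_i$ span $X$, the matrix $A$ has rank $k$, so $G \dfeq A^{\mathsf T} A$ is symmetric positive definite. Set $S = G^{-1/2}$, the unique positive definite square root of $G^{-1}$ (which exists and is positive definite by the spectral theorem). Then $S A^{\mathsf T} A S = G^{-1/2} G \, G^{-1/2} = I_k$, so $AS$ has orthonormal columns, exhibiting a Parseval frame $(Sa_1,\dots,Sa_n)$ in the class of $(a_1,\dots,a_n)$. (Note this frame still spans $X$ since $S$ is invertible.)

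Next, uniqueness. Suppose $S_1, S_2$ are positive definite with $S_i G S_i = I_k$ for $i = 1,2$, where $G = A^{\mathsf T}A$ as above. From $S_i G S_i = I_k$ we get $G = S_i^{-1} S_i^{-1} = S_i^{-2}$, hence $S_1^{-2} = S_2^{-2} = G$. Since $S_1, S_2$ are positive definite, so are $S_1^{-2}$ and $S_2^{-2}$, and a positive definite matrix has a \emph{unique} positive definite square root; applying this to $G$ forces $S_1^{-1} = S_2^{-1}$, hence $S_1 = S_2$. Therefore the two Parseval frames $S_1 a_i$ and $S_2 a_i$ coincide, so the class contains exactly one Parseval frame. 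For completeness I would also remark that any configuration related to a Parseval frame by a symmetric positive definite transformation still spans $X$, so the statement is internally consistent: membership in such a class and being a spanning configuration are compatible.

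The only genuinely substantive ingredient is the uniqueness of the positive definite square root of a positive definite operator, which is standard linear algebra (diagonalize by the spectral theorem and match eigenvalues on each eigenspace); I do not expect any real obstacle here. The one point to state carefully is that "symmetric positive definite transformation" is taken with respect to the fixed inner product $\langle,\rangle$ on $X$, so that adjoints, symmetry, and the square root construction all make sense intrinsically; once that is fixed, the argument above is the whole proof.
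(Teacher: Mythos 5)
Your proposal is correct and takes essentially the route the paper intends: the Observation is stated without proof, and the implicit justification is exactly your computation — the spanning hypothesis makes $A^{\mathsf T}A$ positive definite, the canonical Parseval frame $A(A^{\mathsf T}A)^{-1/2}$ gives existence, and uniqueness of the positive definite square root gives uniqueness. This is also precisely the $(A^*A)^{-1/2}$ normalization the paper itself uses later in its orthonormalization deformation (Claim \ref{claim-orthonormalization}), so your argument matches the machinery already present in the paper.
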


The Stiefel manifold and Grassmannian can be defined in several ways that are homeomorphic to each other. Here, we define the \df{Stiefel manifold}${}$ $\stief_{k,n} = \stief_{k,n}(X)$ for $k \leq n \in \mb{N}$ to be the space of Parseval frames of $X$ indexed by $[n]_\mb{N}$. 
We identify a Parseval frame that ends with a tail of zeros with the shorter frame where the trailing zero are removed so that $\stief_{k,k} \subset \stief_{k,k+1} \subset \stief_{k,k+2} \subset \cdots$,
and we define the infinite Stiefel manifold as the union of this ascending chain of spaces,
\[ \stief_{k,\infty} = \bigcup_{n=k}^\infty \stief_{k,n}. \]


We define the \df{Grassmannian}${}$ as the quotient by the orthogonal group $\g_{k,n} = \stief_{k,n}/\orth_k$.
We define the \df{oriented Grassmannian}${}$ as the quotient by the special orthogonal group $\og_{k,n} = \stief_{k,n}/\sorth_{k}$. 
Similarly, we define the infinite Grassmannian $\g_{k,\infty}$ and oriented Grassmannian $\og_{k,\infty}$ as the quotient of $\stief_{k,\infty}$, which is also the union the ascending chain of Grassmannians.

We define a metric on $\stief_{k,n}$ for $A = (a_1,\dots,a_n)$ and $B = (b_1,\dots,b_n) $ by
\[ \dist(A,B) \dfeq \max \{ \|a_i - b_i\| :\ i \in [n]_\mb{N} \}, \]
and we define a metric on $\mc{A},\mc{B} \in \g_{k,n}$ by
\[ \dist(\mc{A},\mc{B}) \dfeq \inf\{ \dist(A,B) : A \in \mc{A}, B \in \mc{B} \}. \]

\begin{observation}
Since $\g_{k,n}$ and $\og_{k,n}$ are defined as the quotient of $\stief_{k,n}$ by a group of isometries, $\dist$ is a metric and the metric topology is the same as the quotient topology.
\end{observation}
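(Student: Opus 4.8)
The plan is to treat this as the standard fact that the orbit space of a metric space under a compact group acting by isometries is again a metric space, with the metric topology equal to the quotient topology. First I would record the two inputs that make the argument run: the group $\orth_k$ acts on $(\stief_{k,n},\dist)$ by $g\cdot(a_1,\dots,a_n) = (ga_1,\dots,ga_n)$, and this action is by isometries because $\|ga_i-gb_i\| = \|a_i-b_i\|$; moreover $\orth_k$ is compact. From the isometry property, together with the fact that the equivalence classes of $\g_{k,n}$ are exactly the $\orth_k$-orbits, one gets the convenient rewriting $\dist(\mc{A},\mc{B}) = \inf_{g\in\orth_k}\dist(A,gB)$ for any fixed representatives $A\in\mc{A}$, $B\in\mc{B}$, which is what I would use throughout.

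Next I would verify the metric axioms on $\g_{k,n}$. Symmetry and $\dist(\mc{A},\mc{A})=0$ are immediate. The triangle inequality follows by a gluing argument: given near-optimal representatives $A,hB$ for $\dist(\mc{A},\mc{B})$ and $B,h'C$ for $\dist(\mc{B},\mc{C})$, apply $h$ to the second pair so that it meets the first pair at the common point $hB$, then invoke the ordinary triangle inequality on $\stief_{k,n}$ and let the slack tend to zero. The one axiom that is not purely formal is positivity: if $\dist(\mc{A},\mc{B})=0$, choose $g_m\in\orth_k$ with $\dist(A,g_mB)\to 0$, pass to a convergent subsequence $g_m\to g$ using compactness of $\orth_k$, and conclude $\dist(A,gB)=0$ by continuity of the action and of $\dist$, so $A=gB$ and hence $\mc{A}=\mc{B}$. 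This appeal to compactness is the only genuinely nontrivial point in the whole statement.

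Finally I would compare the topologies. The quotient map $\pi$ satisfies $\dist(\pi A,\pi B)\le\dist(A,B)$, so it is $1$-Lipschitz, hence continuous for the metric topology on $\g_{k,n}$; since the quotient topology is the finest one making $\pi$ continuous, every metric-open set is quotient-open. Conversely, let $U$ be quotient-open, so $\pi^{-1}(U)$ is open and $\orth_k$-invariant, and fix $\mc{A}\in U$, a representative $A\in\mc{A}$, and $r>0$ with the $\dist$-ball about $A$ of radius $r$ contained in $\pi^{-1}(U)$. If $\dist(\mc{A},\mc{B})<r$, pick representatives $A'=gA$ and $B'$ with $\dist(A',B')<r$; then $\dist(A,g^{-1}B')<r$, so $g^{-1}B'\in\pi^{-1}(U)$ and therefore $\mc{B}=\pi(g^{-1}B')\in U$. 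Thus the $\dist$-ball of radius $r$ about $\mc{A}$ lies in $U$, so $U$ is metric-open and the two topologies agree. The identical argument with $\sorth_k$ (also compact) handles $\og_{k,n}$, and nothing changes when $n=\infty$ since each frame has only finitely many nonzero entries, so $\dist$ remains an honest maximum. The main obstacle, such as it is, is merely to remember to invoke compactness of the orthogonal group for the separation axiom; the rest is a formal consequence of the action being isometric.
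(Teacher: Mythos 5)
Your argument is correct and is exactly the standard fact the paper's observation silently invokes (the paper gives no proof beyond the phrase ``quotient by a group of isometries''), and you rightly isolate the one nontrivial point: compactness of $\orth_k$ (or $\sorth_k$), which gives closed orbits and hence positivity of the quotient metric, while the identification of the metric and quotient topologies needs only the isometric action. The only thing to drop or qualify is your closing aside about $n=\infty$: there the paper equips $\stief_{k,\infty}$ and its quotients with the direct limit topology, which by the paper's own warning does \emph{not} coincide with the metric topology, so the identification of topologies does not carry over to the infinite case (your metric-quotient argument is fine, but it compares the wrong topology upstairs).
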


On the infinite Stiefel manifold and infinite (oriented) Grassmannian, we use the direct limit topology, which is the finest topology such that the inclusion maps $\stief_{k,n} \hookrightarrow \stief_{k,\infty}$ are continuous.
Equivalently, this topology is defined by the universal property that a function $\phi : \stief_{k,\infty} \to Y$ to any topological space $Y$ is continuous if and only if the restriction of $\phi$ to $\stief_{k,n}$ is continuous for all $n \geq k$.

\begin{warning}
While $\dist$ defines a metric on $\stief_{k,\infty}$, the resulting metric topology is not the same as the direct limit topology on $\stief_{k,\infty}$, and likewise for the Grassmannians.  
\end{warning}



There are various equivalent ways the Grassmannian could be defined, depending on how a point in the Grassmannian is represented, and likewise for the Stiefel manifold.  
A point in $\stief_{k,n}$ is commonly represented as an orthonormal sequence of $k$ vectors in $\mb{R}^n$, i.e.\ the columns of the matrix $A$ in the last definition of a Parseval frame.
Likewise, a point in $\g_{k,n}$ may alternatively be represented as a $k$-dimensional vector subspace of $\mb{R}^n$, i.e.\ the column space of the matrix $A$.  Our choice of representation, the row vectors of $A$, is a matter of convenience.

\begin{justification}
For us it is more convenient to consider the rows of $A$ rather than the columns so that later we will be able to extend these vector configurations to a larger space defined by weighted pseudospheres.
We choose to represent the elements of our vector configuration as row vectors of $A$ so that the orthogonal group $\orth_k$ acts on the right.  Later $\orth_k$ will act by precomposition as a subspace of $\hom(\sphere^2)$, which makes this choice consistent with the convention that composition is written right to left.
\end{justification}

\subsection{Polar representation and pseudosphere arrangements}\label{section-polar}

We represent a vector $a \in \mb{R}^k$ by the pair $\pol(a) \dfeq (\|a\|,\aim (a))$ where 
\[ \aim (a) : \sphere^{k-1} \to \{+,0,-\}, \quad \aim (a; x) \dfeq \sign \langle a,x \rangle.\]
This is effectively a polar representation of $a$ where the direction of $a$ is represented by $\aim(a)$. 
We denote by $\Rpol{k} \dfeq \pol(\mb{R}^k)$ the vector space isomorphic to $\mb{R}^k$ by $\pol$, and inheriting the usual scalar multiplication, vector addition, inner product, norm, standard basis vectors, and the action of matrices as linear transformations.  Note that we do not have a nice formula for adding vectors $a,b \in \Rpol{k}$, other than $a+b = \pol(\pol^{-1}(a)+\pol^{-1}(b))$.

We define weighted pseudospheres as an extension of $\Rpol{k}$.  
A rank $k$ oriented \df{pseudosphere}${}$ is a map $\theta : \sphere^{k-1} \to \{+,0,-\}$ such that there is some orientation preserving self-homeomorphism $\phi \in \hom^+(\sphere^{k-1})$ such that $\theta \circ \phi = \aim (e_k)$; see Figure \ref{figure-pseudosphere} Left.  
We may simply call $\theta$ a pseudosphere, with the understanding that it is oriented and has some rank.  

%
A \emph{non-trivial} \df{weighted pseudosphere}${}$ is a pair $\alpha = (r,\theta)$ consisting of a positive real number $r>0$ and a pseudosphere $\theta$.  Additionally, there is the \emph{trivial} weighted pseudosphere $0 =\pol(0) = (0,\, x \mapsto 0)$, which is the origin of $\Rpol{k}$.
We let 
\[\|\alpha\| \dfeq r \quad \text{ and } \quad \aim (\alpha) \dfeq \theta.\]
We denote the kernel by $S_\alpha = S_\theta = \theta^{-1}(0)$.
We can scale weighted pseudospheres by $s \in \mb{R}$ by $s \alpha = (|s|r,\sign(s)\theta)$ for $s\neq 0$ and $0\alpha = 0$.

A \df{pseudosphere arrangement}${}$ is a sequence of pseudospheres $\Theta = (\theta_1,\dots,\theta_n)$ that satisfies the following.
For all $I \subseteq [n]_\mb{N}$, 
$S_I = \bigcap_{i \in I} S_{\theta_i} $ is either empty or a topological sphere, meaning there is a homeomorphism $\phi_I : \sphere^{k_I} \to S_I$ for some $k_I \leq k$, 
and if $S_I$ is non-empty then $(\theta_1 \circ \phi_I,\dots, \theta_1 \circ \phi_I)$ is again a pseudosphere arrangement; see Figure \ref{figure-pseudosphere} Right.
A \df{weighted pseudosphere arrangement}${}$ is a sequence of rank $k$ weighted pseudospheres ${A} = (\alpha_1,\dots,\alpha_n)$ such that $(\aim(\alpha_1),\dots,\aim(\alpha_n))$ is a pseudosphere arrangement. 
We may simply write $S_i = S_{\{i\}} = S_{\theta_i}$.
We say $A$ and $\Theta$ are \df{spanning}${}$ when $S_{1} \cap \dots \cap S_{n} = \emptyset$.  In other words, for every $x \in \sphere^{k-1}$, there is some $\alpha_i$ that does not vanish at $x$.

\subsection{Pseudolinear Grassmannians}

Throughout the rest of the paper, let the Stiefel manifold be $\stief_{k,n} = \stief_{k,n}(\Rpol{k})$, i.e.\ the space of Parseval frames in $\Rpol{k}$, and similarly let the Grassmannians and oriented Grassmannians consist of equivalence classes of Parseval frames in $\Rpol{k}$ as in Subsection \ref{subsec-vectconfig}.  We choose the vector space $\Rpol{k}$ for convenience in extending vector configurations to weighted pseudosphere arrangements.

We define the \df{pseudolinear Stiefel manifold}${}$ $\pstief_{k,n}$ to be the set of all rank $k$ spanning weighted pseudosphere arrangements indexed by $[n]_\mb{N}$.
We define a $\hom(\sphere^{k-1})$-action on $\pstief_{k,n}$ as follows.
For $A = (\alpha_1,\dots,\alpha_n) \in \pstief_{k,n}$ with $\alpha_i = (r_i,\theta_i)$, let
$\alpha_i * \psi \dfeq (r_i,\ \theta_i \circ \psi)$, and let 
$A*\psi \dfeq (\alpha_1*\psi,\dots,\alpha_n*\psi)$.  This includes an extension of the $\orth_k$-action on $\stief_{k,n}$ to $\pstief_{k,n}$ as $\orth_k \subseteq \hom(\sphere^{k-1})$ and for $a \in \mb{R}^k$, we have  
\[ \pol(a)*Q = (\|a\|,\, \aim (a) \circ Q) = \pol(Q^* a).\]  
We say that $A$ is \df{symmetric} when $-A = A*(-\id)$.

The \df{pseudolinear Grassmannian}${}$ $\pg_{k,n}$ is the quotient by the orthogonal group, $\pg_{k,n} = \pstief_{k,n} / \orth_k$, and the \df{oriented pseudolinear Grassmannian}${}$ $\pog_{k,n}$ is the quotient by the special orthogonal group, $\pog_{k,n} = \pstief_{k,n} / \sorth_k$.

We extend the metrics on $\stief_{k,n}$, $\g_{k,n}$, and $\og_{k,n}$ to metrics on $\pstief_{k,n}$, $\pg_{k,n}$, and $\pog_{k,n}$ as follows.
We first define distance between weighted pseudospheres by a weighted analog of Fréchet distance.
For weighted pseudospheres $\alpha_i = (r_i,\theta_i)$ let
 \[ \dist(\alpha_1,\alpha_0) \dfeq \inf{}_{\phi_1, \phi_0} \sup{}_{x} \left\|  r_1 \phi_1(x) - r_0 \phi_0(x)  \right\| \]
where $ \phi_i \in \hom^+(\sphere^{k-1})$ such that $\theta_i \circ \phi_i = \aim(e_k)$
and $x \in \sphere^{k-1}$ such that $\langle e_k, x\rangle = 0$. 
Note that $\phi_i$ always exists by the definition of a pseudosphere, and also $\phi_i$ outside the equator of $\sphere^{k-1}$ has no bearing on $\dist$, so we may regard $\phi_i$ as a positively oriented parameterization of the kernel $S_{i}$.
For weighted pseudosphere arrangements ${A} = (\alpha_1,\dots,\alpha_n)$ and ${B} = (\beta_1,\dots,\beta_n) \in \pstief$, let
\[ \dist({A},{B}) \dfeq \max_{i\in [n]_{\mb{N}}} \dist(\alpha_i,\beta_i). \]
For a pair $\mc{A},\mc{B}$ in $\pg_{k,n}$ or in $\pog_{k,n}$, let 
\[ \dist(\mc{A},\mc{B}) \dfeq \inf\{ \dist(A,B):\ A \in \mc{A},\ B \in \mc{B} \}. \]

\begin{observation}
Since $\pg_{k,n}$ and $\pog_{k,n}$ are defined as the quotient of $\pstief_{k,n}$ by a group of isometries, $\dist$ is a metric and the quotient topology is the same as the metric topology.
\end{observation}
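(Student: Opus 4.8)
The strategy is to reduce the statement to the standard fact that the metric quotient of a metric space by a \emph{compact} group of isometries both carries a genuine metric and has the quotient topology. Concretely I would check, in order: (i) that $\dist$ is already a metric on $\pstief_{k,n}$; (ii) that $\orth_k$, and likewise $\sorth_k$, acts on $\pstief_{k,n}$ by $\dist$-isometries through the $\hom(\sphere^{k-1})$-action $A \mapsto A*Q$, with continuous orbit maps; and (iii) the general compact-group-quotient lemma, invoked with $G=\orth_k$ to get the statement for $\pg_{k,n}$ and with $G=\sorth_k$ for $\pog_{k,n}$.

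For (i), finiteness is immediate since admissible parameterizations take values in $\sphere^{k-1}$, so the quantity being optimized is at most $r_1+r_0$; the triangle inequality for a single pair $\dist(\alpha,\beta)$ follows by concatenating $\eps$-near-optimal parameterizations, and it passes to arrangements because $\dist$ there is a coordinatewise maximum. For separation, $\dist((r_1,\theta_1),(r_0,\theta_0))=0$ forces $r_1=r_0$ via $|r_1-r_0|\le\|r_1\phi_1(x)-r_0\phi_0(x)\|$, after which vanishing of the Fréchet distance between the positively oriented parameterizations of the kernels, together with the fact that a pseudosphere is pinned down by its kernel and by which of the two complementary pseudo-hemispheres it sends to $+$, gives $\theta_1=\theta_0$; the trivial weighted pseudosphere is treated separately, with $\dist(0,\beta)=\|\beta\|$. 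For (ii), if $\theta\circ\phi=\aim(e_k)$ then $(\theta\circ Q)\circ(Q^{-1}\circ\phi)=\aim(e_k)$, so $\phi\mapsto Q^{-1}\circ\phi$ is a bijection between admissible parameterizations for $\aim(\alpha)$ and for $\aim(\alpha*Q)$; as $Q$, hence $Q^{-1}$, is a linear isometry, $\|Q^{-1}(r_1\phi_1(x)-r_0\phi_0(x))\|=\|r_1\phi_1(x)-r_0\phi_0(x)\|$, and taking infima gives $\dist(\alpha*Q,\beta*Q)=\dist(\alpha,\beta)$, hence $\dist(A*Q,B*Q)=\dist(A,B)$. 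The same computation with a single fixed parameterization yields $\dist(A*Q_1,A*Q_2)\le(\max_i\|\alpha_i\|)\,\|Q_1-Q_2\|_{\mathrm{op}}$, so $Q\mapsto A*Q$ is continuous and each orbit, a continuous image of $\orth_k$ (resp.\ $\sorth_k$), is compact.

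For (iii), let a group $G$ act by isometries on a metric space $(M,d)$ with $G$ compact and each orbit map $g\mapsto gx$ continuous, and set $d_G([x],[y])=\inf_{g\in G}d(x,gy)$; by isometry-invariance this equals $\inf_{g,g'\in G}d(gx,g'y)$, which is exactly the definition of $\dist$ on $\pg_{k,n}$ and $\pog_{k,n}$. Symmetry and the triangle inequality are routine, and by compactness the infimum is attained, say at $g_0$; if it is $0$ then $x=g_0y$ by (i), so $[x]=[y]$ and $d_G$ is a metric. The quotient map $\pi$ is open because $\pi^{-1}(\pi(U))=\bigcup_{g\in G}gU$, and $\pi(B_M(x,\eps))=B_{M/G}([x],\eps)$ since membership in either side is the assertion that $d(x,gy)<\eps$ for some $g$; hence every metric ball in $M/G$ is open in the quotient topology and the metric topology is contained in it. Conversely, if $V$ is quotient-open and $[x]\in V$, then $\pi^{-1}(V)$ is open and contains the compact orbit $Gx$, hence contains an $\eps$-neighborhood of it, so $B_M(x,\eps)\subseteq\pi^{-1}(V)$ and therefore $B_{M/G}([x],\eps)=\pi(B_M(x,\eps))\subseteq V$; this gives the reverse inclusion, and the two topologies coincide.

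I expect everything past (i) to be formal, so the one point genuinely requiring care is the separation clause in (i): the rigidity that a weighted pseudosphere is determined by its weight, its kernel, and its co-orientation, and that Fréchet distance zero between positively oriented parameterizations forces the kernels and their co-orientations to agree. The other essential, if routine, ingredient is the compactness of $\orth_k$ and $\sorth_k$, which is what makes the infimum attained and lets open sets be thickened along orbits — precisely where the argument would fail for the infinite Stiefel manifold, consistent with the Warning above.
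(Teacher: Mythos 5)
The paper offers no proof here --- it treats the statement as the standard fact that a quotient of a metric space by a compact group of isometries inherits a genuine metric inducing the quotient topology --- and your proposal is exactly that argument, fleshed out with the two prerequisites ($\dist$ is a metric on $\pstief_{k,n}$; $\orth_k$ and $\sorth_k$ act by isometries with continuous orbit maps), so in substance you follow the same route. Two small patches are needed in your write-up: in (ii), $\phi\mapsto Q^{-1}\circ\phi$ is \emph{not} a bijection of admissible parameterizations when $\det Q=-1$, since $Q^{-1}\circ\phi\notin\hom^+(\sphere^{k-1})$; you must post-compose with an orientation-reversing homeomorphism that fixes $\aim(e_k)$ (e.g.\ a reflection through a hyperplane containing $e_k$), after which the isometry identity and the continuity bound go through unchanged. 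And in (i), the separation clause you correctly flag as the crux still needs its one real ingredient: that oppositely co-oriented pseudospheres with the same kernel cannot have $\dist=0$, which follows because the transition map between a positively and a negatively oriented parameterization of the kernel is (after identifying the kernel with $\sphere^1$, in the rank~3 picture) a degree $-1$ self-map, which must send some point to its antipode, giving a uniform positive lower bound on the sup-distance for a fixed kernel.
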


\begin{observation}
For $a,b \in \Rpol{k}$, we have $\dist(a,b) = \|a-b\|$. Hence, $\dist$ is an extension of the metrics on $\stief_{k,n}$, $\g_{k,n}$, and $\og_{k,n}$, and the subspace topology is the same as the metric topology.
\end{observation}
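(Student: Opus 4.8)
The content of the statement is the first equation; the rest is routine, so I would focus on proving $\dist(\pol(a),\pol(b))=\|a-b\|$ for $a,b\in\mb{R}^k$, in two inequalities, and I may assume $a,b\neq 0$ (if $b=0$ then $\pol(b)$ is the trivial weighted pseudosphere, the $\|b\|$-term vanishes, and $\dist(\pol(a),0)=\|a\|$ immediately). Write $\hat a=a/\|a\|$, $\hat b=b/\|b\|$, let $\gamma\in[0,\pi]$ be the angle between them, and let $S_a=\aim(a)^{-1}(0)=a^{\perp}\cap\sphere^{k-1}$, similarly $S_b$. For any admissible pair $\phi_1,\phi_0$ — meaning $\phi_i\in\hom^+(\sphere^{k-1})$ with $\aim(a)\circ\phi_1=\aim(b)\circ\phi_0=\aim(e_k)$ — the map $\phi_0$ carries the equator $E=\{x:\langle e_k,x\rangle=0\}$ homeomorphically onto $S_b$ and $\phi_1$ carries it onto $S_a$, so with $h=\phi_1|_E\circ(\phi_0|_E)^{-1}$ and $z=\phi_0(x)$,
\[
\sup_{x\in E}\bigl\|\,\|a\|\phi_1(x)-\|b\|\phi_0(x)\,\bigr\|=\sup_{z\in S_b}\bigl\|\,\|a\|\,h(z)-\|b\|\,z\,\bigr\|;
\]
moreover $h:S_b\to S_a$ is a homeomorphism that is orientation preserving for the orientations $S_a,S_b$ inherit as boundaries of the hemispheres $\{\langle a,\cdot\rangle\geq 0\}$, $\{\langle b,\cdot\rangle\geq 0\}$ (it is assembled from restrictions of orientation-preserving self-homeomorphisms of $\sphere^{k-1}$ carrying one hemisphere to the other). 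Since $\|\,\|a\|h(z)-\|b\|z\,\|^2=\|a\|^2+\|b\|^2-2\|a\|\|b\|\langle h(z),z\rangle$ and $\|a-b\|^2=\|a\|^2+\|b\|^2-2\|a\|\|b\|\cos\gamma$, both bounds reduce to the numbers $\langle h(z),z\rangle$ for $z\in S_b$.

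\textbf{Upper bound.}
For $\leq$, I would exhibit one admissible pair realizing $\|a-b\|$. Take $R\in\sorth_k$ with $Re_k=\hat a$, write $R^{-1}\hat b=\cos\gamma\,e_k+\sin\gamma\,v$ with $v$ a unit vector of $E$ (for $0<\gamma<\pi$; put $\phi_0=\phi_1=R$ if $\gamma=0$, and choose any unit $v\in E$ if $\gamma=\pi$), and let $Q\in\sorth_k$ rotate $\spn(e_k,v)$ by $\gamma$ with $Qe_k=R^{-1}\hat b$, fixing $\spn(e_k,v)^{\perp}$. Then $\phi_1=R$ and $\phi_0=RQ$ lie in $\sorth_k\subseteq\hom^+(\sphere^{k-1})$ and satisfy the admissibility equations, and $\sup_{x\in E}\|\,\|a\|Rx-\|b\|RQx\,\|=\sup_{x\in E}\|\,\|a\|x-\|b\|Qx\,\|$. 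Splitting $x\in E$ into its $v$-component and its component orthogonal to $\spn(e_k,v)$ yields $\langle x,Qx\rangle=1-\langle x,v\rangle^2(1-\cos\gamma)$, with minimum $\cos\gamma$ over $E$ at $x=\pm v$; hence the supremum equals $\sqrt{\|a\|^2+\|b\|^2-2\|a\|\|b\|\cos\gamma}=\|a-b\|$, giving $\dist(\pol(a),\pol(b))\leq\|a-b\|$.

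\textbf{Lower bound.}
For $\geq$, I must produce, for every orientation-preserving homeomorphism $h:S_b\to S_a$, a point $z_0\in S_b$ with $\langle h(z_0),z_0\rangle\leq\cos\gamma$. Let $P$ be orthogonal projection onto $a^{\perp}$; since $h(z)\in a^{\perp}$, $\langle h(z),z\rangle=\langle h(z),Pz\rangle\leq\|Pz\|=\sqrt{1-\langle\hat a,z\rangle^2}$, while $\max_{z\in S_b}|\langle\hat a,z\rangle|=\sqrt{1-\langle\hat a,\hat b\rangle^2}=\sin\gamma$, so $\|Pz\|\geq|\cos\gamma|$ on $S_b$. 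If $\langle a,b\rangle\geq 0$ (that is $\gamma\leq\pi/2$), pick $z_0\in S_b$ with $|\langle\hat a,z_0\rangle|=\sin\gamma$, so $\langle h(z_0),z_0\rangle\leq\sqrt{1-\sin^2\gamma}=\cos\gamma$. If $\langle a,b\rangle<0$ (that is $\gamma>\pi/2$), suppose instead $\langle h(z),z\rangle>\cos\gamma$ for every $z$, and consider $\mu:S_b\to S_a$, $\mu(z)=Pz/\|Pz\|$, which is a homeomorphism since $\pm\hat a\notin S_b$, with $\deg\mu=-1$ — let $\hat b$ travel continuously inside $\{\langle\hat a,\cdot\rangle<0\}$ to $-\hat a$, where $S_b=S_a$ as sets but with opposite boundary orientations and $\mu$ is the identity of sets — whereas $\deg h=+1$. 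If $h(z)=-\mu(z)$ for some $z$ then $\langle h(z),z\rangle=-\|Pz\|\leq-|\cos\gamma|=\cos\gamma$, a contradiction; so $h(z)\neq-\mu(z)$ for all $z$, and $t\mapsto((1-t)h+t\mu)/\|(1-t)h+t\mu\|$ is a homotopy $h\simeq\mu$ inside $S_a$, contradicting $\deg h\neq\deg\mu$ (for $k=2$ the same homotopy forces $h=\mu$, still impossible). In either case $\|\,\|a\|h(z_0)-\|b\|z_0\,\|^2\geq\|a-b\|^2$, hence $\dist(\pol(a),\pol(b))\geq\|a-b\|$.

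\textbf{Consequences, and the main obstacle.}
The ``Hence'' is then formal: restricted to tuples of genuine vectors $\dist$ becomes $\max_i\|a_i-b_i\|$, which is the metric already defined on $\stief_{k,n}$, and after quotienting by $\orth_k$ or $\sorth_k$ (whose orbits of genuine frames stay genuine, as $\pol(a)*Q=\pol(Q^{*}a)$) the same holds on $\g_{k,n},\og_{k,n}$; and the subspace topology that $\stief_{k,n},\g_{k,n},\og_{k,n}$ acquire inside $\pstief_{k,n},\pg_{k,n},\pog_{k,n}$ is just the metric topology of the restricted metric. I expect the one real difficulty to be the obtuse case $\gamma>\pi/2$ of the lower bound: there the soft estimate $\langle h(z),z\rangle\leq\|Pz\|$ is not enough and one must genuinely use that $h$ is orientation preserving, which rests on the degree computation $\deg\mu=-1$ for the radial projection together with the (orientation bookkeeping) fact that admissible parameterizations restrict to orientation-preserving maps on the equator.
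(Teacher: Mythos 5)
Your proof is correct, and there is nothing in the paper to compare it against: the statement appears only as an \emph{Observation}, stated without any argument, so what you have written is a genuine verification rather than an alternative to the author's route. Your case split is also the right one, and it isolates the real content: for $\gamma\le\pi/2$ the crude estimate $\langle h(z),z\rangle\le\|Pz\|$ already suffices and no orientation hypothesis is needed, whereas for obtuse $\gamma$ the equality $\dist(a,b)=\|a-b\|$ genuinely depends on the requirement $\phi_i\in\hom^+(\sphere^{k-1})$ in the definition of $\dist$ --- if orientation-reversing parameterizations were allowed, your normalized projection $\mu$ itself would give $\sup_z\bigl\|\,\|a\|\mu(z)-\|b\|z\,\bigr\|<\|a-b\|$ --- so a degree argument of the kind you give is unavoidable, and your explicit rotation pair settles the upper bound cleanly. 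Two steps are asserted rather than proved and would each need a line in a written-out version: (i) that an element of $\hom^+(\sphere^{k-1})$ carrying the closed hemisphere $\{\langle e_k,\cdot\rangle\ge 0\}$ onto $\{\langle a,\cdot\rangle\ge 0\}\cap\sphere^{k-1}$ restricts on the boundary to an orientation-preserving homeomorphism for the induced (cap-boundary) orientations --- standard, via local homology or a collar argument, but it is exactly where the orientation bookkeeping lives; and (ii) the computation $\deg\mu=-1$, where the informal ``let $\hat b$ travel to $-\hat a$'' should be implemented by a continuous family of rotations identifying the moving kernels with the fixed $S_b$, so that homotopy invariance of degree applies (a direct check in an orthonormal frame adapted to $\spn(\hat a,\hat b)$ confirms the sign). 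With those two points made explicit, the argument is complete, including your handling of $k=2$ via discreteness of $S^0$ and the routine ``Hence'' part; the only residual wrinkle, inherited from the paper's definition rather than from your proof, is that for the trivial weighted pseudosphere no admissible $\phi_0$ literally exists, and your reading (the weight-zero term simply drops out, giving $\dist(\pol(a),0)=\|a\|$) is the intended one.
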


Again we identify weighted pseudosphere arrangements that only differ by a tail of all zeros, so that $\pstief_{k,k} \subset \pstief_{k,k+1} \subset \pstief_{k,k+2} \subset \cdots$, and we define spaces $\pstief_{k,\infty}$, $\pg_{k,\infty}$, $\pog_{k,\infty}$ as the union of the corresponding ascending chain of spaces with the direct limit topology.

We define the canonical bundles over the rank 3 pseudolinear Grassmannians to be the spaces
\[ \pe_{3,n} \dfeq (\pstief_{3,n} \times \mb{R}^3)/{\orth_3}  = \left\{ \{(A*Q, Q^* x) : Q \in \orth_3 \} : A \in \pstief_{3,n}, x \in \mb{R}^3 \right\} \]
with the projection map $ \xi_{3,n}: \pe_{3,n} \to \pg_{3,n} $ induced by $(A,x) \mapsto A$.
Similarly, we define the canonical bundles over the oriented pseudolinear Grassmannians to be 
$\poe_{3,n} \dfeq (\pstief_{3,n} \times \mb{R}^3)/{\sorth_3}$ with projetion $ \widetilde\xi_{3,n}: \poe_{3,n} \to \pog_{3,n} $.
We will prove that these are a fiber bundles, indeed vector bundles, in rank 3; see Lemma~\ref{lemma-principal-bundle}.

\subsection{Sign hyperfield-vector sets and chirotopes}\label{section-oriented-matroid}

For a weighted pseudosphere arrangement $A = (\alpha_1,\dots,\alpha_n) = ((r_1,\theta_1),\dots,(r_n,\theta_n))$, let
\begin{align*}
 \wei(A) &\dfeq (r_1,\dots,r_n) \in \mb{R}^n, \\
 \aim (A) &\dfeq \theta_1\times \dots\times \theta_n : \mb{R}^k \to \{+,0,-\}^n,  \\
 \cov (A) &\dfeq \left\{ \aim (A; v) :\ v \in \mb{R}^{k} \right\} \subseteq \{+,0,-\}^n.
\end{align*}
We call $\cov (A)$ the \df{covector set} of $A$. 
We call a set of sequences of signs $\mc{X}$ a \df{sign hyperfield-vector set} when $\mc{X}$ satisfies the vector axioms for oriented matroids,
or equivalently when $\mc{X}$ the set of covectors of an oriented matroid \cite{}. 

\begin{remark}
The Topological Representation Theorem essentially says that for every weighted pseudosphere arrangement $A$, $\cov(A)$ is a sign hyperfield-vector set, and every sign hyperfield-vector set $\mc{X}$ is realized by a weighted pseudosphere arrangement $A$ such that $\mc{X} = \cov(A)$ \cite{folkman1978oriented}.  
\end{remark}

A \df{basis} of $\mc{X}$ is a minimal subset $I \subset [n]_\mb{N}$ such that for every $\sigma \in \mc{X} \setminus 0$ there is $i \in I$ such that $\sigma(i) \neq 0$, and a subset of a basis is said to be \df{independent}. 
Equivalently, a set $\{i_1,\dots,i_m\} \subset [n]_\mb{N}$ is an independent set of $\mc{X}$ when 
\[ \{ (\sigma(i_1),\dots,\sigma(i_m)) : \sigma \in \mc{X} \} = \{+,0,-\}^m, \]
and a maximal independent set is a basis of $\mc{X}$.  
All bases have the same size, and this is called the \df{rank} of $\mc{X}$.

\begin{warning}
The independent sets, bases, and rank of a sign hyperfield-vector set $\mc{X}$ are the same as those of the oriented matroid with \emph{covector set} $\mc{X}$.
There is, however, a dual oriented matroid $\mc{M}^*$ that has $\mc{X}$ as its \emph{vector set}, and has an associated rank, bases, and independent sets that are different from what we use in this paper.
\end{warning}

\begin{justification}
A hyperfield is a generalization of a field, and their study has lead to a vast generalization of matroids, oriented matroids, and vector spaces as an analog of vector spaces where the scalar field is replaced with a hyperfield.  One hyperfield is the sign hyperfield, and the analog of a vector space in this case is a sign hyperfield-vector set \cite{anderson2019vectors,baker2019matroids}.
\end{justification}

The \df{\hbox{$\stief$-realization} space} of a rank $k$ sign hyperfield-vector set $\mc{X}$ on $[n]_\mb{N}$ is 
\[ \stief(\mc{X}) \dfeq 
\left\{ A \in \stief_{k,n} \ :\  \cov (A) = \mc{X} \right\},
\]
and the \df{$\pstief$-realization space} is 
\[ \pstief(\mc{X}) \dfeq 
\left\{ A \in \pstief_{k,n} \ :\  \cov (A) = \mc{X} \right\}.
\]
Similarly, we define the \df{$\g$-realization} and \df{$\pg$-realization spaces} by
\[ \g(\mc{X}) = \stief(\mc{X})/\orth_k \subset \g_{k,n}, \quad \pg(\mc{X}) = \pstief(\mc{X})/\orth_k \subset \pg_{k,n}. \]
We also regard these as realization spaces of the oriented matroid with covector set $\mc{X}$.

We order the set of sign sequences $\{+,0,-\}^n$ by the product of the relation $(\leq_\vv)$ where ${0 <_\vv ({+})}$, and ${0 <_\vv ({-})}$, and the pair $({+}),({-})$ are incomparable. 
This makes each sign hyperfield-vector set $\mc{X}$ a graded poset and the \df{dimension} of $\sigma \in \mc{X}$ is its height minus 1.  In particular, $0$ always has dimension $-1$.

For $\sigma \in \cov (A)$, let 
\[ \cell(A,\sigma) \dfeq \left\{u \in \sphere^{k-1}:\ \aim (A; u) = \sigma \right\}, \]
and $\Cell(A,\sigma)$ be the closure of $\cell(A,\sigma)$.

\begin{remark}\label{remark-cell}
Edmonds and Mandel have shown that the subdivision of $\sphere^{k-1}$ by a weighted pseudosphere arrangement is a regular cell decomposition.
In particular, the map $\sigma \mapsto \Cell(A,\sigma)$ is a poset isomorphism from $\cov(A)$ to the closed cells of $A$ ordered by inclusion, with $\Cell(A,0) = \emptyset$ \cite{edmonds1978topology}.
\end{remark}



For $I \subseteq [n]_\mb{N}$, let $\proj_I(A) \dfeq (\beta_1,\dots,\beta_n)$ where 
\[\beta_i = \left\{ \begin{array}{ll} \alpha_i & i\in I \\ 0 & i \not\in I \end{array} \right. . \]
We say $I$ is a \df{basis} of $A$ when $|I|$ is the rank of $A$ and $\proj_I(A)$ is spanning, or equivalently when $I$ is a basis of $\cov(A)$.


We now come to realizations in the oriented (pseudolinear) Grassmannian.
We associate to each weighted pseudosphere arrangement a sign valued function on $k$-tuples of indices
\[ \ot (A) : [n]_\mb{N}^k \to \{+,0,-\}, \]
called the \df{order type} of $A$.  Unless $I = \{i_1,\dots,i_k\}$ is a basis for $A$, $\ot(A;i_1,\dots,i_k) = 0$.
If $I$ is a basis, then $C = \cell(\proj_I(A),\sign(e_{i_1}+\dots+e_{i_k}))$ is parameterized by a map $s : \Delta \to C$ from the standard $(k{-}1)$-simplex $\Delta$ such that the $j$-th facet of $s(\Delta)$ is contained in the $i_j$-th kernel $S_{{i_j}}$.
We define $\ot(A;i_1,\dots,i_k) \in \{+,-\}$ to be the orientation of $s$. 
Given a rank $k$ chirotope $\chi$ on $[n]_\mb{N}$, the $\stief$-realization and $\og$-realization spaces of $\chi$ are 
\[ \stief(\chi) \dfeq \left\{ A \in \stief_{k,n} \ :\  \ot (A) = \chi \right\}, \quad \og(\chi) \dfeq \stief(\chi)/\sorth_k \subset \og_{k,n}, \]
and realization spaces $\pstief(\chi)$ and $\pog(\chi)$ are defined similarly from $\pstief_{k,n}$.

\begin{remark}
Again by the Topological Representation Theorem, $\ot(A)$ is always a chirotope, and for every chirotope $\chi$, $\pstief(\chi)$ is non-empty. 
\end{remark}

\begin{justification}
Every sign hyperfield-vector set $\mc{X}$ corresponds to a pair of chirotopes $\{\chi,-\chi\}$, and we have $\pstief(\mc{X})/\sorth_k = \pog(\chi) \cup \pog(-\chi)$.
On the other hand, $\pstief(\chi)$ is not closed under the action of $\orth_k$, since $\ot(A*Q) = -\ot(A)$ for $Q \in \orth_k$ with $\det(Q) = -1$.
Hence, we use sign hyperfield-vector sets to represent elements of the Grassmannian, while we use chirotopes to represent elements of the oriented Grassmannian.
We deliberately prefer talking about ``sign hyperfield-vector sets'' to ``oriented matroids'' since these correspond to the case where we do \emph{not} have an orientation.  
\end{justification}

\section{Rank 3}

The main goal of this section is to prove the following two theorems. 

\begin{theorem}\label{theorem-pseudoreal-contract}
The $\pg$-realization space of every rank 3 oriented matroid (i.e.\ sign hyperfield-vector set) is contractible.
Also, the $\pog$-realization space of every rank 3 chirotope is contractible.
\end{theorem}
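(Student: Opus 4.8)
The plan is to prove the oriented statement first and deduce the unoriented one, or rather to run a single deformation-retraction argument that respects orientation. Fix a rank $3$ chirotope $\chi$ on $[n]_\mb{N}$ and work inside $\pstief(\chi)$, the space of weighted pseudosphere arrangements $A = (\alpha_1,\dots,\alpha_n)$ with $\ot(A) = \chi$; the target space $\pog(\chi)$ is its quotient by $\sorth_3$. The key observation is that a weighted pseudosphere arrangement of rank $3$ is an arrangement of oriented pseudocircles on $\sphere^2$ together with positive weights, and by Remark~\ref{remark-cell} the combinatorial data (the covector set, hence the chirotope) is exactly the face poset of the induced regular cell decomposition of $\sphere^2$. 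So $\pstief(\chi)$ fibers, in a suitable sense, over a choice of ``shape'' of the arrangement, and the weights $\wei(A) \in (0,\infty)^n$ form a convex (hence contractible) factor that can be retracted independently; the real content is contracting the space of pseudocircle arrangements of a fixed chirotope.

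First I would reduce the weights: the map $A \mapsto (\aim(\alpha_1),\dots,\aim(\alpha_n))$ forgetting the weights, together with linear interpolation of $\wei(A)$ toward the all-ones vector, gives an $\sorth_3$-equivariant deformation retraction of $\pstief(\chi)$ onto the subspace of weight-$1$ arrangements; here the footnote's remark that ``the weights play no role'' is made precise (one must check the interpolated weights stay positive, which they do, and that spanning is preserved, which it is since the kernels don't move). Next, the core step: I would realize the space of oriented pseudocircle arrangements with chirotope $\chi$ as a quotient of $\hom^+(\sphere^2)$ or of a product of copies of it glued along the incidence conditions encoded by $\chi$. Concretely, pick a fixed base arrangement $\Theta_0$ with $\ot(\Theta_0) = \chi$ (one exists by the Topological Representation Theorem), and push it around by self-homeomorphisms of $\sphere^2$; the fibers of the resulting surjection onto $\pstief(\chi)$ (with fixed weights) are cosets of the subgroup of homeomorphisms preserving $\Theta_0$ as a decorated arrangement. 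Kneser's theorem \cite{kneser1926}, that $\hom^+(\sphere^2)$ is contractible modulo isometries — the very fact flagged in the introduction — is exactly what makes the orbit space $\hom^+(\sphere^2)/\sorth_3$ contractible, and I would leverage an Alexander-type isotopy: given any arrangement $\Theta$ with chirotope $\chi$, there is a canonical (continuously chosen) isotopy $\phi_t \in \hom^+(\sphere^2)$, $\phi_0 = \id$, carrying $\Theta$ to a canonical representative, and this canonical choice descends to a contraction of $\pog(\chi)$.

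The main obstacle — and where I expect the real work to live — is the \emph{continuity and equivariance} of this canonical straightening: a pseudocircle arrangement is only given up to homeomorphism, so one cannot naively ``average'' or apply a single Alexander trick to the whole arrangement at once; one must straighten the pseudocircles one at a time, or cell by cell, in an order and manner that is canonical and depends continuously on $\Theta$, while never violating the incidence pattern recorded by $\chi$ at the intermediate stages. In rank $3$ this is tractable because any two pseudocircles meet in at most two points and the arrangement of the first $i$ pseudospheres, restricted to a cell of the arrangement of the remaining ones, is again a rank-$\le 3$ arrangement (the recursive clause in the definition of a pseudosphere arrangement); so I would induct on $n$, contracting $\pog(\chi)$ by first contracting the ``last'' pseudocircle into a round great circle inside the ambient isotopy class while dragging the others along, then invoking the inductive hypothesis on the induced arrangement of the remaining $n-1$ pseudospheres within each of the two hemispheres. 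Extra care is needed so the isometry quotient is taken compatibly at each inductive step (this is where one genuinely needs $\sorth_3$, not merely $\hom^+(\sphere^2)$, to act), and so that the whole construction is equivariant enough that passing from $\pstief$ to the quotient $\pog$ is legitimate; the unoriented statement for $\pg(\mc X)$ then follows since $\pstief(\mc X)/\orth_3$ is covered by the $\pog(\chi)$'s for the two chirotopes $\pm\chi$ of $\mc X$, glued along a common face, with each piece contractible.
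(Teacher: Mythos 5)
Your high-level ingredients are the right ones (Kneser's theorem, working modulo isometries, and a canonical straightening of one arrangement onto another), but the step you yourself flag as ``where the real work lives'' is exactly the step you do not supply, and the substitute you sketch does not work. The paper's proof hinges on constructing a continuous \emph{interpolation} map $\interp : \real(\mc{X}) \times \real(\mc{X}) \to \hom(\sphere^2)$ with $A * \interp(A,B) = B$, built cell by cell: 0-cells go to 0-cells, 1-cells are matched by constant-speed parameterizations, and each 2-cell is parameterized by the Carathéodory/Radó conformal map normalized by three marked boundary vertices, with Radó's theorem supplying the continuous dependence on the arrangement (together with Lemma~\ref{lemma-vertex} and Lemma~\ref{lemma-path} for continuity of vertices and edges). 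Once $\interp$ exists, the contraction is the one-line formula $\rho(\mc{B},t) = A * \ho(\interp(A,B),t) * \orth_3$, where $\ho$ is the $\orth_3$-equivariant Kneser retraction of $\hom(\sphere^2)$ onto $\orth_3$; equivariance of $\ho$ under precomposition plus $\interp(A,B*Q) = \interp(A,B)\circ Q$ make this independent of the representative $B$, and restricting to $\hom^+(\sphere^2)$ gives the $\pog(\chi)$ case. Your proposal instead asserts the existence of a ``canonical, continuously chosen isotopy to a canonical representative'' and tries to produce it by induction on $n$: straighten the last pseudocircle to a great circle and recurse ``within each of the two hemispheres.'' This does not go through: after fixing one pseudocircle the remaining $n-1$ pseudocircles are not confined to hemispheres (they cross it), and the space of ways to complete a fixed great circle to an arrangement with chirotope $\chi$ is a \emph{relative} realization space, not the realization space of a rank-3 chirotope on $n-1$ elements, so the inductive hypothesis does not apply to it. Moreover, even granting contractibility of such slices, contractibility of the total space needs a fibration/section argument you never set up; note also that the orbit map $\hom^+(\sphere^2) \to \pstief(\chi)$, $\phi \mapsto \Theta_0 * \phi$, having contractible total space says nothing by itself about the base --- what the paper's $\interp(A,\cdot)$ provides is precisely a continuous section of this map, and that is the missing content.

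Two smaller points. First, your reduction of the unoriented case is garbled: $\pstief(\mc{X})/\sorth_3$ is the \emph{disjoint} union $\pog(\chi) \sqcup \pog(-\chi)$, and the $\orth_3$-quotient identifies the two pieces; there is no ``common face'' to glue along. The conclusion can be salvaged (either note $\pg(\mc{X}) \cong \pog(\chi)$, or, as the paper does, run the same deformation directly in the $\orth_3$-quotient), but as written the deduction is wrong. Second, your preliminary step interpolating the weights to the all-ones vector is fine but unnecessary: in the paper's deformation the weights are simply carried along and play no role, exactly as the footnote indicates.
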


\begin{theorem}\label{theorem-grassman-pseudograssman}
For $n \in \{3,\dots\}$ or $n = \infty$, 
the pseudolinear Stiefel manifold   $\pstief_{3,n}$ strongly and $\orth_3$-equivariantly deformation retracts to the Stiefel manifold $\stief_{3,n}$.  Hence the pseudolinear Grassmannian $\pg_{3,n}$ strongly deformation retracts to the Grassmannian $\g_{3,n}$, and the pseudolinear oriented Grassmannian $\pog_{3,n}$ strongly deformation retracts to the oriented Grassmannian $\og_{3,n}$.
\end{theorem}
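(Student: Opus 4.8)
The strategy is to build the deformation retraction $\pstief_{3,n} \to \stief_{3,n}$ by straightening one pseudosphere at a time relative to the others, using the contractibility of the isometry-quotient of $\hom^+(\sphere^2)$ (Kneser) as the engine. First I would set up the key local move: given a spanning weighted pseudosphere arrangement $A = (\alpha_1,\dots,\alpha_n)$, I want to continuously deform the $i$-th pseudosphere $\theta_i$ so that its kernel $S_i$ becomes a round circle (a genuine great sphere in $\sphere^2$) while keeping all the other $\theta_j$ fixed and keeping the combinatorial type — the covector set $\cov(A)$, and hence by Remark \ref{remark-cell} the regular cell structure on $\sphere^2$ — unchanged. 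The point is that once every $S_i$ is a great circle, the arrangement is (up to a symmetric positive definite transformation) a Parseval frame, i.e. lies in $\stief_{3,n}$, and the weights $r_i$ are carried along untouched. Because the trivial weighted pseudosphere $0$ is excluded from playing any role (a spanning arrangement never has all $\theta_j$ vanishing simultaneously), near a point where some $\alpha_i$ has small weight we can still track $\theta_i$, so the construction should extend continuously even as weights approach $0$; this is exactly what the inclusion of weights buys us.

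The second ingredient is to make the one-pseudosphere straightening \emph{canonical and continuous in $A$}, so that the moves can be performed simultaneously (or in a fixed order) and glued into a global homotopy $H : \pstief_{3,n} \times [0,1]_\mb{R} \to \pstief_{3,n}$ with $H_0 = \id$, $H_1(\pstief_{3,n}) \subseteq \stief_{3,n}$, and $H_t$ fixing $\stief_{3,n}$ pointwise. For a single pseudosphere, "canonical" means: contract the fiber of positively oriented parameterizations $\{\phi \in \hom^+(\sphere^2) : \theta_i \circ \phi = \aim(e_3)\}$ toward the subset realizing the round circle, using a choice of contraction of $\hom^+(\sphere^2)/\orth_3$ that can be made continuous in the base. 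Here the induced metric $\dist$ on weighted pseudospheres (the weighted Fréchet distance defined in the excerpt) is the right notion to check continuity against. To get the $\orth_3$-equivariance, I would perform the straightening in an $\orth_3$-equivariant way — e.g. average or make the construction depend only on the arrangement up to the $\orth_3$-action — so that $H_t(A \ast Q) = H_t(A) \ast Q$; then $H$ descends to the quotients, giving the strong deformation retractions $\pg_{3,n} \to \g_{3,n}$ and $\pog_{3,n} \to \og_{3,n}$ automatically. The $n = \infty$ case then follows from the direct limit topology: the homotopies at finite stages are compatible with the inclusions $\pstief_{3,n} \hookrightarrow \pstief_{3,n+1}$ (straightening does not touch a trailing zero), so they assemble into a homotopy on $\pstief_{3,\infty}$ continuous for the direct limit topology by its universal property.

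The main obstacle I expect is the second ingredient rather than the first: producing the straightening of a single pseudosphere \emph{continuously and equivariantly in the whole arrangement}, not just for a fixed $A$. Straightening $\theta_i$ requires choosing, for each $x \in S_i$, where it should go, and the only constraint is that the other kernels $S_j$ and all intersection patterns $S_I$ are preserved; there is a lot of freedom, and turning that freedom into a single continuous section over $\pstief_{3,n}$ is where Kneser's theorem (contractibility of $\hom^+(\sphere^2)$ modulo isometry) must be invoked carefully — probably via a continuous center-of-mass / barycentric-type construction on the space of parameterizations, or by an explicit "radial" isotopy that pushes $S_i$ onto the great circle through a canonically chosen pair of antipodal points (e.g. the normalized sum of a canonical pair of vertices of the cell structure). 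A secondary subtlety is checking that no cell degenerates during the homotopy — i.e. that $\cov$ is genuinely constant along $H_t(A)$ — which one verifies using the regular cell decomposition of Remark \ref{remark-cell} together with the fact that an ambient isotopy of $\sphere^2$ preserving each $S_j$ setwise preserves all signs; and confirming that at $t=1$ the resulting configuration, after the unique symmetric positive definite normalization (the Observation on Parseval frames), is exactly a point of $\stief_{3,n}$ with the prescribed weights.
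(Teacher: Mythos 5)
There is a fundamental obstruction to your plan as stated. You propose to straighten the pseudospheres one at a time \emph{while keeping the covector set $\cov(A)$ unchanged}, ending with an arrangement of great circles; but if this were possible it would produce, for every $A \in \pstief_{3,n}$, a great-circle (i.e.\ vector) realization of the oriented matroid $\cov(A)$. Rank 3 oriented matroids need not be realizable (non-Pappus, and more generally the Mn\"ev universality phenomenon discussed in the introduction), so for such $A$ the set of great-circle arrangements with covector set $\cov(A)$ is empty and no homotopy of the kind you describe can exist: at some moment the combinatorial type \emph{must} change. The paper's construction is built around exactly this fact. It never tries to hold $\cov$ fixed globally; instead it introduces intermediate spaces $Y_I, Z_I$ of arrangements that are antipodally symmetric and piecewise geodesic relative to a chosen independent sequence $I$, straightens in stages ($f_I$ makes the newest pseudocircle piecewise geodesic, $h_I$ removes its corners by moving vertices of the corresponding pseudolines in the projective plane), and during the $h_I$ stage the oriented matroid is allowed to change --- only the property of being a pseudoline arrangement is preserved (Claims 3.3.3.iv and 3.3.3.viii). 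Kneser's theorem enters only in the $f_I$ step (and in Theorem 3.1), not as a device for straightening to great circles.

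A second, related gap: you treat continuity in $A$, equivariance, and vanishing weights as technicalities to be handled by making the single-pseudosphere move ``canonical.'' But there is no continuous global choice of which pseudosphere to straighten next, nor of a basis to anchor the construction; the sets of arrangements on which a given choice is valid (the spaces $X_{I\cdot j}$) do not cover the space, and near their boundaries (where an element's weight tends to $0$ or an independence degenerates) any such deformation must die out. The paper spends most of the proof on precisely this: the recursive definition of $g_I$ with stopping times $s_{I\cdot j}$ built from the distances $r_{I\cdot j}$ to the complements of the $X_{I\cdot j}$, blending the deformations associated with different choices and proving by a nested induction (Claim 3.3.4.ii) that the result is well defined, continuous, strong, equivariant, and stable on the relevant subspaces. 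Your proposal does not contain a mechanism playing this role. The final orthonormalization step you describe (passing from a spanning great-circle configuration to a Parseval frame) does match the paper, which uses the polar decomposition $A\bigl(t(A^*A)^{-\nicefrac{1}{2}}+(1-t)\mathrm{I}\bigr)$, and your treatment of the $n=\infty$ case via the direct limit topology is in the same spirit as the paper's, but these are the easy parts.
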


\newcounter{grassmanpseudograssmansection}
\setcounter{grassmanpseudograssmansection}{\value{section}}
\newcounter{grassmanpseudograssman}
\setcounter{grassmanpseudograssman}{\value{theorem}}

\begin{corollary}\label{corollary-classifying}
$\pstief_{3,\infty}$, $\pe_{3,\infty}$, and $\poe_{3,\infty}$ are respectively universal for principal $\orth_3$-bundles, rank 3 vector bundles, and oriented rank 3 vector bundles.
Hence, $\pg_{3,\infty}$ and $\pog_{3,\infty}$ are classifying spaces.
\end{corollary}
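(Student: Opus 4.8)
The plan is to recognize the quotient maps $\pstief_{3,\infty}\to\pg_{3,\infty}$ and $\pstief_{3,\infty}\to\pog_{3,\infty}$ as universal bundles via the standard criterion (Milnor--Dold): a numerable principal $G$-bundle whose total space is contractible is a universal $G$-bundle, so its base is a classifying space $BG$ \cite{husemoller1994fibre}. Everything then reduces to three facts about the infinite pseudolinear Stiefel manifold: it is contractible, the groups $\orth_3$ and $\sorth_3$ act freely on it, and the quotient maps are numerable principal bundles.

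Contractibility and freeness are immediate from Theorem~\ref{theorem-grassman-pseudograssman} taken with $n=\infty$. That theorem supplies an $\orth_3$-equivariant strong deformation retraction $r:\pstief_{3,\infty}\to\stief_{3,\infty}$, and since the infinite real Stiefel manifold $\stief_{3,\infty}$ is contractible, so is $\pstief_{3,\infty}$. For freeness, if $A*Q=A$ with $A\in\pstief_{3,\infty}$ and $Q\in\orth_3$, then by equivariance $r(A)*Q=r(A*Q)=r(A)$; a Parseval frame spans $\Rpol{3}$, hence has trivial $\orth_3$-stabilizer, so $Q=\id$. Restricting to $\sorth_3\subset\orth_3$ gives the same conclusion.

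For the bundle structure I would take Lemma~\ref{lemma-principal-bundle} at each finite level $n$ --- giving the principal $\orth_3$- and $\sorth_3$-bundles $\pstief_{3,n}\to\pg_{3,n}$ and $\pstief_{3,n}\to\pog_{3,n}$ together with the associated vector bundles $\xi_{3,n}$, $\widetilde\xi_{3,n}$ --- and pass to the direct limit. Since the inclusions $\pstief_{3,n}\hookrightarrow\pstief_{3,n+1}$ and $\pg_{3,n}\hookrightarrow\pg_{3,n+1}$ are closed embeddings compatible with the projections and the group actions, local trivializations near a point of $\pg_{3,\infty}$, which already lies in some $\pg_{3,n}$, extend to the colimit; so $\pstief_{3,\infty}\to\pg_{3,\infty}$ and $\pstief_{3,\infty}\to\pog_{3,\infty}$ are principal $\orth_3$- and $\sorth_3$-bundles. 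They are numerable because $\pg_{3,\infty}$ and $\pog_{3,\infty}$, being direct limits of paracompact (indeed compact) spaces along closed embeddings, are paracompact. The recognition criterion then makes $\pstief_{3,\infty}$ the total space of a universal principal $\orth_3$-bundle over $\pg_{3,\infty}=B\orth_3$ and of a universal principal $\sorth_3$-bundle over $\pog_{3,\infty}=B\sorth_3$. Finally, $\pe_{3,\infty}=(\pstief_{3,\infty}\times\mb{R}^3)/\orth_3$ is the vector bundle associated to the universal $\orth_3$-bundle via the defining action of $\orth_3$ on $\mb{R}^3$, hence (every rank $3$ vector bundle over a paracompact base being associated to its orthonormal frame bundle) $\xi_{3,\infty}$ is universal for rank $3$ vector bundles; since the $\sorth_3$-action preserves the standard orientation of $\mb{R}^3$, the bundle $\poe_{3,\infty}=(\pstief_{3,\infty}\times\mb{R}^3)/\sorth_3$ is canonically oriented, and as an orientation is exactly a reduction of structure group to $\sorth_3$, the bundle $\widetilde\xi_{3,\infty}$ is universal for oriented rank $3$ vector bundles.

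The step I expect to require the most care is the passage to $n=\infty$: one must check that the finite-level conclusions of Lemma~\ref{lemma-principal-bundle} are compatible with the inclusion maps and that local triviality, numerability, and freeness of the action are inherited by the colimit topology. This is routine for colimits along closed embeddings, but it is the only place where something could go wrong; the rest is a formal consequence of Theorem~\ref{theorem-grassman-pseudograssman}, Lemma~\ref{lemma-principal-bundle}, and the standard theory of classifying spaces.
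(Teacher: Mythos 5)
Your proposal is correct in outline, but it takes a genuinely different route from the paper. The paper does not invoke the Milnor--Dold recognition principle (contractible total space of a numerable principal bundle) at all; instead it argues by direct comparison with the classical universal bundles: surjectivity of the pullback map holds because $\e_{3,\infty}$ is a subspace of $\pe_{3,\infty}$, so every rank~3 bundle over a paracompact $B$ is pulled back from a map into $\g_{3,\infty}\subset\pg_{3,\infty}$; and injectivity holds because the deformation retraction of Theorem~\ref{theorem-grassman-pseudograssman} homotopes any two classifying maps $B\to\pg_{3,\infty}$ into $\g_{3,\infty}$, where classical universality of $\e_{3,\infty}$ supplies the homotopy between them; the same comparison is then applied to $\pstief_{3,\infty}$ and $\poe_{3,\infty}$. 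That argument needs only Lemma~\ref{lemma-principal-bundle} (so that $\xi_{3,\infty}$ is a bundle and pullbacks make sense) and sidesteps both contractibility of $\pstief_{3,\infty}$ and any paracompactness or numerability questions about $\pg_{3,\infty}$ itself, since pullbacks are only ever taken over paracompact domains. Your route trades reliance on classical universality of $\e_{3,\infty}$ for exactly that point-set work, and two of your supporting claims need repair: the finite-level spaces $\pg_{3,n}$ are not compact (the weights range over all positive reals), though they are metrizable and hence paracompact; and a local trivialization over an open subset of $\pg_{3,n}$ does not ``extend to the colimit'' by formal nonsense, because such a set is not open in $\pg_{3,\infty}$ --- the clean fix is to note that the proof of Lemma~\ref{lemma-principal-bundle} applies verbatim at $n=\infty$: the trivializing cover by the sets where a fixed finite triple $I$ is a basis, and the maps $\coord(I)$, are defined at the infinite level, with continuity supplied by the universal property of the direct limit topology. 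With those repairs (plus paracompactness of a countable colimit of paracompact spaces along closed embeddings, to get numerability), your Dold-style argument does yield the corollary; your contractibility and freeness steps are fine as stated, following from Theorem~\ref{theorem-grassman-pseudograssman} (freeness is also proved directly in Lemma~\ref{lemma-principal-bundle}).
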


\subsection{Tools}


We call pseudospheres in rank 3, \df{pseudocircles}.
We start by showing that for $\mc{A} \in \pg_{3,n}$  we can always fix $A \in \mc{A}$ up to orthogonal transformation by fixing a coordinate system defined in terms of the pseudocircles in an $\orth_3$ invariant way.  Essentially, we pick three independent elements of $A$ to define a basis in $\mb{R}^3$.
To this end we will define a partial function 
\[\coord : [n]_\mb{N}^3 \times \pstief_{3,n} \not\to \orth_3, \]
defined for pairs $(I,A)$ where $I$ is a basis of $A$, that satisfies the following lemma.

\begin{lemma}\label{lemma-basis}
For all $I = (i_1,i_2,i_3)$ of distinct indices in $[n]_\mb{N}$ and $Q \in \orth_3$, we have 
\begin{enumerate}
\item
$ \coord(I) : \{A \in \pstief_{3,n} : I \text{ is a basis of } A \} \to \orth_3 $
is continuous,
\item 
$\coord(I;A*Q) =Q^*\coord(I;A)$, 
\item
$ \coord((1,2,3);(e_1,e_2,e_3)) = \id$,
\item
$\coord(I;A*\coord(I;A)) = \id$.
\end{enumerate}
\end{lemma}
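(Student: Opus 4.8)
The idea is to build $\coord(I;A)$ out of three naturally-defined vectors in $\mathbb{R}^3$ — one for each pseudocircle $\theta_{i_1},\theta_{i_2},\theta_{i_3}$ in the basis $I$ — and then Gram–Schmidt them into an orthonormal frame, i.e.\ an element of $\orth_3$. To get an $\orth_3$-equivariant recipe, the three vectors must be defined only from the kernels $S_{i_1},S_{i_2},S_{i_3}$ and their sign data, using operations that commute with the $\hom(\sphere^2)$-action restricted to $\orth_3$. The natural choice: since $I$ is a basis, the three kernels $S_{i_1},S_{i_2},S_{i_3}$ are pseudocircles in $\sphere^2$ in general position (pairwise meeting in two antipodal points, no common point), so they cut $\sphere^2$ into a cell complex combinatorially isomorphic to a rank-3 coordinate arrangement. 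Pick the distinguished $2$-cell $C = \cell(\proj_I(A),\sign(e_{i_1}+e_{i_2}+e_{i_3}))$ (the ``positive octant''), which is a pseudotriangle bounded by arcs of the three kernels, with a well-defined vertex $v_{jk}$ opposite the arc on $S_{i_j}\cap S_{i_k}$ for $\{j,k,\ell\}=\{1,2,3\}$. Set $u_1 = $ the first column of $\coord$ to be (a normalization of) $v_{23}$, i.e.\ the vertex of $C$ lying on $S_{i_2}\cap S_{i_3}$; then set $u_2$ to be the Gram–Schmidt correction of the second vertex $v_{13}$ against $u_1$, and $u_3 = \pm u_1\times u_2$ with the sign chosen so the frame is positively oriented relative to the order type $\ot(A;i_1,i_2,i_3)$. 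Define $\coord(I;A)$ to be the matrix with these orthonormal rows (or columns, matching the paper's row-vector convention so that $\orth_3$ acts on the right).

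**Key steps, in order.** (1) Verify the geometric picture: when $I$ is a basis of $A$, the three kernels are in general position, the cell $C$ is a nonempty pseudotriangle, and its three vertices are well-defined continuous functions of $A$ — this follows from Remark~\ref{remark-cell} (the cell decomposition is regular, $\sigma\mapsto\Cell(A,\sigma)$ is a poset isomorphism) applied to $\proj_I(A)$, whose covector set is the rank-3 coordinate oriented matroid. (2) Check continuity (item 1): the vertices $v_{jk}$ depend continuously on $A$ in the $\dist$ metric — here the Fréchet-type definition of $\dist$ on pseudospheres is exactly what makes the kernels, hence their intersection points, move continuously; then Gram–Schmidt and the cross product are continuous, and the Gram–Schmidt is well-defined because $v_{23}$ and $v_{13}$ are linearly independent (they are distinct non-antipodal vertices of the pseudotriangle $C$). (3) Check equivariance (item 2): for $Q\in\orth_3$, $A*Q$ has kernels $Q^{-1}S_i = Q^*S_i$, and $\aim(\alpha_i*Q;u)=\aim(\alpha_i;Qu)$, so $\cell(\proj_I(A*Q),\sigma) = Q^*\cell(\proj_I(A),\sigma)$; hence each vertex transforms as $v_{jk}(A*Q) = Q^* v_{jk}(A)$. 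Since Gram–Schmidt and the sign-corrected cross product are $\orth_3$-equivariant operations on tuples of vectors (orthogonality and orientation are $\orth_3$-natural once we track the $\det Q$ sign via $\ot(A*Q)=\det(Q)\ot(A)$), we get $\coord(I;A*Q) = Q^*\coord(I;A)$. (4) Item 3 is a direct computation: for $A=(e_1,e_2,e_3)$ with $I=(1,2,3)$, the kernels are the coordinate great circles, $C$ is the spherical triangle in the all-positive orthant, $v_{23}=e_1$, $v_{13}=e_2$ (already orthonormal), $v_{12}=e_3$, the order type is $+$, so $\coord=\id$. (5) Item 4 is formal: write $Q=\coord(I;A)$; by item 2, $\coord(I;A*Q) = Q^*\coord(I;A) = Q^*Q = \id$ (using that $Q$ is orthogonal, $Q^*Q=\id$) — wait, that gives $Q^* Q$, which is $\id$, good; more carefully, item 2 says $\coord(I;A*Q)=Q^*\coord(I;A)=Q^*Q=\id$, exactly item 4.

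**Main obstacle.** The routine-sounding parts (Gram–Schmidt, cross products) are genuinely routine; the real work is step (2), establishing that the distinguished vertices of the pseudotriangle $C$ depend \emph{continuously} on $A\in\pstief_{3,n}$ in the Fréchet-type metric $\dist$ — equivalently, that $A\mapsto \Cell(\proj_I(A),\sigma)$ is continuous in Hausdorff distance for the relevant $0$-dimensional cells, uniformly away from the degenerate locus where $I$ stops being a basis. One must show that a small $\dist$-perturbation of each weighted pseudocircle moves its kernel by a small Hausdorff distance (immediate from the definition: reparametrizations $\phi_i$ witness this), and then that intersecting two kernels that meet transversally-in-the-pseudoline-sense is a continuous operation — this needs that the intersection point is \emph{isolated and stable}, which is where one invokes that $\proj_I(A)$ realizes the rank-3 coordinate matroid (so kernels meet in exactly one antipodal pair) together with the regular-cell-decomposition structure of Remark~\ref{remark-cell}. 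A secondary annoyance is bookkeeping the orientation sign so that $\coord$ lands in $\orth_3$ with the correct determinant and items 2 and 3 are simultaneously consistent; this is handled cleanly by defining the third row via $\ot(A;i_1,i_2,i_3)\cdot(u_1\times u_2)$ and using the transformation law $\ot(A*Q) = \det(Q)\,\ot(A)$.
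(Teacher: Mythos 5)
Your construction is essentially the paper's (distinguished vertices of the subarrangement $\proj_I(A)$, Gram--Schmidt, the orientation sign via $\ot(A;i_1,i_2,i_3)$, continuity from stability of vertices, and the formal deductions of items 2--4), but it omits the one degenerate case that the paper's definition of $\coord$ is specifically built around, and your justification of well-definedness is false exactly there. You claim the Gram--Schmidt step works because $v_{23}$ and $v_{13}$ (the paper's $p_1$ and $p_2$) are ``distinct non-antipodal vertices of the pseudotriangle,'' hence linearly independent. Distinctness is fine, but non-antipodality is a great-circle intuition that does not transfer: pseudocircles are not antipodally symmetric, and the two points of $S_{i_2}\cap S_{i_3}$ need not be antipodes, so nothing prevents the vertex $p_2\in S_{i_1}\cap S_{i_3}$ of the positive cell from being exactly $-p_1$. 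Concretely, take the coordinate great-circle arrangement $(e_1,e_2,e_3)$ and act by an orientation-preserving $\psi\in\hom(\sphere^2)$ with $\psi(q)=e_1$ and $\psi(-q)=e_2$; the resulting $A=(e_1,e_2,e_3)*\psi$ lies in $\pstief_{3,n}$ (pad with trivial elements if $n>3$), $I=(1,2,3)$ is a basis of $A$, and its vertices satisfy $p_1=q$, $p_2=-q$, so $\tilde u_2=p_2-\langle u_1,p_2\rangle u_1=0$ and your map is undefined. Hence your $\coord(I)$ is not defined on all of $\{A:\ I\text{ is a basis of }A\}$, which item 1 requires.

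This is precisely why the paper's definition has the clause you dropped: when the two points $p_1,p_{-1}$ of $S_{i_2}\cap S_{i_3}$, i.e.\ the vertices $\cell(\proj_I(A),\pm\sign(e_{i_1}))$, are not antipodal, one first applies the explicit homeomorphism $\varphi$ (an angular rescaling in the plane spanned by $p_1$ and $p_{-1}$) fixing $p_1$ and carrying $-p_1$ to $p_{-1}$, and sets $\coord(I;A)=\coord(I;A*\varphi)$. After this normalization $-p_1\in S_{i_2}$, so $p_2\neq\pm p_1$ because $\theta_{i_2}(p_2)=+$, and the Gram--Schmidt step is legitimate. If you add such a normalization you must also re-verify items 1 and 2 for it: continuity across the transition (the paper's $\varphi$ depends continuously on $(p_1,p_{-1})$, which vary continuously by Lemma~\ref{lemma-vertex}, and tends to the identity as $p_{-1}\to-p_1$), and equivariance (the normalizing homeomorphism attached to $A*Q$ is the conjugate $Q^*\circ\varphi\circ Q$ of the one attached to $A$, so the identity $\coord(I;A*Q)=Q^*\coord(I;A)$ survives). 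The remainder of your argument --- vertices stable in the Fr\'echet-type metric, $\cell(A*Q,\sigma)=Q^*\cell(A,\sigma)$, the determinant bookkeeping for $u_3$, and items 3 and 4 --- matches the paper and is fine.
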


For $A\in \pstief_{3,n}$ and an ordered basis $I = (i_1,i_2,i_3)$ of $A$, 
let $\coord(I;A)\in \orth_3$ be given by the matrix with columns $(u_1,u_2,u_3)$ defined as follows.
Let $p_{\pm k}$ be the vertex $\cell(\proj_I(A),\, \pm \sign (e_{i_k}))$.
If $p_{- 1} = -p_{1}$, then let 
\begin{align*}
u_1 &= p_1, \\ 
u_2 &= \tilde u_2/ \|\tilde u_2\| \text{ where } \tilde u_2 = \proj_{u_1^{\perp}}(p_2) = {p_2 -\langle u_1, p_2 \rangle u_1}, \\
u_3 &= \ot(A;i_1,i_2,i_3) ( u_1 \times u_2).
\end{align*}
Otherwise, we define a map $\varphi \in \hom(\sphere^2)$ that sends $-p_{1}$ to $p_{-1}$ and let $\coord(I;A) = \coord(I;A * \varphi)$.
Choose a polar coordinate system for $\mb{R}^3$ so that the 1st and 2nd coordinates are angle and radius in the plane spanned by $p_1 =(1,0,0)$ and $p_{-1} = (1,\omega,0)$ with $\omega \in (0,\pi)_\mb{R}$ and the 3rd coordinate is offset from this plane.  Define $\varphi$ by
\[ \varphi(r,\theta,h) = \begin{cases} (r,\theta \frac{\omega}{\pi},h) & \theta \in [0,\pi]_\mb{R} \\ (r,\theta (2-\frac{\omega}{\pi}),h) & \theta \in [-\pi,0]_\mb{R}. \end{cases} \]

\begin{remark}
The points $p_{1},p_{-1}$ are where $S_{i_2}$ and $S_{i_3}$ meet, and assuming these are antipodal, $Q = \coord(I;A)$ is defined as the orthogonal transformation that sends $e_{1}$ to $p_1$, and sends $e_2$ into the half-plane extending from the line though $p_1$ in the direction of $p_2$, and orients the sphere so that $S_{i_1},S_{i_2},S_{i_3}$ appear counterclockwise in that order around the boundary of the triangular cell that is on the positive side of all three curves.
When $p_{1},p_{-1}$ are not antipodal, we first deform the sphere to make these points antipodal to ensure that $p_1$ and $p_2$ are linearly independent.
Note also that $p_3$ may be in the plane spanned by $p_1,p_2$, so we do not use $p_3$ to find $u_3$.
\end{remark}

\begin{lemma}\label{lemma-vertex}
If $\Theta_k = (\theta_{k,1},\theta_{k,2},\theta_{k,3})$ is a spanning pseudocircle arrangement for $k\in \{1,\dots,\infty\}$ and $\Theta_k \to \Theta_\infty$, then $\cell(\Theta_k,e_1) \to \cell(\Theta_\infty,e_1)$.

Moreover, if $\{S_{k,1}, S_{k,2}\}$ is an unoriented pseudocircle arrangement for $k\in \{1,\dots,\infty\}$, and $S_{k,i} \to S_{\infty,i}$ in Fréchet distance, and $S_{\infty,1}\neq S_{\infty,2}$, then $(S_{k,1}\cap S_{k,2}) \to (S_{\infty,1}\cap S_{\infty,2})$ in symmetric Hausdorff distance.
\end{lemma}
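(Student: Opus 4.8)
The plan is to prove both statements by a compactness/continuity argument built around the parameterizations $\phi_i$ furnished by the definition of a pseudosphere, together with the regular cell decomposition of Remark~\ref{remark-cell}. First I would set up notation: write $\Theta_k = (\theta_{k,1},\theta_{k,2},\theta_{k,3})$ and let $C_k = \cell(\Theta_k,e_1)$, the open cell on which $\theta_{k,1}$ is $+$, $\theta_{k,2}$ is $0$, $\theta_{k,3}$ is $0$ — actually $e_1$ here is a sign pattern, so $C_k$ is the vertex where $S_{k,2}$ and $S_{k,3}$ cross on the positive side of $S_{k,1}$. The convergence $\Theta_k \to \Theta_\infty$ is convergence of each weighted (or unweighted) pseudocircle in Fréchet distance, i.e.\ there are orientation-preserving parameterizations $\phi_{k,i} \in \hom^+(\sphere^2)$ with $\theta_{k,i}\circ\phi_{k,i} = \aim(e_3)$ such that the restrictions $\phi_{k,i}|_{\text{equator}}$ converge uniformly to $\phi_{\infty,i}|_{\text{equator}}$; equivalently the kernels $S_{k,i} \to S_{\infty,i}$ in Fréchet distance. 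I would first prove the ``moreover'' clause, since the first clause essentially follows from it: given $S_{k,1}\neq S_{k,2}$ pseudocircles with $S_{k,i}\to S_{\infty,i}$ and $S_{\infty,1}\neq S_{\infty,2}$, show $S_{k,1}\cap S_{k,2} \to S_{\infty,1}\cap S_{\infty,2}$ in symmetric Hausdorff distance.

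For the ``moreover'' clause I would argue in two directions. \emph{One side (no mass escapes):} suppose $x_k \in S_{k,1}\cap S_{k,2}$ and, passing to a subsequence, $x_k \to x_\infty$; since $\dist_h(S_{k,i},S_{\infty,i})\to 0$ and each $S_{\infty,i}$ is closed, $x_\infty \in S_{\infty,1}\cap S_{\infty,2}$, so every accumulation point of the sets $S_{k,1}\cap S_{k,2}$ lies in the limit intersection — this gives $\limsup$ control. \emph{Other side (no mass appears / completeness):} fix $x_\infty \in S_{\infty,1}\cap S_{\infty,2}$; I must produce $x_k \in S_{k,1}\cap S_{k,2}$ with $x_k \to x_\infty$. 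This is the delicate part and is where the hypothesis $S_{\infty,1}\neq S_{\infty,2}$ is essential — without it the intersection is a whole pseudocircle and points can fail to persist. The key structural fact is that two distinct pseudocircles in $\sphere^2$ cross transversally at exactly two antipodal-type points (they form a rank $\leq 2$ pseudosphere arrangement, so by the Topological Representation Theorem $\{S_{\infty,1},S_{\infty,2}\}$ is, up to a homeomorphism of $\sphere^2$, an arrangement of two great circles), and crossings are stable: near $x_\infty$ the two arcs of $S_{\infty,1}$ and $S_{\infty,2}$ locally separate a small disk into four quadrants with alternating signs, and uniform Fréchet convergence of the parameterizations forces $S_{k,1}$ and $S_{k,2}$ to still cross inside that disk for $k$ large (a degree/Jordan-curve argument: an arc of $S_{k,1}$ entering the disk with endpoints in opposite $S_{\infty,2}$-sides must meet any arc of $S_{k,2}$ separating those sides). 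Iterating over a shrinking neighborhood basis of $x_\infty$ yields the desired $x_k$. Combining both directions gives symmetric Hausdorff convergence.

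For the first clause, note $\cell(\Theta_k,e_1)$ is the intersection $S_{k,2}\cap S_{k,3}$ restricted to the region where $\theta_{k,1}=+$; since $\Theta_k$ is a \emph{spanning} pseudocircle arrangement, this cell is a single point (a vertex of the rank 3 regular cell decomposition of Remark~\ref{remark-cell}), and in the limit $\Theta_\infty$ is also spanning, so $\cell(\Theta_\infty,e_1)$ is again a single point. From $\Theta_k\to\Theta_\infty$ we get $S_{k,2}\neq S_{k,3}$ for large $k$ and $S_{\infty,2}\neq S_{\infty,3}$ (if they agreed, the arrangement could not be spanning with the vertex cell $e_1$ nonempty), so the ``moreover'' clause applies to $\{S_{k,2},S_{k,3}\}$, giving Hausdorff convergence of the two-point sets $S_{k,2}\cap S_{k,3}$ to $S_{\infty,2}\cap S_{\infty,3}$; the point of this two-point set lying on the positive side of $\theta_{k,1}$ then converges to the point on the positive side of $\theta_{\infty,1}$ because $\theta_{k,1}\to\theta_{\infty,1}$ uniformly away from its kernel and the limit vertex is not on $S_{\infty,1}$ (it is a vertex, hence in the open region $\theta_{\infty,1}=+$ or $\theta_{\infty,1}=-$, and by construction of $e_1$ it is the $+$ one). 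This identifies the limit as $\cell(\Theta_\infty,e_1)$, completing the proof.

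The main obstacle is the ``no new point appears'' direction of the ``moreover'' clause: making rigorous that a transversal crossing of two distinct pseudocircles persists under Fréchet perturbation. I expect to handle it by reducing, via a single ambient homeomorphism of $\sphere^2$ normalizing $S_{\infty,1}$ to a great circle, to a local planar picture and then invoking a Jordan-curve/intermediate-value argument on arcs; the spanning hypothesis and Remark~\ref{remark-cell} do the bookkeeping that guarantees the relevant intersections are isolated points rather than higher-dimensional sets.
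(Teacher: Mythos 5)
Your proposal is correct in substance but reaches the result along a genuinely different route from the paper. The paper proves the first clause directly: since each $\Theta_k$ (including $\Theta_\infty$) is spanning, the vertex $p_k=\cell(\Theta_k,e_1)$ exists by hypothesis, so one only needs to locate its subsequential limits; compactness of $\sphere^2$ gives a convergent subsequence, the Fréchet-closeness of the kernels shows any limit $q$ lies in $S_{\infty,2}\cap S_{\infty,3}=\{p_\infty,p_\infty^-\}$, a sign argument (using that $\theta_{k,1}\to\theta_{\infty,1}$ and that $q$ is bounded away from $S_{\infty,1}$) forces $\theta_{\infty,1}(q)=+$, hence $q=p_\infty$, and a standard subsequence trick upgrades this to $p_k\to p_\infty$; the ``moreover'' clause is then dismissed as ``a similar argument.'' You instead prove the ``moreover'' clause first, including the harder direction that every point of $S_{\infty,1}\cap S_{\infty,2}$ is approximated by points of $S_{k,1}\cap S_{k,2}$, via a persistence-of-crossings argument (normalize to great circles, local quadrant picture, Jordan-curve/separation argument on shrinking disks), and then deduce the first clause by selecting the point of the two-point set with sign $+$. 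What each buys: the paper's route for the first clause is lighter because the existence of the limit vertex is already guaranteed by the spanning hypothesis, so no persistence argument is ever needed there; your route goes through a stronger intermediate fact, but that fact is exactly what the lower-bound half of Hausdorff convergence in the ``moreover'' clause requires (a subsequential-limit argument alone only shows the intersections accumulate \emph{inside} $S_{\infty,1}\cap S_{\infty,2}$, not that both limit crossings are attained), so your two-directional structure makes explicit something the paper leaves implicit. Two details you should still nail down: (i) the ``side-determination'' step, i.e.\ that a point at definite distance from $S_{\infty,2}$ lies in the corresponding component of $\sphere^2\setminus S_{k,2}$ for large $k$ (this needs the Fréchet-close curve to be essential in the annular neighborhood of $S_{\infty,2}$, e.g.\ via the geodesic homotopy between parameterizations; the paper asserts an analogous fact through its maps $\psi_{k,1}$ close to the identity), and (ii) the observation that $S_{k,2}\neq S_{k,3}$ for all $k$ and that exactly one of the two points of $S_{\infty,2}\cap S_{\infty,3}$ has $\theta_{\infty,1}=+$, both of which follow from the spanning hypothesis as you indicate.
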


\begin{proof}
Since $\Theta_k$ is spanning, the point $p_k = \cell(\Theta_k,e_1) \in \sphere^2$ is well defined.
Since $\sphere^2$ is compact, there is a convergent subsequence $p_{k_j} \to q$.
Since $\theta_{k,i} \to \theta_{\infty,i}$ for $i \in \{2,3\}$, $S_{k,i} \to S_{\infty,i}$ in Fréchet distance, so there are maps $\psi_{k,i}:S_{k,i} \to S_{\infty,i}$ such that $\forall x  \in \sphere^2$, $\hbox{$\|\psi_{k,i}(x)-x\|$} \leq \eps_k \to 0$.  In particular, this holds for $x=p_{k_j}$, so $\| \psi_{k_j,i}(p_{k_j}) -q\| \leq \hbox{$\eps_k + \|p_k -q\|$} \to 0$, 
so $\psi_{k_j,i}(p_{k_j}) \to q$, which implies $q \in (S_{\infty,2} \cap S_{\infty,3})$.
We just have to show that $\theta_{\infty,1}(q)=+$.

Since $p_\infty = \cell(\Theta_\infty,e_1)$ is bounded away from $\theta_{\infty,1}^{-1}\{0,-\}$, and $p^-_\infty = \cell(\Theta_\infty,-e_1)$ is bounded away from $\theta_{\infty,1}^{-1}\{0,+\}$, and $q \in (S_{\infty,2} \cap S_{\infty,3}) = \{p_\infty,p^-_\infty\}$, there is some $\delta > 0$ such that $\forall x \in \sphere^2$ if $\|x-q\| \leq \delta$ then $\theta_{\infty,1}(x) = \theta_{\infty,1}(q)$.
Also, since $\theta_{k,1} \to \theta_{\infty,1}$ there are maps $\psi_{k,1} \in \hom^+(\sphere^2)$ such that $\theta_{k,1} = \theta_{\infty,1} \circ \psi_{k,1}$ and $\forall x \in \sphere^2$, $\|\psi_{k,1}(x) -x\| \leq \eps'_k \to 0$.  Hence, if $\|x-q\| \leq \delta -\eps'_k$ then $\|\psi_{k,1}(x)-q\| \leq \delta$, so $\theta_{k,1}(x) = \theta_{\infty,1}(q)$.
For $j$ sufficiently large, we have $\|p_{k_j} -q\| \leq \delta -\eps'_{k_j}$, so $\theta_{\infty,1}(q) = \theta_{k_j,1}(p_{k_j}) = +$.  Thus, $q = p_\infty$.

Suppose that $p_k$ does not converge to $p_\infty$, then there is some other subsequence that is bounded away from $p_\infty$ but has a subsubsequence that converges by compactness.  By the same argument above this subsubsequence must converge to $p_\infty$, which is a contradiction.  Thus, $p_k \to p_\infty$

The second part of the lemma follows by a similar argument.
\end{proof}

\begin{proof}[Proof of Lemma \ref{lemma-basis}]

We have immediately that $\coord((1,2,3);(e_1,e_3,e_3))$ is the identity from the definition.

For any $Q \in \orth_3$ and any $\sigma \in \cov(A)$, we have 
\begin{align*}
\cell(A*Q, \sigma) &= \{x : \aim(A*Q;x) = \sigma\} \\
 &= \{x : \aim(A;Qx) = \sigma\}  \\
 &= \{Q^*y : \aim(A;y) = \sigma\} 
 = Q^*\cell(A,\sigma).
\end{align*}
In particular, $A \mapsto A*Q$ sends $p_i$ to $Q^*p_i$, and so sends $u_i$ to $Q^*u_i$.
Thus, ${\coord(I;A*Q) =} Q^*\coord(I;A)$.
Furthermore, if $Q = \coord(I;A)$, then $\coord(I;A*Q) =Q^*Q=\id$.

Finally, the vertices $p_{\pm 1}$ and $p_{2}$ depend continuously on $A$ by Lemma \ref{lemma-vertex}, and $u_1,u_2$ depend continuously on these vertices, and $u_3$ depends continuously on $u_1,u_2$ up to change of sign, which is constant on connected components of the domain of $\coord(I)$.  Thus, $\coord(I)$ is continuous.
\end{proof}

Let $\disth$ be the sup metric on $\hom(\sphere^2)$, 
and let $\|\psi\| = \disth(\psi,\id)$.
For each rank 3 sign hyperfield-vector set $\mc{X}$, this gives a topologically equivalent metric on $\pstief(\mc{X})$ by 
\[\disth(A,B) = \|\wei(A)-\wei(B)\| + \inf \{ \|\psi\| :\ \psi \in \hom(\sphere^2) ,\ A*\psi = B \}.  \]

\begin{theorem}\label{theorem-disth}
For every rank 3 sign hyperfield-vector set $\mc{X}$, the metrics $\dist$ and $\disth$ induce the same topology on $\real(\mc{X})$.
\end{theorem}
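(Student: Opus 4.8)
The plan is to show that the identity map between the two metric spaces $(\real(\mc X),\dist)$ and $(\real(\mc X),\disth)$ is a homeomorphism by proving that each metric is bounded above by a function of the other that vanishes at $0$. One inequality is essentially free: given $A,B\in\real(\mc X)$ with $A*\psi=B$ for some $\psi\in\hom(\sphere^2)$, for each $i$ we may use (a positively oriented restriction of) $\psi$ as one of the competitor parameterizations in the infimum defining $\dist(\alpha_i,\beta_i)$, so that $\dist(\alpha_i,\beta_i)\le |r_i^A-r_i^B| + \|r_i^B\|\cdot\sup_x\|\psi(x)-x\|$, and taking the max over $i$ and the infimum over valid $\psi$ gives $\dist(A,B)\le C_{\mc X}\,\disth(A,B)$ for a constant depending only on the (bounded) weights on $\real(\mc X)$; since weights are bounded on a Parseval-frame space, this direction needs only a little care. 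So continuity of $\id:(\real(\mc X),\disth)\to(\real(\mc X),\dist)$ is routine.

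The substantive direction is the reverse: a sequence $A_k\to A_\infty$ in $\dist$ must converge in $\disth$. Since all $A_k$ and $A_\infty$ realize the \emph{same} $\mc X$, they all have the same covector set, hence (Remark \ref{remark-cell}) the same face poset, and in particular the same cell adjacency structure on $\sphere^2$. The idea is: first, $\dist$-convergence of the weighted pseudospheres forces Fréchet convergence of each kernel $S_{k,i}\to S_{\infty,i}$; second, using Lemma \ref{lemma-vertex} (both parts), the vertices of the arrangement $A_k$ — the pairwise intersection points $S_{k,i}\cap S_{k,j}$ — converge in Hausdorff distance to the corresponding vertices of $A_\infty$, and by induction up the face poset the same holds for every cell $\Cell(A_k,\sigma)\to\Cell(A_\infty,\sigma)$. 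Third, one builds homeomorphisms $\psi_k\in\hom(\sphere^2)$ with $A_k*\psi_k = A_\infty$ and $\|\psi_k\|\to 0$ by patching together, cell by cell according to the common regular CW structure, maps that send the $\sigma$-cell of $A_\infty$ to the $\sigma$-cell of $A_k$; because corresponding cells are close and the gluing is along lower-dimensional skeleta on which the maps already agree, the Alexander-trick / cone construction over each 2-cell produces a global homeomorphism whose sup-distance from the identity is controlled by the (uniformly small) Hausdorff distances between corresponding closed cells. Combined with $\|\wei(A_k)-\wei(A_\infty)\|\to 0$, this yields $\disth(A_k,A_\infty)\to 0$.

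The main obstacle is the third step: producing the reparameterizing homeomorphisms $\psi_k$ with quantitative control $\|\psi_k\|\to0$. The face poset being identical gives a combinatorial blueprint for the patching, and Lemma \ref{lemma-vertex} gives convergence of the $0$- and $1$-skeleta, but one must (i) choose, on each $1$-cell, a parameterization of the arc of $S_{k,i}$ between two converging endpoints that converges uniformly to the chosen parameterization for $S_{\infty,i}$ — this is where Fréchet (as opposed to mere Hausdorff) convergence of the pseudocircles is used, together with a uniform non-degeneracy (the arcs do not collapse, since the cell structure is fixed) — and (ii) extend across each $2$-cell by coning from an interior basepoint, checking that the cone map of a homeomorphism between two Jordan regions that are $\eps$-close (with $\eps$-close boundaries) is $O(\eps)$-close to the identity in the sup metric. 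Both (i) and (ii) are standard planar-topology facts once the combinatorial skeleton is pinned down, but assembling them into a single continuous-in-$k$ estimate is the part that requires care; the key structural input that makes it go through is precisely that $\real(\mc X)$ fixes the combinatorial type, so no cells appear, vanish, or change adjacency along the sequence.
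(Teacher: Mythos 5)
Your two-direction strategy is the paper's own: the easy direction is exactly as you describe (use the reparameterizing homeomorphism as a competitor in the Fréchet infimum; note only that $\real(\mc{X})=\pstief(\mc{X})$ carries no Parseval normalization, so weights are \emph{not} globally bounded there -- but a local bound near $A_\infty$ suffices, so this slip is harmless), and the substantive direction is to produce $\psi_k$ with $A_k*\psi_k=A_\infty$ and $\|\psi_k\|\to 0$. The paper, however, does not rebuild this by hand: it already has precisely this tool, the interpolation map $\interp$ of Lemma \ref{lemma-interp}, which satisfies $A*\interp(A,B)=B$, is jointly continuous on $\real(\mc{X})\times\real(\mc{X})$, and restricts to the identity on the diagonal (Observation \ref{remark-interp-id}); Theorem \ref{theorem-disth} is then a two-line consequence. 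Your Step 3 is essentially a re-derivation of $\interp$, with Lemma \ref{lemma-vertex} and Lemma \ref{lemma-path} playing the roles you assign them.

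The genuine soft spot is your 2-cell step. ``Coning from an interior basepoint'' is not a valid construction on a 2-cell of a pseudocircle arrangement: such a cell is a Jordan region bounded by pseudocircle arcs and need not be star-shaped, so the cone rays leave the cell and the purported cone map is not even well defined, let alone a homeomorphism. The Alexander trick applies only after the cell has been identified with the round disk, and the entire difficulty is to choose these identifications canonically enough that they vary continuously with the cell, so that the assembled $\psi_k$ is uniformly close to the identity and depends continuously on the data. That is exactly what the paper supplies via Theorem \ref{theorem-conformal} (Carath\'eodory/Rad\'o: conformal parameterizations pinned at three boundary vertices converge uniformly when the boundary curves converge in Fr\'echet distance and the marked points converge), used as a stand-in for the canonical Schoenflies theorem, followed by radial interpolation in disk coordinates. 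So the key input is not only that $\real(\mc{X})$ freezes the combinatorial type -- that gives the blueprint -- but the canonical, continuously varying disk parameterization of the 2-cells; your sketch labels this ``standard'' without identifying it, and the claimed uniform ``$O(\eps)$'' bound for the coned extension is neither established nor needed (convergence to the identity, which Rad\'o's theorem provides, is enough). With Theorem \ref{theorem-conformal} (or an explicit canonical Schoenflies argument) inserted at that point, your argument becomes the paper's construction of $\interp$, and the proof closes as in the paper.
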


\begin{warning}
$\disth$ does not extend to a metric on $\pstief_{3,n}$ that is topologically equivalent to $\dist$, except for the case $n=3$.  Indeed, there are sign hyperfield-vector sets $\mc{X} \neq \mc{Y}$ and sequences $A_k \in \real(\mc{X})$ where $A_k \to A_\infty \in \real(\mc{Y})$ with respect to $\dist$, but $A_k$ diverges with respect to $\disth$.
\end{warning}

To prove Theorems \ref{theorem-pseudoreal-contract} and \ref{theorem-grassman-pseudograssman} will make use of the following.

\begin{theorem}[Kneser 1926 \cite{Kneser1926deformationssatze}, see also \cite{friberg1973topological}]\label{theorem-kneser}
There is a strong\ ~$\orth_3$-equivariant deformation retraction 
\[\ho : \hom(\sphere^2) \times [0,1]_\mb{R} \to \hom(\sphere^2)\] 
from homeomorphisms of the 2-sphere $\hom(\sphere^2)$ to the orthogonal group $\orth_3$,
where $Q \in \orth_3$ acts by precomposition, i.e.\ $fQ = f \circ Q$.
\end{theorem}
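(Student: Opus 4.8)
The plan is to reduce to orientation-preserving maps, establish the classical homotopy equivalence $\hom^+(\sphere^2)\simeq\sorth_3$ through the configuration space of three points, and then bootstrap it to a strong $\sorth_3$-equivariant deformation retraction using that $\hom^+(\sphere^2)$ is, up to $\sorth_3$-equivariant homeomorphism, a product $\sorth_3\times(\text{contractible})$. Fixing a reflection $r$ (so $r^2=\id$), write $\orth_3=\sorth_3\sqcup\sorth_3 r$ and $\hom(\sphere^2)=\hom^+(\sphere^2)\sqcup\hom^+(\sphere^2)\,r$. It then suffices to produce a strong $\sorth_3$-equivariant deformation retraction $\ho^+$ of $\hom^+(\sphere^2)$ onto $\sorth_3$ that is stationary on $\sorth_3$ and set $\ho(fr,t):=\ho^+(f,t)\,r$; the two halves glue to the required $\orth_3$-equivariant $\ho$. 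The one geometric fact doing real work is the \emph{Alexander trick}: $\hom(\Ball^2,\partial\Ball^2)$ is contractible via the canonical null-homotopy $f_t(x)=t\,f(x/t)$ for $\|x\|\le t$ and $f_t(x)=x$ for $\|x\|\ge t$.

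\textbf{The homotopy equivalence.} Put three distinct points $p_1,p_2,p_3$ on $\sphere^2$ and consider $\mathrm{ev}\colon\hom^+(\sphere^2)\to\conf_3(\sphere^2)$, $f\mapsto(f(p_1),f(p_2),f(p_3))$, into the ordered configuration space. This is a locally trivial fibre bundle (the action is transitive, and local sections come from dragging three nearby points inside disjoint disks), with fibre $H=\{f:f(p_i)=p_i\}$. I would show $H$ is contractible: fixing an embedded tripod $T$ joining the $p_i$, cutting $\sphere^2$ along $T$ yields a closed disk with $T$ doubled on its boundary, so the homeomorphisms fixing $T$ pointwise form a copy of $\hom(\Ball^2,\partial\Ball^2)$, contractible by the Alexander trick; then $H$ fibres over the (contractible) space of tripods joining the $p_i$. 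For the base, identifying $\sphere^2=\mb{CP}^1$ gives $\conf_3(\sphere^2)\cong\mathrm{PGL}_2(\mb{C})$ via the unique Möbius transformation carrying $(0,1,\infty)$ to the given triple, and $\mathrm{PGL}_2(\mb{C})$ deformation retracts by polar decomposition onto its maximal compact subgroup $\cong\sorth_3$. Taking $(p_1,p_2,p_3)=(0,1,\infty)$, the composite $\sorth_3\hookrightarrow\hom^+(\sphere^2)\xrightarrow{\mathrm{ev}}\conf_3(\sphere^2)$ is, under this identification, the inclusion of the maximal compact subgroup, hence a homotopy equivalence; since $\mathrm{ev}$ is a homotopy equivalence too (contractible fibre), $2$-out-of-$3$ gives that $\sorth_3\hookrightarrow\hom^+(\sphere^2)$ is a homotopy equivalence.

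\textbf{Bootstrapping to the theorem.} As $\sorth_3$ is a compact Lie group acting freely on $\hom^+(\sphere^2)$ by precomposition, the orbit map $\pi\colon\hom^+(\sphere^2)\to Q:=\hom^+(\sphere^2)/\sorth_3$ is a principal $\sorth_3$-bundle; its long exact sequence together with the previous step forces $Q$ to have trivial homotopy groups, and $Q$ is an ANR, hence contractible. A principal bundle over a contractible paracompact base is trivial, so choosing a section $s\colon Q\to\hom^+(\sphere^2)$ with $s(q_0)=\id$ where $q_0:=\pi(\sorth_3)$, the map $(R,q)\mapsto s(q)R$ is an $\sorth_3$-equivariant homeomorphism $\sorth_3\times Q\to\hom^+(\sphere^2)$ that restricts to the inclusion on $\sorth_3\times\{q_0\}\cong\sorth_3$. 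Picking a strong deformation retraction $c$ of the contractible ANR $Q$ onto $q_0$ and transporting it, $\ho^+(s(q)R,t):=s(c(q,t))R$ is a strong, $\sorth_3$-equivariant deformation retraction of $\hom^+(\sphere^2)$ onto $\sorth_3$ that is stationary on $\sorth_3$; gluing in the reflected copy completes the construction.

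\textbf{Main obstacle.} The homotopy-theoretic bookkeeping above is routine; the weight of the theorem sits in the inputs that the homeomorphism groups in play (and their quotient $Q$) are ANRs, that $\pi$ is an honest principal bundle, and above all that the fibre $H$ of homeomorphisms fixing three points is contractible — the step that requires genuine $2$-dimensional topology (cutting along arcs, Schoenflies, the Alexander trick) rather than algebra. Making all of this rigorous while keeping every choice simultaneously continuous in $f$, $\sorth_3$-equivariant, and stationary on $\sorth_3$ is precisely the delicate work of \cite{Kneser1926deformationssatze} and \cite{friberg1973topological}, which is why we cite the theorem rather than reprove it. (Consistency check: smoothly this is Smale's $\mathrm{Diff}(\sphere^2)\simeq\orth_3$, compatible with $\hom(\sphere^2)\simeq\mathrm{Diff}(\sphere^2)$ in dimension two, though neither yields the strong equivariant statement directly.)
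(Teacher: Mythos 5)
Your outline is sound and does reach the stated equivariant retraction, but it takes a genuinely different route from the paper's. The paper treats the existence of a (non-equivariant) strong deformation retraction $\kappa$ of $\hom(\sphere^2)$ onto $\orth_3$ as the cited black box, and its real content is Observation \ref{observation-kneser-equivariant}: the equivariance is obtained by an explicit gauge fixing, setting $\gamma(f)=\coord((1,2,3);A(f))$ where $A(f)$ is the arrangement of images of the coordinate great circles under $f$, so that by Lemma \ref{lemma-basis} $\gamma$ is continuous, satisfies $\gamma(f\circ Q)=Q^{*}\gamma(f)$, and equals $f^{-1}$ on $\orth_3$; then $\ho(f,t)=\kappa(f\circ\gamma(f),t)\circ\gamma(f)^{-1}$ is strong, precomposition-equivariant, and handles both orientation components at once. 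You instead (i) re-derive the underlying homotopy equivalence via the evaluation fibration over $\conf_3(\sphere^2)\cong\mathrm{PGL}_2(\mb{C})$ and the Alexander trick, and (ii) get equivariance abstractly: freeness of the $\sorth_3$-action plus local cross-sections make $\hom^+(\sphere^2)\to\hom^+(\sphere^2)/\sorth_3$ a principal bundle, the long exact sequence and the ANR property make the quotient contractible, triviality over a contractible paracompact base gives a global section, and you transport a strong deformation retraction of the quotient, finishing with the reflection gluing (whose equivariance bookkeeping does check out). Conceptually the two upgrades are the same move --- your section is exactly the data the paper builds by hand as $f\mapsto f\circ\gamma(f)$ --- but they buy different things: the paper's version is short, elementary, and self-contained given Lemma \ref{lemma-basis} (no ANR theory, no local-cross-section theorem, no contractibility of the three-point stabilizer), and it inherits strongness directly from $\kappa$; yours never uses $\kappa$ itself, only the homotopy equivalence, at the cost of heavier external inputs (the ANR property of the homeomorphism group and its quotient, local triviality of the orbit map, and the genuinely two-dimensional contractibility of the stabilizer of three points), which are precisely the steps your sketch leaves to the literature, just as the paper leaves the non-equivariant retraction to Kneser and Friberg.
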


\begin{observation}\label{observation-kneser-equivariant}
Although the deformation in \cite{friberg1973topological} is not $\orth_3$-equivariant, 
once we have a strong deformation retraction from $\hom(\sphere^2)$ to $\orth_3$ the deformation can be modified to be equivariant using Lemma \ref{lemma-basis}
\end{observation}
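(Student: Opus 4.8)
The plan is to produce the required equivariant deformation by \emph{conjugating} the non-equivariant one by a canonically chosen orthogonal transformation, the latter being extracted from an arrangement via $\coord$. Let $F\colon \hom(\sphere^2)\times[0,1]_\mb{R}\to\hom(\sphere^2)$ be the strong deformation retraction onto $\orth_3$ obtained from \cite{friberg1973topological}, so $F(f,0)=f$, $F(f,1)\in\orth_3$, and $F(R,t)=R$ for all $R\in\orth_3$. First I would build a continuous map $c\colon\hom(\sphere^2)\to\orth_3$ playing the role of a ``canonical orthogonal part'', and then define
\[
\ho(f,t)\dfeq F\left(f\circ c(f),\,t\right)\circ c(f)^{*}.
\]

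To construct $c$, fix once and for all the standard Parseval frame $A_0=(e_1,e_2,e_3)\in\stief_{3,3}\subseteq\pstief_{3,3}$ and the ordered basis $I=(1,2,3)$ of $A_0$. Precomposition by a self-homeomorphism of $\sphere^2$ bijects $\sphere^2$ and therefore leaves the covector set unchanged, $\cov(A_0*f)=\cov(A_0)$; hence $I$ remains a basis of $A_0*f$ for \emph{every} $f\in\hom(\sphere^2)$, and $A_0*f$ is again spanning, so $A_0*f\in\pstief_{3,3}$ and $c(f)\dfeq\coord(I;A_0*f)$ is defined on all of $\hom(\sphere^2)$. Since $f\mapsto A_0*f$ is continuous into $\pstief_{3,3}$ and $I$ is always a basis, Lemma~\ref{lemma-basis}(1) makes $c$ continuous. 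Using $(A_0*f)*Q=A_0*(f\circ Q)$ and Lemma~\ref{lemma-basis}(2) I get the equivariance-type identity $c(f\circ Q)=Q^{*}c(f)$ for $Q\in\orth_3$, and Lemma~\ref{lemma-basis}(3) gives $c(\id)=\id$, whence $c(R)=R^{*}$ for every $R\in\orth_3$.

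Granting these properties of $c$, checking that $\ho$ works is then a short computation. We have $\ho(f,0)=(f\circ c(f))\circ c(f)^{*}=f$; $\ho(f,1)=F(f\circ c(f),1)\circ c(f)^{*}$ is a composite of two maps in $\orth_3$, hence lies in $\orth_3$; for $R\in\orth_3$, $\ho(R,t)=F(R\circ R^{*},t)\circ(R^{*})^{*}=F(\id,t)\circ R=R$ since $F$ fixes $\id$; and for $Q\in\orth_3$ the identity $c(f\circ Q)=Q^{*}c(f)$ yields both $(f\circ Q)\circ c(f\circ Q)=f\circ c(f)$ and $c(f\circ Q)^{*}=c(f)^{*}\circ Q$, so that
\[
\ho(f\circ Q,t)=F(f\circ c(f),t)\circ c(f)^{*}\circ Q=\ho(f,t)\circ Q.
\]
Continuity of $\ho$ follows from continuity of $F$, of $c$, and of composition and inversion on $\hom(\sphere^2)$.

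The step I expect to be the main obstacle is not any single deep point but rather verifying that everything feeding into $c$ is legitimate: that precomposition by an arbitrary homeomorphism of $\sphere^2$ genuinely preserves the covector set (so that $\coord(I;\cdot)$ stays defined and continuous along the whole family $\{A_0*f\}$), and that composition and inversion are continuous on $\hom(\sphere^2)$ in the sup metric --- the latter being classical for homeomorphism groups of compact manifolds. Note that part~(4) of Lemma~\ref{lemma-basis} is not actually needed for this observation, although it records the idempotence $c(f\circ c(f))=\id$ that makes this normalization well behaved.
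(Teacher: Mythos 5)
Your construction is essentially identical to the paper's: the arrangement $A_0*f=((1,\aim(e_1)\circ f),(1,\aim(e_2)\circ f),(1,\aim(e_3)\circ f))$ is exactly the paper's $A(f)$, your $c(f)=\coord((1,2,3);A_0*f)$ is its $\gamma(f)$, and the conjugated deformation $\ho(f,t)=F(f\circ c(f),t)\circ c(f)^{*}$ together with the equivariance and retraction checks matches the paper's argument line for line. The proposal is correct and takes the same approach.
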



\begin{proof}[Proof of Observation \ref{observation-kneser-equivariant}]
Let $\kappa$ be a strong deformation retraction from $\hom(\sphere^2)$ to $\orth(3)$.  
For $f \in \hom(\sphere^2)$ let 
\[ A(f) = ((1,\aim(e_1)\circ f), (1,\aim(e_2)\circ f), (1,\aim(e_3)\circ f)),
\quad
\gamma(f) = \coord((1,2,3);A(f)) \]

For $Q \in \orth_3$ we have $\gamma(f\circ Q) = Q^{-1} \circ \gamma(f)$ by Lemma \ref{lemma-basis}.  Furthermore, if $f \in \orth_3$, then $\gamma(f) = f^{-1} \circ \gamma(\id) = f^{-1}$.

Let
\[ \ho(f,t) = \kappa(f \circ \gamma(f),t) \circ \gamma(f)^{-1}. \]
We have 
\begin{align*}
\ho(f \circ Q,t)
&= \kappa(f \circ Q \circ \gamma(f \circ Q),t) \circ \gamma(f \circ Q)^{-1} \\
&= \kappa(f \circ Q \circ  Q^{-1} \circ \gamma(f),t) \circ (Q^{-1} \circ \gamma(f))^{-1}  \\
&= \kappa(f \circ \gamma(f),t) \circ \gamma(f)^{-1} \circ Q \\
&= \ho(f ,t) \circ Q. 
\end{align*}
Thus, $\ho$ is equivariant with respect to the action of $\orth_3$ by precomposition.
To check $\ho$ is a strong deformation retraction, observe 
\[\ho(f,0) = f \circ \gamma(f) \circ \gamma(f)^{-1} = f, \]
\[\ho(f,1) = \kappa(f \circ \gamma(f),1) \circ \gamma(f)^{-1} \in \orth_3, \]
and if $f \in \orth_3$, then
\[\ho(f,t) = \kappa(f \circ f^{-1},t) \circ f = f. \qedhere \]
\end{proof}

\begin{question}
Note that we could alternatively make a deformation that is $\orth_3$-equivariant with respect to postcomposition in Theorem~\ref{theorem-kneser} by considering the inverse of the map being deformed, but we cannot do this on both sides at the same time.  Is there a deformation that is equivariant with respect to both postcomposition and precomposition?
\end{question}

\begin{theorem}[Radó 1923 \cite{rado1923representation}, reformulated]\label{theorem-conformal}
Fix $u,v,w \in \sphere^1$. 
Let $S_k$ be a simple closed curve in $\sphere^2$ and $a_k,b_k,c_k \in S_k$ distinct for each $k \in \{1,2,\dots,\infty\}$ such that $S_k \to S_\infty$ in Fréchet distance and $a_k \to a_\infty$, $b_k \to b_\infty$, $c_k \to c_\infty$. 
Then, there is a unique homeomorphism $h_k$ for each $k$ from the closed unit disk to the closed region bounded by $S_k$ that is conformal on the interior of the disk and sends $u,v,w$ 
respectively to $a_k,b_k,c_k$.
Furthermore, $h_k$ converges uniformly to $h_\infty$.
\end{theorem}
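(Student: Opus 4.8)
The plan is to dispatch existence and uniqueness first, and then to reduce the convergence claim to a single analytic estimate. For a fixed $k$, the curve $S_k$ separates $\sphere^2$ into two Jordan domains by the Jordan curve theorem; I take $\Omega_k$ to be the one whose positively oriented boundary meets $a_k,b_k,c_k$ in the same cyclic order as $u,v,w$ occur on $\sphere^1$, which is the convention implicit in ``the closed region bounded by $S_k$''. By the Riemann mapping theorem there is a biholomorphism of the open unit disk onto $\Omega_k$, and since $\partial\Omega_k = S_k$ is a Jordan curve, Carathéodory's extension theorem upgrades it to a homeomorphism $g_k : \Ball^2 \to \overline{\Omega_k}$ that is conformal on the open disk. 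Any two such homeomorphisms differ by an automorphism of the disk, and the Möbius automorphism group of the disk acts simply transitively on positively oriented boundary triples, so there is a unique automorphism $m_k$ with $g_k\circ m_k$ sending $(u,v,w)$ to $(a_k,b_k,c_k)$, and $h_k \dfeq g_k \circ m_k$ is the unique homeomorphism with the asserted properties. Applied to $k=\infty$ this also yields uniqueness of $h_\infty$, which I will invoke below.

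For the convergence, I claim everything follows once we know the family $\{h_k\}_{k<\infty}$ is equicontinuous on $\Ball^2$ with a modulus of continuity independent of $k$. Granting this, Arzelà--Ascoli makes $\{h_k\}$ precompact in $C(\Ball^2,\sphere^2)$, so it suffices to show that every uniformly convergent subsequence $h_{k_j}\to h$ has limit $h=h_\infty$. Such a limit is continuous on $\Ball^2$; on the open disk it is a locally uniform limit of conformal maps, hence holomorphic, and it is non-constant because $h(u),h(v),h(w)=\lim_j(a_{k_j},b_{k_j},c_{k_j})=(a_\infty,b_\infty,c_\infty)$ are distinct. By the open mapping theorem $h$ carries the open disk to an open subset of $\overline{\Omega_\infty}$, hence into $\Omega_\infty$ since $S_\infty$ has empty interior; by Hurwitz's theorem $h$ is injective there; and a short point-chasing argument using the equicontinuity together with $S_k\to S_\infty$ shows $h$ is onto $\Omega_\infty$; moreover $h(\sphere^1)\subset S_\infty$ because $h_{k_j}(\sphere^1)=S_{k_j}\to S_\infty$. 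Thus $h$ is a homeomorphism $\Ball^2\to\overline{\Omega_\infty}$, conformal on the open disk, carrying $(u,v,w)$ to $(a_\infty,b_\infty,c_\infty)$, so $h=h_\infty$ by the uniqueness already established. Since every convergent subsequence has the same limit, $h_k\to h_\infty$ uniformly.

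It remains to prove the uniform equicontinuity, which is the real content of the theorem and the step I expect to be the main obstacle. Interior equicontinuity on each compact subset of the open disk is routine via comparison of the Euclidean and hyperbolic metrics (Koebe/Schwarz-type distortion bounds, using that the images $\Omega_k$ sit in the fixed compact surface $\sphere^2$). Equicontinuity near $\sphere^1$ is the Courant--Lebesgue argument: given $\zeta_0\in\sphere^1$ and small $\rho$, a length--area average over the circular crosscuts $C_{k,t}=\{z:|z-\zeta_0|=t\}\cap\Ball^2$ for $t\in[\rho^2,\rho]_\mb{R}$, whose total is controlled by the area of $\Omega_k$ and hence by the area of $\sphere^2$, yields for each $k$ a radius $t_k$ for which $h_k(C_{k,t_k})$ has small diameter; its two endpoints lie on $S_k$, so by the modulus of local connectivity of the curves $S_k$---which is uniform in $k$ precisely because $S_k\to S_\infty$ in Fréchet distance and $S_\infty$ is a fixed Jordan curve, hence uniformly locally connected---the short crosscut $h_k(C_{k,t_k})$ together with a short subarc of $S_k$ bounds a subregion of $\Omega_k$ of uniformly small diameter containing the image of the corresponding boundary neighbourhood of $\zeta_0$. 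Combining the interior and boundary estimates gives a modulus of continuity on all of $\Ball^2$ independent of $k$, which would complete the proof.
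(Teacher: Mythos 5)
Your proposal is correct in outline, but it takes a genuinely different route from the paper. The paper does not reprove the convergence statement at all: it observes that existence follows from Riemann plus Carath\'eodory, quotes Rad\'o's 1923 theorem for the uniform convergence of the (unnormalized) extended Riemann maps when the curves converge in Fr\'echet distance, and then handles the three-point normalization by composing with the disk automorphism determined by the three boundary points, which depends continuously on them via the cross-ratio formula; uniform convergence of the normalized maps $h_k$ then follows from uniform convergence of the unnormalized maps and of the automorphisms. You instead give a self-contained proof of the Rad\'o-type statement itself: existence and uniqueness as in the paper, then uniform equicontinuity via Courant--Lebesgue length--area estimates together with a modulus of local connectivity for the curves $S_k$ that is uniform in $k$ (correctly extracted from Fr\'echet convergence to the fixed Jordan curve $S_\infty$), followed by Arzel\`a--Ascoli, Hurwitz, and identification of every subsequential limit with $h_\infty$ by the uniqueness already proved. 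What the paper's route buys is brevity and a clean reduction to cited classical results, which is appropriate since the theorem is attributed to Rad\'o; what your route buys is a proof that makes explicit exactly where the Fr\'echet hypothesis enters (uniform local connectivity) and that yields the normalized maps directly, without passing through an interior normalization. Two points in your sketch still need to be written out: in the Courant--Lebesgue step you must argue that the subregion of $\Omega_k$ cut off by the short crosscut is the one bounded by the \emph{short} boundary arc, and the standard way to do this uniformly in $k$ is to use that at most one of $u,v,w$ lies in the small disk about $\zeta_0$ while the pairwise distances of $a_k,b_k,c_k$ are bounded below uniformly in $k$ (by convergence to three distinct points); and in the limit-identification step the inclusion $h(\Ball^2)\subseteq\overline{\Omega_\infty}$ and the surjectivity of $h|_{\sphere^1}$ onto $S_\infty$ use that the regions $\overline{\Omega_k}$, chosen consistently with the cyclic order of the marked points, converge to $\overline{\Omega_\infty}$ in Hausdorff distance, which follows from Fr\'echet convergence but deserves a sentence. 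With those details filled in, your argument is a complete and valid alternative to the paper's citation-based reformulation.
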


The above theorem is just useful to the present paper as a variation of the Canonical Schoenflies Theorem in dimension 2 \cite{gauld1971canonical}.  We do not make use of conformality.  An important distinction is that the Canonical Schoenflies Theorem provides a parameterization of a Jourdan region that depends on an embedding of a cylinder, which is to say that it depends on a parameterization of a pseudocircle along with a collar neighborhood of that pseudocircle.  In higher dimensions this may be necessary, but since we are working only in $\sphere^2$, we can make use of a parameterization that depends only on a pseudocircle and the image of 3 points on the pseudocircle.
Another important feature of this variation is that it is $\orth_3$-equivariant.

Theorem \ref{theorem-conformal} is essentially a reformulation of a theorem of Tibor Radó \cite{rado1923representation}, see also \cite[Section II.5 Theorem 2]{goluzin1969geometric}.  Briefly, this reformulation is as follows.  
The Riemann Mapping Theorem implies that every simply connected open region in $\sphere^2$ is conformally equivalent to the open unit disk. Carathéodory showed that if the region is bounded by a simple closed curve $S \subset \sphere^2$, then this conformal map extends to a homeomorphism $h$ from the closed unit disk to the closure of the region \cite{caratheodory1913gegenseitige}.  Radó showed that if simple closed curves $S_k$ have parameterizations that converge uniformly, then there exist such maps $h_k$ that converge uniformly \cite{rado1923representation}.  Finally, the image of 3 distinct points on the unit circle determines a conformal automorphism of the unit disk depending uniformly continuously on the 3 points.  One way to see this is as follows.  Conformal automorphisms of the unit disk correspond bijectively via conjugation by the map $z \mapsto \frac{z+i}{iz+1}$ to conformal automorphisms of the upper half plane of $\mb{C}$, 
which are precisely the complex extensions of projective automorphisms of the real projective line.  Projective automorphisms of the line are determined bijectively by the image of any 3 points.  
Specifically, the automorphism $\varphi$ that respectively sends $0,1,\infty$ to the distinct values $x_0,x_1,x_\infty \in \mb{R}\cup\{\infty\}$ is given by the linear fractional transformation 
$\varphi(z) = \frac{(z-x_0)(x_1-x_\infty)}{(z-x_\infty)(x_1-x_0)}$,  which is also the cross-ratio $(z,x_1;x_0,x_\infty)$. 

\subsection{Interpolation}

Given a rank 3 sign hyperfield-vector set $\mc{X}$, we define a map 
\[ \interp : \real(\mc{X}) \times \real(\mc{X}) \to \hom(\sphere^2) \]
such that $A*\interp(A,B) = B$. 
Equivalently, for each $\sigma \in \mc{X}$, $\interp(A,B;\cell(B,\sigma)) = \cell(A,\sigma)$.

To define $\interp$, we use Theorem \ref{theorem-conformal} to parameterize the 2-cells of a given pseudocircle arrangement. 
We define $\interp(A,B)$ first on 0-cells then 1-cells then 2-cells; see Figure \ref{figure-interp}.  Let $\mc{V}_d$ be the set of all $d$-dimensional sign hyperfield-vectors of $\mc{X}$.  Equivalently, $\mc{V}_d$ is the set of all $\sigma$ such that $\Cell(A;\sigma)$ is a $d$-dimensional cell of $A$.
Note that as a subset of the sphere, the Hausdorff dimension of $\Cell(A;\sigma)$ might be higher than $d$.

\begin{figure}

\begin{center}
\begin{tikzpicture}

\begin{scope}[shift={(-4.3cm,-.2cm)},scale=1.3]

\shade[ball color=black!5, white, opacity=.5]
circle (1)
;

\path 
(20:1.2) coordinate (X)
(270:1.2) coordinate (X1)
;

\draw[thick,blue]
(-60:1) .. controls +(160:.7) and +(50:-.3) .. 
(0,0) .. controls +(50:.6) and +(20:.3) .. (120:1)
;

\draw[thick,red]
(0:1) .. controls +(200:.3) and +(-20:.6) .. 
(0,0) .. controls +(-20:-.6) and +(20:.3) .. (0:-1)
;

\draw[thick]
(75:1) .. controls +(260:.3) and +(90:.6) .. 
(0:-.8) .. controls +(90:-.6) and +(120:.3) .. (75:-1)
;

\draw[thick,green!50!black]
(-20:-1) .. controls +(45:.4) and +(100:1) .. (-15:1)
;

\draw[thick,olive]
(270:1) .. controls +(130:.5) and +(210:.5) .. 
(-55:.25) .. controls +(210:-.6) and +(-20:.7) .. (90:1)
;

\draw[thick,violet]
(40:1) .. controls +(85:-.4) and +(-15:.8) .. 
(80:.2) .. controls +(-15:-.6) and +(45:.4) .. (50:-1)
;


\begin{scope}

\clip 
(50:-1) arc (230:40:1)
 .. controls +(85:-.4) and +(-15:.8) .. 
(80:.2) .. controls +(-15:-.6) and +(45:.4) .. (50:-1)
;

\clip
(75:1) .. controls +(260:.3) and +(90:.6) .. 
(0:-.8) .. controls +(90:-.6) and +(120:.3) .. (75:-1)
arc (-105:75:1)
;

\clip
(-20:-1) .. controls +(45:.4) and +(100:1) .. (-15:1)
arc (-15:-200:1)
;

\clip
(-60:1) .. controls +(160:.7) and +(50:-.3) .. 
(0,0) .. controls +(50:.6) and +(20:.3) .. (120:1)
arc (120:300:1)
;


\clip 
(0:-1) arc (180:0:1)
 .. controls +(200:.3) and +(-20:.6) .. 
(0,0) .. controls +(-20:-.6) and +(20:.3) .. (0:-1)
;

\fill[orange,opacity=0.3]
(0,0) circle (1);

\end{scope}

\draw[thick,fill=white]
(81:.52) circle (1pt)
;

\draw[thick,fill=yellow]
(175:.8) circle (1pt)
;

\draw[thick,fill=orange]
(165:.34) circle (1pt)
;

\end{scope}

\begin{scope}[shift={(-.2cm,-.2cm)},scale=1.3, rotate=-60]

\shade[ball color=black!5, white, opacity=.5]
circle (1)
;

\begin{scope}[rotate=60]
\path 
(160:1.2) coordinate (Y)
(270:1.2) coordinate (Y1)
;
\end{scope}

\begin{scope}[rotate = 0]
\draw[thick,blue]
(1,0) arc (0:180:1 and .3)
;\end{scope}

\begin{scope}[rotate = 42]
\draw[thick,green!50!black]
(1,0) arc (0:180:1 and .5)
;\end{scope}

\begin{scope}[rotate = 144]
\draw[thick]
(1,0) arc (0:180:1 and .6)
;\end{scope}

\begin{scope}[rotate = 256]
\draw[thick,red]
(1,0) arc (0:180:1 and .4)
;\end{scope}

\begin{scope}[rotate = 288]
\draw[thick,violet]
(1,0) arc (0:180:1 and .35)
;\end{scope}

\begin{scope}
\clip
(1,0) arc (0:180:1 and .3) arc (180:360:1);
\begin{scope}[rotate = 42]
\clip
(1,0) arc (0:180:1 and .5) arc (180:360:1);
\begin{scope}[rotate = 102]
\clip
(1,0) arc (0:180:1 and .6) arc (180:360:1);
\begin{scope}[rotate = 144]
\clip
(1,0) arc (0:180:1 and .35) arc (180:360:1);
\begin{scope}[rotate = -32]

\clip
(1,0) arc (0:180:1 and .4) arc (180:360:1);

\fill[orange,opacity=0.3]
(0,0) circle (1);

\end{scope}
\end{scope}
\end{scope}
\end{scope}
\end{scope}

\draw[thick,olive]
(170:1) .. controls +(60:.3) and +(0:-.3) .. 
(80:.45) .. controls +(0:.7) and +(80:.3) .. (-20:1)
;


\draw[thick,fill=white]
(149:.51) circle (1pt)
;

\draw[thick,fill=yellow]
(280:.73) circle (1pt)
;

\draw[thick,fill=orange]
(166:-.4) circle (1pt)
;

\end{scope}

\begin{scope}[shift={(-4.3cm,-3.2cm)},scale=.8]

\fill[green!50!black!25] (0,0) -- (0:1) arc (0:70:1);
\fill[black!25] (0,0) -- (70:1) arc (70:120:1);
\fill[red!25] (0,0) -- (120:1) arc (120:240:1);
\fill[violet!25] (0,0) -- (240:1) arc (240:330:1);
\fill[blue!25] (0,0) -- (330:1) arc (330:360:1);

\draw[ultra thick,green!50!black] (0:1) arc (0:70:1);
\draw[ultra thick] (70:1) arc (70:120:1);
\draw[ultra thick,red] (120:1) arc (120:240:1);
\draw[ultra thick,violet] (240:1) arc (240:330:1);
\draw[ultra thick,blue] (330:1) arc (330:360:1);


\draw[thick,fill=white]
(0:1) circle (1.5pt)
;

\draw[thick,fill=yellow]
(120:1) circle (1.5pt)
;

\draw[thick,fill=orange]
(240:1) circle (1.5pt)
;

\path
(90:1.2) coordinate (X2)
(0:1.2) coordinate (X3)
;

\end{scope}

\begin{scope}[shift={(-.2cm,-3.2cm)},scale=.8]

\fill[green!50!black!25] (0,0) -- (0:1) arc (0:30:1);
\fill[black!25] (0,0) -- (30:1) arc (30:120:1);
\fill[red!25] (0,0) -- (120:1) arc (120:240:1);
\fill[violet!25] (0,0) -- (240:1) arc (240:270:1);
\fill[blue!25] (0,0) -- (270:1) arc (270:360:1);

\draw[ultra thick,green!50!black] (0:1) arc (0:30:1);
\draw[ultra thick] (30:1) arc (30:120:1);
\draw[ultra thick,red] (120:1) arc (120:240:1);
\draw[ultra thick,violet] (240:1) arc (240:270:1);
\draw[ultra thick,blue] (270:1) arc (270:360:1);


\draw[thick,fill=white]
(0:1) circle (1.5pt)
;

\draw[thick,fill=yellow]
(120:1) circle (1.5pt)
;

\draw[thick,fill=orange]
(240:1) circle (1.5pt)
;

\path
(90:1.2) coordinate (Y2)
(180:1.2) coordinate (Y3)
;

\end{scope}

\path[->]
 (X) edge[bend left] node[auto] {\footnotesize $\text{interp}(A,B)$} (Y)
;

\path[<->]
 (X1) edge node[auto] {\footnotesize conformal} (X2)
 (Y1) edge node[auto] {\footnotesize conformal} (Y2)
 (Y3) edge node[auto,align=center] {\footnotesize interpolate \\ \footnotesize  from boundary} (X3)
;

\end{tikzpicture}
\end{center}

\caption{$\interp(A,B)$ defined on a 2-cell.}
\label{figure-interp}

\end{figure}
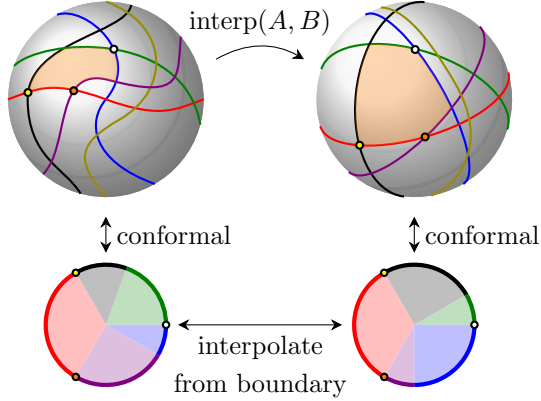

For each $\sigma \in \mc{V}_{2}$, fix a choice of 3 distinct elements $\upsilon_1,\upsilon_2,\upsilon_3 \in \mc{V}_{0}$ with $\upsilon_i <_\vv \sigma$; lets say the 3 lexicographically smallest elements.  These are the sign hyperfield-vectors of 3 vertices of the boundary of $\cell(A,\sigma)$ for $A \in \real(\mc{X})$.
Let $p_i = (\cos (2\pi i/3), \sin (2\pi i/3)) \in \sphere^1$ for $i \in \{1,2,3\}$ be the 3 points spaced uniformly about the unit circle so $p_3 = e_1$.
Let
\[ \conf_{A,\sigma}: \Ball^2 \to \cell(A,\sigma) \]
be the unique homeomorphism as in Theorem \ref{theorem-conformal} that is conformal on the interior and respectively sends $p_i$ to the vertex with sign hyperfield-vector $\upsilon_i$, so $\cell(A,\upsilon_i) = \{\conf_{A,\sigma}(p_i)\}$.

For the 0-cells there is only one possibility, since a 0-cell is just a single point, so 
for $\upsilon \in \mc{V}_{0}$, 
let $\interp(A,B;\cell(B,\upsilon)) = \cell(A,\upsilon)$.

For each $\tau \in \mc{V}_{1}$, fix a sign hyperfield-vector $\sigma \in \mc{V}_{2}$ such that $\sigma >_\vv \tau$ and let 
\[\phi_{A,\tau} : [0,1]_\mb{R} \to \partial{}\Ball^2\] be the positively oriented constant speed parameterization of the arc $\conf_{A,\sigma}^{\,-1}(\cell(A,\tau))$.
Let 
\[ \interp(A,B) = \conf_{A,\sigma}^{\phantom1} \circ \phi_{A,\tau}^{\phantom1} \circ \phi_{B,\tau}^{-1} \circ \conf_{B,\sigma}^{-1} \quad 
\text{on } \cell(B,\tau).\]

For each $\sigma \in \mc{V}_{2}$, let 
$ \phi_{A,B,\sigma} : \Ball^2 \to \Ball^2 $ by 
\[ \phi_{A,B,\sigma}(x) = \|x\| \left(\conf_{A,\sigma}^{-1} \circ \interp(A,B)^{\phantom1} \circ \conf_{B,\sigma}^{\phantom1}(x/\|x\|) \right) \]
for $x \neq 0$, let $\phi_{A,B,\sigma}(0)=0$, 
and let 
\[ \interp(A,B) = \conf_{A,\sigma}^{\phantom1} \circ \phi_{A,B,\sigma}^{\phantom1} \circ \conf_{B,\sigma}^{-1} \quad
\text{on } \cell(B,\sigma).\]

\begin{lemma}\label{lemma-interp}
For any rank 3 sign hyperfield-vector set $\mc{X}$, $\interp$ is continuous on $\real(\mc{X}) \times \real(\mc{X})$ and $A* \interp(A,B) = B$.
\end{lemma}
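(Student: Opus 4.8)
The plan is to verify the two assertions of Lemma~\ref{lemma-interp} separately: first that $A*\interp(A,B) = B$ as maps on $\sphere^2$, and second that $\interp$ depends continuously on the pair $(A,B)$. For the algebraic identity, I would argue cell-by-cell using the cell decomposition from Remark~\ref{remark-cell}. Since $\interp(A,B)$ is defined piecewise over the closed $2$-cells $\Cell(B,\sigma)$ for $\sigma \in \mc{V}_2$, it suffices to check on each such cell that $\interp(A,B)$ restricts to a homeomorphism $\cell(B,\sigma) \to \cell(A,\sigma)$ and, more strongly, that it carries $\cell(B,\rho)$ to $\cell(A,\rho)$ for every $\rho \leq_\vv \sigma$; this is exactly the condition $A*\interp(A,B) = B$ rewritten via $\aim(A;\interp(A,B)(x)) = \aim(B;x)$. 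I would do this by a downward induction on dimension: for $0$-cells the identity holds by definition; for a $1$-cell $\tau$ the formula $\conf_{A,\sigma} \circ \phi_{A,\tau} \circ \phi_{B,\tau}^{-1} \circ \conf_{B,\sigma}^{-1}$ is a composition of homeomorphisms sending the arc $\conf_{B,\sigma}^{-1}(\cell(B,\tau))$ to $\conf_{A,\sigma}^{-1}(\cell(A,\tau))$, and since $\phi_{A,\tau}, \phi_{B,\tau}$ are the constant-speed positively oriented parameterizations, the endpoints (the $0$-cells in $\partial\tau$) are matched correctly, consistent with the $0$-cell definition; for a $2$-cell $\sigma$ the radial extension formula $\phi_{A,B,\sigma}$ is built precisely so that on $\partial\Ball^2$ it agrees with the already-defined boundary map, so the two piecewise definitions agree on overlaps and glue to a homeomorphism of $\sphere^2$.

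The one subtlety in the gluing is \emph{well-definedness}: a given $1$-cell $\tau$ lies on the boundary of possibly two $2$-cells, but $\phi_{A,\tau}$ was defined using one fixed choice of $\sigma >_\vv \tau$, so I must check that the resulting map on $\cell(B,\tau)$ is independent of that choice — equivalently that the two radial extensions agree on the shared boundary arc. This follows because the formula on $\cell(B,\tau)$ was defined \emph{once}, via the fixed $\sigma$, and then the $2$-cell formulas are forced to restrict to it on their boundaries; so there is genuinely no ambiguity, only a consistency check that each $\phi_{A,B,\sigma}$ restricted to $\partial\Ball^2$ reproduces $\conf_{A,\sigma}^{-1} \circ \interp(A,B) \circ \conf_{B,\sigma}$, which is immediate from the definition of $\phi_{A,B,\sigma}$ at $\|x\|=1$. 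I would also note that the conformal parameterizations $\conf_{A,\sigma}$ and $\conf_{B,\sigma}$ use the \emph{same} ordered triple $\upsilon_1,\upsilon_2,\upsilon_3$ of boundary vertices (the three lexicographically smallest), which is what makes $\conf_{A,\sigma}^{-1} \circ \interp(A,B) \circ \conf_{B,\sigma}$ fix the three marked points $p_1,p_2,p_3$ and hence makes the radial extension well-behaved at the boundary.

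For continuity, the main tool is Theorem~\ref{theorem-conformal} (the Radó convergence statement): if $A_k \to A$ in $\real(\mc{X})$ then for each $2$-cell $\sigma$ the boundary curves $\partial\cell(A_k,\sigma)$ converge in Fréchet distance and the marked vertices converge (by Lemma~\ref{lemma-vertex}), so $\conf_{A_k,\sigma} \to \conf_{A,\sigma}$ uniformly; likewise for $B_k \to B$. Continuity of $\interp$ then propagates up through the skeleta: on $0$-cells it is continuity of $A \mapsto \cell(A,\upsilon)$, again Lemma~\ref{lemma-vertex}; on $1$-cells, $\phi_{A_k,\tau} \to \phi_{A,\tau}$ uniformly because the preimage arcs $\conf_{A_k,\sigma}^{-1}(\cell(A_k,\tau))$ converge and constant-speed parameterization is continuous in the arc; and on $2$-cells the radial-extension formula is a uniformly continuous expression in $\conf_{A,\sigma}$, $\conf_{B,\sigma}$, and the already-established boundary map. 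Since $\sphere^2$ is covered by finitely many closed $2$-cells and on each the map converges uniformly, $\interp(A_k,B_k) \to \interp(A,B)$ in $\hom(\sphere^2)$, and $\interp$ actually lands in $\hom(\sphere^2)$ rather than merely continuous maps because it is a cellwise homeomorphism matching a regular CW structure.

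The main obstacle I anticipate is not any single deep fact but the bookkeeping around the boundary-matching: ensuring that the three definitions (on $0$-, $1$-, $2$-cells) are mutually consistent on lower-dimensional faces so that they assemble into a single \emph{homeomorphism} of the whole sphere, and that this assembly is compatible with the convergence statements cell by cell. The degenerate-looking point — that $\Cell(A,\sigma)$ may have Hausdorff dimension larger than its combinatorial dimension $d$, as flagged in the text — does not actually interfere, since everything is phrased in terms of the abstract cell $\Ball^d$ and its conformal/constant-speed parameterization, not in terms of metric properties of the image; I would remark on this explicitly to reassure the reader. Once the cellwise picture is set up, both conclusions of the lemma are essentially formal consequences of Theorems~\ref{theorem-conformal} and Lemma~\ref{lemma-vertex}.
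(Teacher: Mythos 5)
Your proposal is correct and follows essentially the same route as the paper: verify $A*\interp(A,B)=B$ cell by cell, check that the $1$-cell definition agrees with the boundary values of the radial extension on each $2$-cell so the pieces glue to a homeomorphism, and derive continuity in $(A,B)$ from Lemma~\ref{lemma-vertex} and Theorem~\ref{theorem-conformal}. The only step you leave implicit — that the $1$-cells (and hence the cell boundaries fed into Radó's theorem) converge in Fréchet distance once the vertices and pseudocircles converge — is exactly what the paper isolates as Lemma~\ref{lemma-path}, so nothing substantive is missing.
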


\begin{observation}\label{remark-interp-id}
The restriction of $\interp$ to the diagonal is the identity, $\interp(A,A;x)=x$.  
\end{observation}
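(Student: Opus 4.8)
The plan is to show that $\interp(A,A)$ acts as the identity on every cell of the decomposition of $\sphere^2$ induced by $A$; since the cells $\cell(A,\sigma)$ over $\sigma\in\mc{X}$ partition $\sphere^2$, this yields $\interp(A,A)=\id$. Because the $2$-cell clause in the definition of $\interp$ refers back to $\interp$ on the bounding $1$- and $0$-cells, I would carry out the verification in order of increasing dimension, so that each clause is evaluated only after its inputs are already known.

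For $\upsilon\in\mc{V}_0$ the definition directly gives $\interp(A,A;\cell(A,\upsilon))=\cell(A,\upsilon)$, and as $\cell(A,\upsilon)$ is a single point this says $\interp(A,A)$ fixes it. For $\tau\in\mc{V}_1$ with its chosen $\sigma>_\vv\tau$, setting $B=A$ turns $\phi_{B,\tau}$ into $\phi_{A,\tau}$, so the $1$-cell clause collapses to $\conf_{A,\sigma}\circ\phi_{A,\tau}\circ\phi_{A,\tau}^{-1}\circ\conf_{A,\sigma}^{-1}=\id$ on $\cell(A,\tau)$. For $\sigma\in\mc{V}_2$ I would then look at $\phi_{A,A,\sigma}$: for $x\neq 0$ the point $\conf_{A,\sigma}(x/\|x\|)$ lies on $\partial\cell(A,\sigma)$, hence in a cell of dimension $\le 1$ on which $\interp(A,A)$ has just been shown to equal the identity, so $\conf_{A,\sigma}^{-1}\circ\interp(A,A)\circ\conf_{A,\sigma}(x/\|x\|)=x/\|x\|$ and therefore $\phi_{A,A,\sigma}(x)=\|x\|(x/\|x\|)=x$ (with $\phi_{A,A,\sigma}(0)=0$), i.e.\ $\phi_{A,A,\sigma}=\id_{\Ball^2}$. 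Substituting this into the $2$-cell clause gives $\interp(A,A)=\conf_{A,\sigma}\circ\id\circ\conf_{A,\sigma}^{-1}=\id$ on $\cell(A,\sigma)$. Since every point of $\sphere^2$ lies in exactly one such cell, this proves $\interp(A,A)=\id_{\sphere^2}$.

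The only point that needs care is respecting the recursive structure of the definition by processing the skeleton from low to high dimension — precisely the order used above — together with the observation that once the boundary behavior is fixed, the radial map $\phi_{A,A,\sigma}$ is forced to be the identity on $\Ball^2$. Beyond that the argument is just a chain of cancellations, so I do not expect any genuine obstacle; in particular, unlike Lemma~\ref{lemma-interp}, no continuity or gluing analysis is required here.
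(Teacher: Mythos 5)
Your argument is correct and is exactly the definition-unwinding the paper has in mind: the paper states this as an observation with no written proof, treating it as immediate because setting $B=A$ makes each clause of the definition of $\interp$ collapse to the identity cell by cell. Your low-to-high-dimension bookkeeping (0-cells fixed, $\phi_{A,\tau}\circ\phi_{A,\tau}^{-1}=\id$ on 1-cells, hence $\phi_{A,A,\sigma}=\id_{\Ball^2}$ and $\conf_{A,\sigma}\circ\conf_{A,\sigma}^{-1}=\id$ on 2-cells) is precisely the verification that justifies it.
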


\begin{observation}\label{remark-orthinterp}
\[ \interp(A*Q_1,B*Q_2) = Q_1^* \circ \interp(A,B) \circ Q_2 \]
since $A*Q_1* (Q_1^* \circ  \interp(A,B) \circ Q_2) = B*Q_2 $.  
\end{observation}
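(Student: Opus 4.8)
The plan is to verify that the three-stage construction of $\interp$ is itself natural under the $\orth_3$-action, and then read the identity off. The one-line justification above only exhibits $\psi \dfeq Q_1^* \circ \interp(A,B) \circ Q_2$ as \emph{a} homeomorphism carrying $A*Q_1$ to $B*Q_2$; since a pair of realizations is joined by many homeomorphisms, this does not by itself identify $\psi$ with $\interp(A*Q_1,B*Q_2)$, so instead I would show that $\psi$ is exactly the homeomorphism produced by running the construction on the pair $(A*Q_1,B*Q_2)$. The preliminary facts I would use, both already established in the proof of Lemma~\ref{lemma-basis}, are that $\cov(A*Q) = \cov(A)$, so $A*Q \in \real(\mc{X})$ and all the auxiliary data $\conf_{A*Q,\sigma}$, $\phi_{A*Q,\tau}$, $\phi_{A*Q,B*Q',\sigma}$ are defined, and that $\cell(A*Q,\sigma) = Q^*\,\cell(A,\sigma)$ for every $\sigma$, so $Q$ (viewed as a self-homeomorphism of $\sphere^2$) carries $\cell(A*Q,\sigma)$ onto $\cell(A,\sigma)$.

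The key step is the equivariance of the conformal parameterization,
\[ \conf_{A*Q,\sigma} = Q^* \circ \conf_{A,\sigma} \qquad (Q \in \orth_3,\ \sigma \in \mc{V}_2), \]
which I would prove from the uniqueness clause of Theorem~\ref{theorem-conformal}. The map $Q^* \circ \conf_{A,\sigma}$ is a homeomorphism from $\Ball^2$ onto $Q^*\,\cell(A,\sigma) = \cell(A*Q,\sigma)$; it sends $p_i$ to $Q^*$ applied to the $\upsilon_i$-vertex of $\cell(A,\sigma)$, which is the $\upsilon_i$-vertex of $\cell(A*Q,\sigma)$; and on the interior it is $\conf_{A,\sigma}$ postcomposed with $Q^*$, which is a conformal automorphism of $\sphere^2$ when $\det Q = 1$ and an anti-conformal one when $\det Q = -1$. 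When $\det Q = 1$ the composite is conformal on the interior, hence equals $\conf_{A*Q,\sigma}$ by uniqueness; when $\det Q = -1$, $Q^*$ reverses the cyclic order of the boundary vertices of the $2$-cell, so the parameterization singled out by the correspondence $p_i \mapsto \upsilon_i$-vertex of $\cell(A*Q,\sigma)$ is the anti-conformal one, and $Q^* \circ \conf_{A,\sigma}$ is precisely that. Getting this orientation-reversing case airtight — reconciling the two sign changes (conformal $\leftrightarrow$ anti-conformal, and the reversal of the boundary cyclic order) with the way Theorem~\ref{theorem-conformal} is applied, given that $\real(\mc{X})$ contains realizations of both orientation classes — is the main technical point I expect to have to handle carefully.

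Granting the key step, the remainder is bookkeeping through the skeleta, and the orthogonal maps cancel because $Q\circ Q^* = \id$ on $\sphere^2$. On a $0$-cell $\upsilon$, $\interp(A*Q_1,B*Q_2)$ carries $\cell(B*Q_2,\upsilon)$ to $\cell(A*Q_1,\upsilon) = Q_1^*\,\cell(A,\upsilon)$, which is what $Q_1^*\circ\interp(A,B)\circ Q_2$ does, since $Q_2$ carries $\cell(B*Q_2,\upsilon)$ to $\cell(B,\upsilon)$. On a $1$-cell $\tau$ governed by $\sigma$, the key step gives $\conf_{A*Q,\sigma}^{-1}\bigl(\cell(A*Q,\tau)\bigr) = \conf_{A,\sigma}^{-1}\bigl(\cell(A,\tau)\bigr)$ as arcs in $\partial\Ball^2$, hence $\phi_{A*Q,\tau} = \phi_{A,\tau}$; substituting $\conf_{A*Q_1,\sigma} = Q_1^*\circ\conf_{A,\sigma}$, $\conf_{B*Q_2,\sigma}^{-1} = \conf_{B,\sigma}^{-1}\circ Q_2$, $\phi_{A*Q_1,\tau} = \phi_{A,\tau}$, $\phi_{B*Q_2,\tau} = \phi_{B,\tau}$ into the defining formula for $\interp(A*Q_1,B*Q_2)$ on $\cell(B*Q_2,\tau)$ yields $Q_1^*\circ\interp(A,B)\circ Q_2$ there. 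On a $2$-cell $\sigma$, the boundary behavior has already been matched, so the radial-extension formula gives $\phi_{A*Q_1,B*Q_2,\sigma} = \phi_{A,B,\sigma}$, and substituting once more into the defining formula gives $Q_1^*\circ\interp(A,B)\circ Q_2$ on $\cell(B*Q_2,\sigma)$. Since the $0$-, $1$-, and $2$-cells of $B*Q_2$ cover $\sphere^2$, the two homeomorphisms agree everywhere, which is the stated identity; specializing to the diagonal of the $\orth_3\times\orth_3$-action is exactly the form of equivariance that feeds into the later equivariant deformations.
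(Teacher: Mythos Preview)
Your diagnosis is right: the paper offers no proof beyond the one line embedded in the Observation, and that line really only verifies $(A*Q_1)*\psi = B*Q_2$ for $\psi = Q_1^*\circ\interp(A,B)\circ Q_2$. Since $\interp$ is defined by an explicit three-stage construction and not as ``the'' homeomorphism with that property, this does not pin $\psi$ down. So your route---checking that each ingredient of the construction is $\orth_3$-natural---is the honest way to establish the identity, and goes well beyond what the paper records.

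Your argument is correct in the orientation-preserving case and your bookkeeping through the skeleta is clean. The orientation-reversing case you flag is a genuine issue, and in fact it exposes a small imprecision in the paper itself: as written, $\conf_{A,\sigma}$ is required to be conformal on the interior and to send the fixed triple $p_1,p_2,p_3$ to the $\upsilon_i$-vertices, but for $A$ with the ``wrong'' order type those vertices sit in the reversed cyclic order and no conformal map does this. The construction only makes sense if one reads ``conformal'' as ``conformal or anti-conformal, whichever matches the boundary data''---and with that reading, your argument for $\det Q = -1$ goes through exactly as you sketch: the sign flip in conformality from $Q^*$ matches the sign flip forced by the reversed vertex order, and uniqueness (of the appropriate type) gives $\conf_{A*Q,\sigma} = Q^*\circ\conf_{A,\sigma}$. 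Once that is granted, $\phi_{A*Q,\tau} = \phi_{A,\tau}$ since these depend only on the arc in $\partial\Ball^2$ and its intrinsic positive orientation, and the rest of your substitution argument is correct. So your proposal both identifies and repairs a gap the paper glosses over.
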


\begin{lemma}\label{lemma-path}
Let $S_k$ be a sequence of oriented simple closed curves in $\sphere^2$ that converge to an oriented simple closed curve $S_\infty$ in Fréchet distance, and let $a_k,b_k \in S_k$ such that $a_k \to a_\infty$ and $b_k \to b_\infty$.  Then, the oriented paths $P_k \subset S_k$ from $a_k$ to $b_k$ converge to the oriented path $P_\infty \subset S_\infty$ from $a_\infty$ to $b_\infty$ in Fréchet distance.
\end{lemma}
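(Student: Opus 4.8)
The plan is to pull the Fréchet convergence $S_k\to S_\infty$ back to uniform convergence of parameterizations, transfer this convergence to the parameters that cut out the sub-paths $P_k$, and then read off the bound on the Fréchet distance between $P_k$ and $P_\infty$ from an explicit pair of parameterizations. (I use throughout the implicit hypothesis $a_\infty\neq b_\infty$, without which the positively oriented path from $a_\infty$ to $b_\infty$ is not even well defined; it holds automatically in every application of the lemma.) First I would fix an orientation-preserving parameterization $\gamma_\infty\colon\sphere^1\to S_\infty$; by the definition of the Fréchet metric on oriented curves, $S_k\to S_\infty$ provides orientation-preserving parameterizations $\gamma_k\colon\sphere^1\to S_k$ with $\eps_k\dfeq\sup_{x\in\sphere^1}\|\gamma_k(x)-\gamma_\infty(x)\|\to 0$. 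Writing $a_k=\gamma_k(s_k)$, $b_k=\gamma_k(t_k)$ and $s_\infty=\gamma_\infty^{-1}(a_\infty)$, $t_\infty=\gamma_\infty^{-1}(b_\infty)$, the parameter values converge: from $\|\gamma_\infty(s_k)-a_\infty\|\le\|\gamma_\infty(s_k)-\gamma_k(s_k)\|+\|a_k-a_\infty\|\le\eps_k+\|a_k-a_\infty\|\to 0$ and uniform continuity of $\gamma_\infty^{-1}$ on the compact set $S_\infty\subset\sphere^2$, one gets $s_k\to s_\infty$, and likewise $t_k\to t_\infty$.

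Next I would pass to the universal cover $\mb{R}\to\sphere^1=\mb{R}/2\pi\mb{Z}$ and choose lifts $\hat s_\infty<\hat t_\infty<\hat s_\infty+2\pi$ of $s_\infty,t_\infty$ --- possible precisely because $s_\infty\neq t_\infty$ --- so that $P_\infty=\gamma_\infty([\hat s_\infty,\hat t_\infty]_\mb{R})$; taking lifts $\hat s_k\to\hat s_\infty$, $\hat t_k\to\hat t_\infty$, for all large $k$ we get $0<\hat t_k-\hat s_k<2\pi$, hence $a_k\neq b_k$ and $P_k=\gamma_k([\hat s_k,\hat t_k]_\mb{R})$. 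Then I would compare the explicit orientation-preserving parameterizations $p_\infty(\lambda)=\gamma_\infty((1-\lambda)\hat s_\infty+\lambda\hat t_\infty)$ and $p_k(\lambda)=\gamma_k((1-\lambda)\hat s_k+\lambda\hat t_k)$, $\lambda\in[0,1]_\mb{R}$, of $P_\infty$ and $P_k$: they have matching endpoints, so the Fréchet distance between $P_k$ and $P_\infty$ is at most $\sup_\lambda\|p_k(\lambda)-p_\infty(\lambda)\|$. Since the two affine arguments differ by at most $\delta_k\dfeq\max(|\hat s_k-\hat s_\infty|,|\hat t_k-\hat t_\infty|)\to 0$, and writing $\omega$ for a modulus of continuity of the uniformly continuous map $\gamma_\infty$, we obtain $\|p_k(\lambda)-p_\infty(\lambda)\|\le\eps_k+\omega(\delta_k)$ for every $\lambda$, hence the Fréchet distance is at most $\eps_k+\omega(\delta_k)\to 0$, as required.

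The only genuinely delicate point is the passage to sub-arcs: the positively oriented arc from $s_k$ to $t_k$ depends discontinuously on the pair $(s_k,t_k)$ exactly when $s_k$ crosses $t_k$ on $\sphere^1$, which is why one must use $s_\infty\neq t_\infty$ to rule this out for all large $k$ (the finitely many small $k$ being irrelevant to convergence). Everything else is routine triangle-inequality bookkeeping together with uniform continuity of $\gamma_\infty$ and of $\gamma_\infty^{-1}$ on compact sets.
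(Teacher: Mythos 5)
Your proof is correct, and it shares the paper's overall strategy -- extract circle parameterizations $\gamma_k$ with $\sup\|\gamma_k-\gamma_\infty\|\le\eps_k\to 0$ from the Fréchet convergence and then exhibit explicit parameterizations of the sub-paths to bound their Fréchet distance -- but the execution is genuinely different. The paper never passes to the universal cover: it anchors the argument at an interior point $w$ of $P_\infty$, introduces the points $x_k,y_k$ where $f_\infty$ first comes within distance $2\eps_k$ of the endpoints (using $\|a_\infty-b_\infty\|>4\eps_k$, i.e.\ the same implicit hypothesis $a_\infty\neq b_\infty$ you flag), shows by a traversal argument that the arc from $x_k$ to $y_k$ maps into $P_k$, and then builds parameterizations $g_k,h_k$ of $P_k,P_\infty$ that agree with $f_k\circ u_k$ and $f_\infty\circ u_k$ on the middle interval $[\nicefrac{1}{k},1-\nicefrac{1}{k}]_\mb{R}$ and are completed arbitrarily near the ends, where the error is controlled by the crude bounds $\alpha_k,\beta_k\to 0$. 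Your route instead converts endpoint convergence into parameter convergence via continuity of $\gamma_\infty^{-1}$ (a fact the paper uses but only asserts, in the form $\tilde x_k\to\tilde x_\infty$), identifies $P_k=\gamma_k([\hat s_k,\hat t_k]_\mb{R})$ exactly through lifts, and compares affine reparameterizations using a modulus of continuity of $\gamma_\infty$. What your version buys is a cleaner, fully quantitative bound $\eps_k+\omega(\delta_k)$ and an exact identification of the sub-paths with no endpoint patching; what the paper's version buys is that it localizes all estimates near the two endpoints and avoids any choice of lifts. Your explicit remark that $a_\infty\neq b_\infty$ must be assumed (and that $a_k\neq b_k$ for large $k$ follows) is a fair observation: the paper's proof needs the same hypothesis, and the statement of the lemma leaves it implicit.
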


\begin{proof}
For $x,y \in \sphere^1$, let $[x,y]_{\sphere^1}$ be the counter-clockwise arc from $x$ to $y$.

Since $S_k \to S_\infty$, there is a sequence of embeddings $f_k : \sphere^1 \to \sphere^2$ with $f_k(\sphere^1) = S_k$ 
and $\eps_k \to 0$ 
such that for all $u \in \sphere^1$, $\|f_k(u) - f_\infty(u)\| < \eps_k $.  
Fix $w \in \sphere^1$ such that ${f_\infty(w) \in P_\infty \setminus \{a_\infty,b_\infty\}}$.
By choosing $k$ sufficiently large, $\eps_{k}$ is sufficiently small, so that we may assume following,
\[
\|f_\infty(w) -a_\infty\| > 2\eps_{k}, \quad 
\|f_\infty(w) -b_\infty\| > 2\eps_{k}, \quad \text{and} \quad
\|b_\infty - a_\infty\| > 4\eps_k.
\] 
Let $x_k \in \sphere^1$ be the first point clockwise of $w$ such that 
$\|f_\infty(x_k) - a_\infty\| = 2\eps_k$ 
and $y_k \in \sphere^1$ be the first point counter-clockwise of $w$ such that 
$\|f_\infty(y_k) - b_\infty\| = 2\eps_k$.  
Starting from $f_\infty(w)$ and traversing $P_\infty$ we reach $f_\infty(y_k)$ before $b_\infty$, so 
$f_\infty(y_k) \in P_\infty \setminus \{a_\infty,b_\infty\}$.
For $u \in [w,y_k]_{\sphere^1}$, we have 
\[ \|f_k(u) - b_\infty\| \geq  \| f_\infty(u) - b_\infty \| - \|f_k(u) -f_\infty(u)\| > 2\eps_k -\eps_k = \eps_k, \]
but $\|b_k - b_\infty\| < \eps_k$, so starting from $f_k(w)$ and traversing $P_k$ we reach $f_k(y_k)$ before $b_k$, which implies
$f_k(y_k) \in P_k \setminus \{a_\infty,b_\infty\}$.
Similarly, $f_k(x_k) \in P_k \setminus \{a_\infty,b_\infty\}$.

Let $\tilde x_k, \tilde y_k \in \sphere^1$ where $f_k (\tilde x_k) = a_k$ and $f_k (\tilde y_k) = b_k$, and 
let 
\[ A_k = [\tilde x_\infty,x_k]_{\sphere^1} \cup [\tilde x_k,x_k]_{\sphere^1} \quad \text{and}\quad 
   B_k = [y_k, \tilde y_\infty]_{\sphere^1} \cup [y_k, \tilde y_k]_{\sphere^1} 
\]
\[ \alpha_k = \sup \{ \|f_\infty(u)-a_\infty\| :\ u \in A_k \} \quad \text{ and} \quad 
\beta_k = \sup \{ \|f_\infty(u)-b_\infty\| :\ u \in B_k \}.
\]
Since $x_k,\tilde x_k \to \tilde x_\infty$ and $y_k, \tilde y_k \to \tilde y_\infty$, we have $\alpha_k,\beta_k \to 0$. Observe that
\begin{align*}
&\sup \{ \|f_k(u)-f_\infty(v)\| :\ u \in (\tilde x_k,x_k),v \in (\tilde x_\infty,x_k) \} \\
&\quad \leq \sup \{ \|f_k(u) -a_\infty \| : u \in A_k \} + \sup \{ \|f_\infty(v) -a_\infty \| : v \in A_k \} \\ 
&\quad < \eps_k +2\alpha_k,  
\end{align*}
and similarly
$\sup \{ \|f_k(u)-f_\infty(v)\| :\ u \in (y_k,\tilde y_k),v \in (y_k,\tilde y_\infty) \} < \eps_k +2\beta_k $.

For $k\geq 3$, 
let $u_k : [\nicefrac{1}{k},1-\nicefrac{1}{k}]_\mb{R} \to \sphere^1$ be a counter-clockwise parameterization of the arc from $x_k$ to $y_k$, and define $g_k,h_k : [0,1]_\mb{R} \to \sphere^2$ as follows.
For $t \in [\nicefrac{1}{k},1-\nicefrac{1}{k}]_\mb{R}$, let 
\[ g_k(t) = f_k \circ u_k(t) \quad \text{and} \quad h_k(t) = f_\infty \circ u_k(t),\] 
and on the rest of $[0,1]_\mb{R}$ complete $g_k$ and $h_k$ to respective parameterizations of $P_k$ and $P_\infty$.
We now have 
\[\begin{array}{r@{\ <\ }l@{\quad \text{for}\quad }r@{\ \in\ }l}
\|g_k(t)-h_k(t)\| & \eps_k +2\alpha_k & t & [0,\nicefrac{1}{k}]_\mb{R}      \\
\|g_k(t)-h_k(t)\| & \eps_k & t & [\nicefrac{1}{k},1-\nicefrac{1}{k}]_\mb{R} \\
\|g_k(t)-h_k(t)\| & \eps_k +2\beta_k & t & [1-\nicefrac{1}{k},1]_\mb{R}
\end{array}\] 
and $\alpha_k,\beta_k,\eps_k \to 0$.  Therefore, $P_k \to P_\infty$ in Fréchet distance.
\end{proof}

\begin{proof}[Proof of Lemma \ref{lemma-interp}]
First, we show that $A* \interp(A,B) = B$, since $\interp(A,B)$ is defined by building up a map that sends each cell of $B$ to the corresponding cell of $A$.
In the case $\upsilon \in \mc{V}_0$, this is immediate from the definition.
In the case $\tau \in \mc{V}_1$, $\phi_{B,\tau}^{-1}$ is defined so that $\hbox{$\phi_{B,\tau}^{-1} \circ \conf_{B,\sigma}^{-1}(\cell(B,\tau))$} = [0,1]_\mb{R}$, and this is bijective, which also means $\conf_{A,\sigma}^{\phantom1} \circ \phi_{A,\tau}^{\phantom1}([0,1]_\mb{R}) = \cell(A,\tau)$.
In the case $\sigma \in \mc{V}_2$, $\interp(A,B)$ is a composition of bijections sending $\cell(B,\sigma)$ to the unit ball, which is then sent to itself, and then to $\cell(A,\sigma)$.

Next, we show $\interp(A,B) \in \hom(\sphere^2)$.
By definition $\conf_{A,\sigma}$ and $\conf_{A,\sigma}^{-1}$ and $\phi_{A,\tau}$ are continuous. 
And, $\phi_{A,B,\sigma}$ is a composition of continuous functions except at the origin, where $\phi_{A,B,\sigma}$ is also continuous, since $\|\phi_{A,B,\sigma}(x)\| = \|x\|$.  
We just have to check that the definition of $\interp(A,B)$ in a 1-cell agrees with the value $\interp(A,B)$ approaches on the boundary of a 2-cell.  Let $\sigma \in \mc{V}_2$, $\tau \in \mc{X}$ such that $\tau <_\vv \sigma$, and let $x_k \in \cell(B,\sigma)$ such that $x_k \to x \in \cell(B,\tau)$.  Then,
\begin{align*}
\interp(A,B;x_k)
& \to \conf_{A,\sigma}^{\phantom1} \circ \phi_{A,B,\sigma}^{\phantom1} \circ \conf_{B,\sigma}^{-1}(x) \\
& = \conf_{A,\sigma}^{\phantom1} \circ \conf_{A,\sigma}^{-1} \circ \interp(A,B)^{\phantom1} \circ \conf_{B,\sigma}^{\phantom1} \circ \conf_{B,\sigma}^{-1}(x) \\
& = \interp(A,B;x),
\end{align*}
since $\| \conf_{B,\sigma}^{-1}(x) \| = 1$.  Thus, $\interp(A,B) \in \hom(\sphere^2)$.

Finally, we show that $\interp(A,B)$ depends continuously on $A$ and $B$.
The vertices of $A$ depend continuously on $A$ by Lemma~\ref{lemma-vertex}, and likewise for $B$. 
By Lemma \ref{lemma-path}, this implies that each 1-cell of $A$ and $B$ varies continuously, and therefore the conformal maps defining $\interp(A,B)$ vary continuously by Theorem \ref{theorem-conformal}.
\end{proof}

\subsection{Topology of pseudolinear realization spaces}

\begin{proof}[Proof of Theorem \ref{theorem-disth}]
Let $A_k \in \real(\mc{X})$ for $k \in \{1,2,\dots,\infty\}$. 
If $A_k \to A_\infty$ with respect to $\dist$, then $\interp(A_k,A) \to \id_{\sphere^2}$ and $\wei(A_k) \to \wei(A_\infty)$, so $A_k \to A_\infty$ with respect to $\disth$.
If $A_k \to A_\infty$ with respect to $\disth$, then each pseudosphere of $A_k$ converges to the corresponding pseudosphere of $A_\infty$ and $\wei(A_k) \to \wei(A_\infty)$, so $A_k \to A_\infty$ with respect to $\dist$.
\end{proof}

\begin{proof}[Proof of Theorem \ref{theorem-pseudoreal-contract}]
By the Topological Representation Theorem, $\realo(\mc{X})$ is non-empty \cite{folkman1978oriented}. 
For $A \in \mc{A} \in \realo(\mc{X})$, we define a strong deformation retraction $\rho$ from $\realo(\mc{X})$ to $\mc{A}$.
For each $\mc{B} \in \realo(\mc{X})$, fix $B \in \mc{B}$. Let  
\Needspace{2cm}
\[\rho : \realo(\mc{X}) \times [0,1]_\mb{R} \to \realo(\mc{X}) \] 
\[ \rho(\mc{B},t) = A*\ho(\interp(A,B),t)*\orth_3  \]
Observe that $\rho(\mc{B},t)$ does not depend on the choice of $B \in \mc{B}$ by Observation \ref{remark-orthinterp} and since $\ho$ is $\orth_3$-equivariant with respect to precomposition (right action).  Indeed, for any other choice $B' \in \mc{B}$, there is $Q \in \orth_3$ such that $B' = B*Q$, and we have 
\begin{align*}
A*\ho(\interp(A,B'),t)*\orth_3 
&= A*\ho(\interp(A,B) \circ Q,t)*\orth_3 \\
&= A*\ho(\interp(A,B),t)*Q*\orth_3 \\
&= \rho(\mc{B},t).
\end{align*}  
Furthermore, $\rho$ is continuous by Lemma \ref{lemma-interp} and is a strong deformation retraction by Theorem \ref{theorem-kneser}.

If $\interp(A,B) \in \hom^+(\sphere^2)$, then $\ho(\interp(A,B),t) \in \hom^+(\sphere^2)$ for all $t \in [0,1]_\mb{R}$,
so the same deformation also shows that $\pog(\chi)$ is contractible.
\end{proof}

\begin{remark}
Note that if $\ho$ were $\orth_3$-equivariant with respect to postcomposition (left action), then $\rho$ would not depend on a choice of $A \in \mc{A}$, but the dependence on $A$ is not a problem for our purposes.
\end{remark}

\subsection{Deforming weighted pseudocircle arrangements}

We now define the deformation of Theorem \ref{theorem-grassman-pseudograssman}.

We first define several intermediate spaces and deformation retractions between these space.  We then combine these deformations to produce a deformation retraction from $\pstief_{3,n}$ to $\stief_{3,n}$.

\subsubsection{Intermediate spaces}
\label{subsubsec-spaces}

For each non-repeating non-empty sequence $I = (i_1,\dots,i_m)$ 
with entries among $[n]_\mb{N}$ we define spaces $X_I,Y_I,Z_I \subseteq \pstief_{3,n}$ by giving conditions for a weighted pseudosphere arrangement $A = (\alpha_1,\dots,\alpha_n)$ to be in the space.
For this, let $S_i = (\aim \alpha_i)^{-1}(0)$. 

For $m = |I| \in \{0,1,2,3\}$, let $X_I$ be the set of arrangements $A$ such that $I$ is an independent set of $\cov(A)$.  That is, 

\vbox{
\begin{itemize}
\item
$X_{()} = \pstief_{3,n}$; 
\item
$X_{(i_1)} \subset X_{()}$ where $\|\alpha_{i_1}\| \neq 0$;
\item
$X_{(i_1,i_2)} \subset X_{(i_1)}$ where $\|\alpha_{i_2}\| \neq 0$ and $S_{i_1} \neq S_{i_2}$; 
\item
$X_{(i_1,i_2,i_3)} \subset X_{(i_1,i_2)}$ where $\|\alpha_{i_3}\| \neq 0$ and $S_{i_1} \cap S_{i_2} \cap S_{i_3} = \emptyset$.
\end{itemize}
}
For $m >3$, let $X_I = X_{(i_1,\dots,i_m)} \subset Y_{(i_1,\dots,i_{m-1})}$ such that $\|\alpha_{i_m}\| \neq 0$.
Informally, we use $X_{I}$ to first pick a basis and then subsequently pick non-zero weighted pseudocircles as long as one is available.

For $m \in \{0,1,2\}$, let $Y_I = X_I$. 
For $m=3$, let $Y_{(i_1,i_2,i_3)} \subset X_{(i_1,i_2,i_3)}$ where $S_{i_1},S_{i_2},S_{i_3}$ are great circles. 
For $m >3$, let $Y_I \subset X_I$ such that $S_{i_m}$ is antipodally symmetric and is geodesic in each of the 2-cells of the subdivision of $\sphere^2$ by $S_{i_1},\dots,S_{i_{m-1}}$.  That is, the pseudocircles indicated by the sequence $I$ are symmetric and piecewise geodesic with corners only on the pseudocircles appearing earlier in the sequence.

For $m \in \{0,1,2\}$, let $Z_I$ be the set of spanning weighted great circle arrangements, i.e.\ vector configurations spanning $\Rpol{3}$. 
For $m\geq 3$, let $Z_I \subseteq Y_I$ such that every pseudocircle of $A$ is antipodally symmetric and for all $j \not\in I$, if $\|\alpha_j\| \neq 0$ then $S_{j}$ is geodesic on each 2-cell of $\proj_I(A)$.  That is, all pseudocircles are piecewise geodesic with corners only on the pseudocircles appearing in the sequence, and those that appear in the sequence have corners only on those earlier in the sequence, and $-\proj_I(A) = \proj_I(A) * (-\id)$.

We will define strong equivariant deformation retractions $f_I$ from $X_I$ to $Y_I$ and $h_{I} = h_{(i_1,\dots,i_m)}$ from $Z_{(i_1,\dots,i_{m})}$ to $Z_{(i_1,\dots,i_{m-1})}$.  
We then combine these to get strong deformation retractions $g_I$ from $Y_I$ to $Z_I$.
The deformations $f_I$ and $h_I$ for each $I$ ultimately fit together in equation \ref{equation-g} on page \pageref{equation-g} (or equation \ref{equation-g-infty} on page \pageref{equation-g-infty} for $n = \infty$) to give a deformation $g_{()}$ from $Y_{()} = \pstief_{3,n}$ to the space $Z_{()}$ of vector configurations that span $\Rpol{3}$.
To complete the deformation of Theorem~\ref{theorem-grassman-pseudograssman}, we then perform a strong equivariant deformation retraction from great circles to Parseval frames by a continuous orthonormalization process.

Before defining the deformations we start with some properties of these spaces that will be needed.

\begin{claim}\label{claim-pseudocircle-dist}
Let $(\alpha_1,\alpha_2)$ be a weighted pseudocircle arrangement.  Then, $\dist(\alpha_1,\alpha_2)\leq d= {\|\alpha_1\|+\|\alpha_2\|}$.  Also, $\dist(\alpha_1,\alpha_2) = d$ if and only if $\alpha_1 = -\alpha_2$ and these are antipodally symmetric.
\end{claim}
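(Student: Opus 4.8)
The plan is to recall the definition of the weighted Fréchet distance between pseudospheres and reduce everything to a geometric fact about simple closed curves on $\sphere^2$. By definition, $\dist(\alpha_1,\alpha_2) = \inf_{\phi_1,\phi_2}\sup_x \|r_1\phi_1(x) - r_0\phi_0(x)\|$ where $r_i = \|\alpha_i\|$, the $\phi_i$ range over positively oriented parameterizations of the kernels $S_i$, and $x$ ranges over a fixed copy of $\sphere^1$ (the equator). Since each $\phi_i(x)$ lies on the unit sphere, we have $\|r_1\phi_1(x) - r_0\phi_0(x)\| \leq r_1\|\phi_1(x)\| + r_0\|\phi_0(x)\| = r_1 + r_0 = d$ for every $x$ and every choice of parameterizations, by the triangle inequality. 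Taking the sup over $x$ and then the inf over parameterizations gives $\dist(\alpha_1,\alpha_2) \leq d$. This is the easy half.

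For the equality case, first observe that if $\alpha_1 = -\alpha_2$ with both antipodally symmetric, then $S_1 = S_2$ as a set and they carry opposite orientations, so for any parameterization $\phi_1$ of $S_1$, the antipodal map composed appropriately forces $\phi_2(x) = -\phi_1(x)$ (a positively oriented parameterization of $S_2$ with the reversed orientation must be the antipode of one for $S_1$, using that $S_1$ is antipodally symmetric so $-\phi_1$ is again an orientation-reversing reparameterization landing on the same curve). Then $\|r_1\phi_1(x) - r_0\phi_0(x)\| = \|r_1\phi_1(x) + r_0\phi_1(x)\| = (r_1+r_0)\|\phi_1(x)\| = d$ identically, so the inf-sup equals $d$.

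Conversely, suppose $\dist(\alpha_1,\alpha_2) = d$. The key point is that $\|r_1 u - r_0 v\| = r_1 + r_0$ for unit vectors $u,v$ forces $v = -u$ (equality in the triangle inequality with the sign flip). So if the supremum over $x$ of $\|r_1\phi_1(x) - r_0\phi_0(x)\|$ were strictly less than $d$ for some choice of parameterizations, we would be done with a contradiction; the work is to show that $\dist = d$ forces, for *every* choice of parameterizations, $\phi_2(x) = -\phi_1(x)$ for all $x$ — or rather, that the infimum being $d$ means $S_1$ and $S_2$ are antipodal to each other as oriented curves. Here I would argue: for any fixed $\phi_1$, minimizing $\sup_x \|r_1\phi_1(x) - r_0\phi_2(x)\|$ over $\phi_2$; since $S_2$ is a simple closed curve and $-S_1$ is another, if $S_2 \neq -S_1$ there is a point $p \in S_2 \setminus (-S_1)$, hence $p$ is at positive distance from $-S_1$, and by choosing $\phi_2$ to spend "enough" parameter near configurations where $\phi_2(x)$ is bounded away from $-\phi_1(x)$... actually the cleaner route: if $S_2 \neq -S_1$ then $-S_1 \cup S_2$ is not a single curve, so one can find $\phi_2$ with $\phi_2(x) \neq -\phi_1(x)$ for at least one $x$, but we need it for the sup, so instead use that $S_1 = -S_2$ is forced first (as sets) by a Hausdorff-distance argument, then orientation-reversal is forced, then antipodal symmetry of each follows since $-S_1 = S_2$ being a pseudocircle that is also $-S_1$ makes $S_1$ its own antipode.

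The main obstacle I anticipate is the converse direction: carefully showing that $\dist(\alpha_1,\alpha_2)=d$ forces $S_1 = -S_2$ as *sets* (not merely that they meet the antipode somewhere). The subtlety is that $\dist$ is an infimum over reparameterizations, so a priori the two curves could be "mostly" antipodal with a small non-antipodal part and still have the infimum approach but not attain — except the sup makes even a small bad part cost less than $d$ only if we can reparameterize to hide it, which we cannot since the sup sees the worst point. I would make this precise by: fixing any $\phi_1$; for the corresponding optimal-ish $\phi_2$, the condition $\sup_x\|r_1\phi_1(x)-r_0\phi_2(x)\| = d$ (the inf is attained by compactness of $\hom^+(\sphere^1)$ acting, or via a limiting argument) combined with the rigidity of triangle-inequality equality gives a point $x_0$ with $\phi_2(x_0) = -\phi_1(x_0)$; then a continuity/connectedness argument along $\sphere^1$ — using that both $\phi_1$ and $-\phi_1$ trace simple closed curves and $\phi_2$ traces $S_2$ — upgrades this to $\phi_2 \equiv -\phi_1$, hence $S_2 = -S_1$; finally $S_1 = -S_2 = -(-S_1)$ is consistent, and $S_2 = -S_1$ with $S_1,S_2$ sharing a parameterization up to antipode and orientation forces $-S_1 = S_1$ once we also use $S_1 \cap (-S_1) \supseteq$ everything, i.e. antipodal symmetry, and then $\alpha_1 = -\alpha_2$ at the level of weights and signs. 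I will keep the write-up of this last connectedness step brief, citing that a simple closed curve on $\sphere^2$ and its antipode coincide iff the curve is antipodally symmetric.
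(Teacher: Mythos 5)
Your upper bound $\dist(\alpha_1,\alpha_2)\leq d$ is fine, but already the ``if'' direction has a gap. You claim that when $\alpha_1=-\alpha_2$ are antipodally symmetric, the parameterization $\phi_2$ is \emph{forced} to be $-\phi_1$; it is not. Both $\phi_1$ and $\phi_2$ range independently over all positively oriented parameterizations, and since $\dist$ is an infimum you need a lower bound of $d$ on the supremum for \emph{every} pair $(\phi_1,\phi_2)$, not the value attained by one special pair (that only re-proves $\dist\leq d$). What is true, and what the paper actually uses, is weaker but sufficient: the two parameterizations traverse the same circle with opposite orientations, so (using antipodal symmetry of the curve) $\phi_1^{-1}\circ(-\phi_2)$ is an orientation-reversing homeomorphism of $\sphere^1$ and hence has a fixed point; thus \emph{every} pair of parameterizations passes through an antipodal pair of points simultaneously, forcing every supremum to be at least $d$.

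The ``only if'' direction is where the real gap lies, and your own hesitation points at it. Your plan --- attain the infimum ``by compactness of $\hom^+(\sphere^1)$'', extract one point $x_0$ with $\phi_2(x_0)=-\phi_1(x_0)$ from triangle-inequality rigidity, then upgrade to $\phi_2\equiv-\phi_1$ by ``continuity/connectedness'' --- does not go through: $\hom^+(\sphere^1)$ is not compact, so the infimum need not be attained, and even granting a pair with supremum $d$, rigidity yields only a single coincidence point; nothing upgrades that to coincidence everywhere. The paper instead argues the contrapositive with an explicit construction you are missing: if some $x_1\in S_1$ has $-x_1\notin S_2$, choose $x_2\in S_2$ with $-x_2\notin S_1$ (possible because $S_2$ cannot be a proper subset of the circle $-S_1$), and build parameterizations in which $\psi_1$ lingers near $x_1$ while $\psi_2$ traverses $S_2$ starting at $x_2$, and then vice versa; this keeps $\|r_1\psi_1(x)-r_2\psi_2(x)\|$ uniformly below $d$, so $\dist<d$. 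That yields $S_2=-S_1$ as sets. Finally, your last step asserting antipodal symmetry is circular: $S_2=-S_1$ alone does not give $S_1=-S_1$, and you never invoke the hypothesis that $(\alpha_1,\alpha_2)$ is an \emph{arrangement}. The paper closes the argument precisely there: two pseudocircles in an arrangement either coincide or cross transversally at exactly two points with opposite crossing orientations; if $S_2=-S_1$ and they crossed at $x$, they would cross at $-x$ with the \emph{same} orientation, a contradiction, so $S_1=S_2=-S_1$ and hence $\alpha_1=-\alpha_2$ with both antipodally symmetric.
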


\begin{proof}
We have $\dist(\alpha_1,\alpha_2)\leq d$ immediately from the definition of $\dist$.
If $\alpha_1 = -\alpha_2$ are antipodally symmetric, then any parameterizations of $S_1 = S_2$ with opposite orientations must pass though an antipodal pair of points, so $\dist(\alpha_1,\alpha_2) = d$.  It remains to show the `only if' implication.

Suppose there were a point $x_1 \in S_1$ such that $-x_1 \not\in S_2$.
Then, there would have to be $x_2 \in S_2$ such that $-x_2 \not\in S_1$, since $S_2$ cannot be a proper subset of $-S_1$.  In this case, we would be able to define parameterizations $\psi_i : \sphere^1 \to S_i$ such that $\psi_1$ stays at $x_1$ while $\psi_2$ traverses $S_2$ starting at $x_2$ and then vise versa, so we would have $\dist(\alpha_1,\alpha_2) < d$. 

Hence, if $\dist(\alpha_1,\alpha_2) = d$ then $S_1 = -S_2$ with the antipodal orientations.  Since $(S_1,S_2)$ is a pseudocircle arrangement, they must either coincide or cross transversally at exactly two points with opposite orientation.  If they crossed transversally at a point $x$, then they would also have to cross at $-x$ with the same orientation, which is a contradiction.  Thus, we have $S_1 = S_2$ and $\alpha_1 = -\alpha_2$.
\end{proof}

\begin{claim}\label{claim-sphericonvex}
For $m = |I| \geq 3$ and $A \in Y_I$, the 2-cells of $\proj_I({A})$ are spherical convex polygons such that each edge is a 1-cell $\proj_I({A})$.
\end{claim}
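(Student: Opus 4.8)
The plan is to induct on $m = |I| \geq 3$, at each stage slicing the spherically convex $2$-cells of $\proj_{I'}(A)$, where $I' = (i_1,\dots,i_{m-1})$, along the geodesic pieces of the next kernel $S_{i_m}$.

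For the base case $m = 3$: since $A \in X_{(i_1,i_2,i_3)}$, the kernels $S_{i_1},S_{i_2},S_{i_3}$ are great circles with $S_{i_1}\cap S_{i_2}\cap S_{i_3} = \emptyset$, so their unit normals $n_1,n_2,n_3$ are linearly independent. On each connected component of $\sphere^2 \setminus (S_{i_1}\cup S_{i_2}\cup S_{i_3})$ every function $\langle n_j,\cdot\rangle$ has constant nonzero sign, so the component is $\sphere^2$ intersected with the interior of $H_1\cap H_2\cap H_3$ for a choice of closed half-spaces $H_j$ bounded by $n_j^{\perp}$; independence of the $n_j$ makes this cone pointed, so the closed $2$-cell $\sphere^2\cap H_1\cap H_2\cap H_3$ is spherically convex, and each of its three edges is the arc of some $S_{i_j}$ delimited by the other two circles, hence a $1$-cell of $\proj_I(A)$.

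For the inductive step, take $m \geq 4$ and $A \in Y_I$. Since $Y_I \subseteq X_I \subseteq Y_{I'}$, we have $A \in Y_{I'}$, so by induction each $2$-cell $C$ of $\proj_{I'}(A)$ is a spherically convex polygon whose edges are $1$-cells of $\proj_{I'}(A)$; in particular $\overline{C}$ lies in an open hemisphere and is geodesically convex there. A $2$-cell $D$ of $\proj_I(A)$ is a connected component of $C \setminus S_{i_m}$ for a unique such $C$. Inside the open cell $C$ the only kernel present is $S_{i_m}$, which is a simple closed curve with corners only on $S_{i_1},\dots,S_{i_{m-1}}$; hence no $0$-cell of $\proj_I(A)$ lies in $C$, so $S_{i_m}\cap C$ is a finite disjoint union of open $1$-cells $\gamma_1,\dots,\gamma_r$ of $\proj_I(A)$ (finiteness via Remark~\ref{remark-cell}), and by the definition of $Y_I$ each $\gamma_k$ is geodesic, hence a single great-circle arc; since $\overline{C}$ lies in an open hemisphere its closure is a chord of $C$, so $C \setminus \gamma_k$ has two convex components, and the chords are pairwise noncrossing because $S_{i_m}$ is simple. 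Letting $C_k$ be the component of $C \setminus \gamma_k$ containing $D$, one checks that $\bigcap_k C_k$ is an intersection of convex subsets of the hemisphere, hence convex and connected, is disjoint from $S_{i_m}$, and contains $D$, so $D = \bigcap_k C_k$; thus $\overline{D}$ is a closed geodesically convex set bounded by finitely many geodesic arcs, i.e.\ a spherically convex polygon. For the edge condition: an edge of $\overline{D}$ either lies in $\partial C$, hence in a single edge $e$ of $\overline{C}$, and is then the subarc of $e$ cut off by the points $S_{i_m}\cap e$, which is a $1$-cell of $\proj_I(A)$; or it is the relative interior of some $\gamma_k$, which is already a $1$-cell of $\proj_I(A)$. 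This completes the induction.

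The step I expect to be the main obstacle is the local analysis of $S_{i_m}$ inside $C$: verifying that each component of $S_{i_m}\cap C$ is a single great-circle arc ending on $\partial C$, rather than a geodesic segment terminating in the interior of $C$ or running more than a semicircle around a great circle. This rests on combining the ``corners only on the earlier kernels'' clause in the definition of $Y_I$ with the fact (from the induction hypothesis) that $\overline{C}$ sits in an open hemisphere, so a great circle meets $\overline{C}$ in an arc of length less than $\pi$, together with the finiteness of the cell decomposition from Remark~\ref{remark-cell}.
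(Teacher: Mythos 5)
Your proof is correct and follows the same basic plan as the paper's: induct on $m$, with the $m=3$ base case giving spherical triangles (orthants), and in the inductive step cut each convex $2$-cell $C$ of $\proj_{(i_1,\dots,i_{m-1})}(A)$ along the geodesic pieces of $S_{i_m}$. The one substantive difference is how the intersection $S_{i_m}\cap C$ is handled. The paper observes that all points of $S_{i_m}\cap\operatorname{int}(C)$ share a single covector, so by the regular cell decomposition (Remark~\ref{remark-cell}) this set is one connected $1$-cell of $\proj_I(A)$; hence $S_{i_m}$ cuts $C$ along a single geodesic chord into exactly two convex polygons, and no further bookkeeping is needed. You instead allow several components $\gamma_1,\dots,\gamma_r$ and recover convexity of each piece $D$ as an intersection of convex half-cells, which is valid but more elaborate than necessary; moreover your description of the components as a ``finite disjoint union of open $1$-cells of $\proj_I(A)$'' is slightly off, since all of these points have the same covector and therefore lie in one cell, which is connected --- this is exactly the fact that forces $r=1$ and that you could have used to get both connectedness and finiteness directly, rather than citing Remark~\ref{remark-cell} only for finiteness (finiteness can also be patched by noting that the endpoints of the chords lie among the at most $2(m-1)$ crossings of $S_{i_m}$ with the earlier kernels, with geodesics in an open hemisphere being unique). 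Your treatment of the edge condition and of the ``chord with endpoints on $\partial C$'' issue is at the same level of detail as the paper's, so I see no genuine gap, only a missed simplification.
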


\begin{claimproof}
We proceed by induction.  For $m=3$, each cell is an orthant of the sphere, which is a spherical triangle with 1-cell edges.  Since $Y_I \subset Y_{(i_1,\dots,i_{m-1})}$, the 2-cells of $\proj_{(i_1,\dots,i_{m-1})}(A)$ are spherical convex polygons with edges that are 1-cells by inductive assumption.  At each 2-cell $C$ such that $S_{i_m}$ intersects the interior of $C$, $S_{i_m}$ must intersect $C$ in a path $P$, since the restriction of $\proj_I({A})$ is homeomorphic to a 1-dimensional pseudosphere arrangement and $P = C \cap S_{i_m}$ is a 1-cell of that arrangement.
From the definition of $Y_I$, $P$ must be a geodesic arc through $C$, thereby subdividing $C$ into a pair of spherical convex polygons with edges that are 1-cells. 
\end{claimproof}

\begin{claim}\label{claim-open-X}
For $m > 0$, $X_I$ is an open proper subset of $Y_{(i_1,\dots,i_{m-1})}$ in the induced metric topology on $Y_{(i_1,\dots,i_{m-1})}$.
\end{claim}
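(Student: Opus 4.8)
The plan is to show two things: (1) $X_I$ is open in $Y_{(i_1,\dots,i_{m-1})}$, and (2) $X_I \neq Y_{(i_1,\dots,i_{m-1})}$. For openness, I would argue that each defining condition of $X_I$ inside $Y_{(i_1,\dots,i_{m-1})}$ is an open condition in the metric $\dist$. For properness, I would exhibit an explicit element of $Y_{(i_1,\dots,i_{m-1})}$ that fails the relevant condition.

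\textbf{Openness.} I split into cases by $m$. For $m = 1$, $X_{(i_1)}$ is cut out of $\pstief_{3,n}$ by $\|\alpha_{i_1}\| \neq 0$; since $A \mapsto \|\alpha_{i_1}\| = \wei(A)_{i_1}$ is $1$-Lipschitz with respect to $\dist$, this is an open condition. For $m = 2$, the condition $S_{i_1} \neq S_{i_2}$ is the only new one beyond $\|\alpha_{i_2}\| \neq 0$; by Claim~\ref{claim-pseudocircle-dist}, when $\|\alpha_{i_1}\|,\|\alpha_{i_2}\| \neq 0$ we have $S_{i_1} = S_{i_2}$ with both orientations coinciding only in degenerate configurations, and more usefully $S_{i_1} \neq S_{i_2}$ holds on a neighborhood because the set of $A$ with $S_{i_1}=S_{i_2}$ is closed — a $\dist$-limit of arrangements sharing a kernel again shares a kernel (the kernels converge in Fréchet distance). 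For $m = 3$, the new condition is $S_{i_1} \cap S_{i_2} \cap S_{i_3} = \emptyset$, equivalently (given the pair $(S_{i_1},S_{i_2})$ already meets in exactly two antipodal-or-not points $p,p'$ by Lemma~\ref{lemma-vertex}) that neither $p$ nor $p'$ lies on $S_{i_3}$. By the second part of Lemma~\ref{lemma-vertex}, $S_{i_1}\cap S_{i_2}$ varies continuously in symmetric Hausdorff distance, and $S_{i_3}$ varies continuously in Fréchet distance, so the distance from $S_{i_1}\cap S_{i_2}$ to $S_{i_3}$ is a continuous positive function on $X_{(i_1,i_2,i_3)}$, hence the condition is open. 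For $m > 3$, $X_I$ is cut out of $Y_{(i_1,\dots,i_{m-1})}$ by the single open condition $\|\alpha_{i_m}\| \neq 0$, again a $1$-Lipschitz function. In every case the intersection with $Y_{(i_1,\dots,i_{m-1})}$ of an open set is open in the induced topology.

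\textbf{Properness.} Here I give, for each $m$, an explicit $A \in Y_{(i_1,\dots,i_{m-1})} \setminus X_I$. For $m \leq 3$ take $A$ agreeing with an element of $Y_{(i_1,\dots,i_{m-1})}$ on coordinates $i_1,\dots,i_{m-1}$ but with $\alpha_{i_m} = 0$ (the trivial weighted pseudosphere), which is still a valid weighted pseudosphere arrangement and lies in $Y_{(i_1,\dots,i_{m-1})}$ since setting a coordinate to zero preserves spanning-ness when $m-1$ coordinates already span (for $m=3$) or is harmless otherwise; this violates $\|\alpha_{i_m}\| \neq 0$. Alternatively, for $m=2$ set $\alpha_{i_2} = \alpha_{i_1}$ to violate $S_{i_1}\neq S_{i_2}$; for $m=3$ choose $S_{i_3}$ a great circle through the pair $S_{i_1}\cap S_{i_2}$. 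For $m > 3$, again take $\alpha_{i_m} = 0$, which lies in $Y_{(i_1,\dots,i_{m-1})}$ but not in $X_I$. In all cases $X_I$ is a proper subset.

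\textbf{Main obstacle.} The delicate point is the $m=3$ case of openness: I need the intersection point set $S_{i_1}\cap S_{i_2}$ to depend continuously on $A$ in a strong enough sense to conclude that "$p,p' \notin S_{i_3}$" persists under perturbation. This is exactly what the second half of Lemma~\ref{lemma-vertex} supplies (convergence in symmetric Hausdorff distance given $S_{\infty,1}\neq S_{\infty,2}$, which holds on $X_{(i_1,i_2)}$), so the argument goes through; the care is just in packaging "$\dist$-convergence of arrangements implies Fréchet-convergence of each kernel" correctly, which follows directly from the definition of $\dist$ on weighted pseudospheres. The remaining cases are routine applications of Lipschitz-continuity of the weight functional and closedness of the locus where two kernels coincide.
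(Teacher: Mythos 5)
Your argument is correct and lands where the paper does, but for the substantive cases $m\in\{2,3\}$ it runs on different machinery. Properness is handled the same way in the paper (zero out the $i_m$-th entry of a configuration whose remaining elements still span -- the paper takes a generic configuration in $(\Rpol{3})^n$; note your parenthetical that for $m=3$ ``the $m-1$ coordinates already span'' is wrong, since two pseudocircles always intersect, and both your witness and the paper's implicitly need $n\geq 4$, an edge case the paper glosses over as well; your alternative witnesses, $\alpha_{i_2}=\alpha_{i_1}$ and a great circle through $S_{i_1}\cap S_{i_2}$, are fine). For openness the paper does not argue by sequential continuity: for each $A\in X_I$ it names an explicit radius, $r=\|\alpha_{i_m}\|$ when $m=1$ or $m>3$, and for $m\in\{2,3\}$ a radius built from the weights together with a set $P$ of witness points, one in the interior of each 2-cell of the subdivision by $S_{i_1},\dots,S_{i_m}$; any $B\in Y_{(i_1,\dots,i_{m-1})}$ within $r$ of $A$ keeps each point of $P$ in the corresponding cell, so every full sign pattern on $I$ is still realized and $I$ remains independent, i.e.\ $B\in X_I$. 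You instead get $m=2$ from closedness of the coincidence locus (your appeal to Claim~\ref{claim-pseudocircle-dist} is not really needed there) and $m=3$ from the second half of Lemma~\ref{lemma-vertex} plus the fact that $\dist$-convergence forces Fr\'echet convergence of kernels with nonvanishing weight. Both routes are valid: yours is qualitative and isolates exactly which continuity statements (already proved in the paper) are being used, while the paper's witness-point certificate is quantitative, treats $m=2$ and $m=3$ uniformly via the sign-pattern characterization of independence, and avoids any case-by-case limit analysis.
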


Note that $X_I$ is not generally open in the topology on $\pstief_{3,n}$.

\begin{claimproof}
Consider some $A \in Y_{(i_1,\dots,i_{m-1})}$, a generic configuration in $(\Rpol{3})^n \subset Y_{(i_1,\dots,i_{m-1})}$ will suffice.  Then, $\proj_{[n]_\mb{N}\setminus i_m}(A) \in Y_{(i_1,\dots,i_{m-1})} \setminus X_I$, so $X_I$ is a proper subset of $Y_{(i_1,\dots,i_{m-1})}$.

To show $X_I$ is open, for each $A \in X_I$, we find a radius $r>0$ sufficiently small that every $B \in Y_{(i_1,\dots,i_{m-1})}$ within distance $r$ of $A$ is in $X_I$.
For $m =1$ or $m>3$, $r = \|\alpha_{i_m}\|$ suffices.
For $m \in \{2,3\}$, we may use 
\[ r = \min\left\{\|\alpha_i\|:\ i\in I\right\} \cdot \inf\left\{ (\nicefrac{1}{2}) \|x -y\|:\ x \in P,\ y \in (S_{i_1} \cup \dots \cup S_{i_m}) \right\} \]
where $P$ consists of a point in the interior of each 2-cell of the subdivision of the sphere by $S_{i_1},\dots,S_{i_m}$.  
For every $B \in Y_{(i_1,\dots,i_{m-1})}$ within distance $r$ from $A$, we have that the points of $P$ are each in the corresponding 2-cell of $B$, which means $B \in X_I$.
\end{claimproof}

\subsubsection{The deformation $f_I$ from $X_I$ to $Y_I$}
\label{subsubsec-f}

For $m \in \{0,1,2\}$, let $f_I(A,t) = A$. 
For $m \geq 3$, consider $A \in X_I$ and assume that $Q = \coord((i_1,i_2,i_3);A)$ is the identity, otherwise let $f_I(A,t) = f_I(A*Q,t)*Q^{-1}$.

let 
\[ f_{I}(A,t) = A*\ho(\interp(B,C),1-t)\]
where $B = (\beta_1,\dots,\beta_n) = \proj_I(A)$ and $C = (\gamma_1,\dots,\gamma_n)$ with $\wei(C)=\wei(B)$ depends on $m$ as follows.  

For $m=3$, let $\gamma_{i_1} = \|\alpha_{i_1}\|e_1$, while for $k \in \{2,3\}$, $\gamma_{i_k} \in \Rpol{3}$ is the unique vector such that 
$\|\gamma_{i_k}\| = \|\alpha_{i_k}\|$,
$\dist(\gamma_{i_k},\gamma_{i_1}) = \dist(\alpha_{i_k},\alpha_{i_1})$,
and $\aim(\gamma_{i_k},x) = \aim(\alpha_{i_k},x)$ for $x \in \cell(B,\sign(e_{i_2})) \cup \cell(B,\sign(e_{i_3}))$.

For $m > 3$ and for $k<m$ let $\gamma_{i_k}= \beta_{i_k}$.
Let $\gamma_{i_m}$ be a copy of $\beta_{i_k}$ in the upper hemisphere that is straightened in each cell keeping the end points fixed, and a reflected copy in the lower hemisphere.
That is, we let 
\[ H \dfeq \left\{(x_1,x_2,x_3) \in \sphere^2 :\ x_1 > 0 \text{ or } (x_1=0 \text{ and } x_2 \geq 0)\right\}, \]
\[ V \dfeq \bigcup_{k=1}^{m-1} S_{i_k}. \]
For $v \in V$, 
\[ \aim (\gamma_{i_m}; v) = \begin{cases} 
\phantom{-}\aim (\beta_{i_m}; v) & v \in H \\
{-}\aim (\beta_{i_m}; -v) & v \not\in H 
\end{cases}\]
Let $\gamma_{i_m}$ be defined on $\sphere^2 \setminus V$ such that $S_{i_m}$ is geodesic on all 2-cells of $\proj_{(i_1,\dots,i_{m-1})} A$.

\begin{claim}\label{claim-deform-f}
$f_I$ is a well defined strong equivariant deformation retraction from $X_I$ to $Y_I$.
\end{claim}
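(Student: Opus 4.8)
For $m=|I|\in\{0,1,2\}$ there is nothing to prove: there $Y_I=X_I$ and $f_I(A,t)=A$, which is trivially a strong deformation retraction and is $\orth_3$-equivariant, $f_I(A*Q,t)=A*Q=f_I(A,t)*Q$. So from now on assume $m\ge 3$. First I would dispose of well-definedness and equivariance together, using the normalization. For $A\in X_I$ the triple $(i_1,i_2,i_3)$ is a basis of $A$ — this is built into the definition of $X_{(i_1,i_2,i_3)}$, and for $m>3$ it propagates from $X_I\subseteq Y_{(i_1,i_2,i_3)}$ down the chain of inclusions — so $Q=\coord((i_1,i_2,i_3);A)$ is defined and depends continuously on $A$ by Lemma~\ref{lemma-basis}(1). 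By Lemma~\ref{lemma-basis}(4) the recursive clause $f_I(A,t)=f_I(A*Q,t)*Q^{-1}$ reduces in one step to the normalized formula, and agrees with it when $Q=\id$; so $f_I$ is well defined as soon as the normalized case is. The same clause forces equivariance: for $Q'\in\orth_3$, Lemma~\ref{lemma-basis}(2) gives $\coord((i_1,i_2,i_3);A*Q')=Q'^*Q$, and since $(A*Q')*(Q'^*Q)=A*Q$,
\[ f_I(A*Q',t)=f_I\big((A*Q')*(Q'^*Q),t\big)*(Q'^*Q)^{-1}=f_I(A*Q,t)*Q^{-1}*Q'=f_I(A,t)*Q'. \]

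The heart of the argument is to prove that, for $A$ in the normalized case, the auxiliary arrangement $C$ is well defined, varies continuously with $A$, and realizes the same oriented matroid as $B=\proj_I(A)$, so that $\interp(B,C)$ is even meaningful. For $m=3$ one shows existence and uniqueness of $\gamma_{i_2},\gamma_{i_3}$: by Claim~\ref{claim-pseudocircle-dist} and the fact that $S_{i_1},S_{i_2},S_{i_3}$ are pairwise distinct on $X_{(i_1,i_2,i_3)}$, $\dist(\beta_{i_k},\beta_{i_1})$ lies strictly below $\|\alpha_{i_k}\|+\|\alpha_{i_1}\|$; the Fréchet distance between two weighted great circles is a strictly monotone continuous function of the angle between their oriented axes, so the weight and distance constraints cut out a circle of candidate great circles, and the two signs prescribed at the vertices $p_2=\cell(B,\sign(e_{i_2}))$, $p_3=\cell(B,\sign(e_{i_3}))$ single out a unique one, varying continuously with $A$ since $p_2,p_3$ do (Lemma~\ref{lemma-vertex}). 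Then $\cov(C)=\cov(B)$ because both $B$ and $C$ are spanning arrangements of three pairwise distinct (pseudo)circles — hence combinatorially the same up to relabelling and reorientation — and the signs at $p_1,p_2,p_3$ together with the $\coord=\id$ normalization fix the labelling and orientation so the covectors coincide. For $m>3$, the $\gamma_{i_k}$ with $k<m$ are unchanged — they are already symmetric and piecewise geodesic, since $A\in X_I\subseteq Y_{(i_1,\dots,i_{m-1})}$ — while $\gamma_{i_m}$ is the hemisphere-straightening of $\beta_{i_m}$ reflected to the other hemisphere. Here Claim~\ref{claim-sphericonvex} does the work: the $2$-cells of $\proj_{(i_1,\dots,i_{m-1})}(A)$ are spherically convex polygons whose edges are $1$-cells, so replacing $\beta_{i_m}$ arc by arc with geodesic chords keeps the curve inside each cell and crossing the same edges in the same cyclic order, and because the sub-arrangement $(S_{i_1},\dots,S_{i_{m-1}})$ is itself antipodally symmetric, the reflection glues up to a genuine pseudocircle meeting the sub-arrangement in the same pattern as $S_{i_m}$. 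Hence $\cov(C)=\cov(B)$, and $C$ depends continuously on $A$.

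With $\cov(B)=\cov(C)$ established, Lemma~\ref{lemma-interp} supplies $\interp(B,C)\in\hom(\sphere^2)$ with $B*\interp(B,C)=C$, continuous in $A$; composing with the Kneser retraction $\ho$ of Theorem~\ref{theorem-kneser}, $f_I(A,t)=A*\ho(\interp(B,C),1-t)$ is continuous. The remaining axioms I would then check directly: at $t=0$ the map is the identity on $X_I$; at $t=1$ it straightens the $I$-indexed pseudospheres, which become exactly those of $C$ — great circles for $m=3$, symmetric and piecewise geodesic for $m>3$ — so $f_I(A,1)\in Y_I$; the path stays in $X_I$ because the $\hom(\sphere^2)$-action preserves $\cov$ and the weights while every defining condition of $X_I$ is a condition on those; and $f_I$ fixes $Y_I$, since there $C=B$, so $\interp(B,C)=\id$ by Observation~\ref{remark-interp-id} and $\ho(\id,\cdot)\equiv\id$. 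I expect the main obstacle to be the previous paragraph: showing that straightening (and for $m>3$ symmetrizing) leaves the oriented matroid unchanged, so that $\interp(B,C)$ exists at all. This rests entirely on the spherical convexity of the cells of the already-processed sub-arrangement (Claim~\ref{claim-sphericonvex}) and on the weighted-great-circle distance analysis behind Claim~\ref{claim-pseudocircle-dist}; a secondary subtlety is the continuity of $C$ in $A$ near arrangements where a cell of the sub-arrangement is about to collapse, which is precisely why the domain is $X_I$.
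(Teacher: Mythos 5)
Your overall route is the paper's: dispose of $m\le 2$ trivially, reduce to the normalized case via $\coord$ and Lemma~\ref{lemma-basis} (which also yields equivariance), prove that the target arrangement $C$ is well defined — via Claim~\ref{claim-pseudocircle-dist} and the circle-of-candidates argument with the sign conditions at the vertices for $m=3$, and via the convexity of the $2$-cells from Claim~\ref{claim-sphericonvex} for $m>3$ — conclude $\cov(C)=\cov(B)$ so that $\interp(B,C)$ exists, and then get the deformation from Lemma~\ref{lemma-interp} and Theorem~\ref{theorem-kneser}, with strongness coming from $B=C$ on $Y_I$. All of that matches the paper's proof, and your explicit equivariance computation from Lemma~\ref{lemma-basis}(2),(4) is a correct expansion of what the paper dismisses as ``by definition.''

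There is, however, one step whose justification as you state it would fail. You argue that $f_I(A,t)\in X_I$ for intermediate $t$ ``because the $\hom(\sphere^2)$-action preserves $\cov$ and the weights while every defining condition of $X_I$ is a condition on those.'' That is true only for $m=3$, where $X_{(i_1,i_2,i_3)}$ is cut out by combinatorial conditions (nonvanishing weights, distinct kernels, empty triple intersection). For $m\ge 4$ we have $X_I\subset Y_{(i_1,\dots,i_{m-1})}$, and the defining conditions of $Y_{(i_1,\dots,i_{m-1})}$ are metric, not combinatorial: $S_{i_1},S_{i_2},S_{i_3}$ must literally be great circles and $S_{i_4},\dots,S_{i_{m-1}}$ antipodally symmetric and piecewise geodesic. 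An arbitrary self-homeomorphism of $\sphere^2$ preserves covectors and weights but destroys these properties, so preservation of $\cov$ and $\wei$ does not put $f_I(A,t)$ back in $X_I$. The paper's argument at this point is different and is exactly what is needed: for $m>3$ it asserts that the pseudocircles $\alpha_{i_1},\dots,\alpha_{i_{m-1}}$ remain fixed throughout the deformation (they are common to $B$ and $C$), so $f_I(A,t)$ stays in $Y_{(i_1,\dots,i_{m-1})}$, and the norm of $\alpha_{i_m}$ is preserved, giving $X_I$; for $m=3$ it argues that $\proj_I(f_I(A,t))$ stays in the same $\pstief$-realization space, so $I$ remains independent. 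You should replace your covector-and-weight argument for $m\ge 4$ by an argument that the already-processed pseudocircles indexed by $i_1,\dots,i_{m-1}$ do not move during $A*\ho(\interp(B,C),1-t)$; without that, membership in $X_I$ at intermediate times is not established.
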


\begin{claimproof}
We start with the case $m=3$.  Since $A \in X_I$, $I$ is a basis of $A$, so $\{{i_1},{i_2},{i_3}\}$ is an independent set, which implies $\dist(\alpha_{i_k},\alpha_{i_1}) < \|\alpha_{i_k}\| +\|\alpha_{i_1}\|$ by Claim \ref{claim-pseudocircle-dist}.
There is a circle in $\Rpol{3}$ of points that are distance $\|\alpha_{i_k}\|$ from $0$ and distance $\dist(\alpha_{i_k},\alpha_{i_1})$ from $\gamma_{i_1} = \|\alpha_{i_1}\|e_1$, and only an antipodal pair among this circle is orthogonal to $p_{5-k} = \cell(B,\sign(e_{i_{5-k}}))$, and only one can be the vector $\gamma_{i_k}$ toward $p_k = \cell(B,\sign(e_{i_{k}}))$, in the sense that 
$\langle \gamma_{i_k}, p_k\rangle > 0$, so $C$ is well defined.
We have $\cov(C) = \cov(B)$ is the set of all sign sequences with support in $I$, so $f_I$ is a well defined deformation by Lemma \ref{lemma-interp}, Theorem \ref{theorem-kneser}, and Remark \ref{remark-orthinterp} as in the proof of Theorem \ref{theorem-pseudoreal-contract}. 
Since $\{\gamma_{i_1},\gamma_{i_2},\gamma_{i_3}\}\subset \Rpol{3}$ is a basis, $C \in Y_I$, so $f_I$ deforms $X_I$ to $Y_I$.  Since $\proj_I(f_I(A,t))$ remains in the same $\pstief$-realization space throughout the deformation, $\{i_1,i_2,i_3\}$ is an independent set of $f_I(A,t)$, so $f_I(A,t) \in X_I$, so $f_I$ is a deformation retraction from $X_I$ to $Y_I$.  
If $A \in Y_I$ to start, then $\{\alpha_{i_1},\alpha_{i_2},\alpha_{i_3}\}\subset \Rpol{3}$ is a basis, which means $\alpha_{i_k} = \beta_{i_k} = \gamma_{i_k}$, so $B = C$, so $\interp(B,C)$ is the identity, which implies $f_I(A,t) = A$ is trivial, since $\ho$ is strong.  Thus, $f_I$ is strong.

Now consider the case $m>3$. 
Since the 2-cells of $\proj_{(i_1,\dots,i_{m-1})}(A)$ are spherical convex polygons by Claim \ref{claim-sphericonvex}, the geodesic arc between any pair of points is in that 2-cell, so $C$ is well defined and $\sign(B)=\sign(C)$, so $f_I$ is a well defined deformation.  Since $C \in Y_I$ by definition, $f_I$ is a deformation from $X_I$ to $Y_I$. 
Since the initial sequence of weighted pseudocircles $\alpha_{i_1},\dots,\alpha_{i_{m-1}}$ remain fixed throughout the deformation, we have for all $A \in X_I$ and $t \in [0,1]_\mb{R}$ that $f_I(A,t) \in Y_{(i_1,\dots,i_{m-1})}$, and since the norm of $\alpha_{i_m}$ also remains fixed, we have $f_I(A,t) \in X_I$, which implies that $f_I$ is a deformation retraction.  
If $A \in Y_I$ to start, then $f_I(A,t) = A$ is trivial, since $\beta_{i_m} = \gamma_{i_m}$, which implies $B=C$, so the deformation is strong.

Finally, the deformation is equivariant since $f_I(A,t) = f_I(A*Q,t)*Q^{-1}$ by definition.
\end{claimproof}

\begin{claim}\label{claim-stable-f}
For all $p \in \{0,\dots,m{-}1\}$ and all $j \in [n]_\mb{N} \setminus \{i_1,\dots,i_p\}$, 
if $A \in X_I \cap X_{(i_1,\dots,i_{p},j)}$ then $f_I(A,1) \in X_{(i_1,\dots,i_{p},j)}$.
\end{claim}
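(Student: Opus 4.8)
The whole claim rests on a single structural observation about $f_I$, so the plan is to isolate that observation first and then dispatch the two regimes in the definition of $X_{(i_1,\dots,i_p,j)}$ separately. The observation is that $f_I(\,\cdot\,,1)$ acts purely by post-composition with a self-homeomorphism of $\sphere^2$: unwinding the $\coord$-conjugation in the definition, one has $f_I(A,1) = A*\psi$ with $\psi = Q\circ\interp(\proj_I(A*Q),C)\circ Q^{-1}\in\hom(\sphere^2)$, where $Q=\coord((i_1,i_2,i_3);A)$ and $C$ is the straightened arrangement produced in the definition of $f_I$ (I would use here that $\ho(\,\cdot\,,1-t)$ equals $\interp(\cdot,\cdot)$ itself at $t=1$, since $\ho(\,\cdot\,,0)=\id$ on $\hom(\sphere^2)$; if instead the Kneser convention leaves us at an element of $\orth_3$ the argument below only gets easier). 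Since $\aim(\alpha_\ell*\psi;x)=\aim(\alpha_\ell;\psi(x))$ and $\psi$ is a bijection of the sphere, $\cov(f_I(A,1))=\cov(A)$ and $\wei(f_I(A,1))=\wei(A)$, and the kernel of the $\ell$-th element of $f_I(A,1)$ is $\psi^{-1}$ applied to that of $A$.

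First I would treat the regime $p+1\le 3$. Here, by definition, $X_{(i_1,\dots,i_p,j)}$ is precisely the set of arrangements in which $\{i_1,\dots,i_p,j\}$ is an independent set of the covector set — a condition visible entirely through $\cov$ (the individual constraints $\|\alpha_i\|\ne0$, $S_{i_1}\ne S_{i_2}$, $S_{i_1}\cap S_{i_2}\cap S_{i_3}=\emptyset$ are all of this type). Since $f_I(A,1)$ has the same covector set and weights as $A$, membership is preserved. This regime also swallows the entire case $m\le 2$ (where $f_I$ is the identity and $p\le m-1\le 1$) and the entire case $m=3$ (where $p\le 2$), so only $p\ge 3$, hence $m\ge p+1\ge 4$, remains.

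For $p\ge 3$ one has $X_{(i_1,\dots,i_p,j)}=\{D\in Y_{(i_1,\dots,i_p)}:\|\beta_j\|\ne0\}$; the weight clause is immediate from $\wei(f_I(A,1))=\wei(A)$, so the task is to show $f_I(A,1)\in Y_{(i_1,\dots,i_p)}$. Because $A\in X_I\subseteq Y_{(i_1,\dots,i_{m-1})}\subseteq Y_{(i_1,\dots,i_p)}$ and the defining conditions of $Y_{(i_1,\dots,i_p)}$ constrain only the elements indexed by $i_1,\dots,i_p$ — and among those the non-combinatorial (great-circle / piecewise-geodesic) conditions constrain only the kernels $S_{i_1},\dots,S_{i_p}$ — it suffices to show $\psi$ fixes each $S_{i_\ell}$, $\ell\le m-1$, setwise, so that these kernels of $f_I(A,1)$ are literally the point sets they were in $A$. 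This is where the structure of $C$ enters: for $m>3$ the arrangement $C$ agrees with $\proj_I(A*Q)$ on the components indexed by $i_1,\dots,i_{m-1}$ (only the last kernel $S_{i_m}$ is straightened) and $\cov(C)=\cov(\proj_I(A*Q))$. Writing $B'=\proj_I(A*Q)$, the kernel $S_{i_\ell}(B')=S_{i_\ell}(C)$ (for $\ell\le m-1$) is the union of the closed cells $\Cell(\,\cdot\,,\sigma)$ over $\sigma$ with $\sigma(i_\ell)=0$, and $\interp(B',C)$ carries $\Cell(C,\sigma)$ onto $\Cell(B',\sigma)$ for every $\sigma$; hence $\interp(B',C)$ maps $S_{i_\ell}(B')$ onto itself. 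Conjugating by the orthogonal $Q$ (which sends $S_{i_\ell}(A)$ to $S_{i_\ell}(B')$ and back), $\psi$ fixes $S_{i_\ell}(A)$ setwise, as needed; combined with $\|\beta_j\|=\|\alpha_j\|\ne0$ this gives $f_I(A,1)\in X_{(i_1,\dots,i_p,j)}$.

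I expect the one step with genuine content to be the last one — pinning down that $\interp(B',C)$ leaves each of the first $m-1$ kernels setwise fixed when its two arguments differ only in their last nontrivial entry. That reduces to reading off the cell-by-cell construction of $\interp$ from the Interpolation subsection (that it maps the cell of $C$ with a given sign vector onto the cell of $B'$ with that same sign vector) together with the observation that $S_{i_\ell}$ is a union of such closed cells; nothing deeper is involved. Everything else — post-composition by a homeomorphism preserving $\cov$ and $\wei$, the combinatorial nature of the $p+1\le3$ regime, the implication $p\ge3\Rightarrow m\ge4$, and the harmlessness of the orthogonal conjugation — is routine bookkeeping. As a sanity check I would also run the $m=3$ subcase directly against the $\coord$-conjugated formula for $f_I$, but since $p\le2$ there it collapses at once to the combinatorial observation.
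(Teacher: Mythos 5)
Your proof is correct and follows essentially the same route as the paper's: $f_I$ preserves weights and the covector set (order type), which settles $p\le 2$, while for $p\ge 3$ one uses that the elements indexed by $i_1,\dots,i_{m-1}$ are left untouched, so membership in $Y_{(i_1,\dots,i_p)}$ persists along with the nonvanishing weight of the $j$-th element. The only difference is presentational: the paper imports $f_I(A,t)\in X_I\subset Y_{(i_1,\dots,i_{m-1})}\subseteq Y_{(i_1,\dots,i_p)}$ from Claim~\ref{claim-deform-f}, whereas you re-derive the $t=1$ instance directly from the cell-matching property of $\interp$ (and the $\coord$-conjugation), and you merge the paper's separate $p=0$ and $p\in\{1,2\}$ cases into a single covector-preservation argument.
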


\begin{claimproof}
For $p=0$, the claim holds since $f_I$ preserves norm.  In particular, for $A \in X_{(j)}$, the norm of the $j$-th pseudocircle never vanishes throughout the deformation $f_I(A,t)$.
Similarly for $p\geq 3$, the claim holds since $f_I(A,t) \in X_I \subset Y_{(i_1,\dots,i_{m-1})} \subseteq Y_{(i_1,\dots,i_k)}$, so $f_I(A,1) \in Y_{(i_1,\dots,i_k)}$, and $f_I$ preserves norm.

For $p=1$ (or $p=2$), the claim holds since $f_I$ preserves order type.  In particular, if $(i_1,j)$ (or $(i_1,i_2,j)$) is an ordered independent set of $\cov(A)$, then it remains independent throughout the deformation.
\end{claimproof}

\subsubsection{The deformation $h_{(i_1,\dots,i_m)}$ from $Z_{(i_1,\dots,i_m)}$ to $Z_{(i_1,\dots,i_{m-1})}$.}
\label{subsubsec-h}

We now define a deformation retraction $h_I = h_{(i_1,\dots,i_m)}$ from $Z_I = Z_{(i_1,\dots,i_m)}$ to $Z_{(i_1,\dots,i_{m-1})}$.  If $m < 3$, this is the trivial deformation, so assume $m \geq 3$.

Recall from the definition of $Z_I$ that for $A \in Z_I$, $S_{i_1},S_{i_2},S_{i_3}$ are great circles, which means $\alpha_{i_1}, \alpha_{i_2},\alpha_{i_3} \in \Rpol{3}$.
We project from $\sphere^2$ to the the real projective plane $\projplane$ (defined as a compactification of $\mb{R}^2$) by the 2-to-1 covering map $\proj_{\projplane}(\alpha_{i_1},\alpha_{i_2},\alpha_{i_3}) : \sphere^2 \to \projplane$ that sends $S_{i_1}$ to the horizon, and $S_{i_2}$ and $S_{i_3}$ to the horizontal and vertical axes respectively.
This map is given on $x \in \sphere^2 \setminus S_{i_1}$ by
\[ \proj_{\projplane}(\alpha_{i_1},\alpha_{i_2},\alpha_{i_3}; x) = \left( \tfrac{\langle \alpha_{i_2}, x \rangle}{\langle \alpha_{i_1}, x \rangle}, \tfrac{\langle \alpha_{i_3}, x \rangle}{\langle \alpha_{i_1}, x \rangle} \right), \]
where $\langle \cdot,\cdot \rangle$ is the inner product on $\Rpol{3}$ corresponding to the standard inner product on $\mb{R}^3$, 
and the map is defined for $x \in S_{i_1}$ by continuous extension.
We do this projection so that we can make use of the vector space structure of $\mb{R}^2 \subset \projplane$ when defining $h_I$.

Recall that a \df{pseudoline} in $\projplane$ is a simple closed curve in the real projective plane that cannot be deformed to a point.
Let $\curves$ denote the space of all pseudolines in $\projplane$. 
We give $\curves$ the metric that is the pull-back by $\proj_{\projplane}(e_1,e_2,e_3)$ of Fréchet distance on pseudocircles in $\sphere^2$.
By \df{pseudoline arrangement} we mean a collection of pseudolines such that any two either coincide or intersect at a single point.  
We say $L$ is a \df{pseudoline extension} of a pseudoline arrangement $(L_{i_1},\dots,L_{i_m})$ when $(L_{i_1},\dots,L_{i_m},L)$ is a pseudoline arrangement, i.e.\ either $L = L_{i_j}$ for some $j$, or $L$ is a pseudoline that intersects each $L_{i_j}$ exactly once.

Since the pseudocircles of an arrangement in $Z_I$ are antipodally symmetric, these project to pseudolines in $\projplane$, which we denote $L_i = L_i(A) = \proj_{\projplane}(\alpha_{i_1},\alpha_{i_2},\alpha_{i_3};S_i)$.  
We will define the deformation $h_I$ by deforming the pseudolines $L_i$ and lifting these to deformations of weighted pseudocircles.
An important feature of the deformations that we will use is that each pseudoline $L_j$ for $j \in \compl{I} = [n]_\mb{N} \setminus I$ deforms in a way that depends only on $L_{i_1},\dots,L_{i_m}$, and the initial position of $L_{j}$, but distinct pseudolines deform independently of each other.
Claim~\ref{claim-pseudoline-lift} below shows that this is a valid way to define such a deformation.

Let
\begin{align*}
L_{(j_1,\dots,j_k)}(A)  = (L_{j_1}(A),\dots,L_{j_k}(A)) 
\quad \text{and} \quad
\tilde Z_I  = L_I(Z_I). 
\end{align*}
We call a partial map
\[ \tilde \lambda : \tilde Z_I \times \curves \times [0,1]_\mb{R} \not\to \curves \]
an \df{extension deformation process} on $\tilde Z_I$ when, for all $\tilde A \in \tilde Z_I$, $\lambda_0 \in \curves$, and $t \in [0,1]_\mb{R}$, the following hold.
\begin{enumerate}
\item
$\tilde \lambda(\tilde A, \lambda_0, 0) = \lambda_0$.
\item
$\tilde \lambda(\tilde A, \lambda_0, t)$ is defined, 
provided that $\lambda_0$ is a pseudoline extension of $\tilde A$ and $\lambda_0$ is linear in each cell of $\tilde A$.
\item
$\tilde \lambda$ is continuous on the domain where it is defined.
\end{enumerate}
We say $\eta: Z_I \times [0,1]_\mb{R} \to \pstief_{3,n}$ is a deformation of $Z_I$ \df{induced} by the extension deformation process $\tilde \lambda$ on $Z_I$ when, for all $A \in Z_I$, $t \in [0,1]_\mb{R}$, and $j \in \compl{I}$, the following hold.
\begin{enumerate}
\item
$\wei(\eta(A,t)) = \wei(A)$. 
\item
$\proj_{I}(\eta(A,t)) = \proj_{I}(A)$. 
\item
$\eta(A,t)$ is antipodally symmetric and $L_{j}(\eta(A,t)) = \tilde \lambda(L_I(A),L_j(A),t)$.
\end{enumerate}

\begin{claim}\label{claim-pseudoline-lift}
{If} $\tilde \lambda$ is an extension deformation process on $\tilde Z_I$ such that
\[ \left(\tilde \lambda(\tilde A, L_1, t),\dots,\tilde \lambda(\tilde A, L_{n}, t) \right) \] 
is a pseudoline arrangement for every $\tilde A \in \tilde Z_I$, $t \in [0,1]_\mb{R}$, and every pseudoline arrangement $(L_1,\dots,L_n)$ where $(L_{i_1},\dots,L_{i_m}) = \tilde A$, 
{then} there is a unique deformation $\eta$ of $Z_I$ induced by $\tilde \lambda$ and $\eta$ is $\orth_3$-equivariant. 
\end{claim}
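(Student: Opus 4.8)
The plan is to build $\eta$ explicitly and then read off uniqueness and equivariance from the construction. Fix $A \in Z_I$ and let $\pi_A = \proj_{\projplane}(\alpha_{i_1},\alpha_{i_2},\alpha_{i_3}) : \sphere^2 \to \projplane$ be the associated $2$-to-$1$ covering, which identifies antipodes. Conditions~(1) and~(2) of being induced by $\tilde\lambda$ force $\wei(\eta(A,t)) = \wei(A)$ and $\proj_I(\eta(A,t)) = \proj_I(A)$, so the projection attached to $\eta(A,t)$ is again $\pi_A$; condition~(3) then forces, for each $j \in \compl I$ with $\|\alpha_j\| \neq 0$, the kernel of the $j$-th pseudocircle of $\eta(A,t)$ to be $\pi_A^{-1}\bigl(\tilde\lambda(L_I(A),L_j(A),t)\bigr)$ (using antipodal symmetry), and for $\|\alpha_j\| = 0$ the pseudocircle to stay trivial. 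Here $\tilde\lambda(L_I(A),L_j(A),t)$ is defined by property~(2) of an extension deformation process, since $A \in Z_I$ makes $L_j(A)$ a pseudoline extension of $\tilde A = L_I(A)$ that is linear on each cell of $\tilde A$ (great circles and geodesic arcs of $\proj_I(A)$ project to projective lines and segments), and it varies continuously with $(A,t)$ by property~(3) together with continuity of $A \mapsto \bigl(L_I(A),L_j(A)\bigr)$. Thus the only remaining freedom in $\eta(A,t)$ is the orientation of each non-trivial pseudocircle $\alpha_j$, $j \in \compl I$.

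To fix these orientations I would use that the forgetful map from co-oriented (equivalently, sign-carrying) simple closed curves in $\sphere^2$ to unoriented ones is a $2$-to-$1$ covering map. For each $j \in \compl I$, the map $(A,t) \mapsto \pi_A^{-1}\bigl(\tilde\lambda(L_I(A),L_j(A),t)\bigr)$ from $Z_I \times [0,1]_\mb{R}$ to the space of simple closed curves is continuous and at $t = 0$ equals $A \mapsto S_j$; lifting the continuous partial section $A \mapsto \aim(\alpha_j)$ over $Z_I \times \{0\}$ by the homotopy lifting property of coverings gives a unique continuous co-orientation over all of $Z_I \times [0,1]_\mb{R}$. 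Declaring $\alpha_j(\eta(A,t))$ to have weight $\|\alpha_j\|$ and this co-oriented kernel defines a continuous map $\eta$ with $\eta(\cdot,0) = \id_{Z_I}$ satisfying~(1)--(3) by construction. To check $\eta(A,t) \in \pstief_{3,n}$: the hypothesis, applied to the pseudoline arrangement of all non-trivial $L_k(A)$ (which includes $L_{i_1}(A),\dots,L_{i_m}(A) = \tilde A$), shows that $L_{i_1}(A),\dots,L_{i_m}(A)$ together with the $\tilde\lambda(L_I(A),L_j(A),t)$, $j \in \compl I$, again form a pseudoline arrangement in $\projplane$ (using that $\tilde\lambda$ fixes the pseudolines of its base); its $\pi_A$-preimage is an antipodally symmetric pseudocircle arrangement in $\sphere^2$, which is precisely the pseudocircle data of $\eta(A,t)$, and it is spanning since $S_{i_1}\cap S_{i_2}\cap S_{i_3} = \emptyset$ for the basis $\{i_1,i_2,i_3\}$.

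Uniqueness is then immediate: if $\eta'$ is any deformation of $Z_I$ induced by $\tilde\lambda$, then by~(1)--(3) it has the same weights, basis pseudocircles, and non-trivial kernels as $\eta$, and $\eta'(\cdot,0) = \id_{Z_I}$ because it is a deformation; so for each $A,j$ the paths $t \mapsto \alpha_j(\eta'(A,t))$ and $t \mapsto \alpha_j(\eta(A,t))$ are lifts of the same path of kernels through the orientation covering that agree at $t = 0$, hence coincide, giving $\eta' = \eta$.

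For $\orth_3$-equivariance, note first that $Z_I$ is $\orth_3$-invariant. For $Q \in \orth_3$ one has $\langle \alpha_{i_k}*Q, x\rangle = \langle \alpha_{i_k}, Qx\rangle$, so $\pi_{A*Q} = \pi_A \circ Q$, and $S_j(A*Q) = Q^*\bigl(S_j(A)\bigr)$; hence $L_k(A*Q) = L_k(A)$ for all $k$, so $\tilde\lambda(L_I(A*Q),L_j(A*Q),t) = \tilde\lambda(L_I(A),L_j(A),t)$ and $\pi_{A*Q}^{-1}$ of it equals $Q^*$ applied to $\pi_A^{-1}$ of it. Since $t \mapsto \alpha_j(\eta(A,t))*Q$ is a continuous lift of this path of kernels agreeing at $t = 0$ with $\alpha_j(A)*Q = \alpha_j(A*Q)$, uniqueness of lifts gives $\alpha_j(\eta(A*Q,t)) = \alpha_j(\eta(A,t))*Q$, and with~(1) and~(2) this yields $\eta(A*Q,t) = \eta(A,t)*Q$. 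I expect the main obstacle to be the orientation bookkeeping of the second paragraph: a pseudoline in $\projplane$ carries no co-orientation while its lift to $\sphere^2$ carries two, so the real content is that these can be chosen continuously in $(A,t)$ and pinned down by the initial arrangement, which the homotopy lifting property of the orientation double cover supplies. The companion point, that the lifts reassemble into an honest pseudosphere arrangement, is exactly where the pseudoline-arrangement hypothesis is used, via the standard dictionary between projective pseudoline arrangements and antipodally symmetric pseudocircle arrangements.
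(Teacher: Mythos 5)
Your proposal follows the same core strategy as the paper's proof: project to $\projplane$, observe that conditions (1)--(3) determine everything except the orientations of the non-basis pseudocircles, recover those orientations by homotopy lifting through the orientation double cover (the paper phrases this as $W_A$ being a $(\mb{Z}_2)^{J}$-fiber bundle over the space of pseudoline arrangements extending $L_I(A)$), get uniqueness from unique lifting, and get equivariance from $L_k(A*Q)=L_k(A)$. The organizational difference is where continuity in $A$ is handled: the paper lifts only within each fiber $W_A$ (fixed weights, fixed basis) and then proves continuity of the assembled $\eta$ across varying $A$ by an explicit estimate, transporting $A_k$ into $W_{A_\infty}$ via the linear map $M_k$ on the basis and bounding the Fréchet displacement by $\|M_k-\id\|_{\textrm{op}}$ and the weight differences; you instead perform one global lift over $Z_I\times[0,1]_\mb{R}$ (per index $j$), which buys continuity in $(A,t)$ automatically \emph{provided} the map $(A,t)\mapsto \pi_A^{-1}\bigl(\tilde\lambda(L_I(A),L_j(A),t)\bigr)$ is continuous as the projection $\pi_A$ itself varies with $A$. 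That continuity is precisely the technical heart of the paper's argument, and in your write-up it is asserted in one line rather than proved; it is true, but it needs the kind of quantitative control the paper supplies. Two further points you should pin down: the "2-to-1 covering" claim should be justified on the relevant subspace — it works here because the curves in play are antipodally symmetric with fixed weight, so the two orientations are at distance $2\|\alpha_j\|$ apart (Claim~\ref{claim-pseudocircle-dist}), which is what makes the $\mb{Z}_2$-action free and the lifting legitimate; and your per-$j$ lift only lives over the open locus $\{\|\alpha_j\|\neq 0\}\subset Z_I$, so continuity of $\eta$ at arrangements where the $j$-th weight vanishes needs the (easy) remark that the $j$-th coordinate contributes at most $\|\alpha_j\|$ to $\dist$. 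Finally, your parenthetical that $\tilde\lambda$ fixes the base pseudolines is an assumption not literally contained in the claim's hypotheses (though it holds for both deformation processes used later, and the paper's own proof tacitly relies on the same fact when it lifts $\Lambda(B,t)$ into $W_A$); flagging it is fine, but it should be stated as part of the verification rather than used silently.
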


\begin{claimproof}
We first demonstrate that we do not need for $\tilde \lambda$ to keep track of the orientations of the pseudocircles, as these can be tracked throughout the deformation, and thereby show uniqueness.
Let 
\[ W_A = \left\{B \in \pstief_{3,n}:\ -B = B*(-\id),\ \wei(B) = \wei(A),\ \proj_I(B) = \proj_I(A) \right\} \]
be the space of symmetric weighted pseudocircle arrangements where the weight of each pseudocircle is fixed to match that of $A$,
and the pseudocircles indexed by $I$ coincide with those of $A$.
Let $J = \{j \in \compl{I}: \|\alpha_j\| \neq 0 \}$ be the indices of the non-zero elements of $A$.  
For $B = (\beta_1,\dots,\beta_n) \in W_A$, we have $\dist(\beta_j,- \beta_j) = 2\|\alpha_j\| >0$, so the $\mb{Z}_2$-action reversing the orientation of the $j$-th element is free on $W_A$, provided that $j \in J$.
Therefore, $W_A$ is a $(\mb{Z}_2)^{J}$-fiber bundle over the space of pseudoline arrangements extending $L_I(A)$ by $|J|$ pseudolines. 
Let $\tilde A = L_I(A)$ and 
\[ \Lambda(B,t) = \left( \tilde \lambda(\tilde A, L_1(B), t),\dots, \tilde \lambda(\tilde A, L_n(B),t) \right). \]
By the hypotheses of the claim, $\Lambda(B,t)$ is a pseudoline arrangement, so there is a non-empty subset of $W_A$ that projects to $\Lambda(B,t)$.
Since fiber bundles have the homotopy lifting property, the deformation $\Lambda$ lifts to a continuous deformation $\eta_A$ of $Z_I \cap W_A$ in $W_A$.  

Let $\eta$ be the function $\eta(A,t) = \eta_A(A,t)$ for each $A \in Z_I$.
By definition, if there is a deformation $\eta'$ of $Z_I$ induced by $\lambda$, then $\eta'(A,t) \in W_A$ and each pseudocircle projects to a pseudoline that deforms according to $\tilde \lambda$, so $\eta' = \eta$ is the lift of $\Lambda$.  
Hence $\eta$ is the unique deformation induced by $\tilde \lambda$, provided $\eta$ is continuous.

Next, we show that $\eta$ is continuous where it is defined.
Let $\eta_{A,i}$ be the $i$-th weighted pseudocircle of $\eta_A$ so that  
$\eta_A = {\eta_{A,1} \times \dots \times \eta_{A,n}}$.
Note that some of the weighted pseudocircles may vanish and reappear elsewhere as $A$ varies, but $L_{i_1},\dots,L_{i_m}$ cannot vanish by the definition of $Z_I$.
Consider $A_k \to A_\infty \in Z_I$ and $t_k \to t_\infty \in [0,1]_\mb{R}$.  Let
\[ A_k = (\alpha_{k,1},\dots,\alpha_{k,n}) = ((r_{k,1},\theta_{k,1}),\dots,(r_{k,n},\theta_{k,n})), \]
and let $M_k$ be the linear transformation sending $\alpha_{\infty,i_1},\alpha_{\infty,i_2},\alpha_{\infty,i_3}$ to $\alpha_{k,i_1},\alpha_{k,i_2},\alpha_{k,i_3}$, and let $N_k \in \hom(\sphere^2)$ be $M_k$ followed by projecting radially to the sphere.
Recall that $\{\alpha_{k,i_1},\alpha_{k,i_2},\alpha_{k,i_3}\}$ is a basis for the vector space $\Rpol{3}$ by definition of $Z_I$, so $M_k$ is a well defined invertible linear transformation and $M_k \to M_\infty=\id$ in the operator norm.

For each $i$ such that $r_{\infty,i} > 0$, we have $k$ large enough that $r_{k,i} > 0$, so we may let 
\[ B_k = (\beta_{k,1},\dots,\beta_{k,n}) \in W_{A_\infty} \]
where 
\[ \beta_{k,i} = \begin{cases}
(r_{\infty,i},\theta_{k,i} \circ N_k) & r_{\infty,i} > 0 \\
0 & r_{\infty,i} = 0. 
\end{cases}\]
Since $\theta_{k,i} \to \theta_{\infty,i}$ and $N_k \to \id$, we have $B_k \to A_\infty$, and since $\eta_{A_\infty}$ is a continuous deformation in $W_{A_\infty}$, we have $\eta(B_k,t_k) = \eta_{A_\infty}(B_k,t_k) \to \eta(A_\infty,t_\infty)$.

The distance between the $i$-th element $\alpha$ of $\eta(A_k,t_k)$ and $\beta$ of $\eta(B_k,t_k)$ is, by definition of Fréchet distance, the infimum over maps from $\|\alpha_{k,i}\|S \subset \mb{R}^3$ to $\|\beta_{k,i}\|T = \|\alpha_{\infty,i}\|T \subset \mb{R}^3$ respecting the orientations of $S$ and $T$, where $S$ and $T$ are the kernel of $\aim(\alpha)$ and $\aim(\beta)$.
Let $\|\cdot\|_\text{op}$ denote the operator norm of a linear transformation.
We can obtain $\|\alpha_{\infty,i}\|T$ from $\|\alpha_{k,i}\|S$ by first scaling by $\|\alpha_{\infty,i}\|/\|\alpha_{k,i}\|$, which moves each point at most $\left| \| \alpha_{\infty,i} \| - \| \alpha_{k,i} \| \right|$,  then applying $M_k$, which moves each point at most $\| \alpha_{\infty,i} \| \cdot \|M_k -\id \|_{\textrm{op}}$, and then projecting radially to $\|\alpha_{\infty,i}\| \sphere^2$, which moves each point at most $\| \alpha_{\infty,i} \| \cdot | \|M_k \|_{\textrm{op}} -1 |$.  Hence, we can bound the distance to $\beta$ from $\alpha$ as the sum of the distances for each of these three steps, which gives,
\[ \dist(\eta(B_k,t_k), \eta(A_k,t_k)) \leq \max_{i \in [n]_\mb{N}} \left( \left| \| \alpha_{\infty,i} \| - \| \alpha_{k,i} \| \right| +\| \alpha_{\infty,i} \| \left( \|M_k -\id \|_{\textrm{op}} +\left| \|M_k \|_{\textrm{op}}  -1 \right| \right) \right). \]
Since $\|\alpha_{k,i}\| \to \|\alpha_{\infty,i}\|$ and $M_k \to \id$, we have $\dist(\eta(B_k,t_k), \eta(A_k,t_k)) \to 0$, so 
$\eta(A_k,t_k) \to \eta(A_\infty,t_\infty)$, which means $\eta$ is continuous.
Thus, $\eta$ is the deformation of $Z_I$ induced by $\tilde \lambda$.

Finally, we show that $\eta$ is $\orth_3$-equivariant. For $Q \in \orth_3$, we have 
\begin{align*}
L_i(A*Q) 
= \left\{ \left( \tfrac{\langle Q^*\alpha_{i_2}, x \rangle}{\langle Q^*\alpha_{i_1}, x \rangle}, \tfrac{\langle Q^*\alpha_{i_3}, x \rangle}{\langle Q^*\alpha_{i_1}, x \rangle} \right) :\ x \in Q^* S_i \right\} 
= L_i(A).
\end{align*}
That is, applying an orthogonal transformation to a weighted pseudocircle arrangement in $A \in Z_I$, does not change its image in the projective plane, since the projection $\proj_{\projplane}(\alpha_{i_1},\alpha_{i_2},\alpha_{i_3})$ also changes by the same transformation.
Therefore, $L_j(\eta(A,t)*Q) = L_j(\eta(A,t))$,
and $\tilde \lambda(L_I(A*Q),L_j(A*Q),t) = \tilde \lambda(L_I(A),L_j(A),t)$, 
so $L_j(\eta(A*Q,t)) = L_j(\eta(A,t))$, which means $L_j(\eta(A*Q,t)) = L_j(\eta(A,t)*Q)$.
Also $\proj_I(\eta(A*Q,t)) = \proj_I(A*Q) = \proj_I(\eta(A,t)*Q)$, 
so $\eta(A*Q,t) = \eta(A,t)*Q$, which means $\eta$ is $\orth_3$-equivariant.
\end{claimproof}

We will define $h_I$ as the deformation of $Z_I$ induced by an extension deformation process $\tilde \lambda$ as in Claim~\ref{claim-pseudoline-lift}.
We first choose some arbitrary $A \in Z_{I}$ and let $\tilde A = L_I(A) \in \tilde Z_{I}$, and choose an arbitrary pseudoline extension $\lambda(0)$ that is linear on the cells of $\tilde A$.  To define $\tilde \lambda$, we define a deformation $\lambda(t)$ of the pseudoline $\lambda(0)$, and then let $\tilde \lambda(\tilde A,\lambda(0),t) = \lambda(t)$.  Note that $\lambda(t)$ may depend on $\tilde A = (L_{i_1},\dots,L_{i_m})$ and $\lambda(0)$.

\paragraph{The case m$>$3.}
For $m >3$, $h_I$ 
is the deformation of $Z_I$ induced by the extension deformation process $\tilde \lambda$ where $\lambda(t) = \tilde \lambda(\tilde A, \lambda(0), t)$ is defined as follows.
Let $\lambda(t) = \lambda(0)$ be fixed unless $\lambda(0)$ intersects $L_{i_m}$ at a single point $p(0)$ that is in the interior of a 2-cell $C$ of the subdivision of $\mb{R}^2$ by $L_{i_2},\dots,L_{i_{m-1}}$.
Otherwise, let $\lambda$ be fixed on the complement of $C$, and let $\lambda$ evolve in $C$ as follows; see Figure \ref{figure-hI>3}.
Let $a,b$ be the points where $\lambda(0)$ meets the boundary of $C$, and
$p(1)$ be the point where the segment $[a,b]_{C}$ intersects $L_{i_m}$,
and 
\begin{gather*}
p(t) = t p(1) + (1-t) p(0), \\
 \lambda(t) \cap C = \left[a,p(t),b\right]_{C}. 
\end{gather*}

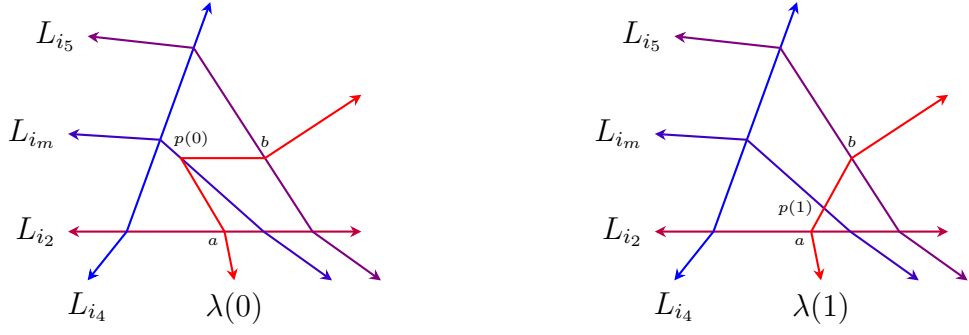
\begin{figure}
\begin{center}
\begin{tikzpicture}[scale=1.3,every node/.style={black}]

\draw[thick,purple,<->]
(-1,0) node[left] {$L_{i_2}$} -- (2,0);

\draw[blue,thick,<->]
(-.8,-.5) node[below] {$L_{i_4}$} -- (-.4,0) -- ++(70:1) coordinate (a) -- ++(70:1) coordinate (b) -- ++(70:.5);

\draw[violet,thick,<->]
(-.8,2) node[left] {$L_{i_5}$} -- (b) -- (1.5,0) coordinate (c) -- (2.2,-.5);

\draw[blue!50!violet,thick,<->]
(-1,1) node[left] {$L_{i_m}$} -- (a) -- (1,0) coordinate (d) -- (1.7,-.5);

\draw[red,thick,<->]
(.7,-.5) node[below] {$\lambda(0)$} -- (.6,0) node[below left,shift={(2pt,2pt)}] {\tiny $a$} -- ($(a)!.2!(d)$) node[above,shift={(4pt,0pt)}] {\tiny $p(0)$} -- ($(b)!.6!(c)$) node[above] {\tiny $b$} -- (2,1.4);

\begin{scope}[shift={(6,0)}]

\draw[thick,purple,<->]
(-1,0) node[left] {$L_{i_2}$} -- (2,0);

\draw[blue,thick,<->]
(-.8,-.5) node[below] {$L_{i_4}$} -- (-.4,0) -- ++(70:1) coordinate (a) -- ++(70:1) coordinate (b) -- ++(70:.5);

\draw[violet,thick,<->]
(-.8,2) node[left] {$L_{i_5}$} -- (b) -- (1.5,0) coordinate (c) -- (2.2,-.5);

\draw[blue!50!violet,thick,<->]
(-1,1) node[left] {$L_{i_m}$} -- (a) -- (1,0) coordinate (d) -- (1.7,-.5);

\draw[red,thick,<->]
(.7,-.5) node[below] {$\lambda(1)$} -- (.6,0) node[below left,shift={(2pt,2pt)}] {\tiny $a$} coordinate (e) -- ($(b)!.6!(c)$) node[above] {\tiny $b$} coordinate (f) -- (2,1.4);

\node[left] at (intersection of a--d and e--f) {\tiny $p(1)$};

\end{scope}

\end{tikzpicture}
\caption{Points used to define $\lambda(t)$ for $m>3$.}
\label{figure-hI>3}
\end{center}
\end{figure}

\Needspace{2cm}
\begin{claim}\label{claim-lambda-def}
For $m >3$, $\lambda(t)$ is well defined and is a pseudoline extension of $L_{i_1},\dots,L_{i_m}$.
\end{claim}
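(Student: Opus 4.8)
The plan is to carry out the entire argument inside the affine chart $\mathbb{R}^2\subset\projplane$ in which $\lambda$ is being deformed, where everything reduces to elementary planar convex geometry. The first thing I would establish is that $C$ is a convex subset of $\mathbb{R}^2$: the double cover $\proj_{\projplane}(\alpha_{i_1},\alpha_{i_2},\alpha_{i_3})\colon\sphere^2\to\projplane$ restricts to a homeomorphism from the open hemisphere $\{\langle\alpha_{i_1},\cdot\rangle>0\}$ onto $\mathbb{R}^2$ which carries the cells of the arrangement $S_{i_1},\dots,S_{i_{m-1}}$ lying in that hemisphere bijectively onto the cells of the arrangement $L_{i_2},\dots,L_{i_{m-1}}$ in $\mathbb{R}^2$, and carries spherically convex sets to affinely convex sets; so $C$, being the image of a single such spherical cell, is convex by Claim~\ref{claim-sphericonvex} (applied with $m-1\ge 3$). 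The same correspondence together with the definition of $Y_I$ and the proof of Claim~\ref{claim-sphericonvex} shows that $L_{i_m}\cap C$ is a single geodesic chord of $C$, splitting $C$ into two convex pieces $C'$ and $C''$, each a $2$-cell of the arrangement $L_{i_1},\dots,L_{i_m}$ contained in $C$. Since $\lambda(0)$ is linear on the cells of $\tilde A=L_I(A)$ and meets $L_{i_m}$ only at the single point $p(0)\in\operatorname{int}C$, its trace in $C$ is the arc $[a,p(0),b]_C$, one segment in $C'$ and one in $C''$ joined at $p(0)$; here $a\ne b$ are its two endpoints on $\partial C$, and $a,b\notin L_{i_m}$ (otherwise they would be further points of $\lambda(0)\cap L_{i_m}$), so $a$ and $b$ lie strictly on opposite sides of the supporting line of the chord.

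For well-definedness: because $C$ is convex and $a,b$ lie strictly on opposite sides of that line, the geodesic $[a,b]_C$ meets the line in a single point, and that point lies on the chord $L_{i_m}\cap C\subset\operatorname{int}C$; it is $p(1)$, so $p(1)$ is well defined and interior to $C$. As $p(0)$ and $p(1)$ both lie on the chord (a geodesic segment) and $C$ is convex, $p(t)=tp(1)+(1-t)p(0)$ lies on $L_{i_m}\cap\operatorname{int}C$ for all $t\in[0,1]_\mb{R}$, so $[a,p(t),b]_C$ is well defined. Each of the two segments $[a,p(t)]$ and $[p(t),b]$ runs from a point of $\partial C$ to the interior point $p(t)$, hence meets $\partial C$ only at its boundary endpoint and meets the line of the chord only at $p(t)$; thus $[a,p(t),b]_C$ is a simple arc in $\bar C$ meeting $\partial C$ exactly at $\{a,b\}$ and meeting $L_{i_m}$ exactly at $p(t)$.

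It remains to check $\lambda(t)$ is a pseudoline extension of $L_{i_1},\dots,L_{i_m}$. Since $\lambda(t)$ equals $\lambda(0)$ off $C$ and equals $[a,p(t),b]_C$ on $\bar C$, it is obtained from the simple closed curve $\lambda(0)$ by replacing one of its arcs lying in the closed disk $\bar C$ and meeting $\partial C$ only at its two endpoints $a,b$ by another such arc; hence $\lambda(t)$ is again a simple closed curve, and it is freely homotopic to $\lambda(0)$ (homotope $[a,p(t),b]_C$ to $[a,p(0),b]_C$ rel $\{a,b\}$ inside the disk $\bar C$), so it represents the nonzero class of $\pi_1(\projplane)$ and is therefore a pseudoline. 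For the incidences: $L_{i_1}$ is the horizon, disjoint from $\mathbb{R}^2\supseteq C$, so $\lambda(t)\cap L_{i_1}=\lambda(0)\cap L_{i_1}$ is a single point; for $2\le j\le m-1$, $C$ is a cell of an arrangement containing $L_{i_j}$, so $L_{i_j}\cap\bar C\subseteq\partial C$, and since both $\lambda(0)\cap\bar C$ and $\lambda(t)\cap\bar C$ meet $\partial C$ only at $\{a,b\}$ we get $\lambda(t)\cap L_{i_j}=\lambda(0)\cap L_{i_j}$, a single point; finally $\lambda(0)\cap L_{i_m}=\{p(0)\}\subset\operatorname{int}C$, so $\lambda(t)$ meets $L_{i_m}$ nowhere off $C$ and exactly at $p(t)$ inside $\bar C$. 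Hence $(L_{i_1},\dots,L_{i_m},\lambda(t))$ is a pseudoline arrangement, i.e.\ $\lambda(t)$ is a pseudoline extension of $L_{i_1},\dots,L_{i_m}$.

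The step I expect to be the main obstacle is the first one: rigorously transferring the spherical statement of Claim~\ref{claim-sphericonvex} across the double cover $\proj_{\projplane}$ to conclude that $C$ is an honest convex planar region and that $L_{i_m}$ meets it in a single chord, while keeping straight which spherical cells correspond to which planar cells. Once convexity of $C$ is in hand, the remaining bookkeeping (that $a\ne b$, that $a,b\ne p(0)$ and lie on opposite sides of the chord, and that the two new segments stay inside $C$) is routine.
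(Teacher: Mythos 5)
Your proof is correct and follows essentially the same route as the paper's: pass to the affine chart, use Claim~\ref{claim-sphericonvex} to get convexity of $C$ and the chord structure of $L_{i_m}\cap C$, observe that $a,b$ are separated by the chord so that $p(1)$ and hence $p(t)$ are well defined, and conclude that replacing the arc inside $\bar C$ yields a simple closed curve that is still a pseudoline crossing each $L_{i_j}$ exactly once. The extra detail you supply (the double-cover transfer of convexity, the homotopy argument for non-contractibility, and the explicit incidence counts) only elaborates steps the paper dispatches in a sentence each; like the paper, you take for granted that $\lambda(0)$ meets $C$ in a single arc, which the paper asserts directly from $\lambda(0)$ being a pseudoline extension.
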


\begin{claimproof}
We may assume that $\lambda(0)$ is a pseudoline extension of $(L_{i_1},\dots,L_{i_{m-1}})$ that passes though the interior of the cell $C$ where it intersects $L_{i_m}$, otherwise the deformation is trivial.
Since $\lambda(0)$ is a pseudoline extension, $\lambda(0)$ intersects $C$ in a single connected component, so $\lambda(0) \cap C$ is a polygonal path with a pair of well defined endpoints $a,b$, and at most one of these points may be on the horizon $L_{i_1}$.
By Claim \ref{claim-sphericonvex} and $Z_I \subset Y_I$ we know that $S_{i_1},\dots,S_{i_{m-1}}$ subdivide $\sphere^2$ into spherical convex polygons, so $C$ is a convex polygonal region of $\mb{R}^2 = \projplane \setminus L_{i_1}$.  Note that $C$ might be unbounded.  
From the definition of $Z_I$, we know that $L_{i_m} \cap C$ is a segment through $C$ that subdivides $C$ into two convex polygonal regions. 
From the definition of $C$, we know that $\lambda(0)$ crosses $L_{i_m}$ at a single point $p(0) \in C$, which implies that $a,b$ are separated in $C$ by $L_{i_m}$, and therefore the segment $[a,b]_{C}$ intersects $L_{i_m}$ at a single point $p(1) \in C$.
Hence $p(t) \in (L_{i_m} \cap C)$ is well defined, and $\lambda(t) \cap C$ is a well defined polygonal path that intersects $L_{i_m}$ at a single point.  Since $\lambda(t)$ is fixed outside of $C$ and $\lambda(t)$ is a path in $C$ between fixed endpoints on the boundary of $C$, $\lambda(t)$ is a simple closed curve.  Since $\lambda(t)$ is a deformation of the pseudoline $\lambda(0)$, $\lambda(t)$ is a pseudoline. 
By definition, $\lambda(0)$ starts as a pseudoline extension of $L_{i_1},\dots,L_{i_m}$, and $\lambda(t)$ only deforms in the interior of the cell $C$ of $(L_{i_1},\dots,L_{i_{m-1}})$ where it meets $L_{i_m}$ at a single point $p(t)$, so $\lambda(t)$ is a pseudoline extension of $(L_{i_1},\dots,L_{i_{m-1}})$ throughout the deformation.
\end{claimproof}

\begin{claim}\label{claim-lambda-continuous}
For $m >3$, $\tilde \lambda$ is continuous on the domain where it is defined.  Hence, $\tilde \lambda$ is an extension deformation process on $\tilde Z_I$.
\end{claim}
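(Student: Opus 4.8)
\begin{claimproof}
The plan is to show that each geometric ingredient used to build $\lambda(t)=\tilde\lambda(\tilde A,\lambda(0),t)$ for $m>3$ varies continuously over the domain where $\tilde\lambda$ is defined, and then to check continuity along the interface between the regime in which the deformation is the identity and the regime in which it sweeps a pseudoline across a single cell. Note first that for every $A\in Z_I$ the projection $\proj_{\projplane}(\alpha_{i_1},\alpha_{i_2},\alpha_{i_3})$ sends $S_{i_1},S_{i_2},S_{i_3}$ to the horizon and the two coordinate axes, so $L_{i_1},L_{i_2},L_{i_3}$ are the same fixed pseudolines for every element of $\tilde Z_I$, and the affine structure of $\mb{R}^2=\projplane\setminus L_{i_1}$ used to define $\lambda(t)$ is the standard one underlying the metric on $\curves$.

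Now fix convergent sequences $\tilde A_k\to\tilde A_\infty$ in $\tilde Z_I$, pseudolines $\lambda_k(0)\to\lambda_\infty(0)$ in $\curves$ lying in the domain of $\tilde\lambda$, and parameters $t_k\to t_\infty$, aiming for $\lambda_k(t_k)\to\lambda_\infty(t_\infty)$. By Claim~\ref{claim-sphericonvex} together with $Z_I\subset Y_I$, the pseudolines $L_{i_2}^k,\dots,L_{i_{m-1}}^k$ subdivide $\mb{R}^2$ into convex polygonal regions, and by Lemma~\ref{lemma-vertex} the vertices of this arrangement, the intersection points $\lambda_k(0)\cap L_{i_j}^k$, and in particular $p_k(0)=\lambda_k(0)\cap L_{i_m}^k$, depend continuously on $(\tilde A_k,\lambda_k(0))$ in Hausdorff distance. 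Hence, passing to a subsequence if necessary as in the proof of Lemma~\ref{lemma-vertex}, the cell $C_k$ through whose interior $\lambda_k(0)$ crosses $L_{i_m}^k$ converges to a cell $C_\infty$, the endpoints $a_k,b_k$ of the arc $\lambda_k(0)\cap C_k$ converge to $a_\infty,b_\infty$, and $p_k(0)\to p_\infty(0)$. By Claim~\ref{claim-lambda-def} the geodesic segment $[a_k,b_k]_{C_k}$ lies in the convex region $C_k$ and crosses the segment $L_{i_m}^k\cap C_k$ transversally at a single point $p_k(1)$; the transversal intersection of two convergent segments converges, so $p_k(1)\to p_\infty(1)$ and therefore $p_k(t_k)=t_k\,p_k(1)+(1-t_k)\,p_k(0)\to p_\infty(t_\infty)$. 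Since $\lambda_k(t_k)$ agrees with $\lambda_k(0)$ off $C_k$ and equals the polygonal path $[a_k,p_k(t_k),b_k]_{C_k}$ on $C_k$, and such polygonal paths vary continuously with their vertices inside a convergent family of convex regions, we obtain $\lambda_k(t_k)\to\lambda_\infty(t_\infty)$ in the metric on $\curves$.

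The main obstacle is continuity at the interface of the non-trivial regime: when $p_k(0)$ is interior to a cell for every finite $k$ but the limit $p_\infty(0)=\lambda_\infty(0)\cap L_{i_m}^\infty$ lies on an earlier pseudoline $L_{i_j}^\infty$ with $j<m$ (or on the horizon $L_{i_1}$), so that by definition $\tilde\lambda(\tilde A_\infty,\lambda_\infty(0),t)=\lambda_\infty(0)$. In that case $p_\infty(0)$ equals the single point $L_{i_m}^\infty\cap L_{i_j}^\infty$, hence it is an endpoint of the arc $\lambda_\infty(0)\cap C_\infty$ — say $a_\infty=p_\infty(0)$, relabelling $a_k,b_k$ if needed — and since $a_\infty\in L_{i_m}^\infty$ the segment $[a_\infty,b_\infty]_{C_\infty}$ already meets $L_{i_m}^\infty$ at $a_\infty$, so $p_\infty(1)=a_\infty$ as well. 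Thus $a_k$, $p_k(0)$ and $p_k(1)$ converge to the one point $a_\infty$, $p_k(t_k)\to a_\infty$, and $\lambda_k(t_k)$ converges on $C_\infty$ to $[a_\infty,a_\infty,b_\infty]_{C_\infty}=[a_\infty,b_\infty]_{C_\infty}$ and off $C_\infty$ to $\lambda_\infty(0)$; together these recover exactly $\lambda_\infty(0)$, so $\lambda_k(t_k)\to\lambda_\infty(0)=\tilde\lambda(\tilde A_\infty,\lambda_\infty(0),t_\infty)$. The case $p_\infty(0)$ on the horizon and further degeneracies (for instance $\lambda_\infty(0)=L_{i_j}^\infty$, or $L_{i_m}^\infty=L_{i_j}^\infty$ for some $j<m$, or $a_\infty=b_\infty$, in all of which the limiting deformation is likewise the identity) are handled in the same way, via the same coalescence of the endpoints $a_k,b_k$ with $p_k(0)$ and $p_k(1)$.

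Finally, together with the remaining two conditions for an extension deformation process — $\tilde\lambda(\tilde A,\lambda(0),0)=\lambda(0)$, which is immediate since $p(0)=p(t)$ at $t=0$, and the fact that $\tilde\lambda(\tilde A,\lambda(0),t)$ is defined whenever $\lambda(0)$ is a pseudoline extension of $\tilde A$ that is linear on the cells of $\tilde A$, which is exactly Claim~\ref{claim-lambda-def} — this shows that $\tilde\lambda$ is an extension deformation process on $\tilde Z_I$ for $m>3$.
\end{claimproof}
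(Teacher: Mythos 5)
Your proof is correct and takes essentially the same route as the paper's: convergence of the vertices and of the defining data $a$, $b$, $p(0)$, $p(1)$ via Lemma~\ref{lemma-vertex}, followed by continuity of the polygonal-path construction, and then the two remaining conditions for an extension deformation process from Claim~\ref{claim-lambda-def}. The only notable difference is that you make explicit the interface case where the limiting deformation is trivial (all of $p_k(0)$, $p_k(1)$, $p_k(t_k)$ coalescing so that the limit path degenerates to $\lambda_\infty(0)$), which the paper treats only implicitly after passing to subsequences of constant covector set and invoking Lemma~\ref{lemma-path} for the 1-cells.
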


\begin{claimproof}
Let $\tilde A_k = (L_{k,i_1},\dots,L_{k,i_m}) \in \tilde Z_{I}$, $\lambda_k(0) \in \curves$ be a pseudoline extension that is linear on cells of $\tilde A_k$, and $t_k \in [0,1]_\mb{R}$ such that $\tilde A_k \to \tilde A_\infty$, $\lambda_k(0) \to \lambda_\infty(0)$, and $t_k \to t_\infty$.
Note that $L_{k,i_1} = L_{i_1}$, $L_{k,i_2} = L_{i_2}$, and $L_{k,i_3} = L_{i_3}$ are fixed at the horizon, horizontal axis, and vertical axis respectively.
Let $\lambda_k(t) = \tilde \lambda(\tilde A_k,\lambda_k(0),t)$ be defined as above.
Our goal is to show that $\lambda_k(t_k) \to \lambda_\infty(t_\infty)$.

Since there are only finitely many rank 3 sign hyperfield-vector sets on $n$ elements, we may assume that $\tilde A_k$ has the same covector set for all $k \in \mb{N}$, otherwise partition into finitely many subsequences by the covector sets of the $\tilde A_k$ and show convergence to $\lambda_\infty(t_\infty)$ for each subsequence separately. 

If $\lambda_k(0) = L_{k,i_m}$, then $\lambda_k$ is fixed throughout the deformation, so the limit converges and we are done.
Otherwise, let $p_k(0) = \lambda_k(0) \cap L_{k,i_m}$.
If $p_{k}(t)$ is on one of the pseudolines $L_{k,i_1},\dots,L_{k,i_{m-1}}$, then $\lambda_k$ is fixed again and we are done.
Otherwise, let $C_k$ be the 2-cell of the subdivision of $\mb{R}^2$ by $L_{k,i_2},\dots,L_{k,i_{m-1}}$ that contains $p_{k}(t)$ in its interior.

By Lemma~\ref{lemma-vertex}, the vertices of $\tilde A_k$ converge to the corresponding vertices of $\tilde A_\infty$.  
Consequently, by Lemma~\ref{lemma-path} the 1-cells of $\tilde A_k$ converge to corresponding 1-cells or vertices of $\tilde A_\infty$.
In particular, $p_k(0) \to p_\infty(0)$ and $\{a_k,b_k\} \to \{a_\infty,b_\infty\}$ where these are defined in the same way as $a,b$ above.
Since $L_{k,i_m} \cap C_k$ is a segment approaching $L_{\infty,i_m} \cap C_\infty$ and $\lambda_k(0) \cap C_k$ is a distinct segment approaching $L_{\infty,i_m} \cap C_\infty$, we have $p_k(1) \to p_\infty(1)$, since the intersection point of a pair of non-parallel segments depends continuously on their end points. 
Since $\lambda_k(t)$ is defined continuously in terms of $\lambda_k(0)$, $a_k$, $b_k$, $p_k(0)$, and $p_k(1)$, we have $\lambda_k(t_k) \to \lambda_\infty(t_\infty)$.
Thus, $\tilde \lambda$ is continuous on the domain where it is defined, and therefore is an extension deformation process on $\tilde Z_I$.
\end{claimproof}

Let $\Lambda(A,t) = \left(\tilde \lambda(\tilde A,L_1,t),\dots,\tilde \lambda(\tilde A,L_n,t)\right)$. 

\begin{claim}\label{claim-lambda-arrangement}
For $m>3$, $\Lambda(A,t)$ is a pseudoline arrangement.  Hence, $h_I$ is a well defined $\orth_3$-equivariant deformation of $Z_I$.
\end{claim}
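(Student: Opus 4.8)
The plan is to verify the hypothesis of Claim~\ref{claim-pseudoline-lift}: for every $A\in Z_I$ (so that $(L_1,\dots,L_n)=(L_1(A),\dots,L_n(A))$ is a pseudoline arrangement in which each $L_j$ is linear on the cells of $\tilde A=L_I(A)$) and every $t\in[0,1]_\mb{R}$, the tuple $\Lambda(A,t)$ is again a pseudoline arrangement; the ``Hence'' then follows, since $\tilde\lambda$ is an extension deformation process by Claim~\ref{claim-lambda-continuous}, so Claim~\ref{claim-pseudoline-lift} supplies the unique $\orth_3$-equivariant deformation $h_I$ of $Z_I$ induced by $\tilde\lambda$. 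Write $\mu_j(t)=\tilde\lambda(\tilde A,L_j,t)$. By Claim~\ref{claim-lambda-def} each $\mu_j(t)$ is a pseudoline extension of $(L_{i_1},\dots,L_{i_m})$, and $\mu_{i_k}(t)=L_{i_k}$; since a pseudoline extension meets each $L_{i_k}$ in at most one point, it remains only to show that for distinct $j,j'\in\compl I$ the curves $\mu_j(t)$ and $\mu_{j'}(t)$ meet in at most one point.

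First I would record the structure of $\mu_j(t)$. From the definition of $\tilde\lambda$, either $\mu_j(t)=L_j$ for all $t$, or there is a single cell $C_j$ of the subdivision of $\mb{R}^2$ by $L_{i_2},\dots,L_{i_{m-1}}$ (the one whose interior contains $L_j\cap L_{i_m}$), with $\mu_j(t)=L_j$ off $C_j$ and $\mu_j(t)\cap C_j=[a_j,p_j(t),b_j]_{C_j}$, where $a_j,b_j\in\partial C_j$ are independent of $t$ and lie on opposite sides of the segment $\gamma_j=L_{i_m}\cap C_j$, and $p_j(t)$ moves affinely along $\gamma_j$ from $p_j(0)$ to $[a_j,b_j]_{C_j}\cap\gamma_j$. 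By Claim~\ref{claim-sphericonvex} applied to $Y_{(i_1,\dots,i_{m-1})}\supseteq Z_I$ the cell $C_j$ is convex, so these segments all lie in $C_j$ and the combinatorial type of $\mu_j(t)$ relative to $(L_{i_1},\dots,L_{i_m})$ is constant in $t$; the same holds for $\mu_{j'}(t)$, $C_{j'}$, $\gamma_{j'}$.

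Then I would count $\mu_j(t)\cap\mu_{j'}(t)$ cell by cell of the arrangement $(L_{i_1},\dots,L_{i_m})$, in each of which both curves are straight segments. The total splits into a contribution from outside $C_j\cup C_{j'}$ (where the two curves equal $L_j$ and $L_{j'}$, so it is fixed), a contribution from $C_j$, and, when $C_j\ne C_{j'}$, one from $C_{j'}$. If $C_j\ne C_{j'}$, then inside $C_j$ the curve $\mu_{j'}(t)=L_{j'}$ does not cross $L_{i_m}$, so $\mu_{j'}(t)\cap C_j$ is a single segment $[c,d]$ lying in one convex half-cell of $C_j\setminus\gamma_j$; it can meet only the corresponding straight piece $[a_j,p_j(t)]$ of $\mu_j(t)$, and since $[c,d]$ has both endpoints on $\partial C_j\setminus\gamma_j$ while all of $\gamma_j$ lies on one side of the line through $[c,d]$, whether $[a_j,p_j(t)]$ meets $[c,d]$ does not depend on the position of $p_j(t)$ on $\gamma_j$; hence this contribution is constant, and symmetrically for $C_{j'}$. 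If $C_j=C_{j'}=:C$, both curves are bent chords of $C$ with bend points on $\gamma=L_{i_m}\cap C$, and the number $n(t)$ of intersections inside $C$ is $[o_a\ne o_p(t)]+[o_p(t)\ne o_b]$, where $o_a,o_p(t),o_b$ are the orders of $\{a_j,a_{j'}\}$, $\{p_j(t),p_{j'}(t)\}$, $\{b_j,b_{j'}\}$ along $\partial C$ and along $\gamma$ (the familiar fact that two chords of a convex region joining the same two boundary arcs cross iff their endpoint pairs are linked). Since $p_j(t),p_{j'}(t)$ move affinely along $\gamma$, the order $o_p$ is constant unless it reverses at one instant $t_0$ with $p_j(t_0)=p_{j'}(t_0)$, and such a reversal sends $n$ from a value $\nu$ to $2-\nu$. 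At $t=0$ the total count is at most $1$ (pseudoline arrangement) and odd (two pseudolines have mod-$2$ intersection number $1$), hence exactly $1$, so $n(0)\in\{0,1\}$; if $n(0)=0$ then $o_a=o_p(0)=o_b$, the chords $[a_j,b_j]_C$ and $[a_{j'},b_{j'}]_C$ are unlinked, $o_p(1)=o_p(0)$, and there is no reversal; if $n(0)=1$ then $2-n(0)=1$, so $n$ is unchanged by a reversal. In all cases $n(t)$ is constant, and at the instant $t_0$ (if any) the two curves meet only at the single point $p_j(t_0)$; therefore $\mu_j(t)$ and $\mu_{j'}(t)$ meet in at most one point, and $\Lambda(A,t)$ is a pseudoline arrangement.

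I expect the case $C_j=C_{j'}$ to be the main obstacle: one must pin down exactly how the intersection count of two bent chords depends on the cyclic order of their fixed endpoints and the order of their sliding bend points, and rule out the birth of a second intersection as the bend points pass each other on $L_{i_m}$. Degenerate configurations — three concurrent pseudolines at $t=0$, collinear straight pieces, or an endpoint $a_j$ or $b_j$ at a corner of $C$ — I would handle separately, using continuity (Claim~\ref{claim-lambda-continuous}) and the fact that a pseudoline arrangement tolerates members that coincide or meet in a single point.
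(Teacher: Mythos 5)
Your proposal is correct and follows essentially the same route as the paper: after the reduction via Claims \ref{claim-lambda-def}, \ref{claim-lambda-continuous} and \ref{claim-pseudoline-lift}, the paper likewise fixes the cell $C$ in which a curve deforms and uses convexity (Claim \ref{claim-sphericonvex}), the straightness of the two pieces in the half-cells cut by $L_{i_m}$, the fixed endpoints on $\partial C$, and the affine motion of the bend points along $L_{i_m}\cap C$ to show the crossing count in $C$ never changes. The only difference is bookkeeping: the paper splits on whether the two pseudolines meet in the interior of $C$ at $t=0$ (alternation around $\partial C$ is preserved, at most one crossing per half-cell, and not in both) rather than on $C_j\neq C_{j'}$ versus $C_j=C_{j'}$ with your linkage-indicator formula and the $\nu\mapsto 2-\nu$ reversal count, and the degenerate configurations you defer are passed over just as silently in the paper's proof.
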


\newcommand{\xj}{{j'}}

\begin{claimproof}
Let $J = \{ j \in \compl{I}: \|\alpha_j\| > 0 \}$, and let $\lambda_j(t) = \tilde\lambda(\tilde A, L_j,t)$.  
Since the pseudolines $\lambda_j(t)$ deform according to their initial positions, any initially identical pairs of pseudolines remain identical throughout the deformation.  Consider $L_j,L_\xj$ distinct.  We may assume that at least one of these is deforming, and it suffices to show that the number of crossings does not change in the cell where it deforms.
Assume $L_j$ crosses $L_{i_m}$ at a point $p(0)$ in the interior of a 2-cell $C$ of $L_{i_1},\dots,L_{i_{m-1}}$.

Suppose $L_j,L_\xj$ meet in the interior of the cell.  Then, $L_j,L_\xj$ alternate around the boundary of $C$, and since points on the boundary of $C$ remain fixed, $\lambda_j(t),\lambda_\xj(t)$ meet in $C$ for all $t$.  Since $\lambda_j(t),\lambda_\xj(t)$ are linear segments in the cells $C_1,C_2 \subset C$ divided by $L_{i_m}$, they can meet in at most one point in each cell $C_1,C_2$.  If $\lambda_j(t),\lambda_\xj(t)$ met at a point in both $C_1$ and $C_2$, then they would alternate around the boundary of both cells, which would imply that they do not alternate around the boundary of $C$.
Thus, $\lambda_j(t),\lambda_\xj(t)$ meet at only a single point in $C$ for all $t$.

Now suppose $L_j,L_\xj$ do not meet in the interior $C \in \mc{C}$.  
Then, they do not alternate around the boundary of $C$, $C_1$, or $C_2$,  
and we may assume $L_j,L_\xj$ both meet the segment $L_{i_m} \cap C$, otherwise they would never alternate around $C$, $C_1$, or $C_2$ as a result of the deformation. 
Since $\lambda_j(t),\lambda_\xj(t)$ are fixed on the boundary of $C$, they still do not alternate around the boundary of $C$ at $t=1$, so $\lambda_j(1),\lambda_\xj(1)$ do not meet in $C$, which implies that they also do not alternate around the boundary of $C_1$ or $C_2$.  
Therefore, the order of the points $p_j(t),p_{\xj}(t)$ where $\lambda_j(t),\lambda_\xj(t)$ meet the segment $L_{i_m} \cap C$ is the same for $t=0$ as for $t=1$. 
That is, $p_{j}(1)-p_{\xj}(1)= r (p_{j}(0)-p_{\xj}(0))$ for some $r >0$, so 
$p_{j}(t)-p_{\xj}(t)= (tr +(1-t)) (p_{j}(0)-p_{\xj}(0))$, which implies $\lambda_j(t),\lambda_\xj(t)$ do not alternate around the boundary of $C_1$ or $C_2$ for all $t$.  Hence, $\lambda_j(t),\lambda_\xj(t)$ never meet in $C$ throughout the deformation.

Thus, each pair of pseudolines $\lambda_j(t),\lambda_\xj(t)$ either coincide or cross exactly once throughout the deformation, so $\Lambda(A,t)$ is a pseudoline arrangement, and therefore by Claims \ref{claim-lambda-continuous} and \ref{claim-pseudoline-lift}, $h_I$ is well defined as the equivariant deformation induced by $\tilde\lambda$.
\end{claimproof}

\begin{claim}\label{claim-deform-h}
For $m >3$, $h_I = h_{(i_1,\dots,i_m)}$ is a strong $\orth_3$-equivariant deformation retraction from $Z_{(i_1,\dots,i_m)}$ to $Z_{(i_1,\dots,i_{m-1})}$.
\end{claim}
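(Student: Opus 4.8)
The plan is to reduce the claim to membership bookkeeping, the substantive work already being done: by Claim~\ref{claim-lambda-arrangement} (via Claim~\ref{claim-pseudoline-lift}), $h_I$ is a well-defined, continuous, $\orth_3$-equivariant deformation $Z_I\times[0,1]_\mb{R}\to\pstief_{3,n}$ with $h_I(A,0)=A$, where $Z_I=Z_{(i_1,\dots,i_m)}$. It remains to verify: (i) $h_I(A,t)\in Z_I$ for all $A\in Z_I$ and $t\in[0,1]_\mb{R}$; (ii) $h_I(A,1)\in Z_{(i_1,\dots,i_{m-1})}$ for all $A\in Z_I$; and (iii) $h_I(A,t)=A$ for all $t$ whenever $A\in Z_{(i_1,\dots,i_{m-1})}$ lies in the domain $Z_I$ (so that $h_I$ is a strong deformation retraction of $Z_I$ onto $Z_I\cap Z_{(i_1,\dots,i_{m-1})}$).

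First I would isolate the defining conditions of $Z_I$ that $h_I$ preserves automatically. As the deformation \emph{induced} by $\tilde\lambda$, $h_I$ fixes $\wei(\cdot)$ and $\proj_I(\cdot)$ and keeps every arrangement antipodally symmetric, so $S_{i_1},S_{i_2},S_{i_3}$ stay great circles, $S_{i_m}$ stays geodesic on the $2$-cells of $\proj_{(i_1,\dots,i_{m-1})}(A)$, and the membership conditions of $Y_I$ together with the symmetry clause of $Z_I$ survive unchanged. The only condition still at issue in (i) and (ii) concerns the shape of the pseudolines $L_j=L_j(A)$ for $j\in\compl{I}$ with $\|\alpha_j\|\neq 0$: for (i) that $L_j(h_I(A,t))$ is geodesic on each $2$-cell of $\proj_I(A)$, and for (ii) geodesic on each of the coarser $2$-cells of $\proj_{(i_1,\dots,i_{m-1})}(A)$. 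Since $\proj_{\projplane}(\alpha_{i_1},\alpha_{i_2},\alpha_{i_3})$ is the restriction of a projective map, it carries great-circle arcs to straight segments, so these conditions amount to linearity of the planar curve $\lambda_j(t)=\tilde\lambda(\tilde A,L_j,t)$ on the corresponding planar cells.

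The geometric core is the local picture already set up in the construction of $\lambda(t)$ and in Claim~\ref{claim-lambda-def}. If $L_j$ meets $L_{i_m}$ in the interior of a $2$-cell $C$ of $(L_{i_1},\dots,L_{i_{m-1}})$ (otherwise $\lambda_j$ is held fixed and nothing is needed), then by Claim~\ref{claim-sphericonvex} together with $Z_I\subseteq Y_I$ the region $C$ is a convex polygon, the chord $L_{i_m}\cap C$ cuts it into convex pieces $C_1,C_2$, the endpoints $a,b$ of $L_j\cap C$ lie one in $\overline{C_1}$ and one in $\overline{C_2}$, and $p(t)\in L_{i_m}\cap C$; convexity then gives $[a,p(t)]\subseteq\overline{C_1}$ and $[p(t),b]\subseteq\overline{C_2}$, so $\lambda_j(t)\cap C=[a,p(t),b]_C$ is linear on each of $C_1$ and $C_2$. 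As $C_1,C_2$ together with the cells of $(L_{i_1},\dots,L_{i_{m-1}})$ not met by $L_{i_m}$ are exactly the $2$-cells of $\proj_I(A)$, and $\lambda_j$ is unchanged (hence still linear) on the latter, this proves (i). For (ii): at $t=1$ we have $p(1)\in[a,b]$, so $\lambda_j(1)\cap C=[a,b]_C$ is a single straight segment; since $L_j$ crosses $L_{i_m}$ only once, $\lambda_j(1)$ acquires no corner on $L_{i_m}$ in the interior of any cell of $(L_{i_1},\dots,L_{i_{m-1}})$, hence is linear on each such cell, and $S_{i_m}$ is already geodesic there, giving $h_I(A,1)\in Z_{(i_1,\dots,i_{m-1})}$. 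For (iii): if $A\in Z_I\cap Z_{(i_1,\dots,i_{m-1})}$ then each $L_j\cap C$ is already a straight segment, so $p(0)\in[a,b]$, i.e.\ $p(0)=p(1)$, whence $p(t)\equiv p(0)$ and $\lambda_j(t)=\lambda_j(0)$ for every $j$; as $L_{i_m}$ is fixed too, $h_I(A,t)=A$.

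I expect the main obstacle to be organizational rather than substantive: keeping the nested conditions $Y_I\supseteq Z_I\supseteq Z_I\cap Z_{(i_1,\dots,i_{m-1})}$ straight, and disposing of the degenerate positions — when $L_j\cap L_{i_m}$, or one of the endpoints $a$ or $b$, lands on some $L_{i_l}$ with $l\le m-1$ — in which cases $\lambda_j$ is held fixed and any corner it still carries sits on the boundary of a $2$-cell of $(L_{i_1},\dots,L_{i_{m-1}})$ and hence stays permissible in $Z_{(i_1,\dots,i_{m-1})}$.
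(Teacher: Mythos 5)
Your proposal is correct and follows essentially the same route as the paper's proof: invoke Claim~\ref{claim-lambda-arrangement} for well-definedness and equivariance, then check that $\lambda(t)$ stays linear on cells of $(L_{i_1},\dots,L_{i_m})$ (membership in $Z_I$), that $\lambda(1)$ is linear on cells of $(L_{i_1},\dots,L_{i_{m-1}})$ (retraction onto the target), and that $p(0)=p(1)$ when the input already lies over $Z_{(i_1,\dots,i_{m-1})}$ (strongness). You merely spell out the convexity and single-crossing bookkeeping that the paper leaves implicit, which is harmless and arguably a touch more careful.
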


\begin{claimproof}
We have already that $h_I$ is an $\orth_3$-equivariant deformation of $Z_I$ by Claim \ref{claim-lambda-arrangement}.
Since $\lambda(t)$ is linear in each cell of $\tilde A = (L_{i_1},\dots,L_{i_{m}})$, we have $h_I(A,t) \in Z_I$, 
and since $\lambda(1)$ is linear in each cell of $(L_{i_1},\dots,L_{i_{m-1}})$, we have $h_I(A,1) \in Z_{(i_1,\dots,i_{m-1})}$,
so $h_I$ is a deformation retraction from $Z_I$ to $Z_{(i_1,\dots,i_{m-1})}$. 
For $A \in Z_{(i_1,\dots,i_{m-1})}$ and $\lambda(0) = L_j$, we have that $\lambda(0)$ is linear in each cell of $(L_{i_1},\dots,L_{i_{m-1}})$, so $p(1) = p(0)$, which implies $\lambda(t) = \lambda(0)$ is a trivial deformation, and therefore $h_I(A,t) = h_I(A,0)$ is trivial.  Thus, $h_I$ is a strong deformation retraction.
\end{claimproof}

\paragraph{The case m$=$3.}
For $m=3$, let $h_I$ be the deformation of $Z_I$ induced by the extension deformation process $\tilde \lambda$ defined by $\lambda(t) = \tilde \lambda(\tilde A, \lambda(0), t)$
for a pseudoline extension $\lambda(0)$ of $\{\tilde A\} = \{(L_{i_1},L_{i_2},L_{i_3})\} = \tilde Z_{I}$ as follows.

If $\lambda(0)$ is a straight line, 
then let $\lambda$ be fixed. 
Equivalently, $\lambda$ is fixed unless $\lambda(0)$ intersects $L_{i_1}$, $L_{i_2}$, $L_{i_3}$ at three distinct points that are not collinear. 
Assume this is so, and let $p_k(0) = \lambda(0) \cap L_{i_k}$ for $k \in \{1,2,3\}$.  

We define $\lambda(t)$ as a polygonal path with moving points $p_k(t)$ as vertices.
For points $a,b \in \projplane$, let $[a,b]_\oplus$ denote the segment between $a,b$ contained in a single quadrant of $\projplane$ when such a segment exists and is unique, and let $[a,b,c,\dots]_\oplus = [a,b]_\oplus \cup [b,c]_\oplus \cup \dots$.  With this we have $\lambda(0) = [p_1(0),p_2(0),p_3(0),p_1(0)]_{\oplus}$.

Let $p_1 = p_1(t) = p_1(0)$ be fixed throughout the deformation. 
Let $P$ be the line through the origin $0$ that is perpendicular to line though $\{0,p_1\}$.
Let $q_0$ be the point where the line $P$ meets $\lambda(0)$; see Figure \ref{figure-q0}.

We define $\lambda(t)$ as the polygonal path consisting of a segment in one quadrant that pivots around the point $q_0$ and then extends beyond the axis as rays in the fixed direction of $p_1$.
We pivot at such a rate that the exterior angle at the vertices of $\lambda(t)$ is $(\nicefrac{\pi}{2})(1-t)$ once this becomes smaller than the initial exterior angle at $t=0$; see Figure \ref{figure-hI3}.
Specifically, let $\lambda(1)$ be the line though $\{q_0,p_1\}$, let $\phi(0) \in (-\nicefrac{\pi}{2},\nicefrac{\pi}{2})_\mb{R}$ be the signed angle from $\lambda(1)$ to the line though $\{p_2(0),p_3(0)\}$ and $\phi(t) = \min(\phi(0),(\nicefrac{\pi}{2})(1-t))$, let $Q(t)$ be the line though $q_0$ at angle $\phi(t)$ from $\lambda(1)$, and let $p_2(t) = Q(t) \cap L_{i_2}$ and $p_3(t) = Q(t) \cap L_{i_3}$.
Finally, let
\[ \lambda(t) = [p_1,p_2(t),p_3(t),p_1]_{\oplus}. \]

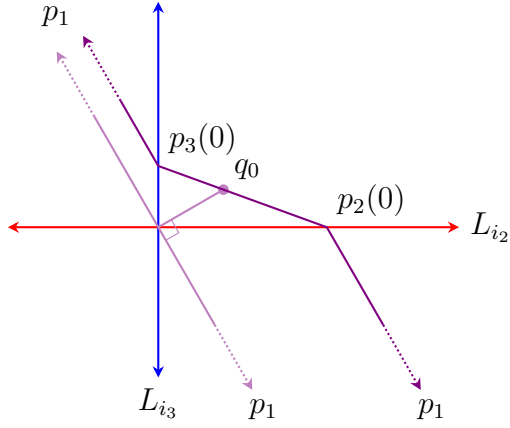
\begin{figure}
\begin{center}

\begin{tikzpicture}

\draw[thick,red,<->]
(-2,0) -- (4,0);

\draw[thick,blue,<->]
(0,-2) -- (0,3);


\def\A{-60}
\def\B{-20}

\fill[violet!50]
({\A+90}:1) coordinate (q0) circle (2pt);

\draw[thick,violet!50]
(0,0) coordinate (o) -- (q0)
(o) +(\A:-1.7) coordinate (a) -- ++(\A:1.5) coordinate (b)
;
\draw[thin,violet!50]
(o) -- (\A:.2) -- ++({\A+90}:.2) -- ++(\A:-.2) -- cycle;
\draw[thick,violet!50,densely dotted,->]
(b) -- ++(\A:1) node[shift={(\A:.3)},black] {$p_1$};
\draw[thick,violet!50,densely dotted,->]
(a) -- ++(\A:-1); 

\path
(q0) ++(\B:2) coordinate (a)
(o) ++(3,0) coordinate (b) 
(intersection of q0--a and o--b) coordinate (p2) ++(\A:1.5) coordinate (p1b)
(q0) ++(\B:-2) coordinate (a)
(o) ++(0,3) coordinate (b) 
(intersection of q0--a and o--b) coordinate (p3) ++(\A:-1) coordinate (p1a)
;
\draw[thick,violet]
(p1a) -- (p3) -- (p2) -- (p1b);
\draw[thick,violet,densely dotted,->]
(p1b) -- ++(\A:1) node[shift={(\A:.3)},black] {$p_1$};
\draw[thick,violet,densely dotted,->]
(p1a) -- ++(\A:-1);
\path 
(p1a) ++(\A:-1.3) ++(-.2,0) node {$p_1$}
(p2) node[above right] {$p_2(0)$}
(p3) node[above right] {$p_3(0)$}
(q0) node[above right] {$q_0$}
(4,0) node[right] {$L_{i_2}$}
(0,-2) node[below] {$L_{i_3}$}
;

\end{tikzpicture}

\caption{Points used to define $\lambda(t)$ for $m=3$.}
\label{figure-q0}

\end{center}
\end{figure}

\begin{figure}

\begin{center}
\begin{tikzpicture}

\begin{scope}[shift={(-7,0)}]

\draw[thick,red,<->]
(-1,0) -- (4.5,0);
\draw[thick,blue,<->]
(0,-1) -- (0,3);
\path 
(0,0) coordinate (o);

\def\A{-55}
\def\B{-10}

\fill[violet!50]
({\A+90}:1) coordinate (q0) circle (3pt);
\path
(q0) ++(\B:2) coordinate (a)
(o) ++(3,0) coordinate (b) 
(intersection of q0--a and o--b) coordinate (p2) ++(\A:1) coordinate (p1b)
(q0) ++(\B:-2) coordinate (a)
(o) ++(0,3) coordinate (b) 
(intersection of q0--a and o--b) coordinate (p3) ++(\A:-1.5) coordinate (p1a)
;
\draw[thick,violet,<->]
(p1a) -- (p3) -- (p2) -- (p1b);
\draw[thin,->]
(q0) ++(\B:-.5) arc ({\B-180}:{\A-180}:.5) node[left] {\tiny $\tfrac{\pi}{4}$}
(q0) -- ++(\A:-.8)
;

\def\A{-70}
\def\B{-40}

\fill[purple!50]
({\A+90}:1.8) coordinate (q0) circle (3pt);
\path
(q0) ++(\B:2) coordinate (a)
(o) ++(3,0) coordinate (b) 
(intersection of q0--a and o--b) coordinate (p2) ++(\A:1) coordinate (p1b)
(q0) ++(\B:-2) coordinate (a)
(o) ++(0,3) coordinate (b) 
(intersection of q0--a and o--b) coordinate (p3) ++(\A:-1) coordinate (p1a)
;
\draw[thick,purple,<->]
(p1a) -- (p3) -- (p2) -- (p1b);
\draw[thin,->]
(q0) ++(\B:-.7) arc ({\B-180}:{\A-180}:.7) node[left,shift={(0,2pt)}] {\tiny $\tfrac{\pi}{6}$}
(q0) -- ++(\A:-1)
;

\node at (2.5,2.5) {$\begin{array}{r@{\ }l@{\vspace{3pt}}} t &=0 \text{ to } \frac{1}{2} \\ \frac{\pi(1{-}t)}{2} &=0 \text{ to } \frac{\pi}{4} \end{array}$};

\end{scope}

\begin{scope}[shift={(0,0)}]

\draw[thick,red,<->]
(-1,0) -- (3.5,0);
\draw[thick,blue,<->]
(0,-1) -- (0,3);
\path 
(0,0) coordinate (o);

\def\A{-55}
\def\B{-25}

\fill[violet!50]
({\A+90}:1) coordinate (q0) circle (3pt);
\path
(q0) ++(\B:2) coordinate (a)
(o) ++(3,0) coordinate (b) 
(intersection of q0--a and o--b) coordinate (p2) ++(\A:1) coordinate (p1b)
(q0) ++(\B:-2) coordinate (a)
(o) ++(0,3) coordinate (b) 
(intersection of q0--a and o--b) coordinate (p3) ++(\A:-1.5) coordinate (p1a)
;
\draw[thick,violet,<->]
(p1a) -- (p3) -- (p2) -- (p1b);
\draw[thin,->]
(q0) ++(\B:-.6) arc ({\B-180}:{\A-180}:.6) node[left,shift={(0,1pt)}] {\tiny $\tfrac{\pi}{6}$}
(q0) -- ++(\A:-.9)
;

\def\A{-70}
\def\B{-40}

\fill[purple!50]
({\A+90}:1.8) coordinate (q0) circle (3pt);
\path
(q0) ++(\B:2) coordinate (a)
(o) ++(3,0) coordinate (b) 
(intersection of q0--a and o--b) coordinate (p2) ++(\A:1) coordinate (p1b)
(q0) ++(\B:-2) coordinate (a)
(o) ++(0,3) coordinate (b) 
(intersection of q0--a and o--b) coordinate (p3) ++(\A:-1) coordinate (p1a)
;
\draw[thick,purple,<->]
(p1a) -- (p3) -- (p2) -- (p1b);
\draw[thin,->]
(q0) ++(\B:-.6) arc ({\B-180}:{\A-180}:.6) node[left,shift={(0,1pt)}] {\tiny $\tfrac{\pi}{6}$}
(q0) -- ++(\A:-.9)
;

\node at (2.5,2.5) {$\begin{array}{r@{\ }l@{\vspace{3pt}}} t &=\frac{2}{3} \\ \frac{\pi(1{-}t)}{2} &=\frac{\pi}{6} \end{array}$};

\end{scope}

\begin{scope}[shift={(-7,-5)}]

\draw[thick,red,<->]
(-1,0) -- (3.5,0);
\draw[thick,blue,<->]
(0,-1) -- (0,3);
\path 
(0,0) coordinate (o);

\def\A{-55}
\def\B{-40}

\fill[violet!50]
({\A+90}:1) coordinate (q0) circle (3pt);
\path
(q0) ++(\B:2) coordinate (a)
(o) ++(3,0) coordinate (b) 
(intersection of q0--a and o--b) coordinate (p2) ++(\A:1) coordinate (p1b)
(q0) ++(\B:-2) coordinate (a)
(o) ++(0,3) coordinate (b) 
(intersection of q0--a and o--b) coordinate (p3) ++(\A:-1) coordinate (p1a)
;
\draw[thick,violet,<->]
(p1a) -- (p3) -- (p2) -- (p1b);
\draw[thin,->]
(q0) ++(\B:-.7) arc ({\B-180}:{\A-180}:.7) node[above left,shift={(1pt,0)}] {\tiny $\tfrac{\pi}{12}$}
(q0) -- ++(\A:-.9)
;

\def\A{-70}
\def\B{-55}

\fill[purple!50]
({\A+90}:1.8) coordinate (q0) circle (3pt);
\path
(q0) ++(\B:2) coordinate (a)
(o) ++(3,0) coordinate (b) 
(intersection of q0--a and o--b) coordinate (p2) ++(\A:1) coordinate (p1b)
(q0) ++(\B:-2) coordinate (a)
;
\draw[thick,purple,<->]
(a) -- (p2) -- (p1b);
\draw[thin,->]
(q0) ++(\B:-.7) arc ({\B-180}:{\A-180}:.7) node[above left,shift={(2pt,0)}] {\tiny $\tfrac{\pi}{12}$}
(q0) -- ++(\A:-.9)
;

\node at (2.5,2.5) {$\begin{array}{r@{\ }l@{\vspace{3pt}}} t &=\frac{5}{6} \\ \frac{\pi(1{-}t)}{2} &=\frac{\pi}{12} \end{array}$};

\end{scope}

\begin{scope}[shift={(0,-5)}]

\draw[thick,red,<->]
(-1,0) -- (3.5,0);
\draw[thick,blue,<->]
(0,-1) -- (0,3);
\path 
(0,0) coordinate (o);

\def\A{-55}

\fill[violet!50]
({\A+90}:1) coordinate (q0) circle (3pt);
\path
(q0) ++(\A:1.8) coordinate (a)
(q0) ++(\A:-2) coordinate (b)
;
\draw[thick,violet,<->]
(a) -- (b);
\draw[thin,->]
(q0) -- ++(\A:-.9)
;

\def\A{-70}

\fill[purple!50]
({\A+90}:1.8) coordinate (q0) circle (3pt);
\path
(q0) ++(\A:1.8) coordinate (a)
(q0) ++(\A:-2) coordinate (b)
;
\draw[thick,purple,<->]
(a) -- (b);
\draw[thin,->]
(q0) -- ++(\A:-.9)
;

\node at (2.5,2.5) {$\begin{array}{r@{\ }l@{\vspace{3pt}}} t &=1 \\ \frac{\pi(1{-}t)}{2} &=0 \end{array}$};

\end{scope}

\end{tikzpicture}

\caption{An example of the deformation $h_I$ for $m=3$.}
\label{figure-hI3}

\end{center}

\end{figure}
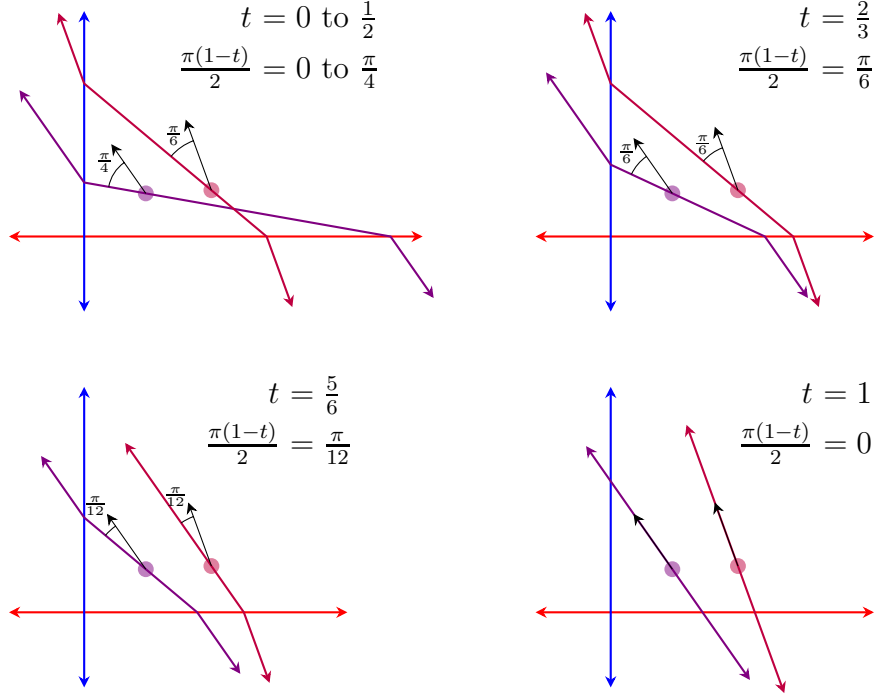

\Needspace{5cm}
\begin{claim}\label{claim-lambda-def-3}
For $m =3$, $\lambda(t)$ is well defined and is a pseudoline extension of $L_{i_1},L_{i_2},L_{i_3}$.
\end{claim}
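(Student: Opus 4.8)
The plan is to use the dichotomy built into the construction. If $\lambda(0)$ is a straight line---equivalently, if $\lambda(0)$ does not meet $L_{i_1},L_{i_2},L_{i_3}$ in three distinct non-collinear points---then $\lambda(t)=\lambda(0)$ for every $t$, and since $\lambda(0)$ is assumed to be a pseudoline extension of $(L_{i_1},L_{i_2},L_{i_3})$ the claim is immediate. This dichotomy is exhaustive: a curve that is linear in each cell of $(L_{i_1},L_{i_2},L_{i_3})$ and meets each of the three lines once has at most three corners, one on each line, so it equals the line through them when these are distinct and collinear, and it is again a line when two of them coincide (the curve then passes through a vertex of the arrangement).

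So assume $\lambda(0)$ crosses $L_{i_1},L_{i_2},L_{i_3}$ at three distinct non-collinear points $p_1,p_2(0),p_3(0)$. I would first fix the affine chart $\mb{R}^2=\projplane\setminus L_{i_1}$ in which $L_{i_2},L_{i_3}$ are the coordinate axes and $p_1$ is the point at infinity of a direction $\vec d$. The arrangement $(L_{i_1},L_{i_2},L_{i_3})$ then has exactly four open cells, the four quadrants, each with closure a topological triangle, and $\lambda(0)$ decomposes into three edges lying in three of these quadrants: the connecting segment $[p_2(0),p_3(0)]_\oplus$ and two ``prongs'', each a ray in direction $\vec d$ issuing respectively from $p_2(0)$ and $p_3(0)$ and meeting at $p_1$. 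The key preliminary step is a bookkeeping argument recording which quadrant contains each piece: $p_1\neq p_x$ and $p_1\neq p_y$ (else two of the three points coincide), so $\vec d$ is not axis-parallel; the two prongs occupy a pair of opposite quadrants while the connecting segment occupies one of the remaining two; the line $P$ occupies the other diagonal pair of quadrants; and the direction-angles of $\vec d$ and of the line through $p_2(0),p_3(0)$ both lie in the same one of the two open arcs $(0,\tfrac{\pi}{2})$, $(\tfrac{\pi}{2},\pi)$ of line slopes.

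Then I would check well-definedness in order. The point $q_0=\lambda(0)\cap P$ is the unique crossing of two distinct pseudolines ($\lambda(0)\neq P$ since $\lambda(0)$ does not pass through the origin, as that would force $p_2(0)=p_3(0)$), and the quadrant count puts $q_0$ in the interior of the connecting segment---the only quadrant and the only $1$-cell met by both $\lambda(0)$ and $P$; hence $q_0$ is finite, $q_0\notin L_{i_2}\cup L_{i_3}$, $q_0\neq 0$, $q_0\neq p_1$, and $Q(0)$ is exactly the line through $p_2(0),p_3(0)$, so $\lambda(0)$ is recovered at $t=0$. Consequently $\lambda(1)=\mathrm{line}(\{q_0,p_1\})$ is a genuine line distinct from all three axes, $\phi(0)$ is a well-defined nonzero signed angle (nonzero by non-collinearity), and $Q(t)$ is the well-defined line through the fixed point $q_0$ whose direction-angle moves monotonically from that of the line through $p_2(0),p_3(0)$ to that of $\vec d$, so it stays strictly inside the common open arc from the previous paragraph. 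Therefore $Q(t)$ is never parallel to $L_{i_2}$ or to $L_{i_3}$, so $p_2(t)=Q(t)\cap L_{i_2}$ and $p_3(t)=Q(t)\cap L_{i_3}$ are well-defined finite points, and because $Q(t)$ always passes through $q_0$ in the interior of its quadrant with slope of one fixed sign, the points $p_2(t),p_3(t)$ remain in the interiors of the same axis-rays as $p_2(0),p_3(0)$; hence the prongs and the segment $[p_2(t),p_3(t)]_\oplus$ stay in the same three quadrants, and $\lambda(t)$ is well-defined for all $t\in[0,1]_\mb{R}$.

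Finally I would show each $\lambda(t)$ is a pseudoline extension of $(L_{i_1},L_{i_2},L_{i_3})$. The two prongs are parallel rays, distinct for $t<1$ and coinciding only at $t=1$ (where $\phi(t)=0$, so the line through $p_2(t),p_3(t)$ is parallel to $\vec d$ and $\lambda(1)$ is the straight line $\mathrm{line}(\{q_0,p_1\})$), while the connecting segment lies in a different quadrant from each prong except at the shared axis endpoints; so $\lambda(t)$ is a simple closed curve in $\projplane$, and being a continuous deformation of the non-contractible curve $\lambda(0)$ it is non-contractible, hence a pseudoline. Each prong, having axis-nonparallel direction and issuing from an axis point into the interior of a quadrant, meets $L_{i_2}\cup L_{i_3}$ only at that endpoint and meets $L_{i_1}$ only at $p_1$, and the connecting segment meets $L_{i_2}$ only at $p_2(t)$, meets $L_{i_3}$ only at $p_3(t)$, and avoids $L_{i_1}$; hence $\lambda(t)$ meets $L_{i_1},L_{i_2},L_{i_3}$ exactly once each, so it is a pseudoline extension. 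The main obstacle is precisely the quadrant bookkeeping of the second and third paragraphs---deciding which of the four triangular cells contains each prong, the connecting segment, the line $P$, and each $Q(t)$---which is what forces $q_0$ onto the connecting segment and prevents $Q(t)$ from ever turning parallel to an axis; the remaining ingredients (uniqueness of the crossing of two pseudolines, continuity, and invariance of non-contractibility under deformation) are routine.
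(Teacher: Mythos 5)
Your proposal is correct and follows essentially the same route as the paper's proof: dismiss the straight-line case, then pin $q_0$ to the open connecting segment and show the pivoting line $Q(t)$ never becomes axis-parallel (the paper phrases this as $\lambda(0)$ and $Q(t)$ having finite negative slope versus $P$ having positive slope, which is your ``common open slope-arc'' and quadrant bookkeeping), so that $p_2(t),p_3(t)$ stay on the same axis rays and $\lambda(t)$ remains a simple closed curve crossing each of $L_{i_1},L_{i_2},L_{i_3}$ exactly once. Your extra verification that the paper's ``straight line'' dichotomy is exhaustive, and the minor phrasing slips (the prongs never literally coincide at $t=1$, they merely become collinear; and one should note $p_1$ differs from the point at infinity of $P$ so $q_0$ is finite), do not change the substance.
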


\begin{claimproof}
Assume that we are in the case where $\lambda(0)$ is not straight and that $(\nicefrac{\pi}{2})(1-t) < |\delta|$, since the claim is trivial otherwise. 
We may assume by symmetry that $p_3(0)$ is above the origin on the vertical axis, and $p_2(0)$ is to the right of the origin on the horizontal axis.

We show that $q_0$ is well defined and is on the segment between $p_2(0)$ and $p_3(0)$.  
By our assumption, $\lambda(0)$ is an implicit function with a finite negative slope in the upper-right quadrant.
Also, our assumption implies that $\lambda(0)$ is unbounded on $\mb{R}^2$ in the upper-left and lower-right quadrants, and $\lambda(0)$ is neither vertical nor horizontal there since $p_1$ is not on $L_{i_2}$ or $L_{i_3}$, so $\lambda(0)$ is an implicit function with a finite negative slope in both of these quadrants as well.  
Since lines through $p_1$ have finite negative slope, $P$ is an implicit function with a finite positive slope, and therefore, $\lambda(0)$ and $P$ meet at a unique point $q_0$ in the upper-right quadrant, so the point $q_0$ is between $p_2(0)$ and $p_3(0)$ on $\lambda(0)$.

Any line through $q_0$ with finite negative slope intersects $L_{i_2}$ in $\mb{R}^2$ to the right of the origin and intersects $L_{i_3}$ in $\mb{R}^2$ above the origin.
Therefore, the points $p_2(t)$ and $p_3(t)$ move along $L_{i_2}$ and $L_{i_3}$ respectively without crossing the origin or leaving the plane.
Hence, $\lambda(t)$ is a well defined path consisting of a segment in the upper-left, upper-right, and lower-right quadrants each.
Furthermore, we now have that $\lambda(t)$ crosses $L_{i_1}$, $L_{i_2}$, and $L_{i_3}$ once each, so this is a pseudoline arrangement.
\end{claimproof}

\begin{claim}\label{claim-lambda-continuous-3}
For $m =3$, $\tilde \lambda$ is continuous on the domain where it is defined.  Hence, $\tilde \lambda$ is an extension deformation process on $\tilde Z_I$.
\end{claim}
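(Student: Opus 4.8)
The plan is to follow the pattern of the proof of Claim~\ref{claim-lambda-continuous}, which here simplifies considerably because $\tilde Z_I = \{\tilde A\}$ is a single point: the pseudolines $L_{i_1},L_{i_2},L_{i_3}$ are always the horizon, the horizontal axis, and the vertical axis, so continuity of $\tilde\lambda$ amounts to joint continuity in the pseudoline extension and the parameter $t$. So I would take $\lambda_k(0)\to\lambda_\infty(0)$ in $\curves$, where each $\lambda_k(0)$ is a pseudoline extension of $\tilde A$ that is linear on the cells of $\tilde A$, and $t_k\to t_\infty$, and show $\lambda_k(t_k)\to\lambda_\infty(t_\infty)$. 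As in the earlier proof I would first pass to a subsequence on which the combinatorial type of $\lambda_k(0)$ relative to $\tilde A$ is constant (there are finitely many), so that the three crossing points $p_1^{(k)},p_2^{(k)}(0),p_3^{(k)}(0)$ of $\lambda_k(0)$ with $L_{i_1},L_{i_2},L_{i_3}$ lie on fixed open rays of the axes and the quadrant containing $q_0^{(k)}$ is fixed.

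I would then separate two regimes. First, the degenerate one: if $\lambda_\infty(0)$ is a straight line, equivalently $p_1,p_2(0),p_3(0)$ of $\lambda_\infty(0)$ are collinear, then $\lambda_\infty(t)\equiv\lambda_\infty(0)$. For nearby non-straight $\lambda_k(0)$ the signed angle $\phi_k(0)$ is small, and by construction the pivoting line $Q_k(t)$ coincides with the line through $p_2^{(k)}(0),p_3^{(k)}(0)$ — so $\lambda_k(t)=\lambda_k(0)$ — for all $t$ below the threshold $1-\tfrac{2}{\pi}|\phi_k(0)|\to1$, while on the vanishing remaining interval $\lambda_k(t)$ moves monotonically toward the line $\lambda_k(1)$ through $q_0^{(k)}$ and $p_1^{(k)}$, which itself tends to $\lambda_\infty(0)$ as $\phi_k(0)\to0$; hence $\lambda_k(t_k)\to\lambda_\infty(0)=\lambda_\infty(t_\infty)$. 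Second, the main regime, $\lambda_\infty(0)$ not straight: by the transversal–intersection half of Lemma~\ref{lemma-vertex} (applied to $\lambda_k(0)$ and each of the fixed lines, lifted to $\sphere^2$) the three crossing points converge to those of $\lambda_\infty(0)$, and then every object in the construction is obtained from these points and $t$ by continuous operations — the perpendicular $P$ through the origin, $q_0=P\cap\lambda(0)$, the line $\lambda(1)$ through $q_0$ and $p_1$, the signed angle $\phi(0)$, $\phi(t)=\min(\phi(0),(\nicefrac{\pi}{2})(1-t))$, the line $Q(t)$ through $q_0$ at angle $\phi(t)$ from $\lambda(1)$, and $p_2(t)=Q(t)\cap L_{i_2}$, $p_3(t)=Q(t)\cap L_{i_3}$; all the intersections involved are transversal and, by the estimates inside the proof of Claim~\ref{claim-lambda-def-3}, stay bounded away from the origin and from infinity throughout the deformation, so they vary continuously. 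Since $\lambda(t)=[p_1,p_2(t),p_3(t),p_1]_\oplus$ is the closed piecewise-geodesic path through these three vertices with a fixed quadrant pattern (again Claim~\ref{claim-lambda-def-3}), convergence of the vertices forces $\lambda_k(t_k)\to\lambda_\infty(t_\infty)$ in the pulled-back Fréchet metric.

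The step I expect to be the real obstacle is the gluing of the two regimes: both the jump from straight to non-straight $\lambda(0)$, and, within a single non-straight $\lambda(0)$, the seam at $t=1-\tfrac{2}{\pi}|\phi(0)|$ where $\phi(t)$ switches from the constant $\phi(0)$ to the decaying branch $(\nicefrac{\pi}{2})(1-t)$. The observation that makes this seamless is that at that seam both branches yield the same line $Q(t)$ through $q_0$, namely the one at angle $|\phi(0)|$ from $\lambda(1)$, which is exactly the line through $p_2(0)$ and $p_3(0)$; consequently $Q(t)$, and hence $\lambda(t)$, is given by a single formula that is jointly continuous in $(\lambda(0),t)$ because $\min$ is continuous and because $\phi(0),q_0,\lambda(1)$ depend continuously on $\lambda(0)$. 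Once continuity is in hand, conditions (1) and (2) in the definition of an extension deformation process hold, respectively, by inspection of the construction at $t=0$ and by Claim~\ref{claim-lambda-def-3}, so $\tilde\lambda$ is an extension deformation process on $\tilde Z_I$.
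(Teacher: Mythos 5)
Your main regime (the limit $\lambda_\infty(0)$ not straight) matches the paper's argument, and your observation about the seam in $\phi(t)=\min(\phi(0),(\nicefrac{\pi}{2})(1-t))$ is fine. The genuine gap is in your degenerate regime, specifically in the claim that for non-straight $\lambda_k(0)$ converging to a straight $\lambda_\infty(0)$ the angle $\phi_k(0)$ must be small. That is true when $\lambda_\infty(0)$ meets the three axes at three distinct affine points, and (after a short computation) when $\lambda_\infty(0)$ is vertical or horizontal, but it fails in the two remaining degenerations: when $\lambda_\infty(0)$ is a line through the origin, and when $\lambda_\infty(0)=L_{i_1}$ is the horizon. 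In the through-origin case both $p_{2,k}(0)$ and $p_{3,k}(0)$ collapse to the origin, so the middle segment of $\lambda_k(0)$ is arbitrarily short and its direction is completely unconstrained by Fréchet proximity: for instance, take $\lambda_\infty(0)$ to be the line of slope $-1$ through the origin and $\lambda_k(0)$ the path with vertices $p_{3,k}(0)=(0,\nicefrac{1}{k})$, $p_{2,k}(0)=(\nicefrac{1}{k^2},0)$ and rays of slope $-1$; then $\lambda_k(0)\to\lambda_\infty(0)$ in $\curves$ while the middle segment becomes vertical and $\phi_k(0)$ stays bounded away from $0$. A similar phenomenon occurs for the horizon, where all three crossing points escape to infinity. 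In these cases your threshold $1-\tfrac{2}{\pi}|\phi_k(0)|$ does not tend to $1$, so the "vanishing remaining interval'' argument collapses and nothing in your write-up controls $\lambda_k(t_k)$ for $t_k$ past the threshold.

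These are exactly the two cases the paper treats with different arguments, and some such substitute is needed. For a limit line through the origin one uses that $q_{0,k}\to 0$ while $p_{1,k}$ stays bounded away from the axes, so $p_{2,k}(1),p_{3,k}(1)\to 0$; since $p_{2,k}(t)$ and $p_{3,k}(t)$ move monotonically along $L_{i_2}$ and $L_{i_3}$ between their initial and final positions, all vertices of $\lambda_k(t_k)$ tend to the origin and $\lambda_k(t_k)$ converges to the line through the origin in the direction $p_{1,\infty}$, regardless of $\phi_k(0)$. For the horizon one uses that $\|q_{0,k}\|\to\infty$, hence the line $\lambda_k(1)$ has distance $\|q_{0,k}\|$ from the origin, and each intermediate curve $\lambda_k(t)$ is separated from the origin by the lower envelope of $\lambda_k(0)$ and $\lambda_k(1)$, so its distance from the origin also tends to infinity and it converges to the horizon. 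Your proposal does not contain either of these arguments (your appeal to "estimates inside the proof of Claim~\ref{claim-lambda-def-3}'' only gives slope information, not the required degenerating bounds), so as written the proof is incomplete precisely at the degenerate limits through the origin and at infinity.
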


\begin{claimproof}
For $j \in \{1,\dots,\infty\}$, let $\lambda_j(t)$ be as above and $t_j \in [0,1]_\mb{R}$ such that $\lambda_j(0) \to \lambda_\infty(0)$ and $t_j \to t_\infty$.  
Recall that we use the metric on $\projplane$ induced from Fréchet distance on the sphere.  

If $\lambda_j(0)$ is straight for all $j \in \mb{N}$ large enough, then $\lambda_j(t) = \lambda_j(0) \to \lambda_\infty(0) = \lambda_\infty(t')$ and we are done.
Otherwise we may restrict to a subsequence that is not straight.
Therefore, assume that $\lambda_j(0)$ is not a straight line for $j\neq \infty$. 

Let $p_{1,j}$, $p_{2,j}(t)$, $p_{3,j}(t)$, $P_j$, $q_{0,j}$, and $Q_{j}(t)$ be defined as above for $j < \infty$ and also for $j=\infty$ where appropriate.

We have four cases to consider, $\lambda_\infty(0)$ intersects $L_{i_1},L_{i_2},L_{i_3}$ at 3 distinct points, or $\lambda_\infty(0)$ is vertical or horizontal, or $\lambda_\infty(0)$ is a straight line through the origin that is neither vertical nor horizontal, or $\lambda_\infty(0) = L_{i_1}$ is the horizon.

Suppose that $\lambda_\infty(0)$ intersects $L_{i_1},L_{i_2},L_{i_3}$ at 3 distinct points.  Then by Lemma~\ref{lemma-vertex}, we have $p_{k,j}(0) \to p_{k,\infty}(0)$, so $q_{0,j} \to q_{0,\infty}$, so $\lambda_j(1) \to \lambda_\infty(1)$, so $Q_{j}(t_j) \to Q_{\infty}(t_\infty)$, so $p_{k,j}(t_j) \to p_{k,\infty}(t_\infty)$, so $\lambda_j(t_j) \to \lambda_\infty(t_\infty)$ since these are defined continuously in terms of each other in succession.

Suppose that $\lambda_\infty(0)$ is vertical.  Then, $p_1$ and $p_{3,j}(0)$ both converge to $(L_{i_1} \cap L_{i_3})$, so $P_j \to L_{i_2}$, so $q_{0,j} \to p_{2,\infty}(0) = (\lambda_\infty(0) \cap L_{i_2})$, so $\lambda_j(t_j)$ converges to the vertical line through $p_{2,\infty}(0)$, which is $\lambda_\infty(t_\infty) = \lambda_\infty(0)$.
The argument for $\lambda_\infty(0)$ horizontal is essentially the same.

Suppose that $\lambda_\infty(0)$ contains the origin and is neither vertical nor horizontal.  Then, $p_{2,j}(0) \to 0$ and $p_{3,j}(0) \to 0$, so $q_{0,j} \to 0$.  Since $p_{1,j} \to p_{1,\infty} = (\lambda_\infty(0) \cap L_{i_1})$, we have $p_{1,j}$ bounded away from $L_{i_2}$ and $L_{i_3}$ for $j$ large enough, so $p_{k,j}(1) \to 0$ for $k \in \{2,3\}$. Since $p_{k,j}(t_k)$ is on $L_{i_k}$ between $p_{k,j}(0)$ and $p_{k,j}(1)$ in $\mb{R}^2$, we have $p_{k,j}(t_j) \to 0$, so $\lambda_j(t_j)$ converges to the line through the origin and $p_{1,\infty}$, which is $\lambda_\infty(t_\infty) = \lambda_\infty(0)$.

Suppose that $\lambda_\infty(0)$ is the horizon.
Then, $\min \{\|x\| : x \in \lambda_j(0)\} \to \infty$, so 
although $q_{0,j}$ might not converge, $\|q_{0,j}\| \to \infty$.
Since $q_{0,j} = (P_{j} \cap \lambda_j(1))$, and $P_{j}$ and $\lambda_j(1)$ are perpendicular, we have $\min \{\|x\| : x \in \lambda_j(1)\} = \|q_{0,j}\| \to \infty$.
Since $\lambda_j(t)$ pivots about the point $q_{0,j}$ in one quadrant, and is parallel in two other quadrants, $\lambda_j(t_j)$ is separated from the origin by the lower envelope of $\lambda_j(0)$ and $\lambda_j(1)$, so $\min \{\|x\| : x \in \lambda_j(t_j)\} \to \infty$, which means $\lambda_j(t_j)$ converges to the horizon.  
\end{claimproof}

Let $\Lambda(A,t) = \left(\tilde \lambda(\tilde A,L_1,t),\dots,\tilde \lambda(\tilde A,L_n,t)\right)$. 

\begin{claim}\label{claim-lambda-arrangement-3}
For $m=3$, $\Lambda(A,t)$ is a pseudoline arrangement.  Hence, $h_I$ is a well defined $\orth_3$-equivariant deformation of $Z_I$.
\end{claim}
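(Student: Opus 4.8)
The plan is to follow the template of the $m>3$ case (Claim~\ref{claim-lambda-arrangement}). Once $\Lambda(A,t) = \bigl(\tilde\lambda(\tilde A, L_1(A), t),\dots,\tilde\lambda(\tilde A, L_n(A), t)\bigr)$ is shown to be a pseudoline arrangement for every $A \in Z_I$ and every $t \in [0,1]_\mb{R}$, the second assertion follows: $\tilde\lambda$ is an extension deformation process by Claim~\ref{claim-lambda-continuous-3}, so Claim~\ref{claim-pseudoline-lift} produces the induced $\orth_3$-equivariant deformation $h_I$. Write $\lambda_j(t) = \tilde\lambda(\tilde A, L_j(A), t)$ and $J = \{j \in \compl I : \|\alpha_j\| > 0\}$. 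As in the proof of Claim~\ref{claim-lambda-arrangement}, pseudolines equal at $t=0$ deform identically and stay equal, so it suffices to check that for indices $j \ne j'$ with $L_j(A) \ne L_{j'}(A)$ the curves $\lambda_j(t)$ and $\lambda_{j'}(t)$ cross exactly once for all $t$, and we may assume at least one of them deforms nontrivially.

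The pairs with an index in $I$ are easy: the pseudolines $L_{i_1}, L_{i_2}, L_{i_3}$ are the fixed horizon and the two axes, which already form an arrangement, and from $\lambda_j(t) = [p_1, p_2(t), p_3(t), p_1]_\oplus$ together with Claim~\ref{claim-lambda-def-3} each $\lambda_j(t)$ with $j \in J$ meets $L_{i_1}, L_{i_2}, L_{i_3}$ transversally in the single points $p_{1,j}$, $p_{2,j}(t)$, $p_{3,j}(t)$. So only pairs $j, j' \in J$ remain. Decompose $\projplane \setminus (L_{i_1}\cup L_{i_2}\cup L_{i_3})$ into its four open triangular quadrants. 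By construction each $\lambda_j(t)$ meets every quadrant in at most one geodesic segment, and since the points $p_{1,j}, p_{2,j}(t), p_{3,j}(t)$ never reach a vertex of the arrangement (Claim~\ref{claim-lambda-def-3}) the set of quadrants visited by $\lambda_j(t)$ is independent of $t$. Two geodesic segments in a convex quadrant cross at most once, so the total number $c(t)$ of crossings of $\lambda_j(t)$ with $\lambda_{j'}(t)$ equals the number of quadrants in which they cross; in particular $c(t) \in \{0,1,2,3,4\}$, it is odd whenever the two curves meet transversally (two pseudolines in $\projplane$ intersect an odd number of times), and $c(0) = 1$ since $L_j(A), L_{j'}(A)$ are distinct members of a pseudoline arrangement. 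I would then show $c$ is locally constant in $t$, which forces $c \equiv 1$.

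Two geodesic segments either meet transversally at one point or are collinear, so $c(t)$ can change only at a time $t_0$ at which $\lambda_j(t_0)$ and $\lambda_{j'}(t_0)$ either meet at a point lying on one of $L_{i_1}, L_{i_2}, L_{i_3}$, or share a geodesic sub-arc. In the first situation Claim~\ref{claim-lambda-def-3} forbids the meeting point from being a vertex of $(L_{i_1},L_{i_2},L_{i_3})$, so both curves cross the relevant line $L_{i_k}$ transversally there; hence near that point the intersection of $\lambda_j(t)$ and $\lambda_{j'}(t)$ simply migrates from one of the two quadrants meeting along $L_{i_k}$ into the other as $t$ passes $t_0$, and $c$ is unchanged. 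Ruling out the second situation is the technical heart: a shared sub-arc must lie in a single quadrant, so it lies either on a common middle-segment line $Q_j(t_0) = Q_{j'}(t_0)$ or on a common ray in the direction $p_{1,j} = p_{1,j'}$. I would rule these out using that the pivot points $q_{0,j}$ and the limiting directions $p_{1,j}$ stay fixed throughout the deformation while $\lambda_j(0)$ and $\lambda_{j'}(0)$ already cross transversally (so never share an arc): $Q_j(t) = Q_{j'}(t)$ would put both pivot points on one line and force the two monotone pivot-angle functions to agree at $t_0$, which does not occur; and in the parallel-ray subcase one checks the marked points on the two axes cannot collide simultaneously. Granting this, $c \equiv 1$, so $\Lambda(A,t)$ is a pseudoline arrangement, and the claim follows from Claims~\ref{claim-lambda-continuous-3} and~\ref{claim-pseudoline-lift}.

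The last step — no shared sub-arc — is where I expect the real difficulty. The $m>3$ argument sidestepped this because its deformation was confined to a single cell with fixed boundary, so crossings were controlled by the unchanging alternation pattern around that cell; for $m=3$ the endpoints $p_{2,j}(t), p_{3,j}(t)$ slide along the axes, so the required rigidity has to be squeezed out of the fixed data $p_{1,j}$, $q_{0,j}$ and the monotone pivoting of the line $Q_j(t)$.
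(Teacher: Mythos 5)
Your overall framework (reduce to pairs $j,j'$ of deforming curves, count crossings quadrant by quadrant, and show the count $c(t)$ is locally constant starting from $c(0)=1$) is a genuinely different route from the paper, which instead argues by contradiction at the first failure time $t_0$, picks a new intersection point that is not a limit of earlier ones, and shows it must be a coincident vertex, then derives a contradiction. But your proposal has a genuine gap exactly at the step you treat as routine: the case where $\lambda_j(t_0)$ and $\lambda_{j'}(t_0)$ meet at a point of $L_{i_2}$ or $L_{i_3}$. Since each deforming curve meets $L_{i_2}$ only at its moving vertex $p_{2,j}(t)$, such a meeting point is necessarily a coincidence $p_{2,j}(t_0)=p_{2,j'}(t_0)$ of the two vertices, i.e.\ one vertex overtakes the other along the axis. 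Transversality of each curve with $L_{i_2}$ does not imply that ``the intersection simply migrates'' between the two adjacent quadrants: locally each curve is a wedge with one arm above and one arm below the axis, and when the two vertices swap order the upper-arm crossing and the lower-arm crossing each toggle, so the local count can change by $0$ or by $\pm 2$ depending on the slopes. The dangerous possibility is precisely that a pair of new crossings is created (local count $0\to 2$, hence $c$ jumping e.g.\ from $1$ to $3$), and your parity observation does not exclude this since $3$ is odd. Ruling this out is the heart of the claim, and it is what the paper's proof is entirely devoted to: it compares the exterior angles of the two curves at the coincident vertex (so one curve is still undeformed while the other has begun pivoting), obtains nested cones $C_{j'}\subset C_j$ at that vertex, and uses the monotone motion of $p_{2,j}(t)$ along $L_{i_2}$ together with the family of parallel rays $[p_{2,j}(t),p_{1,j})_\oplus$ to exhibit intersection points converging to the coincidence point, i.e.\ to show the crossing was already there before the vertices met. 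Nothing in your proposal substitutes for this argument.

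The second unresolved point is the shared sub-arc case, which you flag yourself but then dismiss with ``which does not occur'' and ``one checks the marked points cannot collide simultaneously.'' Neither assertion is established, and neither is obvious: a priori the two pivoting lines $Q_j(t)$ and $Q_{j'}(t)$ could momentarily coincide (the common line need only pass through the two fixed pivot points $q_{0,j}\neq q_{0,j'}$ at a time when both pivot angles match), which would make the two middle segments identical chords of the quadrant. The paper folds this case into the same vertex-coincidence analysis (a shared chord forces the endpoints, which are vertices of both curves, to coincide, and the cone argument then yields the contradiction), so any correct completion of your plan will end up supplying essentially that argument anyway. As written, the proposal is a reasonable reduction plus bookkeeping, but the two steps carrying the real content are asserted rather than proved.
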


\begin{claimproof}
Let $\lambda_j(t)=\tilde \lambda(\tilde A, L_j,t)$, and again let $p_{1,j}$, $p_{2,j}(t)$, $p_{3,j}(t)$, $P_j$, $q_{0,j}$, and $\phi_j(t)$ be defined for $\lambda_j$ as above.

Assume for the sake of contradiction that $\Lambda(A,t)$ is not a pseudoline arrangement for some $t\in [0,1]_\mb{R}$.
Since we start with a pseudoline arrangement $\Lambda(A,0)$, there is some minimum $t_0 > 0$ where $\Lambda(A,t_0)$ is not a pseudoline arrangement, and there is some pair $j,\xj \in [n]_\mb{N}\setminus I$ distinct such that $\lambda_j(t_0)$ and $\lambda_\xj(t_0)$ intersect at more than 1 point.


Since we start with a pseudoline arrangement $\Lambda(A,0)$, $\lambda_j(0)$ and $\lambda_\xj(0)$ meet at a single point.
Since $\lambda_j(t)$ and $\lambda_\xj(t)$ are pseudolines, they must meet in at least 1 point, so there is some minimum time $t_0 > 0$ where the pseudolines meet at more than 1 point, and one of these points $x \in (\lambda_j(t_0) \cap \lambda_\xj(t_0))$ is not a limit point of $\lambda_j(t) \cap \lambda_\xj(t)$ for $t \to t_0$ from below.
If $x$ is in the interior of a segment, then either the segments coincide or they cross at $x$, but the segments cannot cross, since that would make $x$ a limit point of $\lambda_j(t) \cap \lambda_\xj(t)$ for $t \to t_0$.
Therefore, we may assume that $x$ is a vertex of $\lambda_j(t_0)$ and of $\lambda_\xj(t_0)$.

We cannot have $x = p_{1,j}(t_0) = p_{1,\xj}(t_0)$, since these points are fixed throughout the deformation.  Therefore, $x$ is one of the other vertices, and by symmetry we may assume that $x = p_{2,j}(t_0) = p_{2,\xj}(t_0)$.

We will show that exterior angles of $\lambda_j(t_0)$ and $\lambda_\xj(t_0)$ at $x$ cannot be equal. 
If the exterior angles of $\lambda_j(t_0)$ and $\lambda_\xj(t_0)$ at $x$ were equal, then either they would cross at $x$, which is impossible, or they would coincide along the segments on both sides of $x$, which would imply that $\lambda_j(t_0) = \lambda_\xj(t_0)$.
If we had $\lambda_j(t_0) = \lambda_\xj(t_0)$, then $p_{1,j}(0) = p_{1,\xj}(0)$, so $P_j = P_\xj$, so $q_{0,j} = (P_j \cap \lambda_j(t_0)) = (P_\xj \cap \lambda_\xj(t_0)) = q_{0,\xj}$, but then $\lambda_j(0)$ and $\lambda_\xj(0)$ would meet at $p_{1,j}$ and at $q_{0,j}$, which contradicts that $\lambda_j(0)$ and $\lambda_\xj(0)$ meet at a single point.
Therefore, we may assume that the exterior angle of $\lambda_j(t_0)$ at $x$ is strictly greater than that of $\lambda_\xj(t_0)$.  Thus, $\lambda_j(t_0)$ has already begun deforming by time $t_0$, while $\lambda_\xj(t_0)$ has been fixed up to time $t_0$.

Let $C_j$ and $C_\xj$ be the cones emanating from $x$ that are respectively generated by the segments of $\lambda_j(t_0)$ and $\lambda_\xj(t_0)$ incident to $x$. 
Since the exterior angle of $\lambda_j(t_0)$ at $x$ is strictly greater than that of $\lambda_\xj(t_0)$, the interior angle of $\lambda_j(t_0)$ is strictly less than that of $\lambda_\xj(t_0)$, so these cones are nested $C_\xj \subset C_j$.

We now have that $t_0 < 1$, since $\lambda_j(1)$ and $\lambda_\xj(1)$ are both lines, and therefore have the same exterior angle, namely $0$.  Also, we know that $\lambda_j(t_0)$ cannot be a line, since it has exterior angle strictly greater than that of $\lambda_\xj(t_0)$, which is at least $0$.

Since $\lambda_j(t_0)$ is not a line, we have $q_{0,j} \neq x$, but $q_{0,j}$ is on the boundary of $C_j$.
Since $\lambda_j(t_0)$ crosses $L_{i_2}$ at $x$, the axis $L_{i_2}$ passes through the interior of $C_j$.
We now have $p_{2,j}(1) \in C_j$, since $\lambda_j(1)$ passes through $p_{2,j}(1)$, is parallel to one side of $C_j$, and intersects $C_j$ at $q_{0,j}$.

Since $\lambda_j(0)$ and $\lambda_\xj(0)$ do not intersect at $x \in \lambda_\xj(0)$, $\lambda_j$ must have started deforming at an earlier time $s < t_0$.
For $t \in [s,1]_{\mb{R}^2}$, the angle of $\lambda_j(t)$ at $q_{0,j}$ changes monotonically at a uniform speed, so the point $p_{2,j}(t)$ moves continuously and monotonically from $p_{2,j}(s) = p_{2,j}(0)$ to $p_{2,j}(1)$ along $L_{i_2}$.
Hence, the point $p_{2,j}(t)$ is outside $C_\xj$ for time $t<t_0$, and then crosses into to $C_\xj$ at time $t_0$, and then stays in $C_\xj$ for time $t \geq t_0$. 

Let $R(t) = [p_{2,j}(t),p_{1,j})_\oplus$.
The rays $R(t)$ are all parallel
and ray $R(t_0)$ is in the interior of $C_\xj$, so each ray $R(t)$ intersects $C_\xj$.
In particular, the rays $R(t)$ for $t < t_0$ extend from the point $p_{2,j}(t) \not\in C_\xj$ and cross into $C_\xj$ at a point $y(t) \in \lambda_j(t_0)\cap \lambda_\xj(t_0)$, and $y(t) \to y(t_0) = x$ as $t \to t_0$ from below.
But this is a contradiction, since $x$ is not a limit point of $\lambda_j(t) \cap \lambda_\xj(t)$ for $t \to t_0$.
Thus, our assumption cannot hold, so $\Lambda(A,t)$ must be a pseudoline arrangement for all $t \in [0,1]_\mb{R}$.

Therefore, by Claims \ref{claim-lambda-continuous-3} and \ref{claim-pseudoline-lift}, $h_I$ is well defined as the equivariant deformation induced by $\tilde\lambda$.
\end{claimproof}

\begin{claim}\label{claim-deform-h3}
$h_{(i_1,i_2,i_3)}$ is a strong $\orth_3$-equivariant deformation retraction from $Z_{(i_1,i_2,i_3)}$ to $Z_{(i_1,i_2)} = Z_{()}$.
\end{claim}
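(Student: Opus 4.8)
Write $h=h_{(i_1,i_2,i_3)}$ and $I=(i_1,i_2,i_3)$. The plan is to deduce Claim~\ref{claim-deform-h3} directly from the three preceding claims, since the substantive geometric work has already been done there. Claims~\ref{claim-lambda-def-3}, \ref{claim-lambda-continuous-3}, and \ref{claim-lambda-arrangement-3} establish, for $m=3$, that $\tilde\lambda$ is an extension deformation process on $\tilde Z_I$ carrying pseudoline extensions of $(L_{i_1},L_{i_2},L_{i_3})$ to pseudoline extensions, that $\Lambda(A,t)$ is always a pseudoline arrangement, and hence, via Claim~\ref{claim-pseudoline-lift}, that $h$ is a well-defined, continuous, $\orth_3$-equivariant deformation of $Z_I$ with $h(\cdot,0)=\id$. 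So it remains only to verify: (i) $h(A,t)$ stays in $Z_I$ for all $t$; (ii) $h(A,1)\in Z_{(i_1,i_2)}=Z_{()}$; and (iii) $h(A,t)=A$ for all $t$ whenever $A\in Z_I$ already lies in $Z_{(i_1,i_2)}$ (the \emph{strong} requirement).

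For (i), I would use that, since $h$ is induced by $\tilde\lambda$, we have $\proj_I(h(A,t))=\proj_I(A)$, so $S_{i_1},S_{i_2},S_{i_3}$ remain great circles and the symmetry condition in the definition of $Z_I$ persists because $\proj_I$ is unchanged; that $\wei(h(A,t))=\wei(A)$, so the nonvanishing pseudocircles of $h(A,t)$ are indexed by the same set as those of $A$; and that $h(A,t)$ is antipodally symmetric. For each such $j\notin\{i_1,i_2,i_3\}$, the deformed pseudoline $L_j(h(A,t))=\tilde\lambda(\tilde A,L_j(A),t)$ is, by the definition of $\tilde\lambda$ together with Claim~\ref{claim-lambda-def-3}, either the straight line $L_j(A)$ itself (when $L_j(A)$ is straight) or the polygonal path $[p_1,p_2(t),p_3(t),p_1]_\oplus$ consisting of a single segment in each of three quadrants of $\projplane\setminus(L_{i_1}\cup L_{i_2}\cup L_{i_3})$; in either case it is linear on every cell of $(L_{i_1},L_{i_2},L_{i_3})$, i.e.\ $S_j(h(A,t))$ is geodesic on each $2$-cell of $\proj_I(A)$, which is exactly the defining condition of $Z_I$. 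For (ii), the key observation is that $\tilde\lambda(\tilde A,L_j(A),1)$ is always a straight line — it is $L_j(A)$ when $L_j(A)$ is straight, and the line through $q_0$ and $p_1$ otherwise — so every pseudocircle of $h(A,1)$ is a great circle; since $h(A,1)\in\pstief_{3,n}$ is automatically spanning, $h(A,1)\in Z_{()}$. Thus $h$ is a deformation retraction from $Z_{(i_1,i_2,i_3)}$ to $Z_{(i_1,i_2)}=Z_{()}$.

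For (iii), I would observe that if $A\in Z_I$ also lies in $Z_{()}$, then $A$ is a spanning weighted great circle arrangement — with $\{i_1,i_2,i_3\}$ a basis, since $A\in X_{(i_1,i_2,i_3)}$ — so every $L_j(A)$ with $j\notin\{i_1,i_2,i_3\}$ is already a straight line, and by the definition of $\tilde\lambda$ the deformation fixes it: $\tilde\lambda(\tilde A,L_j(A),t)=L_j(A)$ for all $t$. The constant path $t\mapsto A$ then satisfies every condition characterizing the deformation of $Z_I$ induced by $\tilde\lambda$, so by the uniqueness asserted in Claim~\ref{claim-pseudoline-lift} it coincides with $h(A,\cdot)$, giving $h(A,t)=A$ for all $t$; the $\orth_3$-equivariance is already contained in Claim~\ref{claim-lambda-arrangement-3}. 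I do not expect a genuine obstacle in this claim: the real difficulty — that the pivoting construction keeps $\lambda(t)$ a pseudoline extension, keeps $\Lambda(A,t)$ a pseudoline arrangement, and varies continuously — is precisely what Claims~\ref{claim-lambda-def-3}, \ref{claim-lambda-continuous-3}, and \ref{claim-lambda-arrangement-3} supply, so the only care needed here is to chase the membership conditions for $Z_{(i_1,i_2,i_3)}$ and $Z_{()}$ through the straight and non-straight branches of the definition of $\tilde\lambda$.
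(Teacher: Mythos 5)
Your proposal is correct and follows essentially the same route as the paper's own proof: well-definedness, continuity, and equivariance are imported from the preceding claims via Claim~\ref{claim-pseudoline-lift}, membership in $Z_I$ follows because $\lambda(t)$ stays linear on the cells of $(L_{i_1},L_{i_2},L_{i_3})$, the retraction onto $Z_{()}$ follows because $\lambda(1)$ is a line, and strongness follows because $\tilde\lambda$ fixes straight lines. Your write-up merely spells out the membership-chasing (weights, $\proj_I$, antipodal symmetry, uniqueness of the induced deformation) that the paper leaves implicit.
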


\begin{claimproof}
Since $\lambda_j(t)$ is geodesic in each cell of $L_{i_1},L_{i_2},L_{i_3}$ throughout the deformation, we have $h_I(t) \in Z_I$, and since $\lambda_j(1)$ is a line, we have $h_I(1) \in Z_{()}$.
Therefore, $h_I$ is a deformation retraction from $Z_I$ to $Z_{()}$.
Since $\lambda$ is the trivial deformation if $\lambda(0)$ is a line, $h_I$ is a strong deformation retraction.
\end{claimproof}

Recall that for $m < 3$, $h_I(A,t) = A$ is trivial.

\begin{claim}\label{claim-stable-h}
For all $p \in \{0,\dots,m{-}1\}$ and all $j \in [n]_\mb{N} \setminus \{i_1,\dots,i_p\}$, 
if $A \in Z_I \cap X_{(i_1,\dots,i_{p},j)}$ then $h_I(A,1) \in X_{(i_1,\dots,i_{p},j)}$.
\end{claim}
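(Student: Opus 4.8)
The plan is to mirror the three-way case analysis used for Claim~\ref{claim-stable-f}, splitting on the size of $p$ and leaning on the two structural properties of $h_I$ already established: $h_I$ preserves weights, $\wei(h_I(A,t)) = \wei(A)$, and it fixes the first $m$ pseudocircles, $\proj_I(h_I(A,t)) = \proj_I(A)$. When $m < 3$ the deformation $h_I$ is trivial, so assume $m \geq 3$. For $p = 0$ the sequence is $(j)$ and $X_{(j)}$ is cut out by the single condition $\|\alpha_j\| \neq 0$, which is immediate from weight preservation. For $3 \leq p \leq m-1$, membership of $h_I(A,1)$ in $X_{(i_1,\dots,i_p,j)}$ asks for $h_I(A,1) \in Y_{(i_1,\dots,i_p)}$ together with its $j$-th weighted pseudocircle being nonzero; the latter is again weight preservation, and the former holds since $h_I(A,t) \in Z_I$ throughout the deformation (Claims~\ref{claim-deform-h} and~\ref{claim-deform-h3}) and $Z_I \subseteq Y_I = Y_{(i_1,\dots,i_m)} \subseteq Y_{(i_1,\dots,i_p)}$ by the descending nesting $Y_{(i_1,\dots,i_r)} \subseteq X_{(i_1,\dots,i_r)} \subseteq Y_{(i_1,\dots,i_{r-1})}$ of the intermediate spaces. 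These two cases are routine, exactly as for $f_I$ in Claim~\ref{claim-stable-f}.

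The substantive case is $p \in \{1,2\}$. For $A \in Z_I$, the conditions defining $X_{(i_1,\dots,i_p,j)}$ — nonvanishing of the relevant weights, $S_{i_1} \neq S_{i_2}$ when $p = 2$, and emptiness of the common intersection — are together equivalent to $\{i_1,\dots,i_p,j\}$ being an independent set of $\cov(A)$, so it suffices to show that independence of $\{i_1,\dots,i_p,j\}$ is preserved along $h_I$. Since independence of a set $T$ is detected already by the restriction of the covector set to any set of coordinates containing $T$, it is enough to show that $\cov(\proj_{\{i_1,\dots,i_m,j\}}(h_I(A,t)))$ does not depend on $t$; and since $\proj_I(h_I(A,t))$ is fixed, this in turn amounts to showing that the single-element extension of the fixed pseudocircle arrangement $(\theta_{i_1},\dots,\theta_{i_m})$ by $\theta_j$ — equivalently, the combinatorial position of the pseudoline $L_j(h_I(A,t))$ in the fixed pseudoline arrangement $(L_{i_1},\dots,L_{i_m})$, i.e.\ the sequence of cells it traverses — is independent of $t$. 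I would check this directly from the two forms of $\tilde\lambda$. For $m > 3$: by Claim~\ref{claim-lambda-def}, $L_j$ is deformed only inside a single $2$-cell $C$ of $(L_{i_1},\dots,L_{i_{m-1}})$, where $L_j(t)\cap C = [a,p(t),b]_C$ crosses $L_{i_m}$ exactly once, at the moving interior point $p(t)$ of the segment $L_{i_m}\cap C$; hence $L_j(t)$ passes through the same two subcells of $C$ cut off by $L_{i_m}$, in the same order, for every $t$, and is fixed outside $C$, so its cell-path through $(L_{i_1},\dots,L_{i_m})$ is combinatorially constant. For $m = 3$: either $\lambda(0)$ is straight and $L_j$ is fixed, or, by the proof of Claim~\ref{claim-lambda-def-3}, the vertex $p_1 = L_j\cap L_{i_1}$ is held fixed while $p_2(t)$ and $p_3(t)$ stay on the same open rays of $L_{i_2}$ and $L_{i_3}$ out of the origin, so $L_j$ meets $L_{i_1},L_{i_2},L_{i_3}$ at combinatorially unchanging positions and the extension is again constant. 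In both cases $\cov(\proj_{\{i_1,\dots,i_m,j\}}(h_I(A,t)))$ is constant in $t$, so $\{i_1,\dots,i_p,j\}$ stays independent and $h_I(A,1) \in X_{(i_1,\dots,i_p,j)}$.

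The hard part is this last case: making rigorous the principle that, for an arrangement in $Z_I$, the covector set restricted to $\{i_1,\dots,i_m,j\}$ is governed by the cell-path of the extending pseudoline $L_j$ through the fixed arrangement $(L_{i_1},\dots,L_{i_m})$, and then checking carefully — for both forms of $\tilde\lambda$, and especially for the pivoting motion when $m=3$, where one must ensure that $p_2(t)$ and $p_3(t)$ neither slide across the origin nor escape to infinity, which is precisely what the proof of Claim~\ref{claim-lambda-def-3} guarantees — that this cell-path never changes. Everything else follows from weight preservation and the nesting of the spaces $X_I$, $Y_I$, and $Z_I$, exactly as for $f_I$ in Claim~\ref{claim-stable-f}.
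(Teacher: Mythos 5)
Your proposal is correct, and it rests on the same case split and the same structural facts as the paper's proof: norm preservation for $p=0$ and $p\geq 3$ (together with $h_I(A,t)\in Z_I\subseteq Y_{(i_1,\dots,i_p)}$), and, for $p\in\{1,2\}$, the confinement of the deformation to a single $2$-cell of $(L_{i_1},\dots,L_{i_{m-1}})$ when $m>3$ and the facts from Claim~\ref{claim-lambda-def-3} (fixed $p_1$ on $L_{i_1}$, with $p_2(t),p_3(t)$ never crossing the origin or reaching the horizon) when $m=3$. The only real difference is packaging in the $p\in\{1,2\}$ case: you prove the stronger statement that the entire single-element extension type, i.e.\ the covector set of $\proj_{\{i_1,\dots,i_m,j\}}(h_I(A,t))$, is constant in $t$, deducing independence preservation from that, and you correctly flag that turning ``cell-path constancy implies covector constancy'' into a rigorous statement is the main remaining work. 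The paper avoids that burden entirely by tracking only the two defining conditions of $X_{(i_1,\dots,i_p,j)}$ directly: for $p=1$, that $S_j$ stays distinct from $S_{i_1}$; for $p=2$ with $m>3$, that the intersections of $S_j$ with $S_{i_1}$ and $S_{i_2}$ are literally unmoved because $S_j$ deforms only in interiors of $2$-cells; and for $p=2$ with $m=3$, that the crossing of $\lambda(t)$ with the horizon $L_{i_1}$ is fixed, so $S_j$ can never acquire the point $S_{i_1}\cap S_{i_2}$. So your route is sound but proves more than is needed; the paper's direct argument is lighter and needs no appeal to constancy of the full restricted covector set.
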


\begin{claimproof}
For $p = 0$ or $p \geq 3$, the claim holds since the deformation $h_I$ preserves norms.
For $p = 1$, if $S_{i_1}$ and $S_j$ are distinct, then they remain distinct throughout the deformation, so the claim holds.
For $p = 2$ and $m > 3$, the pseudocircle $S_j$ can only deform in interiors of 2-cells of $S_{i_1},\dots,S_{i_m}$, so the intersections of $S_j$ with $S_{i_1}$ and with $S_{i_2}$ are preserved throughout the deformation, so the claim holds.
For $p = 2$ and $m = 3$, the intersection of $\lambda(t)$ with $L_{i_1}$ is fixed throughout the deformation, so the points where $S_{i_2}$ and $S_j$ meet $S_{i_1}$ are preserved throughout the deformation, so the claim holds.
\end{claimproof}

\subsubsection{The deformation $g_I$ from $Y_I$ to $Z_I$ for $n < \infty$}
\label{subsubsec-g-finite}

Let $(\cdot)$ denote the operation augmenting a sequence, $(x_1, \dots, x_k) \cdot y = (x_1, \dots, x_k, y)$.
Let this also denote concatenation of deformations,
\[\phi_2 \cdot \phi_1 (x,t) \dfeq \left\{ \begin{array}{ll}
\phi_1(x,\ 2t) & t \in [0, \nicefrac{1}{2}]_\mb{R} \\
\phi_2(\phi_1(x,1),\ 2t{-}1) & t \in [\nicefrac{1}{2},1]_\mb{R} \\
\end{array} \right.
\]
with composition left associative.  Note that $\phi_2$ must be a deformation of the range of $\phi_1$ at $t=1$.
We use $\prod_{j \in J} \phi_j$ to denote the concatenation of the deformations $\phi_j$ for $j \in J$ in increasing order from right to left.

We next define the deformations $g_I$ using the $f_I$ and $h_I$.  
Here we assume that $n \in \{3,\dots\}$.  The infinite case will be dealt with in the next subsubsection.

Our situation so far is this. 
If we were only concerned with the space of arrangements where none of the weighted pseudocircles vanish, and the first three $\alpha_1,\alpha_2,\alpha_3$ are always a basis, then we could straighten all pseudocircles with the deformation given by
\[ h_{(1)} \cdot h_{(1,2)} \cdots h_{(1,\dots,n)} \cdot f_{(1,\dots,n)} \cdots f_{(1,2)} \cdot f_{(1)},  \]
but these assumptions only hold on some proper subset of $\pstief_{3,n}$. 
What we need is a deformation retraction from $Y_{()} = \pstief_{3,n}$ to $Z_{()}$.
Moreover, $f_{(1,\dots,{m-1})}$ is a deformation retraction to $Y_{(1,\dots,{m-1})}$, and the next deformation we would like to use $f_{(1,\dots,m)}$ is only defined on $X_{(1,\dots,m)}$, which is a proper subset of $Y_{(1,\dots,{m-1})}$.
Even worse, the spaces $X_{I\cdot j}$ for $j \not\in I$ do not even cover $Y_I$.
To deal with this, we will use the fact that these deformations approach the trivial deformation near the complement of the union of the $X_{I\cdot j}$.
We will also continuously shift between the deformations corresponding to different sequences $I \cdot j$ by stopping certain deformations early.

Let
\[ U_I = \bigcup_{j\in\compl{I}} X_{I\cdot j} 
= \left\{ (\alpha_1,\dots,\alpha_n) \in Y_I:\ \exists j \in \compl{I}.\ \alpha_j > 0 \right\}. \]
Note that the second equality above holds since every $A \in \pstief_{3,n}$ has a basis.  Moreover, since every independent set can be completed to a basis, we have that $U_I = Y_I$ for $m < 3$.

For spaces $X \subseteq Y$, let
\begin{align*}
&\upto :  (X \times [0,1]_\mb{R} \to Y) \times [0,1]_\mb{R} \to (X \times [0,1]_\mb{R} \to Y) , \\  
&\upto(\phi,s;x,t) = \phi(x,\min(s,t)). 
\end{align*}
That is, $\upto(\phi,s)$ is the deformation of $X$ that coincides with $\phi$ up to the stopping time $s$ and then remains fixed thereafter. 
We allow $\upto(\phi,0)$ to be the trivial deformation even for input that is outside the domain of $\phi$.
That is, we let $\upto(\phi,0; y,t) = y$ for $y \in Y$.


Recall $m=|I|$.
If $m=n$, then $g_I$ is just the trivial deformation $g_I({A},t) = {A}$.
Otherwise for $m<n$, $g_I$ is defined recursively by 
\begin{equation}\label{equation-g}
 g_{I}({A},t) = \left( \prod_{j \in \compl{I} }  \upto\left(h_{I \cdot j} \cdot g_{I \cdot j} \cdot f_{I \cdot j},\ s_{I \cdot j} ({A}) \right)\right) ({A},t), 
\end{equation}
where $s_{I \cdot j}$ is defined for $j \in \compl{I}$ as follows,
\[ r_{I\cdot j}(A) = \inf \left\{\dist(B,A): B \in Y_{I} \setminus X_{I\cdot j} \right\}, \]
\[ s_{I \cdot j}(A) = \begin{cases} 
0 & A \not\in U_{I} \\
\left(2\left(\frac{\displaystyle  r_{I\cdot j}({A})}{\displaystyle \max_{k \in \compl{I}} r_{I\cdot k}({A}) }\right) -1 \right)^+ & A \in U_{I}, \\
\end{cases}\]
where $(\phi(x))^+ = \max(0,\phi(x))$ is the positive part of a function $\phi$.

Here, we let $\upto\left(h_{I \cdot j} \cdot g_{I \cdot j} \cdot f_{I \cdot j},\ s \right)$ be the trivial deformation on all of $Y_I$ when $s=0$.  Note that if $s > 0$, then this deformation is only defined on $X_{I\cdot j}$.



\begin{claim}\label{claim-s}
For all $A \in Y_I$ and $j \in \compl{I}$, $s_{I\cdot j}(A)$ is well defined, and $s_{I\cdot j}$ is continuous on $U_{I}$.
\end{claim}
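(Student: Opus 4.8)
The plan is to read off both assertions from Claim~\ref{claim-open-X} together with the standard fact that, in a metric space, the distance to a fixed nonempty set is a $1$-Lipschitz function. Since $|I\cdot j| = m+1 > 0$, Claim~\ref{claim-open-X} applies to $X_{I\cdot j}$ and tells us it is an open proper subset of $Y_I$ in the metric topology. Properness makes $Y_I\setminus X_{I\cdot j}$ nonempty, so $r_{I\cdot j}(A)=\inf\{\dist(B,A):B\in Y_I\setminus X_{I\cdot j}\}$ is the distance in $(Y_I,\dist)$ from $A$ to a nonempty set; in particular it is a well-defined finite nonnegative real, bounded above by $\dist(B_0,A)$ for any fixed choice of $B_0\in Y_I\setminus X_{I\cdot j}$. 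For $A\notin U_I$ the value $s_{I\cdot j}(A)=0$ is set by definition, so well-definedness there is immediate.

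For $A\in U_I$ I would use the openness half of Claim~\ref{claim-open-X}. By definition of $U_I$ there is some $k\in\compl I$ with $A\in X_{I\cdot k}$, and since $X_{I\cdot k}$ is open in $Y_I$ there is $\eps>0$ with the $\eps$-ball about $A$ in $Y_I$ contained in $X_{I\cdot k}$; hence $\dist(B,A)\ge\eps$ for every $B\in Y_I\setminus X_{I\cdot k}$, and so $r_{I\cdot k}(A)\ge\eps>0$. Because $n<\infty$, the index set $\compl I$ is finite, so $\max_{k\in\compl I}r_{I\cdot k}(A)$ is attained and is $\ge\eps>0$. Thus the quotient appearing in the formula for $s_{I\cdot j}(A)$ has nonzero denominator, and since the numerator $r_{I\cdot j}(A)$ does not exceed this denominator the quotient lies in $[0,1]$, so $s_{I\cdot j}(A)$ is a well-defined number in $[0,1]$.

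For continuity on $U_I$ I would first note that each $r_{I\cdot k}$ is $1$-Lipschitz on $(Y_I,\dist)$: for $A,A'\in Y_I$ and every $B\in Y_I\setminus X_{I\cdot k}$ the triangle inequality gives $\dist(B,A)\le\dist(B,A')+\dist(A',A)$, and taking infima over $B$ yields $r_{I\cdot k}(A)\le r_{I\cdot k}(A')+\dist(A,A')$; interchanging $A$ and $A'$ gives $|r_{I\cdot k}(A)-r_{I\cdot k}(A')|\le\dist(A,A')$. Hence each $r_{I\cdot k}$ is continuous on $Y_I$, so $\max_{k\in\compl I}r_{I\cdot k}$, a maximum of finitely many continuous functions, is continuous; by the previous paragraph it is moreover strictly positive throughout $U_I$, so $A\mapsto r_{I\cdot j}(A)\big/\max_{k\in\compl I}r_{I\cdot k}(A)$ is continuous on $U_I$. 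Composing with the continuous function $x\mapsto(2x-1)^+=\max(0,2x-1)$ shows that $s_{I\cdot j}$ is continuous on $U_I$. The only nonroutine input is the openness (as opposed to mere non-emptiness of the complement) in Claim~\ref{claim-open-X}, which is precisely what prevents the denominator from vanishing on $U_I$; everything else is elementary metric-space bookkeeping. One may also note in passing that $U_I=\bigcup_{k\in\compl I}X_{I\cdot k}$ is open in $Y_I$, being a finite union of sets open in $Y_I$, although this is not needed for the statement.
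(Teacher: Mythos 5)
Your proof is correct and follows essentially the same route as the paper: use Claim~\ref{claim-open-X} (openness and properness of $X_{I\cdot j}$ in $Y_I$) to get a strictly positive denominator on $U_I$, and continuity of the distance-to-a-set functions $r_{I\cdot j}$ to conclude continuity of $s_{I\cdot j}$ there. The only cosmetic difference is that you verify the $1$-Lipschitz property of $r_{I\cdot j}$ directly, where the paper simply cites continuity of the distance to a closed set.
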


Note that $s_{I\cdot j}$ is not continuous on all of $Y_I$.

\begin{proof}
If $A \not\in U_{I}$, then $s_{I\cdot j}(A)=0 $ is well defined.
Alternatively, if $A \in U_{I}$, then there is some $j_0 \in \compl{I}$ such that $A \in X_{I\cdot j_0}$, which implies that $r_{I\cdot j_0}(A) > 0$ by Claim \ref{claim-open-X}.  
Therefore, $\max_{k \in \compl{I}} r_{I\cdot k}({A}) > 0$, which implies that $s_{I\cdot j}(A)$ is well defined for all $j \in \compl{I}$.

For the second part, observe that distance to a closed set in a metric space is continuous, so each $r_{I\cdot j}$ is continuous on $Y_I$.  We have that $\max_{k \in \compl{I}} r_{I\cdot k}(A)$ is strictly positive on $A \in U_I$, so $(\max_{k \in \compl{I}} r_{I\cdot k}(A))^{-1}$ is continuous there, which implies that $s_{I\cdot j}$ is continuous on $U_I$.
\end{proof}


\begin{claim}\label{claim-g}
$g_I$ is a well defined strong equivariant deformation retraction from $Y_I$ to $Z_I$.

Moreover, let $\compl{I} = \{j_1,\dots,j_{n-m}\}$ and for $e \in \{0,\dots,n-m\}$ let 
\[ \tilde g_{I, e}({A},t) = \left( \prod_{j =j_1}^{j_e}  \upto\left(h_{I \cdot j} \cdot g_{I \cdot j} \cdot f_{I \cdot j},\ s_{I \cdot j} ({A}) \right) \right) ({A},t) \]
be the initial part of the deformation $g_I$ up to the $e$-th element of $\compl{I}$.
For all $A \in Y_I$, $t \in [0,1]_\mb{R}$, and $e \in \{0,\dots,n{-}m\}$, we have the following.

\vbox{
\begin{enumerate}
\item\label{part-g-defined}
$\tilde g_{I, e}(A,t) \in Y_I$ is well defined and equivariant.
\item\label{part-g-continuous}
$\tilde g_{I, e}$ is continuous.
\item\label{part-g-strong}
If $A \in Z_I$, then $\tilde g_{I,e}(A,t) \in Z_I$.
\item\label{part-g-stable}
For all $p \in \{0,\dots,m\}$ and $j \in [n]_\mb{N} \setminus \{i_1,\dots,i_{p}\}$, 
if $A \in X_{(i_1,\dots,i_{p},j)}$ then $\tilde g_{I,e}(A,1) \in X_{(i_1,\dots,i_{p},j)}$.
\item\label{part-g-proj}
$\proj_I(\tilde g_{I,e}({A},t)) = \proj_I({A})$ and $\tilde g_{I,e}$ preserves norms.
\end{enumerate}
}
\end{claim}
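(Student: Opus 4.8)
The plan is a double induction: an outer reverse (downward) induction on $m=|I|$, with an inner forward induction on $e$ establishing parts \ref{part-g-defined}--\ref{part-g-proj} for $\tilde g_{I,e}$, the assertions about $g_I=\tilde g_{I,\,n-m}$ being the case $e=n-m$. The outer base case $m=n$ is trivial. For $m<n$ we may assume Claim~\ref{claim-g} for every $I\cdot j$ with $j\in\compl I$ (so $|I\cdot j|=m+1>m$), giving $f_{I\cdot j}$ (Claims~\ref{claim-deform-f},~\ref{claim-stable-f}), $h_{I\cdot j}$ (Claims~\ref{claim-deform-h},~\ref{claim-deform-h3},~\ref{claim-stable-h}), and $g_{I\cdot j}$ as strong $\orth_3$-equivariant deformation retractions with the stated stability and norm/projection properties; the inner base case $e=0$ is immediate since $\tilde g_{I,0}=\id$. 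In the inductive step I would prove part \ref{part-g-proj} first, since the rest depends on it: each of $f_{I\cdot j}$, $g_{I\cdot j}$, $h_{I\cdot j}$ preserves all weights and fixes the pseudocircles indexed by $I$ --- for $f_{I\cdot j}$ because $m+1>3$ puts it in the ``$m>3$'' branch of Section~\ref{subsubsec-f}, which holds $\alpha_{i_1},\dots,\alpha_{i_m}$ fixed; for $g_{I\cdot j}$ and $h_{I\cdot j}$ because they preserve $\proj_{I\cdot j}$, hence $\proj_I$, by the inductive hypothesis and Claim~\ref{claim-pseudoline-lift}. Since $\upto$ and concatenation inherit this, $\proj_I(\tilde g_{I,e}(A,t))=\proj_I(A)$ and $\tilde g_{I,e}$ preserves norms.

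Granting part \ref{part-g-proj}, the remaining parts apart from continuity are bookkeeping. For part \ref{part-g-defined}: the $e$-th piece $\upto(h_{I\cdot j_e}\cdot g_{I\cdot j_e}\cdot f_{I\cdot j_e},\,s_{I\cdot j_e}(A))$ is the trivial deformation on all of $Y_I$ when $s_{I\cdot j_e}(A)=0$; when $s_{I\cdot j_e}(A)>0$ we have $r_{I\cdot j_e}(A)>0$, so $A\in X_{I\cdot j_e}$, and part \ref{part-g-stable} for $\tilde g_{I,e-1}$ (with $p=m$) gives $\tilde g_{I,e-1}(A,1)\in X_{I\cdot j_e}$; since $f_{I\cdot j_e}$ keeps $X_{I\cdot j_e}$ in $X_{I\cdot j_e}$, then $g_{I\cdot j_e}$ lands in $Y_{I\cdot j_e}\subseteq X_{I\cdot j_e}$ and $h_{I\cdot j_e}$ in $Z_{I\cdot j_e}\subseteq X_{I\cdot j_e}$, the composite is defined there and stays in $Y_I$, so $\tilde g_{I,e}(A,t)\in Y_I$. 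Equivariance holds because $\dist$, hence each $r_{I\cdot k}$ and each $s_{I\cdot j}$, is $\orth_3$-invariant, and every constituent deformation is equivariant. For part \ref{part-g-strong}: the pieces are all trivial on $Z_I$ (when $s_{I\cdot j}(A)>0$ and $A\in Z_I$ one checks $A\in Z_{I\cdot j}\subseteq Y_{I\cdot j}$, on which $f_{I\cdot j},g_{I\cdot j},h_{I\cdot j}$ are trivial), so $\tilde g_{I,e}$ is trivial on $Z_I$; this also gives the deformation-retraction property of $g_I$, since on $Y_I\setminus U_I$ all $s_{I\cdot j}$ vanish and $A=\proj_I(A)\in Z_I$, while on $U_I$ the index $j$ realizing $\max_k r_{I\cdot k}(A)$ has $s_{I\cdot j}(A)=1$, so its piece runs $h_{I\cdot j}\cdot g_{I\cdot j}\cdot f_{I\cdot j}$ to completion into $Z_{(i_1,\dots,i_m)}=Z_I$, after which every remaining piece is trivial, whence $g_I(A,1)\in Z_I$. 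Part \ref{part-g-stable} is verified piece by piece: Claims~\ref{claim-stable-f},~\ref{claim-stable-h} and part \ref{part-g-stable} for $g_{I\cdot j_e}$ show that $X_{(i_1,\dots,i_p,j)}$-membership is preserved throughout each of $f_{I\cdot j_e}$, $g_{I\cdot j_e}$, $h_{I\cdot j_e}$ (with $p\le m$ in the admissible range for the longer sequence $I\cdot j_e$), and $\upto$ only truncates a deformation, so cannot destroy membership.

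The heart of the proof, and the step I expect to be the real obstacle, is part \ref{part-g-continuous}, continuity of $\tilde g_{I,e}$. On $U_I$ each $s_{I\cdot j}$ is continuous (Claim~\ref{claim-s}) and each piece is continuous on its domain, so the only delicate points are those where some $s_{I\cdot j_e}$ vanishes --- equivalently points of $\partial X_{I\cdot j_e}$ lying in $U_I$ --- together with points of $Y_I\setminus U_I$; at these the piece itself is not continuous and one must show it contributes no motion in the limit. Here I would combine two observations. First, by part \ref{part-g-proj} the image of $A_k$ under any $\tilde g_{I,e}$ agrees with $A_k$ on the $I$-indexed pseudocircles and has the same weights, so by Claim~\ref{claim-pseudocircle-dist} it differs from $A_k$ in $\dist$ by at most $2\max_{i\in\compl I}\|\alpha_{k,i}\|$; in particular at a point of $Y_I\setminus U_I$ all these weights tend to $0$ and $\tilde g_{I,e}(A_k,t_k)\to A_\infty=\tilde g_{I,e}(A_\infty,t_\infty)$ outright. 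Second, for $m\ge3$ (the only case in which $U_I\subsetneq Y_I$ and in which an interior boundary $\partial X_{I\cdot j}$ with $|I\cdot j|>3$ occurs) the set $X_{I\cdot j}$ is cut out inside $Y_I$ merely by $\|\alpha_j\|\neq0$, so approaching $\partial X_{I\cdot j}$ forces $\|\alpha_j\|\to0$ while $s_{I\cdot j}\to0$ on $U_I$; hence the $j$-th piece runs only for a vanishing time on arrangements in which the $j$-th pseudocircle is becoming weightless, and --- using the explicit constructions of $f,g,h$ from $\interp$ and $\tilde\lambda$ and the fact that they genuinely start at the identity (the $\coord$-normalization of Section~\ref{subsubsec-f}) --- the $j$-th piece extends continuously over $\{\|\alpha_j\|=0\}$ as the trivial deformation. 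Thus near such a point that piece may be discarded and continuity reduces to that of the remaining pieces, supplied by the inner inductive hypothesis; as usual one may pass to subsequences on which $\cov(A_k)$ is constant, since only finitely many occur. For $m<3$ one has $U_I=Y_I$ and $|I\cdot j|\le3$, so the only boundaries are those internal to $X_{I\cdot j}$ for $|I\cdot j|=3$, handled the same way using that there too $s_{I\cdot j}\to0$ on approach. Finally, with parts \ref{part-g-defined}--\ref{part-g-proj} established for all $\tilde g_{I,e}$, the case $e=n-m$ gives that $g_I$ is a well defined, strong, $\orth_3$-equivariant deformation retraction from $Y_I$ onto $Z_I$, completing the inductive step.

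I expect the genuinely hard point to be making the ``extends continuously over $\{\|\alpha_j\|=0\}$ as the trivial deformation'' assertion precise: when a pseudocircle becomes weightless its kernel need not converge (this is exactly the phenomenon behind the warning after Theorem~\ref{theorem-disth}), so $\interp$ of the nearby wiggly arrangements need not converge as a homeomorphism; the argument must instead exploit that applying these near-identity pieces for a vanishing time moves every weighted pseudocircle by an amount controlled by its (vanishing) weight together with the weights of the other non-basis elements, which is where part \ref{part-g-proj} and Claim~\ref{claim-pseudocircle-dist} do the essential work.
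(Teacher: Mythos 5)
Most of your outline coincides with the paper's proof: the same downward induction on $m$ with an inner induction on $e$, well-definedness via part \ref{part-g-stable} applied to $\tilde g_{I,e-1}$ together with the convention that $\upto(\phi,0)$ is trivial, strongness and the retraction property via the maximizing index having stopping time $1$, and the case $A\notin U_I$ handled exactly as in the paper through part \ref{part-g-proj}, norm preservation, and the bound of Claim \ref{claim-pseudocircle-dist}. The gap is in the continuity argument at a point $A_\infty\in U_I\setminus X_{I\cdot j_e}$. There you propose that the $j_e$-th factor ``contributes no motion in the limit'' because it runs for a vanishing time and its effect is ``controlled by the vanishing weight of the $j_e$-th element together with the weights of the other non-basis elements.'' That bound does not vanish: by definition of $U_I$ some other non-$I$ weight is bounded away from zero along the sequence, and the factor $h_{I\cdot j_e}\cdot g_{I\cdot j_e}\cdot f_{I\cdot j_e}$ moves the \emph{entire} arrangement (e.g.\ $f_{I\cdot j_e}$ acts by $A*\ho(\interp(B,C),1-t)$, a global homeomorphism of $\sphere^2$), so the heavy pseudocircles are displaced as well; Claim \ref{claim-pseudocircle-dist} only helps when \emph{all} weights in $\compl{I}$ tend to zero, i.e.\ in the $A_\infty\notin U_I$ case. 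The fallback ``near-identity piece run for a vanishing time moves things little'' requires equicontinuity in $t$ uniformly along $A_k\to A_\infty$, and that is precisely what is unavailable, since $\interp(\proj_{I\cdot j_e}(A_k),C_k)$ need not converge as the $j_e$-th kernel degenerates --- the very obstruction you flag at the end.

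The paper closes this case without any such estimate, using the exact form of the stopping time: $s_{I\cdot j}(A)=\bigl(2\,r_{I\cdot j}(A)/\max_{k\in\compl{I}} r_{I\cdot k}(A)-1\bigr)^+$ is \emph{identically zero} whenever $r_{I\cdot j}(A)\le\tfrac{1}{2}\max_k r_{I\cdot k}(A)$. If $A_k\to A_\infty\in U_I\setminus X_{I\cdot j_e}$, then $r_{I\cdot j_e}(A_k)\to 0$ while $\max_k r_{I\cdot k}(A_k)$ is bounded below (some $X_{I\cdot j}$ containing $A_\infty$ is open in $Y_I$ by Claim \ref{claim-open-X}), so $s_{I\cdot j_e}(A_k)=0$ \emph{exactly} for all large $k$; hence the $j_e$-th factor is literally the trivial deformation along the tail of the sequence, $\tilde g_{I,e}(A_k,t_k)=\tilde g_{I,e-1}(A_k,\min(2t_k,1))$, and continuity follows from the inner inductive hypothesis. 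In other words, the $(2x-1)^+$ cutoff builds a buffer zone in which the problematic piece is switched off before its own discontinuities can matter; your proof needs this observation (or a genuine substitute for the missing uniform estimate) to go through.
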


\begin{proof}
We proceed by nested induction arguments on $m$ decreasing from $m=n$ and on $e$ increasing from $e=0$.  To show continuity, we consider $A_k \to A$ and $t_k \to t$.

For $m = n$, $g_I$ is just the trivial deformation and $Z_I = Y_I$, so the claim holds.  Let $m < n$.  Our first inductive assumption is that for all $j\in \compl{I}$, the claim holds for $g_{I\cdot j} = \tilde g_{I\cdot j,n-m-1}$.

Also for $e=0$, $\tilde g_{I,e}$ is just the trivial deformation, so the parts of the claim for $\tilde g_{I,e}$ hold.
Let $e > 0$, and let $A' = \tilde g_{I,e-1}(A,1)$.  Our second inductive assumption is that the claim holds for $\tilde g_{I,e-1}$.

Suppose first that $A \in X_{I\cdot j_e}$.

We start by showing parts \ref{part-g-defined}, \ref{part-g-strong}, and \ref{part-g-proj}.
Here is the crucial reason why we need part \ref{part-g-stable} of the claim to show that $g_I$ is well defined: by the second inductive assumption for part~\ref{part-g-stable} of the claim, we have that $A' \in X_{I\cdot j_e}$.  This implies that $f_{I\cdot j_e}(A',t) \in X_{I\cdot j_e}$ is well defined, equivariant, and trivial on $Y_{I\cdot j_e}$ by Claim~\ref{claim-deform-f}, and $f_{I\cdot j_e}(A',1) \in Y_{I\cdot j_e}$.
Therefore, $g_{I\cdot j_e} \cdot f_{I\cdot j_e}(A',t) \in Y_{I\cdot j_e}$ is well defined, equivariant, and trivial on $Z_{I\cdot j_e}$ by the first inductive assumption, and $g_{I\cdot j_e} \cdot f_{I\cdot j_e}(A',1) \in Z_{I\cdot j_e}$, so $h_{I\cdot j_e}\cdot g_{I\cdot j_e}\cdot f_{I\cdot j_e}(A',t)  \in Z_{I\cdot j_e}$ is well defined, equivariant, and trivial on $Z_I$ by Claims~\ref{claim-deform-h} and \ref{claim-deform-h3}. 
Recall that $Z_I \subset Z_{I\cdot j_e} \subseteq Y_{I\cdot j_e} \subset X_{I\cdot j_e} \subset Y_I$, so $\tilde g_{I, j_1}({A},t) \in Y_I$ is well defined, equivariant, and trivial on $Z_I$, which means parts~\ref{part-g-defined} and~\ref{part-g-strong} hold.
Similarly, we have part~\ref{part-g-proj} by the inductive assumptions and the definitions of $f_{I\cdot j_e}$ and $h_{I\cdot j_e}$.

Next we show part \ref{part-g-stable}.
Consider now the case where $A \in X_{(i_1,\dots,i_{p},j)}$.
Again, $A' \in X_{(i_1,\dots,i_{p},j)}$ by the second inductive assumption, so $f_{I\cdot j_e}(A',1) \in X_{(i_1,\dots,i_{p},j)}$ by Claim~\ref{claim-stable-f}, and so $g_{I\cdot j_e}\cdot f_{I\cdot j_e}(A',1) \in X_{(i_1,\dots,i_{p},j)}$ by the first inductive assumption, and so $h_{I\cdot j_e}\cdot g_{I\cdot j_e}\cdot f_{I\cdot j_e}(A',1) \in X_{(i_1,\dots,i_{p},j)}$ by Claim~\ref{claim-stable-h}.
Thus, part~\ref{part-g-stable} holds.

Next we show part \ref{part-g-continuous}.
By the second inductive assumption, we have $A'_k = g_{I,e-1}(A,1) \to A' \in X_{I\cdot j_e}$. 
Since $X_{I\cdot j_e}$ is an open subset of $Y_I$ by Claim~\ref{claim-open-X}, $A'_k \in X_{I\cdot j_e}$ for $k$ sufficiently large, and since $f_{I\cdot j_e}$ is continuous on $X_{I\cdot j_e}$, we have $f_{I,e}(A'_k,t_k) \to f_{I,e}(A',t)$.
Since $g_{I\cdot j_e}$, $h_{I\cdot j_e}$, and $s_{I\cdot j_e}$ are respectively continuous on $Y_{I\cdot j_e}$, $Z_{I\cdot j_e}$, and $U_{I} \supset X_{I\cdot j_e}$, we have that $\tilde g_{I,e}$ is continuous. Thus, part~\ref{part-g-continuous} holds.

Suppose next that $A \not\in X_{I\cdot j_e}$.

Then, $s_{I\cdot j_e}(A) = 0$, so $\tilde g_{I,e}(A,t)$ is the concatenation of $\tilde g_{I,e-1}(A,t)$ with a trivial deformation.
Specifically, $\tilde g_{I,e}(A,t) = \tilde g_{I,e-1}(A,\tau(t))$ where $\tau(t) = \min(2t,1)$ 
is a non-decreasing surjective transformation of the unit interval, which means that parts \ref{part-g-defined}, \ref{part-g-strong}, \ref{part-g-stable}, and \ref{part-g-proj} hold by the second inductive assumption.  It remains to show part \ref{part-g-continuous} in this case.

Suppose also that $A \in U_I$.  Then, there is some $j$ such that $A \in X_{I\cdot j}$, so $r_{I\cdot j}(A_k)$ is bounded below by some $r > 0$ for $k$ sufficiently large, whereas $r_{I\cdot j_e}(A_k) \to 0$, which implies that $s_{I\cdot j_e}(A_k) = 0$ for $k$ sufficiently large.  Therefore, $\tilde g_{I,e}(A_k,t_k) = \tilde g_{I,e-1}(A_k,\tau(t_k))$, so part~\ref{part-g-continuous} holds.

Suppose instead that $A \not\in U_I$.  
This implies that for all $j \in \compl{I}$, $s_{I\cdot j}(A) = 0$, so $\tilde g_{I\cdot e}(A,t) = A$.
Since every $A \in \pstief_{3,n}$ includes a basis among its elements, and any independent set can be completed to a basis, we have $U_I = Y_I$ for $m \leq 2$.  Therefore, we must have $m \geq 3$ in this case, which means that each of the $X_{I\cdot j}$ consists of the weighted pseudocircle arrangements in $Y_I$ where the $j$-th element does not vanish.  Since $A$ is in none of the $X_{I\cdot j}$, we have $\proj_I(A) = A$, which implies that $\proj_I(A_k) \to A$. 
We have already shown that part~\ref{part-g-proj} of the claim holds for $\tilde g_{I,e}(A_k,t_k)$ (in both cases $A_k \not\in X_{I\cdot j_e}$ and $A_k \in X_{I\cdot j_e}$), so $\proj_I(\tilde g_{I,e}(A_k,t_k)) = \proj_I(A_k)$ and $\tilde g_{I,e}$ preserves norms.  Therefore, for all $j \in \compl{I}$, the $j$-th element of $\tilde g_{I,e}(A_k,t_k)$ converges to 0, since the $j$-th element of $A_k$ converges to 0.  This implies that \[ \dist(\tilde g_{I,e}(A_k,t_k),\proj_I(A_k)) = \dist(\tilde g_{I,e}(A_k,t_k),\proj_I(\tilde g_{I,e}(A_k,t_k))) \to 0, \]
so $\tilde g_{I,e}(A_k,t_k) \to A = \tilde g_{I\cdot e}(A,t)$.
Thus, part~\ref{part-g-continuous} holds.

This completes the induction on $e$.  So far we have established parts \ref{part-g-defined}, \ref{part-g-continuous}, \ref{part-g-strong}, \ref{part-g-stable}, and \ref{part-g-proj} of the claim.  

It remains to show that $g_I(A,1) \in Z_I$. 
If $A \not\in U_I$, then we must have $m \geq 3$, which means $\alpha_j = 0$ for all $j \in \compl{I}$, so $A \in Z_I$ trivially, so $g_I(A,1) \in Z_I$.
Alternatively, if $A \in U_I$, then there is some $j_e \in \compl{I}$ such that $r_{I\cdot j_e}(A)$ is maximal among all $r_{I\cdot j}(A)$ for $j \in \compl{I}$.
Therefore, $s_{I\cdot j_e}(A) = 1$, so $\tilde g_{I\cdot j_e}(A,1) \in Z_I$ by Claims \ref{claim-deform-h} and \ref{claim-deform-h3}.  Since $f_{I\cdot j}$, $g_{I\cdot j}$, and $h_{I\cdot j}$ are all trivial on $Z_I$, once $g_I(A,t)$ attains a value in $Z_I$ for some $t \in [0,1]_\mb{R}$, it is trivial thereafter.  That is, $g_I(A,t') = g_I(A,t) \in Z_I$ for all $t' \in [t,1]_\mb{R}$, so we have $g_I(A,1) \in Z_I$. 

Finally, parts \ref{part-g-defined}, \ref{part-g-continuous}, and \ref{part-g-strong} applied to $\tilde g_{I,n-m} = g_I$ together with $g_I(A,1) \in Z_I$ imply that $g_I$ is a strong equivariant deformation retraction from $Y_I$ to $Z_I$.
\end{proof}

\subsubsection{The deformation $g_{()}$ from $\pstief_{3,\infty}$ to $Z_{()}$ for $n = \infty$}

We will define a strong equivariant deformation retraction $g_{()}$ from $\pstief_{3,\infty}$ to $Z_{()}$.
For any finite non-repeating sequence $I$ of natural numbers, we have $X_I,Y_I,Z_I \subset \pstief_{3,\infty}$ defined in the same way as in Subsubsection \ref{subsubsec-spaces} for $n = \infty$.
We also have the deformations $f_I$ from $X_I$ to $Y_I$ from Subsubsection \ref{subsubsec-f} and the deformation $h_I$ from $Z_{I}$ to $Z_{(i_1,\dots,i_{m-1})}$ from Subsubsection \ref{subsubsec-h} in the infinite case.

For $\hat n \geq 3$, we define strong equivariant deformation retractions $\hat g_{I,\hat n}$ from $Y_I \cap \pstief_{3,\hat n}$ to $Z_I \cap \pstief_{3,\hat n}$, which will be a little different from the deformation of Subsubsection \ref{subsubsec-g-finite}.
For $m = |I| \geq \hat n$, we again let $\hat g_{I,\hat n}$ be the trivial deformation.  Otherwise, $\hat g_{I,\hat n}$ is defined recursively by 
\begin{equation}\label{equation-g-infty}
 \hat g_{I,\, \hat n}({A},t) = \left( \prod_{j \in \mb{N} \setminus I }  \upto\left(h_{I \cdot j} \cdot \hat g_{I \cdot j,\, \hat n} \cdot f_{I \cdot j},\ s_{I \cdot j} ({A}) \right)\right) ({A},t), 
\end{equation}
where $s_{I\cdot j}$ is the same as in Subsubsection \ref{subsubsec-g-finite} with $n = \infty$.
Finally, let $g_{()}(A,t) = \hat g_{(),\hat n}(A,t)$ for $A \in \pstief_{3,\hat n} \subset \pstief_{3,\infty}$.

Note that the composition of deformations is defined to be left associative, so that an infinite composition of deformations is well defined for $t<1$.  Specifically, a composition of deformations 
\[\phi = \left(\prod_{j \in \mb{N}} \phi_j\right) (A,t) \]
performes the deformation $\phi_1$ twice as fast up to time $t = \nicefrac{1}{2}$, and then $\phi_2$ four times as fast up to time $t = \nicefrac{3}{4}$, etcetera.
When $\phi(A,t)$ converges as $t \to 1$ from below, the final state is defined as the limit $\phi(A,1) = \lim_{t \to 1} \phi(A,t)$.  

\begin{claim}\label{claim-consistent-g}
$\hat g_{I,\hat n}$ is a well defined strong equivariant deformation retraction from $Y_I \cap \pstief_{3,\hat n}$ to $Z_I \cap \pstief_{3,\hat n}$.
Also, for $\hat n_1 < \hat n_2$ and $A \in Y_I \cap \pstief_{3,\hat n_1}$, $\hat g_{I,\hat n_1}(A,t) = \hat g_{I,\hat n_2}(A,t)$.
\end{claim}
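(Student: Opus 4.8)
The plan is to reduce both assertions to the finite-$n$ statement Claim~\ref{claim-g}, the point being that on $\pstief_{3,\hat n}$ the infinite concatenation in~(\ref{equation-g-infty}) is really a finite one. The elementary observation behind this is: if $A=(\alpha_1,\dots,\alpha_n)\in Y_I$ has $\|\alpha_j\|=0$, then $A\in Y_I\setminus X_{I\cdot j}$, so $r_{I\cdot j}(A)=0$ and hence $s_{I\cdot j}(A)=0$, and by the convention on $\upto(\cdot,0)$ the factor of~(\ref{equation-g-infty}) indexed by such a $j$ is the trivial deformation of $Y_I$. Since every element of $\pstief_{3,\hat n}$ has $\alpha_j=0$ for $j>\hat n$, and since $f_I$ and $h_I$ preserve the weight sequence (hence the support $\{j:\|\alpha_j\|>0\}$) throughout a trajectory, only the finitely many factors indexed by the support of $A$ are ever non-trivial along $\hat g_{I,\hat n}(A,\cdot)$. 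Also, if $Y_I\cap\pstief_{3,\hat n}$ is non-empty then the inclusion $Y_I\subseteq X_I$ forces $\|\alpha_{i_k}\|>0$ for each entry of $I$, so every entry of $I$ lies in $\{1,\dots,\hat n\}$ and $|I|\le\hat n$; and $X_I\cap\pstief_{3,\hat n}$, $Y_I\cap\pstief_{3,\hat n}$, $Z_I\cap\pstief_{3,\hat n}$ coincide with the spaces $X_I$, $Y_I$, $Z_I$ of Subsubsection~\ref{subsubsec-g-finite} taken with $n=\hat n$, since the defining clause of $Z_I$ concerning indices $j\notin I$ with $\|\alpha_j\|\neq0$ is vacuous for $j>\hat n$.

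For the first assertion I would rerun the nested induction of Claim~\ref{claim-g} with $n$ replaced by $\hat n$, the outer induction now being on $\hat n-|I|$ and the base case $|I|=\hat n$ giving the trivial deformation of $Y_I\cap\pstief_{3,\hat n}=Z_I\cap\pstief_{3,\hat n}$. The only new ingredient is the one isolated above: for $A\in Y_I\cap\pstief_{3,\hat n}$ fixed, all factors of~(\ref{equation-g-infty}) indexed by $j>\hat n$ are trivial, so $\hat g_{I,\hat n}(A,\cdot)$ performs its finitely many non-trivial factors within an interval $[0,t_{\hat n}]_\mb{R}$ for a threshold $t_{\hat n}<1$ depending only on $\hat n$ and $|I|$, and is constant on $[t_{\hat n},1]_\mb{R}$. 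On any subinterval of $[0,t_{\hat n}]_\mb{R}$ contained in a single time slot of the concatenation, $\hat g_{I,\hat n}(\cdot,t)$ is a finite composition of the continuous, $\orth_3$-equivariant, norm-preserving pieces $h_{I\cdot j}\cdot\hat g_{I\cdot j,\hat n}\cdot f_{I\cdot j}$ stopped at the times $s_{I\cdot j}$ (continuous on $U_I$ by Claim~\ref{claim-s}), so $\hat g_{I,\hat n}$ is continuous and $\hat g_{I,\hat n}(A,1)=\hat g_{I,\hat n}(A,t_{\hat n})$ lies in $Z_I\cap\pstief_{3,\hat n}$. The remaining verifications, namely continuity in $A$ (including arrangements that leave some $X_{I\cdot j}$ in a limit), strongness, and $\orth_3$-equivariance, are word for word those of Claim~\ref{claim-g}, since the topology and the relevant spaces are unchanged. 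Equivalently, one may simply quote Claim~\ref{claim-g} for $n=\hat n$ and observe that $\hat g_{I,\hat n}$ restricted to $Y_I\cap\pstief_{3,\hat n}$ runs through exactly the same sequence of elementary deformations as $g_I$, just at a different fixed speed, and so inherits being a strong $\orth_3$-equivariant deformation retraction onto $Z_I\cap\pstief_{3,\hat n}$.

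For the consistency assertion, fix $\hat n_1<\hat n_2$ and $A\in Y_I\cap\pstief_{3,\hat n_1}$, and induct on $\hat n_1-|I|$. The support $J$ of $A$ is a fixed subset of $\{1,\dots,\hat n_1\}\setminus I$, independent of which superscript is used; the stopping times $s_{I\cdot j}(A)$ do not involve the superscript; for $j\notin J$ the corresponding factor of~(\ref{equation-g-infty}) is the trivial deformation for both $\hat g_{I,\hat n_1}$ and $\hat g_{I,\hat n_2}$; and for $j\in J$ the two factors differ only through $\hat g_{I\cdot j,\hat n_1}$ versus $\hat g_{I\cdot j,\hat n_2}$, which agree on $Y_{I\cdot j}\cap\pstief_{3,\hat n_1}$ by the inductive hypothesis, the arrangement fed to them $f_{I\cdot j}(\cdot,1)$ still lying in $\pstief_{3,\hat n_1}$ because $f_{I\cdot j}$ preserves support. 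Since in the concatenation~(\ref{equation-g-infty}) the time interval allotted to the $j$-th factor depends only on $j$ (relative to the fixed $I$) and not on how many factors are non-trivial, retaining versus discarding the extra trivial factors indexed by $j>\hat n_1$ does not disturb the parametrization, and these identifications assemble to $\hat g_{I,\hat n_1}(A,t)=\hat g_{I,\hat n_2}(A,t)$ for every $t\in[0,1]_\mb{R}$.

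The only real difficulty is bookkeeping rather than new mathematics: one must verify carefully that the infinite concatenation in~(\ref{equation-g-infty}) collapses, on each $\pstief_{3,\hat n}$, to the finite concatenation of Subsubsection~\ref{subsubsec-g-finite} without disturbing continuity, strongness, equivariance, or the limiting value at $t=1$, and in particular that a factor's time slot is a function of its index alone. Once that is pinned down, the claim contains nothing beyond Claim~\ref{claim-g} together with the triviality of the factors attached to vanishing pseudocircles.
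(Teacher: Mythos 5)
Your proposal is correct and takes essentially the same route as the paper: observe that for $A\in Y_I\cap\pstief_{3,\hat n}$ every factor of (\ref{equation-g-infty}) with $j>\hat n$ has $s_{I\cdot j}(A)=0$ and is therefore trivial, so the infinite concatenation collapses to an effectively finite one and the nested induction of Claim~\ref{claim-g} goes through verbatim, while the consistency statement follows by induction on $\hat n_1-|I|$ since the stopping times and the trivial factors do not depend on the superscript and the recursive calls agree by hypothesis. One minor caution: your parenthetical ``equivalently, quote Claim~\ref{claim-g} for $n=\hat n$, same deformations at a different fixed speed'' is not literally accurate, because the inner $\upto(\cdot,s_{I\cdot j})$ stopping times are applied to time-reparametrized sub-deformations, so $\hat g_{I,\hat n}$ and $g_I$ need not coincide even up to reparametrization; your primary argument (rerunning the induction), which is what the paper does, is the one to rely on.
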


\begin{proof}
First observe that if there is some $i_k \in I$ such that $i_k > \hat n$, then $Y_I \cap \pstief_{3,\hat n} = Z_I \cap \pstief_{3,\hat n} = \emptyset$, so we may assume all entries of $I$ are no greater than $\hat n$.
For $j > \hat n$ and $A \in Y_I \cap \pstief_{3,\hat n}$, we have $A \not \in X_{I\cdot j}$, so $r_{I\cdot j}(A) = 0$, which makes $s_{I\cdot j}(A) = 0$.  Hence, $\hat g_{I,\hat n}$ is a composition of deformations that become trivial after $j > \hat n$, and therefore $\hat g_{I,\hat n}$ is a well defined equivariant deformation retraction by the same argument as in Claim \ref{claim-g}.  

For the second part, we proceed by induction on $\hat n_1 -m$.
For $m = \hat n_1$, we have that $I$ is a permutation of $[\hat n_1]_\mb{N}$, so $A \in (Y_I \cap \pstief_{3,\hat n_1}) = (Z_I \cap \pstief_{3,\hat n_1})$.  Since $\hat g_{I,\hat n_2}$ is a strong deformation retraction, $\hat g_{I,\hat n_2}(A,t) = A = \hat g_{I,\hat n_1}(A,t)$.  For $m < n_1$, we have $\hat g_{I,\hat n_2}(A,t) = \hat g_{I,\hat n_1}(A,t)$ by induction.
\end{proof}

\begin{claim}\label{claim-infinite-g}
$g_{()}$ is a well defined strong equivariant deformation retraction from $\pstief_{3,\infty}$ to $Z_{()}$.
\end{claim}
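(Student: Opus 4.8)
The plan is to derive this from Claim~\ref{claim-consistent-g} together with the defining property of the direct limit topology on $\pstief_{3,\infty}$, checking each required property by passing to whichever level $\pstief_{3,\hat n}$ contains a given arrangement. First I would record that every $A \in \pstief_{3,\infty}$ lies in $\pstief_{3,\hat n}$ for some $\hat n \geq 3$, since arrangements are identified with their truncations when only a tail of zeros is removed. The second part of Claim~\ref{claim-consistent-g} says that $\hat g_{(),\hat n_1}(A,t) = \hat g_{(),\hat n_2}(A,t)$ whenever $A$ lies in both $\pstief_{3,\hat n_1}$ and $\pstief_{3,\hat n_2}$, so the formula $g_{()}(A,t) = \hat g_{(),\hat n}(A,t)$ is a well-defined function, and its restriction to $\pstief_{3,\hat n} \times [0,1]_\mb{R}$ is precisely $\hat g_{(),\hat n}$ for every $\hat n$.

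\textbf{Continuity.} Since $\hat g_{(),\hat n}$ preserves norms, it carries $\pstief_{3,\hat n} \times [0,1]_\mb{R}$ into $\pstief_{3,\hat n}$; composing with the inclusion $\pstief_{3,\hat n} \hookrightarrow \pstief_{3,\infty}$, which is continuous by definition of the direct limit topology, I obtain that the restriction of $g_{()}$ to each $\pstief_{3,\hat n} \times [0,1]_\mb{R}$ is continuous into $\pstief_{3,\infty}$. Because $[0,1]_\mb{R}$ is compact Hausdorff, the product $\pstief_{3,\infty} \times [0,1]_\mb{R}$ carries the direct limit topology of the subspaces $\pstief_{3,\hat n} \times [0,1]_\mb{R}$, so a map out of it is continuous as soon as all of these restrictions are. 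This gives continuity of $g_{()}$.

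\textbf{The remaining properties} are inherited levelwise from the $\hat g_{(),\hat n}$. For equivariance, the $\orth_3$-action preserves each $\pstief_{3,\hat n}$ (since $A*Q$ has the same weights, hence the same support, as $A$) and each $\hat g_{(),\hat n}$ is $\orth_3$-equivariant, so $g_{()}(A*Q,t) = g_{()}(A,t)*Q$. From $\hat g_{(),\hat n}(A,0)=A$ I get $g_{()}(\cdot,0)=\id$; from $\hat g_{(),\hat n}(A,1) \in Z_{()} \cap \pstief_{3,\hat n} \subseteq Z_{()}$ I get that $g_{()}(\cdot,1)$ maps $\pstief_{3,\infty}$ into $Z_{()}$; and since $Z_{()} = \bigcup_{\hat n \geq 3}(Z_{()} \cap \pstief_{3,\hat n})$ with each $\hat g_{(),\hat n}$ fixing $Z_{()} \cap \pstief_{3,\hat n}$ throughout the deformation, $g_{()}$ fixes all of $Z_{()}$ throughout. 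Hence $g_{()}$ is a strong $\orth_3$-equivariant deformation retraction from $\pstief_{3,\infty}$ to $Z_{()}$.

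\textbf{Main obstacle.} The argument is essentially bookkeeping layered on top of Claim~\ref{claim-consistent-g}; the only step needing a genuine topological observation is the reduction of continuity to the finite levels, i.e.\ the fact that the product of a direct limit of a sequence of spaces with a compact (more generally locally compact Hausdorff) space is again the direct limit of the products. Everything else amounts to tracking which $\pstief_{3,\hat n}$ a given arrangement and its entire deformation path inhabit, which is guaranteed by norm-preservation of $\hat g_{(),\hat n}$.
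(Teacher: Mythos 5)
Your proposal is correct and follows essentially the same route as the paper: well-definedness from the consistency statement in Claim~\ref{claim-consistent-g}, the retraction/equivariance/strongness properties inherited levelwise, and continuity checked on each $\pstief_{3,\hat n}\times[0,1]_\mb{R}$ via the universal property of the direct limit topology. Your explicit remark that the product with the compact interval $[0,1]_\mb{R}$ again carries the direct limit topology is a point the paper leaves implicit, and it is a welcome clarification rather than a deviation.
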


\begin{proof}
Let $\iota_{\hat n} : \pstief_{3,\hat n} \hookrightarrow \pstief_{3,\infty}$ denote inclusion.
By the second part of Claim \ref{claim-consistent-g}, there is a unique function $g_{()}$ that makes the following diagram commute for all $\hat n \in \{3,\dots\}$,
\[
\begin{tikzpicture}
\matrix (m) [matrix of math nodes, row sep=4em,
column sep=5em, text height=1.5ex, text depth=0.25ex]
{ 
\pstief_{3,\hat n} \times [0,1]_\mb{R} & \pstief_{3,\hat n} \\
\pstief_{3,\infty} \times [0,1]_\mb{R} & \pstief_{3,\infty} \\
};
\path[->]
(m-1-1) edge node[auto] {$\hat g_{(),\hat n} $} (m-1-2)
(m-1-2) edge node[auto] {$\iota_{\hat n}$} (m-2-2)
(m-1-1) edge node[auto] {$\iota_{\hat n} \times \id$} (m-2-1)
(m-2-1) edge node[auto] {$ g_{()} $} (m-2-2) 
;
\path[dashed,->]
(m-1-1) edge (m-2-2)
;
\end{tikzpicture} 
\]
so $g_{()}$ is well defined. By the first part, it only remains to show that $g_{()}$ is continuous.
We have $g_{()} \circ (\iota_{\hat n} \times \id) = \iota_{\hat n} \circ \hat g_{(),\hat n}$ is continuous by the universal property of the subspace topology, since $\hat g_{(),\hat n}$ is continuous.  Therefore, $g_{()}$ is continuous by the universal property of the direct limit topology.
\end{proof}

\subsubsection{Orthonormalization}

To complete the deformation, we perform a continuous orthonormalization.  This may be accomplished in a variety of ways, of which a continuous analog of the Gram–Schmidt process may be the most familiar.  For our purposes, a continuous deformation using the polar decomposition of a matrix is more directly relevant, so that is what we do here.

We may represent $A = (\alpha_1,\dots,\alpha_n) \in Z_{()} \subset (\Rpol{3})^n \simeq \mb{R}^{n\times 3}$ as the $(n\times 3)$-matrix (also denoted $A$) where the entries of the $j$-th row are given by the coordinates of $\pol^{-1}(\alpha_j) \in \mb{R}^3$.
In this way, we will simply treat $Z_{()}$ as the space of all full-rank $(n\times 3)$-matrices where it is convenient to do so, and make use of the standard matrix operations of matrix multiplication and taking roots of symmetric positive definite matrices.

For $n = \infty$, we treat $Z_{()}$ as the union of the ascending chain of spaces of full-rank $(\hat n\times 3)$-matrices with the direct limit topology.  Here matrices that differ by a tail of rows of zeros are identified.

\newcommand{\I}{\text{I}}

Let $q$ be the deformation of $Z_{()}$ by
\[ q(A,t) = A \left(t(A^*A)^{-\nicefrac{1}{2}}+(1-t)\I\right),  \]
where $\I$ is the identity $(3\times 3)$-matrix.

\begin{claim}\label{claim-orthonormalization}
$q$ is a well defined strong equivariant deformation retraction from $Z_{()}$ to $\stief_{3,n}$.
\end{claim}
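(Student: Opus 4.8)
The claim is that $q(A,t) = A(t(A^*A)^{-1/2}+(1-t)\mathrm{I})$ is a well-defined strong $\orth_3$-equivariant deformation retraction from $Z_{()}$ to $\stief_{3,n}$. The approach is entirely standard linear algebra built on the polar decomposition, so I would organize the proof as a sequence of short verifications: (1) well-definedness, (2) the endpoint at $t=1$ lies in $\stief_{3,n}$, (3) $q$ stays in $Z_{()}$ throughout, (4) $q(A,0)=A$, (5) $q$ fixes $\stief_{3,n}$ pointwise (strongness), (6) continuity, including the $n=\infty$ case, and (7) $\orth_3$-equivariance.

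First I would recall that for a full-rank $(n\times 3)$-matrix $A$, the Gram matrix $A^*A$ is symmetric positive definite, hence has a unique symmetric positive definite square root $(A^*A)^{1/2}$ and a well-defined inverse square root $(A^*A)^{-1/2}$; this gives well-definedness, and since the convex combination $t(A^*A)^{-1/2}+(1-t)\mathrm{I}$ of two symmetric positive definite $3\times 3$ matrices is again symmetric positive definite, it is invertible, so $q(A,t)$ is again full-rank, i.e.\ $q(A,t)\in Z_{()}$ — this handles step (3). For step (2), one checks $q(A,1) = A(A^*A)^{-1/2}$ satisfies $q(A,1)^*q(A,1) = (A^*A)^{-1/2}A^*A(A^*A)^{-1/2} = \mathrm{I}$, so its columns are orthonormal and it is a Parseval frame, i.e.\ lies in $\stief_{3,n}$. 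Step (4) is immediate since $t=0$ gives $q(A,0) = A\cdot\mathrm{I} = A$. For step (5), if $A\in\stief_{3,n}$ then $A^*A = \mathrm{I}$, so $(A^*A)^{-1/2} = \mathrm{I}$ and $q(A,t) = A(t\,\mathrm{I}+(1-t)\mathrm{I}) = A$ for all $t$, so the deformation is strong.

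For continuity (step 6) in the finite case $n<\infty$, the map $A\mapsto A^*A$ is polynomial, inversion and the symmetric-positive-definite square root are continuous on the open cone of positive definite $3\times 3$ matrices (e.g.\ because the square root is given by a convergent power series locally, or by continuity of spectral decomposition), so $(A,t)\mapsto A(t(A^*A)^{-1/2}+(1-t)\mathrm{I})$ is continuous as a composition of continuous maps, and it is metric-compatible since everything takes place in finite-dimensional Euclidean space. For $n=\infty$, $Z_{()}$ carries the direct limit topology of the chain of full-rank $(\hat n\times 3)$-matrices, and the restriction of $q$ to each $\pstief_{3,\hat n}\cap Z_{()}$ agrees with the finite-$\hat n$ construction — here one uses that appending zero rows to $A$ does not change $A^*A$, so $q$ commutes with the inclusions — hence $q$ is continuous by the universal property of the direct limit topology, exactly as in the proof of Claim~\ref{claim-infinite-g}. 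Finally, for equivariance (step 7): for $Q\in\orth_3$, acting by $A\mapsto AQ$ on matrices (this is how $\orth_3$ acts on $\Rpol3$-configurations, cf.\ $\pol(a)*Q = \pol(Q^*a)$), we have $(AQ)^*(AQ) = Q^*(A^*A)Q$, so $((AQ)^*(AQ))^{-1/2} = Q^*(A^*A)^{-1/2}Q$ and thus $q(AQ,t) = AQ(tQ^*(A^*A)^{-1/2}Q + (1-t)\mathrm{I}) = A(t(A^*A)^{-1/2}+(1-t)\mathrm{I})Q = q(A,t)Q$, which is the desired equivariance.

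I do not expect any genuine obstacle here — this is the "easy last step" of the larger deformation. The only point requiring a moment of care is making sure the matrix conventions line up: that $\orth_3$ acting on the right on the matrix $A$ is the correct translation of the $*$-action on weighted pseudosphere arrangements restricted to great circle arrangements, and that the identification of $Z_{()}$ with full-rank $(n\times 3)$-matrices via $\pol^{-1}$ on each row is orthogonally equivariant — both of which follow from the identity $\pol(a)*Q = \pol(Q^*a)$ established in Subsection~\ref{section-polar}. The continuity of the inverse square root on the positive-definite cone and the direct-limit argument for $n=\infty$ are routine but should be stated.
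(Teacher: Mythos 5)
Your proposal is correct and follows essentially the same route as the paper: positive definiteness of $A^*A$ and of the convex combination $t(A^*A)^{-\nicefrac{1}{2}}+(1-t)\mathrm{I}$ for well-definedness and staying in $Z_{()}$, the computation $q(A,1)^*q(A,1)=\mathrm{I}$ for the endpoint, $A^*A=\mathrm{I}$ for strongness, the same conjugation identity for $\orth_3$-equivariance, and continuity componentwise plus the direct-limit argument for $n=\infty$. No gaps; your explicit remark that the columns (not rows) of $q(A,1)$ are orthonormal is in fact slightly cleaner than the paper's phrasing.
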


\begin{proof}
Since $A$ is full-rank, $A^*A$ is positive definite symmetric, so it has a well defined square root that is also positive definite symmetric, so $q$ is well defined. Also, $q$ is defined by a composition of continuous functions for $n < \infty$ so $q$ is continuous.  For $n=\infty$, $q$ is continuous on $Z_{()}$, since $q$ is continuous on each subspace of $(\hat n\times 3)$-matrices.

if $A \in \stief_{3,n}$, then $A$ has orthonormal rows, so $A^*A = \I$, and $\I^{-\nicefrac{1}{2}} = \I$, which gives $q(A,t) = A$.

For $Q \in \orth_3$, we have 
\begin{align*}
q(A*Q,t) 
&= AQ \left(t(Q^*A^*AQ)^{-\nicefrac{1}{2}}+(1-t)\I\right) \\
&= AQ \left(tQ^*(A^*A)^{-\nicefrac{1}{2}}Q+(1-t)\I\right) \\
&= A \left(t(A^*A)^{-\nicefrac{1}{2}}+(1-t)\I\right) Q \\
&= q(A,t)*Q. 
\end{align*}

Let $A_1 = q(A,1)$.  We have 
\[ A_1{}^* A_1 
= (A^*A)^{-\nicefrac{1}{2}} A^* A (A^*A)^{-\nicefrac{1}{2}} 
= (A^*A)^{-\nicefrac{1}{2}} (A^* A)^{\nicefrac{1}{2}} (A^* A)^{\nicefrac{1}{2}} (A^*A)^{-\nicefrac{1}{2}} 
= \I, \]
so $q(A,1) \in \orth_3$.

Finally, since $(A^*A)^{-\nicefrac{1}{2}}$ and $\I$ are both symmetric positive definite, positive linear combinations of $(A^*A)^{-\nicefrac{1}{2}}$ and $\I$ are also symmetric positive definite, which implies that $\left(t(A^*A)^{-\nicefrac{1}{2}}+(1-t)\I\right)$ is full-rank, so $q(A,t)$ is full-rank, so $q(A,t) \in Z_{()}$.
Thus, $q$ is a strong equivariant deformation retraction from $Z_{()}$ to $\stief_{3,n}$.
\end{proof}

\begin{proof}[Proof of Theorem \ref{theorem-grassman-pseudograssman}]
The deformation $q \cdot g_{()}$ is a strong $\orth_3$-equivariant deformation retraction from $\pstief_{3,n}$ to $\stief_{3,n}$ by Claims \ref{claim-orthonormalization} and \ref{claim-g} for $n < \infty$ or \ref{claim-infinite-g} for $n = \infty$.
Hence, taking the quotient of this deformation by the $\orth_3$-action on $\pstief_{3,n}$ provides a strong deformation retraction from $\pg_{3,n}$ to $\g_{3,n}$, and the quotient by $\sorth_3$ provides a strong deformation retraction from $\pog_{3,n}$ to $\og_{3,n}$.
\end{proof}

%
%

\subsection{Universal vector bundles and classifying spaces.}

Recall that a real rank $k$ vector bundle is a space that locally has the structure of a product with $\mb{R}^k$;
for a precise definition see \cite[page 24]{husemoller1994fibre}. 
One way to obtain a vector bundle is as the quotient space $(X \times \mb{R}^k)/\orth_3$ where $X$ is a locally trivial principal $\orth_3$-bundle \cite[Capter~5]{husemoller1994fibre}.





\begin{samepage}
\begin{lemma}\label{lemma-principal-bundle}
$\pstief_{3,n}$ is a locally trivial principal $\orth_3$-bundle. 
Hence, the projection maps 
$$ \xi_{3,n}: \pe_{3,n} \to \pg_{3,n} \quad \text{and} \quad \widetilde\xi_{3,n}: \poe_{3,n} \to \pog_{3,n} $$ 
are respectively a vector bundle and oriented vector bundle. 
\end{lemma}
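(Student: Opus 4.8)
The plan is to show that the $\orth_3$-action on $\pstief_{3,n}$ is free and that the quotient map $\pstief_{3,n} \to \pg_{3,n}$ admits local sections, which together give local triviality of the principal bundle; the two vector bundle statements then follow from the standard associated-bundle construction cited from Husemoller. First I would check freeness: if $A * Q = A$ for some $A \in \pstief_{3,n}$ and $Q \in \orth_3$, then since $A$ is spanning it has a basis $I = (i_1,i_2,i_3)$, and the three kernels $S_{i_1}, S_{i_2}, S_{i_3}$ together with their induced orientations determine a frame in $\mb{R}^3$ that is fixed by $Q$; concretely, $A*Q = A$ forces $Q^* p_i = p_i$ for the vertices $p_i$ used in the definition of $\coord$, hence $Q = \id$. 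Equivalently, one can invoke Lemma~\ref{lemma-basis}(2),(4): $\coord(I; A) = \coord(I; A*Q) = Q^*\coord(I;A)$, so $Q^* = \id$.

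Next I would build local sections. Fix $\mc{A}_0 \in \pg_{3,n}$ and a representative $A_0$; choose a basis $I$ of $A_0$. By Claim~\ref{claim-open-X}-type reasoning, the set of arrangements for which $I$ remains a basis is a neighborhood of $A_0$ in the relevant sense, and on the corresponding neighborhood $U$ of $\mc{A}_0$ in $\pg_{3,n}$ the map $\coord(I;-)$ of Lemma~\ref{lemma-basis} is well defined and continuous. Define $\sigma : U \to \pstief_{3,n}$ by $\sigma(\mc{A}) = A * \coord(I;A)$ for any $A$ in the class $\mc{A}$ with $I$ a basis of $A$; this is well defined by the equivariance property Lemma~\ref{lemma-basis}(2) (changing $A$ to $A*Q$ changes $\coord$ to $Q^*\coord$, so $\sigma$ is unchanged), continuous by Lemma~\ref{lemma-basis}(1), and a section since $\pi(\sigma(\mc{A})) = \mc{A}$. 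The local trivialization $U \times \orth_3 \to \pi^{-1}(U)$ is then $(\mc{A}, Q) \mapsto \sigma(\mc{A}) * Q$, with continuous inverse $A \mapsto (\pi(A), \coord(I;A))$ using Lemma~\ref{lemma-basis}(4) to recover $Q$. I should make sure the open sets $U$ indexed by the finitely many bases $I$ cover $\pg_{3,n}$: every arrangement has at least one basis, so they do. For $n = \infty$ one additionally notes that each point lies in some $\pstief_{3,\hat n}$ and uses a finite basis there, with the direct limit topology making the sections continuous by the usual universal property.

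Finally, for the vector bundle conclusions: $\pe_{3,n} = (\pstief_{3,n} \times \mb{R}^3)/\orth_3$ is the bundle associated to the principal $\orth_3$-bundle $\pstief_{3,n}$ via the standard linear action of $\orth_3$ on $\mb{R}^3$, hence a rank $3$ vector bundle with local trivializations inherited from those above; similarly $\poe_{3,n} = (\pstief_{3,n}\times\mb{R}^3)/\sorth_3$, using that $\pstief_{3,n}$ is also a principal $\sorth_3$-bundle (freeness of $\sorth_3 \subset \orth_3$ is immediate, and local sections are obtained the same way, now recording the orientation so that $\coord$ lands in $\sorth_3$), gives an oriented rank $3$ vector bundle. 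The main obstacle I expect is purely bookkeeping: verifying carefully that $\sigma$ is well defined and continuous near points where the chosen basis $I$ is only marginally a basis (i.e.\ that $\coord(I;-)$ stays continuous as $A$ varies), which is exactly what Lemma~\ref{lemma-basis} and the openness statement Claim~\ref{claim-open-X} are there to supply, and handling the $n=\infty$ case through the direct limit topology.
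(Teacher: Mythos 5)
Your proposal is correct and follows essentially the same route as the paper: freeness of the $\orth_3$-action via Lemma~\ref{lemma-basis}, local sections $\mc{A}\mapsto A*\coord(I;A)$ over the open sets where a fixed triple $I$ is a basis, and the associated-bundle construction for $\pe_{3,n}$ and $\poe_{3,n}$. One small remark: for the $\sorth_3$ case you do not need $\coord$ to land in $\sorth_3$ — the same section formula is already well defined on $\sorth_3$-classes, which is all the paper uses when it says "the same argument with $\sorth_3$."
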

\end{samepage}

\begin{proof}
As a consequence of Lemma \ref{lemma-basis}, $\pstief_{3,n}$ is a free $\orth_3$-space.  To see this, let $Q \in \orth_3$ and $A = A*Q \in \pstief_{3,n}$, and $I$ be an independent set of $A$.  Then, $\coord(I;A) = \coord(I;A*Q) = Q^* \coord(I;A)$, so $Q = \id$.
This implies that there is a translation function
\[ \tau : \{(A,A*Q): A \in \pstief_{3,n}, Q \in \orth_3 \} \to \orth_3 \] 
where $\tau(A,B)$ is the unique orthogonal transformation such that $B = A*\tau(A,B)$.
To check uniqueness, if  $B = A*Q_1 = A*Q_2$, then $A = A*(Q_2 Q_1^{-1})$, and since the $\orth_3$ action is free $Q_2 Q_1^{-1} = \id$, so $Q_2=Q_1$.

Next we will verify the continuity of $\tau$ and define local sections associated to an open cover of $\pstief_{3,n}$.
Let $\mc{I}$ be the set of non-repeating ordered triples with entries among $[n]_\mb{N}$, and for each $I = (i_1,i_2,i_3) \in \mc{I}$ let $U_I$ denote the subset of $\pstief_{3,n}$ where $I$ is a basis.
If pseudocircles $S_{i_1},S_{i_2},S_{i_3}$ have no common point of intersection, then any triple of pseudocircles that are sufficiently close in Fréchet distance will also not have a common point of intersection, so $U_I$ is open.  Since every $A \in \pstief_{3,n}$ has a basis, $\{U_I : I \in \mc{I}\}$ is an open cover of $\pstief_{3,n}$.  

For $B = A*\tau(A,B)$ with basis $I$, we have $\coord(I;B) = \coord(I;A*\tau(A,B)) = \tau(A,B)^*\coord(I;A)$, so $\tau(A,B) = \coord(I;A)\coord(I;B)^*$, which is continuous on $A,B \in U_I$, so $\tau$ is continuous.
Thus, $\pstief_{3,n}$ is a principal $\orth_3$-bundle. 

Let $\mc{U}_I = U_I/\orth_3$, and define a local section on $U_I$ by
\[ s_I : \mc{U}_I \to U_I, \quad s_I(\mc{A}) = A *\coord(I;A) \text{ for } A \in \mc{A},  \]
Observe that $s_I$ does not depend on the choice of $A \in \mc{A}$, since for $B = A*Q$, we have
$B *\coord(I;B) = A*Q \coord(I;A*Q) = A*Q Q^* \coord(I;A) = A*\coord(I;A) $, 
and $s_I$ is continuous since $\coord(I)$ is continuous and by definition of the topology on $\pg_{3,n}$. 
We may equivalently let $s_I(\mc{A})$ be the unique element of $\mc{A}$ such that $\coord(I;s_I(\mc{A})) = \id$.


Since $\pstief_{3,n}$ has local sections, $\pstief_{3,n}$ is locally trivial, which implies that $\pe_{3,n}$ is locally trivial \cite[Section 4.7]{husemoller1994fibre}. 
Specifically, we have a local trivialization on $\xi_{3,n}^{-1}(\mc{U}_I) \subset \pe_{3,n}$ given by 
\[ h_I : \xi_{3,n}^{-1}(\mc{U}_I) \to \mc{U}_I \times \mb{R}^3, \quad h_I(\mc{X}) = (\xi_{3,n}(\mc{X}),\coord(I;A)^*x) \text{ for } (A,x) \in \mc{X} \]
Observe that $h_I(\mc{X})$ does not depend on the choice of $(A,x)$, since for $(B,y) \in \mc{X}$, we have $B = A*Q$ and $y = Q^* x$, so 
$\coord(I;B)^*y = \coord(I;A*Q)^*Q^*x = (Q^*\coord(I;A))^*Q^*x = \coord(I;A)^*QQ^*x = \coord(I;A)^*x $.  Equivalently, we may choose $h_I(\mc{X})=(\mc{A},x)$ such that $(s_I(\mc{A}),x) \in \mc{X}$.  Again, $h_I$ is continuous since $\coord(I)$ is continuous, and we have a continuous inverse given by 
$h_I^{-1}(\mc{A},x) \in \pe_{3,n}$ such that $(s_I(\mc{A}),x) \in h_I^{-1}(\mc{A},x)$.  
For $\mc{A} \in \mc{U}_I \cap \mc{U}_J$, we have $h_I \circ h_J^{-1}(\mc{A},x) = (\mc{A},y)$ where $y = \coord(I;s_J(\mc{A}))^*x$, so $h_I \circ h_J^{-1}$ acts as a linear isometry on fibers. 
Thus, $\xi_{3,n}$ is a vector bundle.  

The same argument with $\sorth_3$ instead of $\orth_3$ shows that $\widetilde\xi_{3,n}$ is an oriented vector bundle.
\end{proof}

Recall that for a principal $\orth_3$-bundle $\xi: E \to B$ (or vector bundle or oriented vector bundle) and a continuous map $f: B' \to B$ between paracompact spaces, there is a pullback bundle (in the same category) $f^*(\xi)$ with base space $B'$, and if $f_0,f_1$ are homotopy equivalent then their pull back bundles $f_0^*(\xi),f_1^*(\xi)$ are isomorphic. 
This induces a map $\text{pb}(\xi,B')$ from homotopy classes of maps $B' \to B$ to isomorphism classes of bundles with base space $B'$ (in the same category as $\xi$), and we say $\xi$ is \df{universal} when $\text{pb}(\xi,B')$ is a bijection for every paracompact space $B'$.
In particular, the canonical bundles on $\stief_{3,\infty}$, $\e_{3,\infty}$, and $\oe_{3,\infty}$ are respectively universal for principal $\orth_3$-bundles, vector bundles, and oriented vector bundles \cite{husemoller1994fibre}.

\begin{proof}[Proof of Corollary \ref{corollary-classifying}]
Since $\e_{3,\infty}$ is a subspace of $\pe_{3,\infty}$, the map $\text{pb}(\xi_{3,\infty},B)$ is surjective for every paracompact space $B$.  That is every vector bundle over $B$ is isomorpic to the pullback of a map $f : B \to \e_{3,\infty}$, which is also a map into $\pe_{3,\infty}$. 
Suppose the pullbacks of a pair of maps $f_1,f_4 : B \to \pg_{3,\infty}$ are isomorphic vector bundles.  
Since $\e_{3,\infty}$ is a deformation retract of $\pe_{3,\infty}$, $f_1,f_4$ are respectively homotopic to a pair $f_2,f_3 : B \to \pg_{3,\infty}$, which are homotopic to each other since $\e_{3,\infty}$ is a universal vector bundle, so $f_1,f_4$ are homotopic to eachother.
Thus, $\pe_{3,\infty}$ is a universal vector bundle. 
The same argument applies to $\pstief_{3,\infty}$ and $\poe_{3,\infty}$.
\end{proof}


\section*{Acknowledgments}

The author would like to thank 
Laura Anderson,
Pavle Blagojević,
Kenneth Clarkson,
\hbox{Andreas Holmsen},
and Alfredo Hubard 
for helpful discussions and insights.
This research was partly supported by the National Research Foundation of Korea NRF grant 2011-0030044, SRC-GAIA.

\bibliographystyle{plain}
\bibliography{OM7}

\end{document}